\renewcommand{\restriction}{\mathbin\upharpoonright}    
\newtheorem*{theorem*}{Theorem}
\newtheorem*{maintheorem*}{Main Theorem}
\newtheorem*{corollary*}{Corollary}
\newtheorem*{definition*}{Definition}
\newtheorem{mainthm}{Main Theorem}
\newtheorem{theorem}{Theorem}[section]
\newtheorem{claim}{Claim}[theorem]
\newtheorem{subclaim}{Subclaim}[claim]
\newtheorem{lemma}[theorem]{Lemma}
\newtheorem{cor}[theorem]{Corollary}
\newtheorem{fact}[theorem]{Fact}
\theoremstyle{definition}
\newtheorem{example}[theorem]{Example}
\newtheorem{definition}[theorem]{Definition}
\newtheorem{notation}[theorem]{Notation}
\newtheorem{conv}[theorem]{Convention}
\newtheorem{setup}{Setup}
\theoremstyle{remark}
\newtheorem{remark}[theorem]{Remark}
\newcommand\cat[1]{{}^\curvearrowright #1}
\newcommand{\UEP}{\vec{\mathcal{U}}\textsf{-EP}}
\newcommand{\UBP}{\vec{\mathcal{U}}\textsf{-BP}}
\newcommand*\axiomfont[1]{\textsf{\textup{#1}}}
\newcommand\zfc{\axiomfont{ZFC}}
\newcommand\zf{\axiomfont{ZF}}
\newcommand\ac{\axiomfont{AC}}
\newcommand\ad{\axiomfont{AD}}
\newcommand\dc{\axiomfont{DC}}
\newcommand\sch{\axiomfont{SCH}}
\newcommand\psp{\axiomfont{PSP}}
\newcommand\bp{\axiomfont{BP}}
\DeclareMathOperator{\sky}{Sky}
\DeclareMathOperator{\con}{Con}
\DeclareMathOperator{\supp}{supp}
\DeclareMathOperator{\crit}{crit}
\DeclareMathOperator{\ob}{OB}
\DeclareMathOperator{\id}{id}
    \def\sle{\preceq}
    \def\sq{\sqsubseteq}
    \newcommand{\one}{\mathop{1\hskip-3pt {\rm l}}}
\newcommand{\tpitchfork}{
  \vbox{
    \baselineskip\z@skip
    \lineskip-.52ex
    \lineskiplimit\maxdimen
    \m@th
    \ialign{##\crcr\hidewidth\smash{$-$}\hidewidth\crcr$\pitchfork$\crcr}
  }
}
\def\s{\subseteq}
\def\forces{\Vdash}
\DeclareMathOperator{\col}{Col}
\DeclareMathOperator{\ro}{\mathcal{B}}
\DeclareMathOperator{\cf}{cf}
\DeclareMathOperator{\ord}{Ord}
\renewcommand{\mid}{\mathrel{|}\allowbreak}
\newcommand{\lh}{\ell}
\newcommand{\mc}{\mathop{\mathrm{mc}}\nolimits}
\newcommand{\dom}{\mathop{\mathrm{dom}}\nolimits}
\newcommand{\Col}{\mathop{\mathrm{Col}}}
\title[Regularity properties at singulars cardinals]{The Baire and perfect set properties at singulars cardinals}
\author[Dimonte]{Vincenzo Dimonte}
\address[Dimonte]{Dipartimento di scienze matematiche, informatiche e fisiche, UniversitÃ  degli studi di Udine, Udine, Italy}
\email{vincenzo.dimonte@uniud.it}
\author[Poveda]{Alejandro Poveda}
\address[Poveda]{ Department of Mathematics and Center of Mathematical Sciences and Applications, Harvard University, Cambridge MA, 02138, USA}
\email{alejandro@cmsa.fas.harvard.edu}
\urladdr{www.alejandropovedaruzafa.com}
\author[Thei]{Sebastiano Thei}
\address[Thei]{Dipartimento di scienze matematiche, informatiche e fisiche, UniversitÃ  degli studi di Udine, Udine, Italy}
\email{thei91.seba@gmail.com}
\subjclass[2020]{03E35, 03E55}
\keywords{Perfect set property, Baire properties,  Descriptive set theory.}
\thanks{This  work is part of Thei's Ph.D. dissertation under   Dimonte at Udine University. The  abstract framework and the  main results of the paper were obtained by  Poveda and Thei  during a visit of the latter to Harvard University in the Spring of 2024.}
\begin{document}

\maketitle
\begin{abstract}
   We construct a model of $\zfc$ with a singular cardinal $\kappa$ such that every subset of $\kappa$ in $L(V_{\kappa+1})$ has both the $\kappa$-Perfect Set Property and the $\mathcal{\vec{U}}$-Baire Property. This is a higher analogue of Solovay's  result  for $L(\mathbb{R})$ \cite{solovay1970model}. We obtain this configuration starting with large-cardinal assumptions in the realm of supercompactness, thus improving 
  former theorems by  Cramer \cite{Cramer}, Shi \cite{Shi} and Woodin \cite{WoodinPartII}. 
\end{abstract}

\section{Introduction}
The \emph{Perfect Set Property} ($\psp$) and the \emph{Baire Property} ($\bp$) are cornerstone concepts in the study of the real line. Recall that a set $A\s \mathbb{R}$  has the $\psp$ if and only if $A$ is either countable or contains a non-empty \emph{perfect set}; namely, a closed set without isolated points in the topology of $\mathbb{R}$. Likewise, $A\s \mathbb{R}$ has the $\bp$ if there is an open set $U\s\mathbb{R}$ such that $A\triangle U$ is meager. 

\smallskip

The $\psp$ and the $\bp$ are paradigms of what are commonly referred to as \emph{regularity properties}; namely, properties indicatives of \emph{well-behaved} sets of reals. Classical theorems in Descriptive Set Theory, due to Luzin and Suslin, show that all \emph{Borel sets} (in fact, all \emph{analytic sets}) have both the $\psp$ and the $\bp$  as well as other regularity properties such as Lebesgue measurability. For more complex subsets of the real line 
the issue becomes sensitive to the consistency of strong axioms of infinity called \emph{large cardinals} \cite{Koel}. 

\smallskip

The \emph{Axiom of Choice} $(\ac)$, one of the backbone components of modern mathematics, yields  sets lacking the above-mentioned regularity properties. A natural competitor of $\ac$ is  Mycielski-Steinhaus \emph{Axiom of Determinacy} $(\ad)$ which asserts that every set of reals is determined. $\ad$ wipes out most of the pathologies inoculated by the non-constructive nature of $\ac$. For instance, under $\ad$ every set of reals has the $\psp$ (Davis, 1964), the $\bp$ (Banach-Mazur, 1957) and is Lebesgue measurable (Mycielski-Swierczkowski, 1964). Besides from these pleasant consequences, $\ad$ yields a coherent rich theory of the real line (cf. \cite{Kan, KoelWoo, Koel, Larsson}).

\smallskip

Since $\ac$ contradicts the Axiom of Determinacy, who is then a natural model for $\ad$?  $L(\mathbb{R})$, the smallest transitive model of \emph{Zermelo-Fraenkel set theory} ($\zf$)  containing the real numbers. But for $\ad$ to hold in $L(\mathbb{R})$ very large cardinals are required to exist. Indeed,  a deep theorem of Woodin says that $\ad^{L(\mathbb{R})}$ (i.e., the assertion $``L(\mathbb{R})\models \ad$'' holds) is equiconsistent with infinitely-many \emph{Woodin cardinals}. However mild large cardinal hypotheses suffice to grant the regularity properties inferred from $\ad^{L(\mathbb{R})}$, as proved by Solovay in his celebrated \emph{Annals of Mathe\-matics}' paper \cite{solovay1970model}. Solovay  shows that starting (merely) with an \emph{inaccessible cardinal} one can construct a model of $\zfc$ ($\zf$ together with the Axiom of Choice) where every set of reals $A\s\mathbb{R}$ in $L(\mathbb{R})$ has both the $\psp$ and $\bp$. The inaccessible cardinal is needed for every set of reals in $L(\mathbb{R})$  to have the $\psp$ \cite{solovay1970model} yet superfluous when it comes to the $\bp$, as showed by Shelah in \cite{ShelahBook}.

\smallskip

Curiously, the first proof of the consistency of  $\ad^{L(\mathbb{R})}$   
relied on 
a model beyond the confines of the continuum. The said model is $L(V_{\kappa+1})$ and it  was inspected under the lenses of axiom $I_0(\kappa)$, one of the strongest-known large-cardinal axioms (Woodin, 1984).\footnote{$L(V_{\kappa+1})$ stands for  the least transitive model of $\zf$ containing $V_{\kappa+1}$, the power set of the $\kappa$th-stage of the \emph{Von Neumann} hierarchy of sets (i.e., $V_\kappa$). Axiom $I_0(\kappa)$ postulates the existence of certain \emph{elementary embeddings} between $L(V_{\kappa+1})$ and itself. By results of Shelah and Kunen this requires $\kappa$ to be a singular cardinal of \emph{counatble cofinality}.}  
The investigation of regularity properties of sets $A\s\kappa$ in $ L(V_{\kappa+1})$ is regarded as a natural generalization of the classical study of regularity properties of sets $A\s\mathbb{R}$ in $L(\mathbb{R})$.  
Indeed, Woodin realized that $L(V_{\kappa+1})$ behaves under $I_0(\kappa)$ very much like $L(\mathbb{R})$ does under $\ad^{L(\mathbb{R})}$ \cite{Koel, Cramer}.  
Thus, $I_0(\kappa)$ provides the appropriate axiomatic framework to develop \emph{Generalized Descriptive Set theory} ($\textsf{GDST}$) 
in generalised Baire/Cantor spaces (e.g., ${}^\omega \kappa$ and ${}^\kappa 2$) at singular cardinals $\kappa$.

  Our knowledge of $\textsf{GDST}$ at regular uncountable cardinals has been substantially expanded in recent years (cf.  \cite{Friedman,  LuckeSchlicht, LuckeSchlichtMotto, Agostini}).  
On the contrary, similar investigations at  singular cardinals are barely existent. This is perhaps due to  the seemingly disconnection between (1) the descriptive-set-theoretic methods and (2) the techniques employed in the study of singular cardinals. This paper aims to bridge this conceptual gap.

 \smallskip

 Work by Woodin \cite[\S7]{WoodinPartII}  and his students,  Cramer \cite[\S5]{Cramer} and Shi \cite[\S4]{Shi}, have revealed that under axiom $I_0(\kappa)$ every set $A\s {}^\omega \kappa$ in $L(V_{\kappa+1})$ has the $\kappa$-$\psp$ (see \S\ref{sec: Generalized Descriptive Set Theory}). This is in clear concordance with the effects of $\ad^{L(\mathbb{R})}$ upon sets of reals in $L(\mathbb{R})$. Our first main result reads:
 \begin{mainthm}
    Suppose that $\kappa<\lambda$ are cardinals with $\kappa$ ${<}\lambda$-supercompact and $\lambda$  inaccessible. Then, there is a model of $\zfc$ where $\kappa$ is strong limit singular with $\cf(\kappa)=\omega$ and every set in $\mathcal{P}( {}^\omega \kappa)\cap L(V_{\kappa+1})$ has the $\kappa$-$\psp$. 
 \end{mainthm}

  The hypotheses employed in our theorem are noticeably weaker than those used by Cramer, Shi and Woodin. 
 Our result also improves former theorems by Dimonte \cite{Dimonte} and Dimonte, Iannella and Lücke \cite{DimonteIannella}. 

 \smallskip

 The second regularity property analyzed in this paper is the $\UBP$ (see \S\ref{sec: Generalized Descriptive Set Theory}), a generalization of the classical Baire Property introduced by Dimonte, Motto Ros and Shi in \cite{DimonteMottoShi} and recently studied in \cite{DimonteIannella}. 

\begin{mainthm}
     Suppose that $\Sigma=\langle \kappa_n\mid n<\omega\rangle$ is a strictly increasing sequence of ${<}\lambda$-supercompact cardinals and that $\lambda$ is an inaccessible cardinal above $\kappa:=\sup(\Sigma)$. Then, there is a model of $\zfc$ where $\kappa$ is a strong limit singular cardinal of countable cofinality and the following hold:
     \begin{enumerate}
         \item   Every set in $\mathcal{P}({}^\omega \kappa)\cap L(V_{\kappa+1})$ has the $\kappa$-$\psp$. 
         \item Every set in $\mathcal{P}(\prod_{n<\omega}\kappa_n)\cap L(V_{\kappa+1})$ has the $\vec{\mathcal{U}}$-$\bp$ (for certain  $\vec{\mathcal{U}}$).
     \end{enumerate}
\end{mainthm}
The above provides the singular-cardinal analogue of Solovay's theorem \cite{solovay1970model}. 
Also, it
enhances the extent of $\kappa$-$\psp$ and $\vec{\mathcal{U}}$-$\bp$ obtained in \cite{DimonteIannella}.

\smallskip

Our main theorems follow from a general 
framework developed by the last two  authors to get  \(\kappa\)-\(\psp\) and \(\UBP\) configurations at singular cardinals. This paradigm is underpinned by the  \emph{$\Sigma$-Prikry technology}, developed in \cite{PartI,PartII,PartIII, PartIV}. As we will show, our analysis generalize earlier work of Kafkoulis in \cite{Kafkoulis1994-KAFTCS}. Also,  
they lay the groundwork 
for  investigating regularity properties at small singular cardinals;  such as \(\aleph_\omega\).

\smallskip

The organization of the paper is as follows. In \S\ref{sec: prelimminaries} we set notations and provide relevant prelimminaries. In \S\ref{sec: Sigma Prikry Tool Box} the abstract framework is presented.  In \S\ref{sec: applications} our main consistency results are proved. Here we also develop a new forcing poset (the \emph{Diagonal Supercompact Extender Based Prikry forcing}) which has set-theoretic interests beyond the scope of the manuscipt.

\section{Preliminaries and notations}\label{sec: prelimminaries}
In this section we collect a few definitions and notations which should help the reader navigate the manuscript. When it comes to  \emph{set-theoretic forcing} our main reference is Kunen's \cite{Kunen}, whilst Kanamori's text  \cite{Kan} will be our preferred source when it comes to \emph{large-cardinal axioms}.
\subsection{Review on projections and complete embeddings}\label{sec: projections}
  Following the set-theoretic tradition our notation for (forcing) posets will be $\mathbb{P},  \mathbb{Q}$, etc. As customary members $p$ of a poset $\mathbb{P}$ will be referred as \emph{conditions}. Given two conditions $p,q\in \mathbb{P}$ we will write $``q\leq p$'' as a shorthand for \emph{$``q$ is stronger than $p$''}. We shall denote by $\mathbb{P}_{\downarrow p}$ the subposet of $\mathbb{P}$ whose universe is  $\{q\in \mathbb{P}\mid q\leq p\}$. Finally, \emph{$p$ and $q$ are compatible} if there is $r\in \mathbb{P}$ such that $r\leq p,q$. 

    \begin{definition}\label{def: projections}
        Let $\mathbb{P}$ and $\mathbb{Q}$ be forcing posets. \begin{itemize}
\item \cite{ForWoo} A \emph{weak projection} is a map $\pi\colon \mathbb{P}\rightarrow\mathbb{Q}$ such that: \begin{enumerate}
            \item For all $p,p'\in\mathbb{P}$, if $p\leq p'$ then $\pi(p)\leq \pi(p')$;
            \item For all $p\in\mathbb{P}$ there is $p^*\leq p$  such that for all $q\in \mathbb{Q}$ with $q\leq \pi(p^*)$ there is $p'\leq p$ such that $\pi(p')\leq q$.
        \end{enumerate} 
            \item A  weak projection $\pi\colon \mathbb{P}\rightarrow\mathbb{Q}$ is called a \emph{projection} if $\pi(\one_{\mathbb{P}})=\one_{\mathbb{Q}}$ and in Clause~(3) above the condition $p^*$ can be taken to be $p$.\footnote{Recall that the weakest condition of a forcing poset $\mathbb{P}$ is customarily denoted by $\one_{\mathbb{P}}$ -- or simply by $\one$ if there is no confusion. }
           \item A \emph{complete embedding} is a map $\sigma\colon\mathbb{Q}\rightarrow\mathbb{P}$ such that: 
            \begin{enumerate}
             \item For all $q,q'\in\mathbb{Q}$, if $q\leq q'$ then $\sigma(q)\leq \sigma(q')$;
             \item For all $q,q'\in\mathbb{Q}$, if $q$ and $q'$ are incompatible then so are $\sigma(q)$ and $\sigma(q')$; 
             \item For all $p\in\mathbb{P}$ there is $q\in\mathbb{Q}$ such that for all $q'\leq q$, $\sigma(q')$ and $p$ are compatible.
        \end{enumerate}
        \end{itemize} 
    \end{definition}
    \begin{fact}\label{fact: basics of weak projections}
        Let $\mathbb{P}$ and $\mathbb{Q}$ be forcing notions and $G$ be $\mathbb{P}$-generic. \begin{enumerate}
            \item If $\pi\colon \mathbb{P}\rightarrow\mathbb{Q}$ is weak a projection, then the upwards closure of the set $\pi``G$
            (namely, $\{q\in \mathbb{Q}\mid \exists p\in G\;\pi(p)\leq q\}$)
            is a $\mathbb{Q}$-generic filter (see {\cite[Proposition~2.8]{ForWoo}}).
            \item If $\sigma\colon\mathbb{Q}\rightarrow\mathbb{P}$ is a complete embedding then $\{q\in\mathbb{Q}\mid\sigma(q)\in G\}$ is $\mathbb{Q}$-generic (cf {\cite{Kunen}}).
        \end{enumerate}
    \end{fact}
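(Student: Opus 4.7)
Both parts are standard generic-filter verifications; my plan is, in each case, to show that the prescribed subset of $\mathbb{Q}$ is (i) upward closed, (ii) downward directed inside itself, and (iii) meets every dense $D\s \mathbb{Q}$. I expect the mild technical point in (1) to be the two-stage quantifier pattern in Clause~(2) of weak projections, which forces one to pass through an auxiliary $p^{*}\leq p$. The hard part of (2) will be directedness, since complete embeddings yield reductions rather than joins, so I would derive directedness as a \emph{consequence} of genericity rather than directly.

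For part (1), let $H$ be the upward closure of $\pi``G$. Upward closure is immediate. For directedness, given $q_{1},q_{2}\in H$ I pick $p_{1},p_{2}\in G$ with $\pi(p_{i})\leq q_{i}$ and choose a common extension $p\in G$; then $\pi(p)\in H$ lies below both $q_{i}$ by monotonicity. For genericity, given a dense $D\s \mathbb{Q}$ I form
\[
D':=\{p\in \mathbb{P}\mid \exists q\in D\ \ \pi(p)\leq q\}.
\]
Given any $p\in \mathbb{P}$, Clause~(2) of the weak-projection definition produces $p^{*}\leq p$ witnessing it; by density pick $q\in D$ with $q\leq \pi(p^{*})$, and the clause then yields $p'\leq p$ with $\pi(p')\leq q$, so $p'\in D'$. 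Hence $D'$ is dense below every $p$, so $G\cap D'\neq\emptyset$, and the corresponding $q\in D$ belongs to $H\cap D$.

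For part (2), let $H:=\{q\in \mathbb{Q}\mid \sigma(q)\in G\}$. Upward closure follows from Clause~(1) of complete embedding together with upward closure of $G$. I would first establish genericity: given a dense $E\s \mathbb{Q}$, set $D:=\{p\in \mathbb{P}\mid \exists q\in E\ \ p\leq \sigma(q)\}$. For any $p\in \mathbb{P}$, Clause~(3) of complete embedding produces $q_{0}\in \mathbb{Q}$ reducing $p$; density of $E$ yields $q\in E$ with $q\leq q_{0}$; the reduction property gives $r\leq p,\sigma(q)$, whence $r\in D$ and $D$ is dense. Meeting $G\cap D$ produces $r\leq \sigma(q)$ with $\sigma(q)\in G$ (by upward closure of $G$), i.e.\ $q\in H\cap E$. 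Finally, for directedness, given $q_{1},q_{2}\in H$ I would apply the genericity just proved to
\[
E:=\{q\in \mathbb{Q}\mid q\leq q_{1},q_{2}\ \text{or}\ q\perp q_{1}\ \text{or}\ q\perp q_{2}\},
\]
which is dense in $\mathbb{Q}$. The resulting $q\in H\cap E$ has $\sigma(q),\sigma(q_{1}),\sigma(q_{2})$ all in $G$, hence pairwise compatible; by the contrapositive of Clause~(2) of complete embedding, $q$ is compatible with both $q_{i}$, ruling out the last two disjuncts. Hence $q\leq q_{1},q_{2}$, providing the required common extension inside $H$.
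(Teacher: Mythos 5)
Your proof is correct; both parts are verified carefully, including the two genuinely delicate points (routing part (1)'s density argument through the auxiliary $p^{*}$ from Clause~(2) of the weak-projection definition, and deriving directedness in part (2) from genericity via the dense set of conditions that either extend both $q_1,q_2$ or are incompatible with one of them). The paper states this as a Fact with references to \cite{ForWoo} and \cite{Kunen} and supplies no proof of its own, and your argument is the standard one from those sources.
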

    \begin{conv}
          We will identify $\pi``G$ with  its upwards closure.
    \end{conv}
 
    \begin{definition}[Quotient forcing]
    Given a projection $\pi\colon \mathbb{P}\rightarrow\mathbb{Q}$ between posets and a $\mathbb{Q}$-generic filter $H$ one defines the \emph{quotient forcing $\mathbb{P}/H$} as the subposet of $\mathbb{P}$ with universe $\{p\in \mathbb{P}\mid \pi(p)\in H\}.$
    \end{definition}
    \begin{fact}
        Every $\mathbb{P}/H$-generic $G$ (over $V[H]$) is $\mathbb{P}$-generic (over $V$).
    \end{fact}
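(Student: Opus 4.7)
The plan is to verify the two defining clauses of $\mathbb{P}$-genericity of $G$ over $V$: namely, that $G$ is a filter in $\mathbb{P}$ and that $G \cap D \neq \emptyset$ for every dense $D \subseteq \mathbb{P}$ lying in $V$.

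The filter conditions are essentially automatic. Compatibility of members of $G$ in $\mathbb{P}$ is inherited from their compatibility in $\mathbb{P}/H$, since the latter carries the induced order. For upward closure, suppose $p \in G$ and $p \leq p'$ in $\mathbb{P}$. Monotonicity of $\pi$ gives $\pi(p) \leq \pi(p')$; since $\pi(p) \in H$ and $H$ is a filter, $\pi(p') \in H$, placing $p'$ in $\mathbb{P}/H$. Upward closure in $\mathbb{P}/H$ then forces $p' \in G$.

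The substantive step is meeting dense sets. Fix $D \subseteq \mathbb{P}$ dense in $V$. I would reduce the problem to showing that the trace $D' := D \cap (\mathbb{P}/H)$ is dense in $\mathbb{P}/H$ in $V[H]$, since then the $\mathbb{P}/H$-genericity of $G$ over $V[H]$ immediately delivers some $p \in G \cap D' \subseteq G \cap D$. Given $p_0 \in \mathbb{P}/H$, I would work back in $V$ with
$$E_{p_0} = \{\pi(p) : p \in D \text{ and } p \leq p_0\} \subseteq \mathbb{Q},$$
and argue that $E_{p_0}$ is dense in $\mathbb{Q}$ below $\pi(p_0)$: given $q \leq \pi(p_0)$, the projection property of $\pi$ (Definition~\ref{def: projections}, with $p^\ast = p_0$ as permitted for projections) supplies $p' \leq p_0$ with $\pi(p') \leq q$; density of $D$ in $\mathbb{P}$ then yields $p \leq p'$ with $p \in D$, whence $\pi(p) \leq q$ and $\pi(p) \in E_{p_0}$. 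Since $\pi(p_0) \in H$ and $H$ is $\mathbb{Q}$-generic over $V$, the filter $H$ meets $E_{p_0}$, which produces the desired $p \in D'$ with $p \leq p_0$. The only real obstacle is precisely this density translation; both the projection axiom and the $V$-genericity of $H$ are essential there, while the remainder is routine bookkeeping.
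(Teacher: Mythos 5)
Your proof is correct. The paper states this as a folklore fact without proof, and your argument is the standard one: the filter clauses transfer directly, and the density of $E_{p_0}=\{\pi(p): p\in D,\ p\leq p_0\}$ below $\pi(p_0)$ (which uses precisely the projection clause with $p^\ast=p_0$, available here since $\pi$ is a genuine projection rather than merely a weak one) lets the $V$-genericity of $H$ produce a condition of $D$ inside $\mathbb{P}/H$ below $p_0$, after which the $\mathbb{P}/H$-genericity of $G$ finishes the job.
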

        We are going to identify $\mathbb{P}$ with its isomorphic copy in $\ro(\mathbb{P})$, the \emph{regular open Boolean algebra} of $\mathbb{P}$ (minus its least element $\mathbf{0}$) \cite{Kunen}. According to this (usual) identification, $\mathbb{P}$ is dense in $\ro(\mathbb{P})$ and $\mathbb{P}$-names are also $\ro(\mathbb{P})$-names. In particular, if $\mathbb{Q}$ is a forcing and $\rho\colon\ro(\mathbb{P})\rightarrow\ro(\mathbb{Q})$ is a projection, then so is its restriction to $\mathbb{P}$. Moreover, if $G$ is $\ro(\mathbb{P})$-generic then $G\cap\mathbb{P}$ is $\mathbb{P}$-generic. Conversely, if $G$ is $\mathbb{P}$-generic, then its upwards closure in $\ro(\mathbb{P})$ $$\{b\in\ro(\mathbb{P})\mid \exists p\in G\ p\leq
        b\}$$ is $\ro(\mathbb{P})$-generic. Forcing with complete Boolean algebras has the   advantage that   every formula $\varphi$ admits weakest a condition $\llbracket\varphi\rrbracket\in \ro(\mathbb{P})$
        (\emph{the Boolean value of $\varphi$})  forcing $\varphi$; this is defined as $$\llbracket\varphi\rrbracket:=\bigvee\{p\in\mathbb{P}\mid p\Vdash_{\mathbb{P}}\varphi\}.$$
        
    If $\mathbb{P}$ and $\mathbb{Q}$ are forcings and $\dot g$ is a $\mathbb{P}$-name such that $\one \Vdash_{\mathbb{P}}``\dot g \text{ is } \check{\mathbb{Q}}\text{-generic}$'', there is a way to define a complete embedding $\sigma\colon \ro(\mathbb{Q})\rightarrow\ro(\mathbb{P})$, as well as a projection from $\mathbb{P}$ into (a cone of) $\ro(\mathbb{Q})$, using the $\mathbb{P}$-name $\dot g$. 
    \begin{definition}\label{def: projection/complete embedding induced by a name}
        Assume $\mathbb{P}$ and $\mathbb{Q}$ are forcings and $\dot g$ is a $\mathbb{P}$-name such that $\one \Vdash_{\mathbb{P}}``\dot g \text{ is } \check{\mathbb{Q}}\text{-generic}$''. The \emph{embedding induced by} $\dot g$ is the complete embedding $\sigma:\ro(\mathbb{Q})\rightarrow\ro(\mathbb{P})$ given by $\sigma(q)=\llbracket\check{q}\in\dot g\rrbracket,$ for all $q\in\mathbb{Q}$.
    \end{definition}
    \begin{fact}[Folklore]\label{fact: connection between projections and complete embeddings}
         Let  $\mathbb{P}$ and $\mathbb{Q}$ be forcings. The following are equivalent: \begin{enumerate}
             \item There is a projection $\pi\colon\mathbb{P}\rightarrow\ro(\mathbb{Q})$.
             \item There is a complete embedding $e\colon\ro(\mathbb{Q})\rightarrow\ro(\mathbb{P})$.
         \end{enumerate}
    \end{fact}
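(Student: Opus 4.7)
The plan is to treat each direction separately using the standard translation between generic filters and Boolean-valued embeddings.

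For $(1)\Rightarrow (2)$, I would extend the projection $\pi\colon\mathbb{P}\to\ro(\mathbb{Q})$ to act on the $\ro(\mathbb{P})$-generic (equivalently, apply it to the image of the $\mathbb{P}$-generic under the identification $\mathbb{P}\subseteq\ro(\mathbb{P})$). Let $\dot G$ be the canonical $\mathbb{P}$-name for the generic filter; by Fact~\ref{fact: basics of weak projections}(1), the name $\dot H$ for the upward closure of $\pi``\dot G$ satisfies $\one\Vdash_\mathbb{P} ``\dot H \text{ is }\ro(\check{\mathbb{Q}})\text{-generic}''$. Then Definition~\ref{def: projection/complete embedding induced by a name} applied to $\dot H$ furnishes a complete embedding $e\colon\ro(\mathbb{Q})\to\ro(\mathbb{P})$ given by $e(q)=\llbracket\check q\in\dot H\rrbracket$. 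Concretely, $e(q)$ is the supremum in $\ro(\mathbb{P})$ of $\{p\in\mathbb{P}\mid \pi(p)\leq q\}$, and clauses (1)--(3) of Definition~\ref{def: projections} follow directly from the projection properties of $\pi$.

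For the converse $(2)\Rightarrow(1)$, I would exploit the fact that a complete embedding $e\colon\ro(\mathbb{Q})\to\ro(\mathbb{P})$ of complete Boolean algebras admits a left adjoint, the \emph{reduction map}
\begin{equation*}
\pi(p):=\bigwedge\{q\in\ro(\mathbb{Q})\mid p\leq e(q)\}\qquad (p\in\mathbb{P}).
\end{equation*}
I would verify that $\pi\colon\mathbb{P}\to\ro(\mathbb{Q})$ is a projection. Monotonicity is immediate from the definition, and $\pi(\one_{\mathbb{P}})=\one_{\ro(\mathbb{Q})}$ follows because $e$ preserves the top element (a consequence of preserving binary meets, which in turn follows from preservation of incompatibility). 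The substantive clause is (3): given $p\in\mathbb{P}$ and $q\in\ro(\mathbb{Q})$ with $\mathbf{0}<q\leq\pi(p)$, I must produce $p'\leq p$ with $\pi(p')\leq q$. I would argue first that $p$ and $e(q)$ are compatible in $\ro(\mathbb{P})$: otherwise $p\leq \neg e(q)=e(\neg q)$, so $\neg q$ would belong to the set defining $\pi(p)$, giving $\pi(p)\leq \neg q$ and contradicting $\mathbf{0}<q\leq \pi(p)$. Density of $\mathbb{P}$ in $\ro(\mathbb{P})$ then yields $p'\in\mathbb{P}$ with $p'\leq p\wedge e(q)$, and $p'\leq e(q)$ implies $\pi(p')\leq q$ by construction.

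The main technical point is this third clause: one has to convert information about the infimum $\pi(p)$ in $\ro(\mathbb{Q})$ into a concrete refinement $p'\in\mathbb{P}$ witnessing the projection property. The identity $e(\neg q)=\neg e(q)$, which is the Boolean-algebraic content of $e$ being a complete embedding, is the key ingredient that makes this contrapositive argument work. Once these preservation properties of $e$ are in hand, everything else is routine bookkeeping inside $\ro(\mathbb{P})$ and $\ro(\mathbb{Q})$.
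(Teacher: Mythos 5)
The paper states this fact as folklore and gives no proof, so there is nothing to compare against; your argument is the standard one and its overall structure is correct. The $(1)\Rightarrow(2)$ direction is fine: Fact~\ref{fact: basics of weak projections}(1), applied to $\pi$ viewed as a projection onto the poset $\ro(\mathbb{Q})$, gives the name $\dot H$, and Definition~\ref{def: projection/complete embedding induced by a name} (together with the identification $\ro(\ro(\mathbb{Q}))=\ro(\mathbb{Q})$) yields the complete embedding. Your closed-form description $e(q)=\bigvee\{p\in\mathbb{P}\mid\pi(p)\leq q\}$ is correct, and the fact that $e(q)\neq\mathbf{0}$ for every $q$ uses precisely that $\pi$ is a genuine projection (apply clause (3) to $\one_{\mathbb{P}}$) rather than merely a weak one.

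In $(2)\Rightarrow(1)$ two points deserve an explicit line, because Definition~\ref{def: projections} takes the purely order-theoretic notion of complete embedding (order-preserving, incompatibility-preserving, reductions exist), not the Boolean-homomorphism one. First, the identity $e(\neg q)=\neg e(q)$, which carries your whole contrapositive argument, is not part of that definition and genuinely fails for maps satisfying only the first two clauses; it must be derived from the reduction clause: incompatibility-preservation gives $e(\neg q)\leq\neg e(q)$, and if $\neg\bigl(e(q)\vee e(\neg q)\bigr)>\mathbf{0}$ then a reduction $r$ of this element satisfies $r\wedge q>\mathbf{0}$ or $r\wedge\neg q>\mathbf{0}$, and either case contradicts the definition of a reduction since $e(r\wedge q)\leq e(q)$ and $e(r\wedge\neg q)\leq\neg e(q)$. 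The same computation gives $e(\one)=\one$ directly; your parenthetical derivation of this from preservation of binary meets is not right, since meet-preservation does not follow from incompatibility-preservation alone. Second, you should check that $\pi(p)\neq\mathbf{0}$, i.e.\ that $\pi$ really lands in $\ro(\mathbb{Q})\setminus\{\mathbf{0}\}$: again by the reduction clause, any reduction $r$ of $p$ satisfies $r\leq q$ whenever $p\leq e(q)$ (otherwise $e(r\wedge\neg q)\leq\neg e(q)$ would have to be compatible with $p\leq e(q)$), whence $\mathbf{0}<r\leq\pi(p)$. With these routine verifications inserted, the rest of your clause-(3) argument --- compatibility of $p$ with $e(q)$, density of $\mathbb{P}$ in $\ro(\mathbb{P})$, and monotonicity of $\pi$ --- goes through as written.
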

    \begin{fact}[{\cite[Lemma~2.1]{FrieHonz}}]\label{fact: projection to the rescue}
        Let $\mathbb{P}$ and $\mathbb{Q}$ be forcing notions. Suppose that $p$ is a condition in $\mathbb{P}$ and $\dot{g}$ is a $\mathbb{P}$-name such that $p\forces_{\mathbb{P}}\text{$``\dot{g}$ is $\mathbb{Q}$-generic''}$. 
        
       Let $\rho\colon \ro(\mathbb{P})_{\downarrow p}\rightarrow\ro(\mathbb{Q})$ be the extension to $\ro(\mathbb{P})_{\downarrow p}$ of the map defined as
        $$p'\leq_{\mathbb{P}} p\mapsto\rho(p'):=\bigwedge\{q\in\mathbb{Q}\mid p'\forces_{\mathbb{P}}\check{q}\in\dot{g}\}.$$ 
        Then $\rho\colon \ro(\mathbb{P})_{\downarrow p}\rightarrow\ro(\mathbb{Q})_{\downarrow\rho(p)}$ as well as its restriction  $\rho\restriction\mathbb{P}_{\downarrow p}$  
        are projections.

       \smallskip
        
        Moreover, for each $\mathbb{P}$-generic filter $G$ with $p\in G$, the upwards closure in $\ro(\mathbb{Q})$ of $\rho``G$ is
        $\{b\in\ro(\mathbb{Q})\mid \exists q\in \dot{g}_G\, (q\leq b)\}.\footnote{ {Here we are identifying the $\mathbb{P}$-generic $G$ with its induced $\ro(\mathbb{P})$-generic.}}$
    \end{fact}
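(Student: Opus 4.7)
The plan is to realize $\rho$ as the Boolean-algebraic dual of a complete embedding induced by $\dot g$. Since $p\forces_{\mathbb{P}}$``$\dot g$ is $\check{\mathbb{Q}}$-generic'', the name $\dot g$ extends canonically to a $\mathbb{P}$-name $\dot H$ for a $\ro(\mathbb{Q})$-generic ultrafilter, so Definition~\ref{def: projection/complete embedding induced by a name} supplies a complete embedding $\sigma\colon \ro(\mathbb{Q})\to \ro(\mathbb{P})_{\downarrow p}$ given by $\sigma(b):=p\wedge \llbracket \check b\in \dot H\rrbracket$. Since $\dot H$ is forced to be an ultrafilter, a routine check shows $\sigma$ is a complete Boolean homomorphism; in particular it preserves arbitrary infima (using $\sigma(-b)=-\sigma(b)$ and De Morgan). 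Combining this with the definition of $\rho$ yields the key relation $p'\le \sigma(\rho(p'))=\bigwedge\{\sigma(q)\mid q\in\mathbb{Q},\; p'\le \sigma(q)\}$ for every nonzero $p'\in \ro(\mathbb{P})_{\downarrow p}$; this is the only feature of $\rho$ I will need.

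Granting this, the projection properties are algebraic. Order preservation is immediate: if $p''\le p'$ then the defining set of $\rho(p')$ is contained in that of $\rho(p'')$. For the lifting property, let $p'\in \ro(\mathbb{P})_{\downarrow p}$ and $b\in \ro(\mathbb{Q})$ with $0<b\le \rho(p')$; by density of $\mathbb{Q}$ in $\ro(\mathbb{Q})$ I would pick $q\in \mathbb{Q}$ with $0<q\le b$ and set $p'':=p'\wedge \sigma(q)\in \ro(\mathbb{P})$. If $p''=0$ then $p'\le \sigma(-q)$, so $-q$ lies in the defining set of $\rho(p')$, whence $\rho(p')\le -q$ and hence $q\le q\wedge (-q)=0$, a contradiction. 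Therefore $0<p''\le p'$, and since $p''\le \sigma(q)$ we read off $\rho(p'')\le q\le b$. Monotonicity places the image of $\rho$ inside $\ro(\mathbb{Q})_{\downarrow \rho(p)}$. Because $\mathbb{P}$ is dense in $\ro(\mathbb{P})$, the witness $p''$ can be further refined to some $r\in \mathbb{P}$ with $r\le p'$ and $\rho(r)\le \rho(p'')\le b$, yielding the projection property for $\rho\restriction \mathbb{P}_{\downarrow p}$ as well.

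For the ``moreover'' clause, let $G$ be $\mathbb{P}$-generic with $p\in G$ and set $H:=\{b\in \ro(\mathbb{Q})\mid \sigma(b)\in G\}$; by Fact~\ref{fact: basics of weak projections}(2) this is the $\ro(\mathbb{Q})$-generic ultrafilter obtained as the upward closure of $\dot g_G$. For $p'\in G\cap \mathbb{P}_{\downarrow p}$, the inequality $p'\le \sigma(\rho(p'))$ forces $\sigma(\rho(p'))\in G$, whence $\rho(p')\in H$. Conversely, if $b\in H$ then $\sigma(b)\in G$ and some $p'\in G$ refines $\sigma(b)$, producing $\rho(p')\le b$ via the lifting argument. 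These two directions identify the upward closure of $\rho``G$ in $\ro(\mathbb{Q})$ with $H=\{b\in\ro(\mathbb{Q})\mid \exists q\in \dot g_G\,(q\le b)\}$, as required. The principal technical point throughout is that $\sigma$ preserves arbitrary infima (i.e.\ is a complete Boolean embedding rather than merely a complete embedding of posets); once this is in place, the remaining verifications are direct Boolean manipulations.
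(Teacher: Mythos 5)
The paper offers no proof of this Fact (it is imported from Friedman--Honz\'{\i}k), so your argument must stand on its own, and it contains one genuine gap, located in the lifting step. Your $\sigma(b):=p\wedge\llbracket\check b\in\dot H\rrbracket$ is indeed a complete Boolean homomorphism into $\ro(\mathbb{P})_{\downarrow p}$ (it preserves arbitrary meets and relative complements), and from this you correctly deduce $p'\le\sigma(\rho(p'))$. The problem is the converse-type bound: from $p'\wedge\sigma(q)=0$, i.e.\ $p'\le\sigma(-q)$, you conclude that ``$-q$ lies in the defining set of $\rho(p')$, whence $\rho(p')\le -q$''. But the defining set of $\rho(p')$ is $\{q'\in\mathbb{Q}\mid p'\Vdash\check{q'}\in\dot g\}$, a subset of $\mathbb{Q}$, whereas $-q$ is in general an element of $\ro(\mathbb{Q})\setminus\mathbb{Q}$. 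What your reasoning actually establishes is $\pi(p')\le -q$ for the canonical co-projection $\pi(p'):=\bigwedge\{b\in\ro(\mathbb{Q})\mid p'\le\sigma(b)\}$. One always has $\pi(p')\le\rho(p')$ (the meet defining $\pi$ ranges over a larger set), but the inequality you need, $\rho(p')\le\pi(p')$, is not automatic and density of $\mathbb{Q}$ in $\ro(\mathbb{Q})$ does not deliver it, since $p'\le\sigma(\bigvee_i q_i)$ does not yield $p'\le\sigma(q_i)$ for any single $i$.

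The gap is not cosmetic. Take $\mathbb{P}=\mathbb{Q}=2^{<\omega}$ with generic real $x$, and let $\dot g$ name the filter of initial segments of $y$, where $y:=\langle 0,1\rangle{}^\smallfrown(x\restriction[2,\omega))$ if $x$ begins with $\langle 0,0\rangle$ and $y:=x$ otherwise; one checks $\one\Vdash``\dot g$ is $\mathbb{Q}$-generic''. Here $\one$ forces no nontrivial $q$ into $\dot g$ (it decides neither $y(0)$ nor $y(1)$), so $\rho(\one)=\one$; yet any $p''$ forces into $\dot g$ only initial segments of $y$, none of which extends $\langle 0,0\rangle$, so no $p''$ has $\rho(p'')\le\langle 0,0\rangle$ and the lifting property fails at $b=\langle 0,0\rangle\le\rho(\one)$. (By contrast $\pi(\one)\le-\langle 0,0\rangle$, and $\pi$ is a genuine projection onto the cone below $\pi(\one)$.) So your argument proves the statement for $\rho(p')=\bigwedge\{b\in\ro(\mathbb{Q})\mid p'\Vdash\check b\in\dot H\}$ --- surely the intended reading --- but not for the literal $\mathbb{Q}$-meet; to close the gap you must either adopt that reading or prove $\rho=\pi$ in your setting. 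Note finally that the ``moreover'' clause does not need the lifting argument you invoke for its converse direction: if $b$ lies above some $q\in\dot g_G$, pick $p'\in G$ with $p'\Vdash\check q\in\dot g$; then $q$ is in the defining set of $\rho(p')$, so $\rho(p')\le q\le b$ directly.
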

    \begin{conv}\label{conv: convention about projections of boolean algebras}
         In the situation described in Fact \ref{fact: projection to the rescue}, we will refer to the map $\rho\colon \ro(\mathbb{P})_{\downarrow p}\rightarrow\ro(\mathbb{Q})_{\downarrow\rho(p)}$ as the \emph{projection induced by} $\dot{g}$ (and $p$). If $\sigma\colon \ro(\mathbb{P})\rightarrow\ro(\mathbb{Q})_{\downarrow\sigma(\one_{\mathbb{P}})}$ is a projection we will economize language and say instead that $``\sigma\colon \ro(\mathbb{P})\rightarrow\ro(\mathbb{Q})$ is a projection''. Note that $\sigma(\one_{\mathbb{P}})$ is possibly stronger than the least element in $\mathcal{B}(\mathbb{Q})$, thus our remark.
    \end{conv}

A technical caveat with certain Prikry-type forcings (the principal  posets of this paper) is the non-existence of projections, but only weak projections. However, given a weak projection $\pi\colon \mathbb{P}\rightarrow \mathbb{Q}$ one can combine Fact~\ref{fact: basics of weak projections}(1) with Fact~\ref{fact: projection to the rescue} to form the projection induced by $\pi``\dot{G}$, $\rho\colon \ro(\mathbb{P})\rightarrow\ro(\mathbb{Q})$. The next technical lemma shows that any commutative system of weak projections has a  {mirror (commutative) system of projections} between their regular open algebras. This observation will be important in \S\ref{sec: Sigma Prikry Tool Box}.  
    \begin{lemma}\label{from a weak system to a system}
       Every commutative diagram of weak projections induces a commutative diagram of projections between their regular open algebras:
    \begin{displaymath}
\begin{tikzcd}
\mathbb{P} \arrow{r}{\pi_1} \arrow{d}{\pi} & \mathbb{Q} \arrow{ld}{\pi_2}   \\
\mathbb{R} &
\end{tikzcd}
\Longrightarrow
\begin{tikzcd}
\ro(\mathbb{P}) \arrow{r}{\rho_1} \arrow{d}{\rho} & \ro(\mathbb{Q}) \arrow{ld}{\rho_2}   \\
\ro(\mathbb{R}) &
\end{tikzcd}.
\end{displaymath}  

    \end{lemma}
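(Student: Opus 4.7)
The plan is to produce the three projections by applying Fact~\ref{fact: projection to the rescue} to canonical names of induced generics, and then verify commutativity through the resulting Boolean-value formulas. By Fact~\ref{fact: basics of weak projections}(1), the $\mathbb{P}$-names $\pi_1``\dot{G}$ and $\pi``\dot{G}$ are forced by $\one_{\mathbb{P}}$ to be, respectively, $\mathbb{Q}$- and $\mathbb{R}$-generic, while the $\mathbb{Q}$-name $\pi_2``\dot{H}$ (with $\dot{H}$ the canonical $\mathbb{Q}$-generic) is forced to be $\mathbb{R}$-generic. Applying Fact~\ref{fact: projection to the rescue} at $p=\one$ to each of these names produces projections $\rho_1\colon\ro(\mathbb{P})\to\ro(\mathbb{Q})$, $\rho\colon\ro(\mathbb{P})\to\ro(\mathbb{R})$ and $\rho_2\colon\ro(\mathbb{Q})\to\ro(\mathbb{R})$.

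To show $\rho_2\circ\rho_1=\rho$, fix $p\in\mathbb{P}$ and unpack the defining formulas
\[\rho(p)=\bigwedge\{r\in\mathbb{R}\mid p\forces_{\mathbb{P}} \check{r}\in \pi``\dot{G}\}, \qquad \rho_2(\rho_1(p))=\bigwedge\{r\in\mathbb{R}\mid \rho_1(p)\forces_{\mathbb{Q}} \check{r}\in \pi_2``\dot{H}\}.\]
I would argue that the two index sets coincide. For the forward inclusion, if $r$ belongs to the first set then the collection of $p'\leq p$ with $\pi(p')\leq r$ is dense below $p$; for each such $p'$, $\pi_1(p')\in\pi_1``\dot{G}$ satisfies $\pi_2(\pi_1(p'))=\pi(p')\leq r$, so via the Moreover clause of Fact~\ref{fact: projection to the rescue} (which identifies $\rho_1``G$ with the upward closure of $\pi_1``(G\cap\mathbb{P})$) the condition $\rho_1(p)$ forces $\check{r}\in\pi_2``\dot{H}$. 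Conversely, any witness $q\leq\rho_1(p)$ with $\pi_2(q)\leq r$ lifts, using the weak-projection property of $\pi_1$, to some $p'\leq p$ with $\pi_1(p')\leq q$, so $\pi(p')=\pi_2(\pi_1(p'))\leq r$; as $q$ ranges over a dense subset below $\rho_1(p)$ this produces the required density below $p$. Equality of the index sets then yields equality of the meets.

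The main obstacle will be the lifting step in the reverse inclusion: weak projectivity of $\pi_1$ does not let one lift every $q\leq\pi_1(p)$ in $\mathbb{Q}$ to a $p'\leq p$ in $\mathbb{P}$, but only after first descending to some auxiliary $p^{*}\leq p$. I plan to circumvent this by working throughout in the regular-open completions $\ro(\mathbb{P})$ and $\ro(\mathbb{Q})$, where the densification demanded by the weak-projection axiom is already absorbed into the definition of $\rho_1$ via Fact~\ref{fact: projection to the rescue}: for every $b\leq\rho_1(p)$ in $\ro(\mathbb{Q})$, the $p'\leq p$ in $\mathbb{P}$ with $\pi_1(p')\leq b$ form a dense subset below $p$ in the Boolean ordering, which is exactly what the density-chase above requires.
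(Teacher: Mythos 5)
Your overall strategy matches the paper's: realize $\rho_1,\rho,\rho_2$ by applying Fact~\ref{fact: projection to the rescue} to the canonical names $\pi_1``\dot{G}_{\mathbb{P}}$, $\pi``\dot{G}_{\mathbb{P}}$, $\pi_2``\dot{G}_{\mathbb{Q}}$, and then verify commutativity by unwinding the defining formulas. Your forward inclusion is sound and yields $\rho_2(\rho_1(p))\leq\rho(p)$; this is precisely the inequality the paper proves in detail (it leaves the reverse to the reader, so your two-sided programme is, if anything, more ambitious).

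The gap is in your patch for the reverse inclusion. You assert that for every $b\leq\rho_1(p)$ in $\ro(\mathbb{Q})$, the set $\{p'\leq p\mid\pi_1(p')\leq b\}$ is dense below $p$. This is false: take $\mathbb{P}=\mathbb{Q}$ with $\pi_1=\mathrm{id}$, so $\rho_1=\mathrm{id}_{\ro(\mathbb{Q})}$; choose $b\leq p$ and then $p_0\leq p$ incompatible with $b$, and observe that no $p'\leq p_0$ has $\pi_1(p')=p'\leq b$. So the weak-projection densification is \emph{not} ``absorbed'' into $\rho_1$ in the way you propose. What the reverse direction actually needs is the comparison $\rho_1(p^*)\leq\pi_1(p^*)$, which holds because $p^*$ always forces $\check{\pi}_1(p^*)\in\dot{g}_1$ (so $\pi_1(p^*)$ enters the meet defining $\rho_1(p^*)$). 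With it the argument closes: given $p_0\leq p$, descend by weak projectivity of $\pi_1$ to an auxiliary $p^*\leq p_0$; from $\rho_1(p^*)\leq\rho_1(p)\leq\llbracket\check{r}\in\dot{g}_2\rrbracket=\bigvee\{q\in\mathbb{Q}\mid\pi_2(q)\leq r\}$ and density of $\mathbb{Q}$ in $\ro(\mathbb{Q})$, pick $q'\in\mathbb{Q}$ with $q'\leq\rho_1(p^*)\leq\pi_1(p^*)$ and $\pi_2(q')\leq r$; lift $q'$ to some $p'\leq p_0$ with $\pi_1(p')\leq q'$; conclude $\pi(p')=\pi_2(\pi_1(p'))\leq r$. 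Without the step $\rho_1\leq\pi_1$, your density-chase has no way to get from $\rho_1(p^*)$ (where you have the good $q$'s) to $\pi_1(p^*)$ (where the weak-projection lifting is available).
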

    \begin{proof}
        Let $\dot{G}_{\mathbb{P}}$ (resp. $\dot{G}_{\mathbb{Q}}$) be the canonical  name for the $\mathbb{P}$-generic (resp. $\mathbb{Q}$-generic) filter.  
        Define three generics as $\dot{g}_1:=\{\langle\check{q},p\rangle\mid p\in\mathbb{P}\ \wedge\ \pi_1(p)\leq q \}$, $\dot{g}:=\{\langle\check{r},p\rangle\mid p\in\mathbb{P}\ \wedge\ \pi(p)\leq r \}$ and $\dot{g}_2:=\{\langle\check{r},q\rangle\mid q\in\mathbb{Q}\ \wedge\ \pi_2(q)\leq r \}$.

       One can check that $$\one\forces_{\mathbb{P}}``\forall i\in\{1,2\}\;\dot{g}_i \text{ is the upwards closure of }\pi_i``\dot{G}_{\mathbb{P}}\text{"}$$ and $$\one\Vdash_{\mathbb{Q}}``\dot{g}_2 \text{ is the upwards closure of }\pi_2``\dot{G}_{\mathbb{Q}}\text{"}.$$ Appealing to Fact \ref{fact: projection to the rescue}, we let $\rho_1,\rho$ and $\rho_2$ be the projections respectively induced by the pairs $\langle \pi_1,g_1\rangle$, $\langle \pi,g_2\rangle$ and $\langle \pi_2, g_{1,2}\rangle$.

       \smallskip

       Let us  verify that the projections $\rho_i$ commute. For this it suffices to check that the they commute when restricted to $\mathbb{P}$. For each $p\in\mathbb{P}$ we have $$\rho_2(\rho_1(p))=\rho_2(\bigwedge\{q\in\mathbb{Q}\mid p\Vdash_{\mathbb{P}}\check{q}\in\dot{g}_1\})=\bigwedge\{\rho_2(q)\mid q\in\mathbb{Q}\ \wedge\  p\Vdash_{\mathbb{P}}\check{q}\in\dot{g}_1\}.$$ 
       We prove $\rho_2(\rho_1(p))\leq \rho(p)$ and leave the dedicated reader the verification of the other inequality.
    
            Note that everything amounts to verify that $$\text{$\rho_2(\rho_1(p))\leq r$ for all $r\in \mathbb{R}$ such that $p\Vdash_{\mathbb{P}}\check{r}\in\dot{g}$.}$$ 
            So fix a condition $r$ as above. First observe that $\pi_1(p)\Vdash_{\mathbb{Q}}\check{r}\in\dot{G}_{1,2}$. 
            
            To see this let be a $\mathbb{Q}$-generic filter $H$ with $\pi_1(p)\in H$, and let $G$ be $\mathbb{P}/H$-generic with $p\in G$. 
            As $p\Vdash_{\mathbb{P}}\check{r}\in\dot{g}$ and $p\in G$, we deduce that $$r\in(\dot{g})_G=\pi``G=\pi_2``\pi_1``G=\pi_2``G_{\mathbb{Q}}=(\dot{g}_2)_{G_{\mathbb{Q}}},$$ leading to $\pi_1(p)\Vdash_{\mathbb{Q}}\check{r}\in\dot{g}_2$. In particular, $\rho_2(\pi_1(p))\leq r$.
            
            On the other hand, $p$ forces that $\pi_1(p)\in\dot{g}_1$. 
            Recalling that $$\rho_2(\rho_1(p))=\bigwedge\{\rho_2(q)\mid q\in\mathbb{Q}\ \wedge\  p\Vdash_{\mathbb{P}}\check{q}\in\dot{g}_1\},$$ we finally conclude that $\rho_2(\rho_1(p))\leq\rho_2(\pi_1(p))\leq r$.
        \end{proof}

\subsection{A review of the $\Sigma$-Prikry framework.}\label{sec: Sigma Prikry framework}
Let us provide a succinct account of the $\Sigma$-Prikry framework developed in \cite{PartI, PartII,PartIII, PartIV}. 
Basic acquitance with this matter  will be required for \S\ref{sec: Sigma Prikry Tool Box}.

\begin{definition}\label{gradedposet} We say that a pair $(\mathbb P,\lh)$ is a \emph{graded poset}
whenever $\mathbb P$ is a poset and $\lh:\mathbb{P}\rightarrow\omega$ is a surjection such that for all $p\in \mathbb{P}$:
\begin{itemize}
\item  For every $q\le p$, $\lh(q)\geq\lh(p)$;
\item  There exists $q\le p$ with $\lh(q)=\lh(p)+1$.
\end{itemize}
\end{definition}
    If $(\mathbb{P}, \ell)$ is a graded poset $p\in\mathbb{P}$, and $n<\omega$, we denote $\mathbb{P}_n:=\{p\in\mathbb{P}\mid \ell(p)=n\}$, $\mathbb{P}_n^p:=\{q\in\mathbb{P}\mid q\leq p\ \wedge \  \ell(q)=\ell(p)+n\}$ and $\mathbb{P}_{\geq n}^p:=\{q\in \mathbb{P}\mid q\leq p \ \wedge \   \ell(q)\geq \ell(p)+n\}$. We will also write $q\leq^n p$ whenever $q\in \mathbb{P}^p_n$. Note that $\mathbb{P}_{\downarrow p}=\mathbb{P}^p_{\geq 0}$. If $q\in\mathbb{P}_0^p$, we write $q\leq^\ast p$ and call $q$ a \emph{direct extension} of $p$.

\begin{definition}[\cite{PartII}]\label{SigmaPrikry}
Suppose that $\mathbb P$ is a notion of forcing with a greatest element $\one$,
and that $\Sigma=\langle \kappa_n\mid n<\omega\rangle$ is a non-decreasing sequence of regular uncountable cardinals,
converging to some cardinal $\kappa$.
Suppose that $\lambda$ is a cardinal such that $\one\forces_{\mathbb P}\check\lambda=\kappa^+$.
For a function $\lh:\mathbb{P}\rightarrow\omega$
we say that $(\mathbb{P},\lh)$ is \emph{$\Sigma$-Prikry} iff all of the following hold:
\begin{enumerate}
\item\label{c4} $(\mathbb P,\lh)$ is a graded poset;
\item\label{c2} For $n<\omega$, $\mathbb P_n$ contains a dense subposet 
which is $\kappa_n$-directed-closed;
\item\label{c5} For all $p\in \mathbb{P}$, $n,m<\omega$ and $q\le^{n+m}p$, the set $\{r\le^n p\mid  q\le^m r\}$ contains a greatest element which we denote by $m(p,q)$. 
In the special case $m=0$, we shall write $w(p,q)$ rather than $0(p,q)$;\footnote{
Note that $w(p,q)$ is the weakest $n$-step extension of $p$  above $q$.}
\item\label{csize} For all $p\in \mathbb{P}$,
the set $W(p):=\{w(p,q)\mid q\le p\}$ has size $<\lambda$;
\item\label{itsaprojection} For all $p'\le p$ in $\mathbb{P}$, the map $w\colon W(p')\rightarrow W(p)$ defined by $q\mapsto w(p,q)$ is order-preserving;
\item\label{c6}(\emph{Complete Prikry Property})  Suppose that $U\s \mathbb{P}$ is a $0$-open set.\footnote{A set $U$ is \emph{$0$-open} whenever $r\in U$ iff $\mathbb{P}^r_0\s U$.}
Then, for all $p\in \mathbb{P}$ and $n<\omega$, there is $q\le^0 p$, such that, either $\mathbb{P}^{q}_n\cap U=\emptyset$ or $\mathbb{P}^{q}_n\s U$.
\end{enumerate}
\end{definition}

\begin{definition}[\cite{PartI}]\label{def: open coloring}
    Assume $(\mathbb{P},\ell)$ is a graded poset, $d\colon \mathbb{P} \rightarrow\theta$ is some coloring, with $\theta$ a nonzero
cardinal, and $H\subseteq P$. \begin{itemize}
    \item $d$ is $0$\emph{-open} if either $d(p)=0$ or $d(p)=d(q)$ for all $q\leq^*p$:  
    \item $H$ is a \emph{set of indiscernibles for} $d$ if for all $p,q\in H$, $$\text{$\ell(p)=\ell(q)\,\rightarrow\,d(p)=d(q)$.}$$
\end{itemize}
\end{definition}
\begin{lemma}[\cite{PartI}]\label{lem: indiscernibles}
    Assume $(\mathbb{P}, \ell)$ is  $\Sigma$-Prikry and let $p\in\mathbb{P}$.
    
    \begin{enumerate}
        \item\label{cones of indiscernibles} For  
every $0$-open coloring $d\colon \mathbb{P}\rightarrow n$, $n<\omega$,
there exists $q\leq^\ast p$ such that $\mathbb{P}_{\downarrow q}$ is a set
of indiscernibles for $d$;
\item\label{Prikry Property}\emph{(Prikry property)} If $\varphi$ is a formula in the forcing language of $\mathbb{P}$, there is a condition $q\leq^\ast p$ deciding $\varphi$;
\item\label{Strong Prikry Property}\emph{(Strong Prikry property)} If $D\subseteq\mathbb{P}$ is a dense open set there are $q\leq^\ast p$ and $n<\omega$ such that $\mathbb{P}^q_{\geq n}\subseteq D$.
    \end{enumerate}
\end{lemma}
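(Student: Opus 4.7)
The plan is to prove (1) first and then to derive (2) and (3) from it by designing an appropriate 0-open coloring in each case. This mirrors how Prikry and Strong Prikry-type properties classically descend from a fusion/indiscernibility lemma.

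For (1), fix a 0-open coloring $d\colon\mathbb{P}\to n$ and $p\in\mathbb{P}$. Observe first that for each nonzero color $i<n$, the preimage $U_i:=d^{-1}(i)$ is a 0-open \emph{set} in the sense of Definition~\ref{SigmaPrikry}(\ref{c6}): if $r\in U_i$ then $d(r)=i\neq 0$, so 0-openness of $d$ forces $d(q)=i$ for every $q\leq^{\ast} r$, hence $\mathbb{P}^{r}_{0}\subseteq U_i$; the converse is trivial. Now diagonalize the Complete Prikry Property across both $k<\omega$ and $i\in\{1,\dots,n-1\}$. For a fixed $k$, apply CPP successively with respect to $U_1,\dots,U_{n-1}$ to build a finite $\leq^{0}$-descending chain and extract a single $q_k\leq^{0}p$ such that for every such $i$ either $\mathbb{P}^{q_k}_k\subseteq U_i$ or $\mathbb{P}^{q_k}_k\cap U_i=\emptyset$. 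Since the $U_i$'s are pairwise disjoint, $d$ is constant on $\mathbb{P}^{q_k}_k$ (value either the unique captured $i$ or $0$). Finally, fuse the sequence $\langle q_k\mid k<\omega\rangle$ into a single direct extension $q\leq^{\ast}p$ by working inside a $\kappa_{\ell(p)}$-directed-closed dense subposet of $\mathbb{P}_{\ell(p)}$ afforded by axiom~(\ref{c2}); since each $q_k$ is a direct extension of $p$, the chain lives inside this dense subposet and admits a lower bound there. By construction $d$ is constant on each $\mathbb{P}^{q}_k$, so $\mathbb{P}_{\downarrow q}$ is a set of indiscernibles for $d$.

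The Prikry property (2) follows by defining $d(r)=1$, $d(r)=2$, or $d(r)=0$ accordingly as $r\Vdash\varphi$, $r\Vdash\neg\varphi$, or $r$ decides nothing; this $d$ is 0-open because forcing statements are inherited downward. Apply (1) to get $q\leq^{\ast}p$ with constant colors $d_k$ on each level $\mathbb{P}^{q}_k$. Density of deciding conditions below $q$ forces some $d_k\neq 0$. The $m(q,\cdot)$ machinery from axiom~(\ref{c5}) then propagates any nonzero $d_k$ upward: if $d_k\in\{1,2\}$ and $k'\geq k$, then every $r'\in\mathbb{P}^{q}_{k'}$ extends $m(q,r')\in\mathbb{P}^{q}_k$, which already forces the given answer, so $r'$ inherits the same decision and $d_{k'}=d_k$. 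In particular all nonzero $d_k$'s agree on a single value $\varepsilon\in\{1,2\}$, and any $r\leq q$ admits an extension $r'\leq r$ of sufficiently large level (by gradedness of $\mathbb{P}$) whose color is then $\varepsilon$; this shows $q$ itself decides $\varphi$. For the Strong Prikry property (3), set $d:=\chi_D$, which is 0-open since $D$ is open, apply (1) to obtain $q\leq^{\ast}p$ with $d$ constant on each level, let $n$ be least with $d_n=1$ (it exists by density of $D$), and invoke the same $m(q,\cdot)$ propagation together with openness of $D$ to conclude $\mathbb{P}^{q}_{\geq n}\subseteq D$.

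The main obstacle is the fusion step closing the proof of (1). The Complete Prikry Property delivers only a single-level refinement at a time, whereas what is needed is a simultaneous direct extension controlling every $k<\omega$. Axiom~(\ref{c2}) is exactly tailored to supply this: by passing to the $\kappa_{\ell(p)}$-directed-closed dense subposet of $\mathbb{P}_{\ell(p)}$, the countable $\leq^{\ast}$-descending chain $\langle q_k\mid k<\omega\rangle$ sits inside a directed-closed order and therefore admits a lower bound, which one verifies remains a direct extension of $p$ in the ambient $\mathbb{P}$. Everything else in the argument is essentially bookkeeping layered on top of this fusion.
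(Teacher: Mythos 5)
The paper does not prove this lemma (it is quoted from \cite{PartI}), so I am judging your argument on its own terms; its overall architecture — derive (1) by diagonalizing the Complete Prikry Property over the colour classes and the levels, then obtain (2) and (3) from (1) via the $0$-open colourings you describe — is the standard one and is essentially correct. The reductions in (2) and (3), including the upward propagation of a nonzero colour via Clause~(3) of Definition~\ref{SigmaPrikry}, all check out.

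The one step that is wrong as written is the fusion closing the proof of (1). Two points. First, the conditions $q_k$ must be produced \emph{recursively}, with $q_k\leq^{0}q_{k-1}$ obtained by applying the CPP at level $k$ below $q_{k-1}$ (homogeneity at earlier levels persists since $\mathbb{P}^{q_k}_j\subseteq\mathbb{P}^{q_{k-1}}_j$); as you first describe them, the $q_k$ are built independently below $p$ and need not form a chain, so directed-closure gives nothing. Second, and more importantly, your justification ``since each $q_k$ is a direct extension of $p$, the chain lives inside this dense subposet'' is false: being in $\mathbb{P}_{\ell(p)}$ does not place a condition in the dense $\kappa_{\ell(p)}$-directed-closed subposet $\mathbb{D}\subseteq\mathbb{P}_{\ell(p)}$, and a poset with a dense $\sigma$-closed subposet need not itself be $\sigma$-closed, so you cannot take the lower bound in $\mathbb{P}_{\ell(p)}$ directly, nor can you push an already-built chain into $\mathbb{D}$ after the fact. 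The repair is routine but must happen \emph{during} the recursion: at stage $k$, after the CPP gives $q_k'\leq^{0}q_{k-1}$, use density of $\mathbb{D}$ in $\mathbb{P}_{\ell(p)}$ (which is density with respect to $\leq^{\ast}$) to replace $q_k'$ by some $q_k\in\mathbb{D}$ with $q_k\leq^{0}q_k'$. Then $\langle q_k\mid k<\omega\rangle$ is a descending chain in $\mathbb{D}$, hence has a lower bound $q\in\mathbb{D}\subseteq\mathbb{P}_{\ell(p)}$, i.e.\ $q\leq^{\ast}p$, and homogeneity at every level survives the final direct extension. With this correction the argument is complete.
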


\subsection{Generalized Descriptive Set Theory}\label{sec: Generalized Descriptive Set Theory}
Classical Descriptive Set \linebreak Theory \cite{Kec} studies the properties of definable subsets of Polish spaces, with the real line \(\mathbb{R}\) and the Baire space ${}^\omega\omega$ serving as paradigmatic examples. A number of results in the area show that simply definable sets  (e.g., \emph{Borel} or \emph{analytic} sets) do posses a rich canonical structure theory. 
Instead, the emerging field of 
\emph{Generalize Descriptive Set theory} stems from the study of definable objects beyond the continuum. Specifically, this line of research employs descriptive set-theoretic tools to study definable sets in higher function spaces like $^\kappa 2$ and $^\kappa \kappa$ (see \cite{Friedman}). It turns out that when $\kappa$ is an infinite cardinal with $\cf(\kappa)=\omega$, there are a number of results that parallel the findings of classical descriptive set theory. 

\smallskip 

\begin{setup} \label{setup: GDST}
 For the sake of a uniform presentation  we will assume that $\kappa$ is a  cardinal with $\cf(\kappa)=\omega$ and $\beth_\kappa=\kappa$. Our base axiomatic theory is $\textsf{ZF}+\textsf{DC}_\kappa$.

\end{setup}

Recall that $\dc_\kappa$ stands for Levi's \emph{Axiom of $\kappa$-Dependent Choice}; namely, this is the assertion that for every non-empty set $X$ and every total relation $R\s{}^{<\kappa} X\times X$ (i.e., for every $s\in{}^{<\kappa} X$ there is $x\in X$ such that $(s,x)\in R$) there is $f\colon \kappa\rightarrow X$ such that $(f\restriction \alpha, f(\alpha))\in R$ for all $\alpha<\kappa.$

\smallskip

The first demand of Setup~\ref{setup: GDST} ensures that our forthcoming topological spaces will admit a metric compatible with their topology;   the second demand ensures that $V_{\kappa+1}$ and $\mathcal{P}(\kappa)$ are interchangeable; finally, the theory $\textsf{ZF}+\textsf{DC}_\kappa$  will grant that the classical  proofs go smoothly even in this generalized case.

\begin{definition}[{\cite{DimonteMotto}}]
    A topological space $\mathcal{X}$ is called \emph{$\kappa$-Polish} in case it is homeomorphic to a completely metrizable space with weight $\kappa$.
\end{definition}

\begin{conv}
    Given a non-empty set ${X}$ and an infinite cardinal $\lambda$, the set ${}^\lambda X$ of all functions from $\lambda$ to ${X}$ will be equipped with the so-called \emph{bounded topology}; namely, the topology whose basic open neighborhoods are $N_{\xi, s}:=\{x\in{}^\lambda X\mid x\restriction\xi=s\}$, where $\xi<\lambda$ and $s\in{}^\xi X.$ In case $\lambda=\omega$ this coincides with the product  of the discrete topologies in $X.$
\end{conv}

The following accounts for what can be considered a canonical \(\kappa\)-Polish space, playing a role analogous to the Baire space \({}^\omega\omega\) in CDST.

\begin{example}[Examples of canonical $\kappa$-Polish spaces]\label{example: kappa polish spaces}  \hfill
    \begin{enumerate}
        \item The \emph{Generalized Baire Space} ${}^\omega\kappa$. 
        \item The \emph{Generalized Cantor Space} ${}^\kappa 2$ 
        (this is homeomorphic to $^\omega \kappa$).
        \item Let $\Sigma=\langle \kappa_n\mid n<\omega\rangle$ be an increasing sequence of regular cardinals with $\kappa=\sup(\Sigma)$. The space  $\textstyle \prod_{n<\omega}\kappa_n:= \{\text{$x\in{}^\omega\kappa\mid$  $\forall n<\omega\;x(n)\in \kappa_n$}\}$ (also denoted $C(\Sigma)$) is closed in ${}^\omega\kappa$ and thus  $\kappa$-Polish. It is homeomorphic to $^\omega \kappa$.   
        \item  $\mathcal{P}(\kappa)$ is $\kappa$-Polish  when endowed with the topology whose basic open sets are  $N_{\eta,a}:=\{b\in \mathcal{P}(\kappa)\mid b\cap \eta=a\}$ for $\eta<\kappa$ and $a\s\eta.$
    \end{enumerate}
    $\mathcal{P}(\kappa)$ and ${}^\kappa 2$ are homeomorphic -- this can be seen by sending each $a\in \mathcal{P}(\kappa)$ to its characteristic function. Similarly, there is a homeomorphism between ${}^\omega \kappa$ and a closed subspace of $\mathcal{P}(\kappa)$ given  by  $x\mapsto \{\prec n, x(n)\succ\mid n<\omega\}$.\footnote{Here $\prec\cdot,\cdot\succ$ denotes the G\"odel pairing function.}
\end{example}
The classical descriptive hierarchy on a Polish space extends naturally to the new setting of $\kappa$-Polish spaces. In particular, one has the following:
\begin{definition}[{\cite{DimonteMotto}}]
    Let $\mathcal{X}$ be a $\kappa$-Polish space. A set $A\s \mathcal{X}$ is:
    \begin{itemize}
        \item \emph{$\kappa$-Borel}: If $A$ is in the $\kappa$-algebra generated by the open sets of $\mathcal{X}$.
        \item \emph{$\kappa$-analytic}: If $A$ is a continuous image of another $\kappa$-Polish space $\mathcal{Y}$ (equivalently, $A$ is a continuous image of ${}^\omega\kappa$).
         \item \emph{$\kappa$-coanalytic}: If $A=\mathcal{X}\setminus B$ for a  $\kappa$-analytic set $B\s \mathcal{X}$.
         \item \emph{$\kappa$-projective}: If $A$ is in the family that contains the $\kappa$-Borel sets and it is closed under complements and continuous images.
    \end{itemize}
\end{definition}
\begin{remark}
	If $\mathcal{X}$ is a definable subset of $V_{\kappa+1}$ (e.g., all the spaces in Example~\ref{example: kappa polish spaces}), it is routine to prove that this hierarchy is equivalent to a definability hierarchy. In particular, the $\kappa$-projective subsets of $\mathcal{X}$ are exactly those that are definable in $V_{\kappa+1}$ with parameters in $\mathcal{X}$.
\end{remark}

One of the main tenets of classical descriptive set theory are regularity properties, a way to show that low-level sets in the descriptive hierarchy are well-behaved. Two paradigmatic examples are the \emph{Perfect Set Property} and the \emph{Baire Property} which we will discuss  in the context of $\kappa$-Polish spaces.

\medskip

\textbf{The Perfect Set Property:} Recall that a set $A\s \mathbb{R}$ has the \emph{Perfect Set Property} ($\psp$) if either $A$ is countable or there is a \emph{perfect set} $P\s A$; that is, $P$ is closed and does not have isolated points.  

A map $\iota\colon \mathcal{X}\rightarrow\mathcal{Y}$ between topological spaces is called an \emph{embedding} if it is an homeomorphism between $\mathcal{X}$ and $\mathrm{ran}(\iota)$. A well-known theorem in Classical Descriptive Set Theory says that a set $A$ contains a perfect set if and only if it contains a Cantor set, i.e., a closed set homeomorphic to ${}^\omega 2$ (see \cite[Theorem 6.2]{Kec}). So $A\s \mathbb{R}$ has the $\psp$ iff either $A$ is countable or there is an embedding $\iota\colon{}^{\omega}2\rightarrow A$ closed-in-$\mathbb{R}$. This second equivalent definition is the one that is usually generalized to higher cardinalities:

\begin{definition}[$\kappa$-Perfect Set Property]
    Let $\mathcal{X}$ be $\kappa$-Polish. A set $A\s \mathcal{X}$ has the $\kappa$\emph{-Perfect Set Property} (briefly, $\kappa$\emph{-\textsf{PSP}}) if either $|A|\leq\kappa$, or there exists an embedding from $^\kappa 2$ to $A$ closed-in-$X$.
\end{definition}

The following has been proved in \cite{DimonteMotto}:
\begin{fact} \label{fact PSP}
    Let $\mathcal{X}$ be $\kappa$-Polish and a set $A\s \mathcal{X}$. The following are equivalent: 
    \begin{enumerate}
        \item \label{fact PSP-1} $A$ has the $\kappa$-$\psp$;
        \item \label{fact PSP-2} $|A|\leq \kappa$ or there is an embedding $\iota\colon \mathcal{C}\rightarrow \mathcal{X}$ with $\mathrm{ran}(\iota)\s A$, for $\mathcal{C}$ some (all) of the canonical $\kappa$-Polish spaces of Example~\ref{example: kappa polish spaces}. 
        \item \label{fact PSP-3} $|A|\leq \kappa$ or there is a continuous injection $\iota\colon \mathcal{C}\rightarrow \mathcal{X}$ with $\mathrm{ran}(\iota)\s A$, for $\mathcal{C}$ some (all) of the canonical $\kappa$-Polish spaces of Example~\ref{example: kappa polish spaces}. 
        \item \label{fact PSP-4} $A$ contains a $\kappa$-perfect set $P$, i.e., a set that is closed and such that if $x\in P$, for all open neighborhoods $U$ of $x$ we have $|P\cap U|\geq\kappa$.
    \end{enumerate}
\end{fact}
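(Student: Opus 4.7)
The plan is to prove the equivalence via the cycle $(1) \Rightarrow (2) \Rightarrow (3) \Rightarrow (4) \Rightarrow (1)$. The implications $(1) \Rightarrow (2)$ and $(2) \Rightarrow (3)$ are essentially bookkeeping: Example~\ref{example: kappa polish spaces} records that the canonical $\kappa$-Polish spaces are mutually homeomorphic (and ${}^\omega\kappa$ embeds as a closed subspace of $\mathcal{P}(\kappa)$), so pre-composing an embedding out of ${}^\kappa 2$ with the appropriate homeomorphism yields an embedding out of any $\mathcal{C}$ in the list, and every embedding is in particular a continuous injection. Hence the real content lies in $(3) \Rightarrow (4)$ and $(4) \Rightarrow (1)$, both of which I would handle by a Cantor-scheme construction of length $\kappa$.

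For $(4) \Rightarrow (1)$, fix a $\kappa$-perfect $P \subseteq A$ (witnessing $|A| > \kappa$) together with a complete metric $d$ compatible with the topology of $\mathcal{X}$. Using $\mathsf{DC}_\kappa$ I would recursively build a Cantor scheme $\{V_s \mid s \in {}^{<\kappa} 2\}$ of nonempty open subsets of $\mathcal{X}$ satisfying $\overline{V_{s \smallfrown i}} \subseteq V_s$ for $i \in \{0,1\}$, $\overline{V_{s \smallfrown 0}} \cap \overline{V_{s \smallfrown 1}} = \emptyset$, $V_s \cap P \neq \emptyset$, and the $d$-diameter of $V_s$ bounded by a fixed null sequence. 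At successor stages the $\kappa$-perfectness of $P$ supplies two distinct $P$-points inside $V_s$ that one separates by small basic neighborhoods. At limits of cofinality $\omega$, completeness of $d$ keeps the nested intersection nonempty; for limits of higher cofinality, one reduces to this case via a cofinal $\omega$-subsequence. The map sending $x \in {}^\kappa 2$ to the unique point of $\bigcap_{\alpha<\kappa} \overline{V_{x\restriction\alpha}}$ is then a continuous injection into $P \subseteq A$, and the disjointness and vanishing-diameter conditions force every Cauchy sequence in its range to come from a convergent sequence in ${}^\kappa 2$, so the range is closed in $\mathcal{X}$ and the map is an embedding, yielding $(1)$.

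For $(3) \Rightarrow (4)$, by the reductions above I may take $\mathcal{C} = {}^\kappa 2$ and fix a continuous injection $\iota\colon {}^\kappa 2 \rightarrow \mathcal{X}$ with $\mathrm{ran}(\iota) \subseteq A$. I would recursively construct a perfect subtree structure consisting of nonempty basic clopen sets $N_{t_s} \subseteq {}^\kappa 2$ indexed by $s \in {}^{<\kappa} 2$, where $t_{s \smallfrown 0}$ and $t_{s \smallfrown 1}$ are incomparable extensions of $t_s$, and such that the $\iota$-images of $N_{t_{s \smallfrown 0}}$ and $N_{t_{s \smallfrown 1}}$ lie inside disjoint open balls of $\mathcal{X}$ whose diameters vanish along the recursion. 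The splitting step is possible because $\iota$ is a continuous injection: any two distinct points of ${}^\kappa 2$ have disjoint small $\mathcal{X}$-neighborhoods, whose $\iota$-pullbacks refine the current $N_{t_s}$. The $\iota$-image of the body of the resulting tree is then a closed copy of ${}^\kappa 2$ inside $A$; since every nonempty basic open subset of ${}^\kappa 2$ has $2^\kappa$ branches, this closed copy is automatically a $\kappa$-perfect subset of $A$.

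The principal obstacle will be the management of limit stages $\alpha < \kappa$, where one must simultaneously preserve nonempty branches and the vanishing-diameter condition despite $\cf(\kappa) = \omega$. My plan is to fix at the outset an $\omega$-cofinal sequence in $\kappa$, enforce diameter control along it, and invoke completeness of $d$ together with $\mathsf{DC}_\kappa$ to propagate the construction through limits. The hypothesis $\beth_\kappa = \kappa$ from Setup~\ref{setup: GDST} keeps $|{}^{<\kappa} 2| = \kappa$, so the length of the recursion is tractable.
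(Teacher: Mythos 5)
The paper offers no proof of this statement: it is imported verbatim from \cite{DimonteMotto}. Evaluated on its own terms, your plan for the two substantive implications $(3)\Rightarrow(4)$ and $(4)\Rightarrow(1)$ has a genuine gap: a Cantor scheme of length $\kappa$ indexed by ${}^{<\kappa}2$ with binary splitting cannot be pushed through the limit levels. Since $\kappa$ is a $\beth$-fixed point, there are many limits $\delta<\kappa$ of uncountable cofinality; such a $\delta$ admits no cofinal $\omega$-subsequence, so your proposed reduction to the cofinality-$\omega$ case is vacuous. More importantly, at any limit $\delta<\kappa$ you face a dichotomy. If the diameters of the $V_{x\restriction\alpha}$ tend to $0$ as $\alpha\to\delta$, then $\bigcap_{\alpha<\delta}\overline{V_{x\restriction\alpha}}$ is at most a singleton and can no longer be split, yet $\kappa$-many further levels remain. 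If they do not tend to $0$, then Cantor's intersection theorem does not apply: a decreasing transfinite chain of nonempty closed sets of non-vanishing diameter in a complete metric space can have empty intersection (consider the tails of an infinite discrete space), so completeness alone does not keep the intersection nonempty; and even when it is nonempty there is no reason it should meet $P$ in $\geq\kappa$ points or contain a nonempty open set, so the recursion cannot continue. The same defect affects your $(3)\Rightarrow(4)$ tree of clopen sets $N_{t_s}$, $s\in{}^{<\kappa}2$.

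The proof in \cite{DimonteMotto} --- and the analogous construction carried out in this paper in Theorem~\ref{the main construction} --- uses $\cf(\kappa)=\omega$ to avoid limit stages altogether: fix an increasing cofinal sequence $\langle\nu_n\mid n<\omega\rangle$ and build a tree of height $\omega$ indexed by $\bigcup_{n<\omega}\prod_{i<n}\nu_i$, splitting each level-$n$ node into $\nu_n$-many pieces separated by pairwise disjoint open sets of diameter less than $2^{-n}$ (here $\kappa$-perfectness of $P$, resp.\ injectivity and continuity of $\iota$, supplies the $\nu_n$-many points to separate). The body of this tree is $\prod_{n<\omega}\nu_n$, which by Example~\ref{example: kappa polish spaces}(3) is homeomorphic to ${}^\kappa2$, so one still obtains an embedding of ${}^\kappa2$; the recursion has length $\omega$, the diameters genuinely vanish along every branch, and completeness yields convergence of branches and closedness of the image. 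Your reductions $(1)\Rightarrow(2)\Rightarrow(3)$ via the homeomorphisms recorded in Example~\ref{example: kappa polish spaces} are correct.
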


Note that in the second item we are not asking for $\mathrm{ran}(\iota)$ to be closed.

\begin{fact}[{\cite{DimonteMotto}}]
  $\kappa$-analytic sets in a $\kappa$-Polish space $\mathcal{X}$ have the $\kappa$-$\psp$.
\end{fact}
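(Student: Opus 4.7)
The plan is to adapt the classical Luzin--Suslin argument for analytic sets to the $\kappa$-Polish setting, working under $\zf+\dc_\kappa$. Write $A=f[{}^\omega\kappa]$ for a continuous map $f\colon{}^\omega\kappa\to\mathcal{X}$, and for $s\in{}^{<\omega}\kappa$ denote by $N_s$ the corresponding basic open of ${}^\omega\kappa$. By Fact~\ref{fact PSP}(3) it suffices to show that either $|A|\leq\kappa$, or there is a continuous injection $g\colon{}^\omega\kappa\to A$.

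First I would perform a $\kappa$-analogue of the Cantor--Bendixson derivative on ${}^\omega\kappa$. Recursively define closed sets $K_\xi\s{}^\omega\kappa$ by $K_0:={}^\omega\kappa$, $K_\eta:=\bigcap_{\xi<\eta}K_\xi$ at limits, and
\[
K_{\xi+1}:=K_\xi\setminus\bigcup\bigl\{N_s\mid s\in{}^{<\omega}\kappa,\ |f[N_s\cap K_\xi]|\leq\kappa\bigr\}.
\]
Since ${}^\omega\kappa$ has weight $\kappa$, this decreasing chain of closed sets stabilizes at some $\xi^*<\kappa^+$; let $K:=K_{\xi^*}$. Each successor step removes a union of at most $\kappa$ basic opens, each carrying an $f$-image of size $\leq\kappa$; summing across the ${<}\kappa^+$ stages and using $\kappa\cdot\kappa=\kappa$ with careful bookkeeping, the total $f$-image of ${}^\omega\kappa\setminus K$ has size $\leq\kappa$. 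In particular, if $K=\emptyset$ then $|A|\leq\kappa$ and we are done.

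Assume otherwise that $K\neq\emptyset$; then by construction $K$ is $\kappa$-\emph{perfect for $f$}, meaning $|f[N_s\cap K]|>\kappa$ whenever $N_s\cap K\neq\emptyset$. Fixing a complete metric $d$ on $\mathcal{X}$ compatible with its topology, I would next recursively build a Suslin scheme $\langle t_s\mid s\in{}^{<\omega}\kappa\rangle$ in ${}^{<\omega}\kappa$ such that $N_{t_s}\cap K\neq\emptyset$; $|t_{s^\frown\langle\alpha\rangle}|>|t_s|$; the closed sets $\overline{f[N_{t_{s^\frown\langle\alpha\rangle}}]}$ are pairwise disjoint for $\alpha<\kappa$; and $\mathrm{diam}_d(f[N_{t_s}])\leq 2^{-|s|}$. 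At each node the $\kappa$-perfectness of $K$ supplies $\kappa$-many pairwise distinct points in $f[N_{t_s}\cap K]$, which can be separated by disjoint closed $d$-balls and pulled back via continuity of $f$ to the desired basic open neighborhoods; the simultaneous $\kappa$-many choices are afforded by $\dc_\kappa$.

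For each $y\in{}^\omega\kappa$ the union $x(y):=\bigcup_{n<\omega}t_{y\restriction n}$ is a well-defined element of ${}^\omega\kappa$, and $g(y):=f(x(y))\in A$ defines a map $g\colon{}^\omega\kappa\to A$ which is continuous (by the $d$-diameter condition) and injective (by the pairwise disjointness of branches), completing the argument via Fact~\ref{fact PSP}(3). The hardest step will be the derivative stage: one must verify that the chain $\langle K_\xi\rangle$ stabilizes in fewer than $\kappa^+$ steps and that the discarded $f$-image accumulates to size $\leq\kappa$ without appealing to full $\ac$. The Suslin-scheme step is routine once $\kappa$-perfectness is in hand, and $\dc_\kappa$ suffices for the simultaneous selection of the separating opens at each node.
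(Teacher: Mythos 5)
The paper does not prove this statement: it is imported verbatim as a Fact from \cite{DimonteMotto}, so there is no internal proof to compare against and your sketch has to stand on its own. Its architecture is the right generalization of the classical Luzin--Suslin argument: prune ${}^\omega\kappa$ to a kernel $K$ over which every nonempty basic trace has $f$-image of size ${>}\kappa$, then run a Suslin scheme. The derivative stage, though more elaborate than necessary (a single pruning of the tree $\{s\mid |f[N_s]|>\kappa\}$ already works, since each basic open is discarded at most once), is sound: there are only $\kappa$-many basic opens, so the process stabilizes and the discarded image has size $\le\kappa$ by $\mathsf{AC}_\kappa$, which follows from $\mathsf{DC}_\kappa$.

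The genuine gap is the separation step. You assert that $\kappa$-many pairwise distinct points of $f[N_{t_s}\cap K]$ ``can be separated by disjoint closed $d$-balls.'' Mere distinctness never suffices for this, even for countably many points (a convergent sequence together with its limit cannot be ball-separated): one needs a subfamily of the prescribed size that is metrically discrete, i.e., each point at positive distance from the rest. Extracting it requires an argument you have omitted: from $|f[N_{t_s}\cap K]|>\kappa$ and the standing hypothesis $\beth_\kappa=\kappa$ one infers that this subspace has weight, hence spread, at least $\kappa$ (a metric space of weight $\mu<\kappa$ has at most $\mu^{\aleph_0}<\kappa$ points), and therefore has discrete subsets of every size ${<}\kappa$. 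But --- and this is the more serious problem --- since $\cf(\kappa)=\omega$ in the intended setting, the supremum need not be attained: a discrete subset of size exactly $\kappa$ may fail to exist, so your scheme indexed by ${}^{<\omega}\kappa$ with full $\kappa$-splitting at every level cannot in general be built. The standard repair, consistent with how the paper itself builds $\kappa$-perfect sets in Theorem~\ref{the main construction}, is to fix an increasing sequence $\langle\nu_n\mid n<\omega\rangle$ cofinal in $\kappa$ and split into only $\nu_{|s|}$-many pieces at level $|s|$; the resulting continuous injection has domain $\prod_{n<\omega}\nu_n$, which Fact~\ref{fact PSP}(\ref{fact PSP-3}) accepts as a witness of the $\kappa$-$\psp$. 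With that modification, and the discreteness extraction spelled out, your argument goes through.
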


If $\mathsf{AC}$ holds, there are always sets without the $\kappa$-$\psp$: Since the set of embeddings between ${}^{\omega}\kappa$ and $\mathcal{P}(\kappa)$ has cardinality $2^\kappa$, one can build a subset of $\mathcal{P}(\kappa)$ without the $\kappa$-$\psp$ via a standard recursive construction (see \cite[Proposition~11.4]{Kan}). By results of Shelah \cite{ShelahChoice}) if $\kappa$ is a strong limit singular of uncountable cofinality then $L(V_{\kappa+1})\models \ac$ and as a result there are  sets $X\in \mathcal{P}(\kappa)\cap L(V_{\kappa+1})$ without the $\kappa$-$\psp$.
In contrast, a theorem of Cramer \cite[Theorem 5.1]{Cramer} says that under axiom $I_0(\kappa)$ every set $X\in \mathcal{P}(\kappa)$ in $L(V_{\kappa+1})$ has the $\kappa$-$\psp.$ Similar results were also obtained by Shi \cite{Shi} and Woodin \cite{WoodinPartII}. In \S\ref{sec: applications} we will prove the consistency of this very configuration starting from much weaker large-cardinal hypothesis. Under axiom $I_0(\kappa)$, however, $L(V_{\kappa+1})$ manifests other desirable properties -- e.g., $(\kappa^+)^V$ is measurable in $L(V_{\kappa+1})$.

\smallskip

Many proofs in this paper take place in $L(V_{\kappa+1})$, the smallest model of $\zf$ that contains $V_{\kappa+1}$. Since $\beth_\kappa=\kappa$ is equivalent to $|V_\kappa|=\kappa$, 
the bijection that witnesses it is in $V_{\kappa+1}$ and therefore $L(V_{\kappa+1})\vDash\beth_\kappa=\kappa$. Moreover, $L(\mathcal{P}(\kappa))\vDash\mathsf{DC}_\kappa$ in the same way that $L(\mathbb{R})\vDash\mathsf{DC}$ (\cite[Lemma 4.10]{DimonteRankIntoRank}). Therefore,  $L(V_{\kappa+1})$ is a model for the configuration described in Setup~\ref{setup: GDST}.

It is standard to prove that if $\iota:{}^\omega\kappa\to{}^\omega\kappa$ is continuous, then it is uniquely defined by its behavior on ${}^{<\omega}\kappa$, just like every continuous function in $\mathbb{R}$ is defined by its behavior on $\mathbb{Q}$. Therefore $\iota$ is codeable by an element in $V_{\kappa+1}$. The same reasoning works if we swap one or both instances of ${}^\omega\kappa$ with any space in Example~\ref{example: kappa polish spaces}. As a result  the $\kappa$-$\psp$ is absolute between models that have the same $\mathcal{P}(\kappa)$, thus it is absolute between $L(V_{\kappa+1})$ and $V$.

\medskip

\textbf{The Baire Property:} Recall that a topological space $\mathcal{X}$ is a  \emph{Baire space} if every non-empty open subset of $\mathcal{X}$ is not meager. A set  $A\s \mathcal{X}$ has the \emph{Baire Property} ($\bp$, for short) in case there is an open set $O\s \mathcal{X}$ such that $A\triangle O$ is meager. In a similar fashion \cite{DimonteMottoShi}, one defines the notions of a $\emph{$\kappa$-Baire space}$ and $\kappa$-\emph{Baire Property} ($\kappa$-$\bp$) by replacing ``meager'' by $``\kappa$-meager'' (i.e., a union of $\kappa$-many nowhere dense sets).

\smallskip

While the notion of a $\kappa$-Baire space is meaningful for general topological spaces it does not fit well with the canonical $\kappa$-Polish topologies. Specifically, $C(\Sigma)$ (the natural extension of the Baire space ${}^\omega\omega$) is \textbf{not} a $\kappa$-Baire in its $\kappa$-Polish topology for it is the union of $\omega_1$-many nowhere dense sets \cite{DimonteMottoShi}.\footnote{For each $\alpha<\omega_1$, set $U_\alpha=\{x\in C(\Sigma)\mid\exists n<\omega\, x(n)=\alpha\}$ and note that $C(\Sigma)=\bigcup_{n<\omega_1}(C(\Sigma)\setminus U_\alpha)$. Since the $U_\alpha$'s are dense open, the conclusion follows.} To bypass this issue another route is outlined in \cite{DimonteMottoShi}. Namely, the authors consider $C(\Sigma)$ with a topology accessory to its natural product topology --  the $\vec{\mathcal{U}}$-\emph{Ellentuck-Prikry} topology ($\UEP$). While we will insist on $C(\Sigma)$ retaining its $\kappa$-Polish topology (in fact, $C(\Sigma)$ is not $\kappa$-Polish in the $\UEP$ topology) it is in the $\UEP$ topology where the $\kappa$-$\bp$ for subsets  $A\s C(\Sigma)$ will be formulated. This is because $C(\Sigma)$ is a $\kappa$-Baire space with respect to the accessory $\UEP$ topology \cite{DimonteMottoShi}.

\smallskip

The choice of the $\UEP$ topology is inspired by the fact that in Classical Descriptive Set Theory the product topology of the Baire space ${}^\omega\omega$ is homeomorphic to the topology of maximal filters on Cohen forcing.\footnote{This is the topology generated by the open sets  $N_p=\{F\in \beta\omega\mid \supp(p)\in F\}$ where $\supp(p)$ is the support of a condition $p\colon \omega\rightarrow 2$ in Cohen forcing $\mathrm{Add}(\omega,1).$}
The analogy in the singular case is provided by Magidor's \emph{Diagonal Prikry forcing}.

\smallskip

Hereafter we assume that $\vec{\mathcal{U}}=\langle \mathcal{U}_n\mid n<\omega\rangle$ are normal (non-principal) ultrafilters over each member $\kappa_n$ of an increasing sequence $\Sigma=\langle \kappa_n\mid n<\omega\rangle$. We shall set $\kappa:=\sup(\Sigma)$. Recall $C(\Sigma)$ denotes the $\kappa$-Polish space $\prod_{n<\omega}\kappa_n.$

\begin{definition}[Diagonal Prikry forcing (Magidor)]\label{def: diagonal prikry}
A condition in the \emph{Diagonal Prikry forcing with $\vec{\mathcal{U}}$} (in symbols, $\mathbb{P}(\vec{\mathcal{U}})$) is a sequence $$p=\langle \alpha^p_0,\dots, \alpha^p_{\ell(p)-1}, A^p_{\ell(p)}, A^p_{\ell(p)+1},\dots \rangle$$ where
$s^p=\langle \alpha^p_0,\dots, \alpha^p_{\ell(p)-1}\rangle\in \prod_{n<\ell(p)}\kappa_n$ is strictly increasing,  $A^p_n$ belongs to $\mathcal{U}_n$ for $n\geq \ell(p)$ and every $\beta\in A_{n+1}^p$ is bigger than $\kappa_n$. 

Given conditions $p,q\in \mathbb{P}(\mathcal{U})$ write $p\leq q$ if $s^p$ end-extends $s^q$ (in symbols,  $s^p\sq s^q$), $s^p(n)\in A^q_n$ for $n\in [\ell(q),\ell(p))$ and $A^p_n\s A^q_n$ for all $n\geq \ell(p).$
\end{definition}
\begin{definition}[The $\UEP$ topology]\label{def: ellentuck prikry}\hfill
   \begin{enumerate}
    \item For each $x\in \prod_{n<\omega}\kappa_n$ we consider the  filter $\mathcal{F}_x\s \mathbb{P}({\Vec{\mathcal{U}}})$ defined as
    $$\mathcal{F}_x:=\{p\in \mathbb{P}({\Vec{\mathcal{U}}})\mid s^p\sqsubseteq x\,\wedge\,\forall n\geq \ell(p)\, (x(n)\in A^p_n)\}$$
and we say that \emph{$x$ is $\mathbb{P}(\vec{\mathcal{U}})$-generic}  if $\mathcal{F}_x$ is a $\mathbb{P}(\vec{\mathcal{U}})$-generic filter.  
       \item For each condition $p\in \mathbb{P}({\Vec{\mathcal{U}}})$, define $$\textstyle N_p:=\{x\in\prod_{n<\omega}\kappa_n\mid p\in\mathcal{F}_x\}.$$ 
       \item  The \emph{$\Vec{\mathcal{U}}$-Ellentuck-Prikry}  ($\UEP$) topology $\mathcal{T}_{\UEP}$ is the topology which has  $\{N_p\mid p\in\mathbb{P}({\Vec{\mathcal{U}}})\}$ as  basic open sets. 
   \end{enumerate}
\end{definition}

\begin{definition}[$\Vec{\mathcal{U}}$-Baire Property]\label{def: kappa baire}
   A set $A\subseteq C(\Sigma)$ has the \emph{$\Vec{\mathcal{U}}$-Baire Property} (in short, $\UBP$) if it has the $\kappa$-$\bp$ as a subset of the topological space $(C(\Sigma), \mathcal{T}_{\UEP})$. More verbosely, if  there is a $\UEP$-open set $O\subseteq C(\Sigma)$ such that $A\Delta O$ is $\kappa$-meager in the $\UEP$ topology. 
\end{definition}

This definition generalizes many classical results, like the Mycielski and the Kuratowski-Ulam theorems (\cite{Kec}), and it is a  regularity property:

\begin{fact}\hfill
\begin{enumerate}
      \item $($\cite{DimonteMottoShi}$)$  $(C(\Sigma), \mathcal{T}_{\UEP})$ is a $\kappa$-Baire space.  
    \item $($\cite{DimonteMottoShi}$)$  All the $\kappa$-analytic subsets of $\prod_{n<\omega}\kappa_n$ have the $\Vec{\mathcal{U}}$-$\bp$.
    \item $(${\cite{DimonteIannella}}$)$  There exists a subset of $\prod_{n<\omega}\kappa_n$ without the $\vec{\mathcal{U}}$-$\bp$.
\end{enumerate}
\end{fact}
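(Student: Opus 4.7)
The plan is to argue as in the classical proof that the Ellentuck topology on $[\omega]^\omega$ is a Baire space, adapting the fusion argument to $\mathbb{P}(\vec{\mathcal{U}})$. Fix a basic open $N_p$ and a sequence $\langle D_\alpha\mid\alpha<\kappa\rangle$ of $\UEP$-dense open subsets of $C(\Sigma)$; I want to produce $x\in N_p\cap\bigcap_{\alpha<\kappa}D_\alpha$. Working recursively, I will build a $\leq$-decreasing sequence $\langle p_\alpha\mid\alpha<\kappa\rangle$ with $p_0=p$ such that the stems $s^{p_\alpha}$ stabilize in a limit sequence $x\in\prod_n\kappa_n$, and such that $N_{p_\alpha}\subseteq D_\beta$ for every $\beta<\alpha$ (above some threshold). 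The existence of $p_\alpha$ at successor stages is where the combinatorics of $\vec{\mathcal{U}}$ enter: density of $D_\alpha$ gives a basic $N_q\subseteq D_\alpha\cap N_{p_\alpha}$, and a standard diagonal intersection of measure-one sets together with the normality of the $\mathcal{U}_n$'s (to handle the ``long'' tail past the stem) yields a condition with the same stem as $p_\alpha$ whose $\UEP$-neighborhood is contained in $D_\alpha$. At limit stages $\delta<\kappa$ one uses that the stems are end-extending and that $\bigcap_{\alpha<\delta}A^{p_\alpha}_n\in\mathcal{U}_n$ (by $\kappa_n$-completeness, valid since $\delta<\kappa_n$ eventually). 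The resulting $x$ lies in $N_p\cap\bigcap_\alpha D_\alpha$, whence $N_p$ cannot be $\kappa$-meager, proving $\kappa$-Baireness.

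\textbf{Proposal for (2).} I would follow the Marczewski--Nikodym pattern: show that the family $\mathcal{B}_{\vec{\mathcal{U}}}$ of subsets of $C(\Sigma)$ with the $\UBP$ is a $\kappa$-algebra containing the $\UEP$-open sets, and that it is closed under the Suslin operation $\mathcal{A}$. From (1) it is routine to check that $A\in\mathcal{B}_{\vec{\mathcal{U}}}$ iff $A=O\triangle M$ for some $\UEP$-open $O$ and $\UEP$-$\kappa$-meager $M$, and this characterization gives closure under complements and $\kappa$-unions in the standard way (using that a $\kappa$-union of $\kappa$-meager sets is $\kappa$-meager). For closure under the Suslin operation, given $A=\bigcup_{x\in{}^\omega\kappa}\bigcap_{n<\omega}F_{x\restriction n}$ with each $F_s\in\mathcal{B}_{\vec{\mathcal{U}}}$, I define the ``operation kernels'' $A_s:=\bigcup_{x\supseteq s}\bigcap_n F_{x\restriction n}$, produce $\UEP$-open approximations $O_s$ with $A_s\triangle O_s$ $\kappa$-meager, and verify that $A\triangle O_\emptyset$ is a $\kappa$-union of $\kappa$-meager sets: the Baireness from (1) is what makes this last verification work. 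Since the $\kappa$-Polish topology on $C(\Sigma)$ is finer than the $\UEP$-topology on its continuous-image skeleton (open base sets $N_{\xi,s}$ are $\UEP$-open), every $\kappa$-analytic set is in the $\sigma$-algebra generated by closed sets under $\kappa$-unions and the Suslin operation, hence lies in $\mathcal{B}_{\vec{\mathcal{U}}}$.

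\textbf{Proposal for (3).} This is a Bernstein-type construction under $\ac$. Enumerate all pairs $\langle P_\alpha,Q_\alpha\rangle$ where $P_\alpha$ ranges over the $\UEP$-closed sets which are \emph{not} $\UEP$-$\kappa$-meager; there are at most $2^\kappa$ such objects, and by (1) each such $P_\alpha$ has cardinality $2^\kappa$. Recursively pick distinct points $x_\alpha,y_\alpha\in P_\alpha\setminus\{x_\beta,y_\beta\mid\beta<\alpha\}$, which is possible since the already-chosen set has size ${<}2^\kappa$ while $P_\alpha$ has size $2^\kappa$. Set $B:=\{x_\alpha\mid\alpha<2^\kappa\}$. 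If $B$ had the $\UBP$, either $B$ or its complement would contain a non-$\kappa$-meager $\UEP$-Borel set, hence (by (1) and a regularity of $\UEP$-Borel sets) a basic $N_p$ modulo a $\kappa$-meager set, and in particular a non-$\kappa$-meager $\UEP$-closed set $P_\alpha$; but by construction $P_\alpha$ meets both $B$ and its complement, contradiction.

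\textbf{Main obstacle.} The nontrivial step throughout is (1): the Baireness of $(C(\Sigma),\mathcal{T}_{\UEP})$ is what drives both (2) and (3). The delicate point is ensuring that the fusion produces a genuine element of $C(\Sigma)$ at the end, which requires combining the Prikry-property-style stem stabilization with the $\kappa_n$-completeness of each $\mathcal{U}_n$ to keep the measure-one ``tail'' components non-empty through all $\kappa$-many steps; this is where the normality of the $\mathcal{U}_n$'s (and the choice of $\mathbb{P}(\vec{\mathcal{U}})$ rather than an arbitrary Prikry-type forcing) is used essentially.
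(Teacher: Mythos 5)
This Fact is quoted from \cite{DimonteMottoShi} and \cite{DimonteIannella}; the paper supplies no proof of its own, so I can only assess your sketches on their merits. For (1) your plan is the right one, but the step you dismiss as ``a standard diagonal intersection'' is the actual crux: a single basic $N_q\subseteq D_\alpha\cap N_{p_\alpha}$ only controls the branches through one finite stem, whereas producing a \emph{direct} extension $q\leq^* p_\alpha$ (same stem) with $N_q\subseteq D_\alpha$ requires the Galvin--Prikry/Ellentuck-type argument for $\mathbb{P}(\vec{\mathcal{U}})$ (equivalently, its Complete Prikry Property): one first shrinks so that every finite stem-extension decides whether its neighborhood sits inside $D_\alpha$, and only then diagonalizes over all such extensions. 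Also, your limit-stage remark that ``$\delta<\kappa_n$ eventually'' hides a scheduling constraint that must be built into the recursion: the stem has to pass coordinate $n$ before $\kappa_n$-many stages have elapsed, so that at every limit one only ever intersects fewer than $\kappa_n$ sets of $\mathcal{U}_n$. With that arranged, (1) goes through, and the Marczewski--Nikodym route for (2) is viable modulo checking the $\kappa$-Banach category theorem for $\mathcal{T}_{\UEP}$ and the fact that product-basic open sets are $\UEP$-open only modulo the nowhere dense set of points lying in no $N_p$.

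Your proposal for (3) has a genuine gap. The Bernstein recursion must diagonalize against enough closed sets to catch, for \emph{every} $p$ and every $\kappa$-meager $M$, some non-meager closed subset of $N_p\setminus M$, and it must do so in order type at most $2^\kappa$ so that fewer than $|P_\alpha|=2^\kappa$ points have been consumed at each stage. But $(C(\Sigma),\mathcal{T}_{\UEP})$ is nowhere near second countable: it has $|\mathbb{P}(\vec{\mathcal{U}})|=2^\kappa$ basic open sets, hence up to $2^{2^\kappa}$ open (and closed) sets, and the comeager filter likewise has size up to $2^{2^\kappa}$. Your count ``there are at most $2^\kappa$ such objects'' is therefore unjustified, and there is no evident cofinal (under $\supseteq$) family of size $2^\kappa$ among the non-meager closed sets to enumerate instead. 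This is precisely where the classical Bernstein argument leans on second countability, and it does not transfer; a different idea is needed (for instance, a zero--one law for tail sets in the $\UEP$ topology together with a choice construction of a finite-modification-invariant set that is neither $\kappa$-meager nor $\kappa$-comeager), which is why the paper defers to \cite{DimonteIannella} rather than to a routine Bernstein construction.
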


There is a natural way to construct a $\kappa$-comeager set in the  $\UEP$ topology. This is again inspired by the classical example of the Baire space ${}^\omega \omega$ and Cohen forcing $\mathrm{Add}(\omega,1)$. Specifically, if $M$ is an inner model of $\zfc$ and the cardinality of its continuum $(2^{\aleph_0})^M$ is countable (in $V$) then the set of Cohen reals over $M$ is comeager in ${}^\omega\omega$ (see e.g., \cite[Lemma~8.10]{Schindler}).

In the singular case the construction of a $\kappa$-comeager subset of the product space $C(\Sigma)$ appeals to the \emph{Mathias criterion of genericty} of $\mathbb{P}(\vec{\mathcal{U}})$. 

\begin{fact}[Mathias criterion of genericity]
    Let $M\s N$ be inner models of $\zfc$ with $\mathbb{P}(\vec{\mathcal{U}})\in M$. A sequence $x\in (\prod_{n<\omega}\kappa_n)^N$ is $\mathbb{P}(\vec{\mathcal{U}})$-generic over $M$ if and only if for each sequence $\langle A_n\mid n<\omega\rangle\in (\prod_{n<\omega}\mathcal{U}_n)^M$ there is $n^*<\omega$ such that $x(n)\in A_n$ for all $n\geq n^*$.
\end{fact}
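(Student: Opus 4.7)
The plan is to derive both directions from the strong Prikry property of $\mathbb{P}(\vec{\mathcal{U}})$ (valid since $\mathbb{P}(\vec{\mathcal{U}})$ is $\Sigma$-Prikry, cf.\ Lemma~\ref{lem: indiscernibles}(3)) together with the $\kappa_n$-completeness of each $\mathcal{U}_n$. For the $(\Rightarrow)$ direction, given $\vec{A}=\langle A_n\mid n<\omega\rangle\in M$ with $A_n\in\mathcal{U}_n$, I would show that
\[
D_{\vec A}\;:=\;\{p\in\mathbb{P}(\vec{\mathcal{U}})\mid \forall n\geq \ell(p)\,(A^p_n\subseteq A_n)\}
\]
is dense in $M$: any $q$ is extended by the condition obtained from $q$ by replacing each top $A^q_n$ (for $n\geq\ell(q)$) with $A^q_n\cap A_n\in\mathcal{U}_n$. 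Genericity then yields some $p\in D_{\vec A}\cap\mathcal{F}_x$ witnessing $x(n)\in A^p_n\subseteq A_n$ for every $n\geq \ell(p)$.

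The substance lies in $(\Leftarrow)$. Assuming the tail condition, and given a dense open $D\in M$, I want to locate some $q'\in \mathcal{F}_x\cap D$. In $M$, for each strictly increasing finite sequence $s$ with $s(i)<\kappa_i$ (a \emph{potential stem}), I would apply the strong Prikry property to $p_s:=\langle s,\kappa_{|s|},\kappa_{|s|+1},\dots\rangle$ to extract a direct extension $q_s=\langle s,A^s_{|s|},A^s_{|s|+1},\dots\rangle\leq^\ast p_s$ and an $n_s<\omega$ with $\mathbb{P}^{q_s}_{\geq n_s}\subseteq D$. I then diagonalize by setting
\[
B_n\;:=\;\bigcap\{A^s_n\mid |s|\leq n\}\qquad (n<\omega).
\]
The number of potential stems of length $\leq n$ is bounded by $\kappa_{n-1}^n<\kappa_n$ (using the inaccessibility of $\kappa_n$, a consequence of its measurability), so $\kappa_n$-completeness of $\mathcal{U}_n$ gives $B_n\in\mathcal{U}_n$.

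By the tail hypothesis on $x$ applied to $\langle B_n\rangle\in M$, there is $n^*$ with $x(n)\in B_n$ for every $n\geq n^*$. Fixing any $k\geq n^*$ and setting $s:=x\restriction k$, for each $i\geq k$ we have $|s|=k\leq i$, whence $x(i)\in B_i\subseteq A^s_i$. Therefore
\[
q'\;:=\;\langle x\restriction(k+n_s),\,A^s_{k+n_s},A^s_{k+n_s+1},\dots\rangle
\]
is a legitimate $n_s$-step extension of $q_s$, so $q'\in D$; by construction $s^{q'}\sqsubseteq x$ and $x(i)\in A^{q'}_i$ for $i\geq \ell(q')$, so $q'\in \mathcal{F}_x$ as well. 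The main obstacle is the diagonalization step: controlling the number of potential stems of length $\leq n$ so that $\kappa_n$-completeness produces a single sequence $\langle B_n\rangle$ that simultaneously governs every relevant stem, thereby letting the tail hypothesis on $x$ reach each $A^s_n$ at once.
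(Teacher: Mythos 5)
The paper records this Fact without proof (it is the classical Mathias-style criterion for diagonal Prikry forcing), so there is no argument of the paper's to compare yours against; what you propose is the standard proof and it is essentially correct. The forward direction via density of $D_{\vec A}$ is fine, and in the converse the diagonalization is exactly the classical device: there are at most $\kappa_{n-1}<\kappa_n$ potential stems of length $\le n$, so $\kappa_n$-completeness keeps $B_n\in\mathcal{U}_n$, the construction lives in $M$, and the tail hypothesis then hands you $q'\in\mathcal{F}_x\cap D$. One point deserves explicit attention: when you set $s:=x\restriction k$ you are tacitly using that $s$ is a legitimate stem, i.e.\ strictly increasing. The tail hypothesis does force $x$ to be \emph{eventually} increasing (apply it to $A_n:=\kappa_n\setminus(\kappa_{n-1}+1)\in\mathcal{U}_n$, which gives $x(m)<\kappa_m\le\kappa_{n-1}<x(n)$ for all $m<n$ and all sufficiently large $n$), but it says nothing about the first few coordinates; if, say, $x(0)\ge x(1)$, then $\mathcal{F}_x$ contains no condition of length $\ge 2$ and cannot be generic, even though the tail condition may hold. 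So the ``if'' direction implicitly assumes $x$ strictly increasing (as every genuine generic sequence is) --- this is an imprecision in the statement of the Fact rather than in your argument, but your write-up should either record that hypothesis or fold the sets $\kappa_n\setminus(\kappa_{n-1}+1)$ into the $B_n$'s and note separately that the finitely many initial coordinates must form an increasing sequence for genericity to be possible at all. Two cosmetic remarks: the ``full'' condition $p_s$ should carry the sets $\kappa_n\setminus(\kappa_{n-1}+1)$ rather than $\kappa_n$, so as to meet the requirement that every element of $A^{p}_{n+1}$ exceed $\kappa_n$; and one should observe (as is routine) that $\mathcal{F}_x$ is always a filter, so that genericity indeed reduces to meeting every dense $D\in M$.
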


The next lemma will be crucial in our verification of the $\vec{\mathcal{U}}$-$\bp$ in \S\ref{sec: applications}:
\begin{lemma}\label{lemma: C is comeager}
    Let $\mathbb{P}$ be a forcing such that $V[G]\models``|\mathcal{P}(\kappa)^{{V}}|<(\kappa^+)$" for certain $\mathbb{P}$-generic filter $G$. Working in $V[G]$ define $$\textstyle C:=\{x\in C(\Sigma)^{V[G]}\mid x \text{ is }\mathbb{P}({\Vec{\mathcal{U}}})\text{-generic over } V\}.$$ Then, $C$ is $\kappa$-comeager in the $\UEP$-topology.
\end{lemma}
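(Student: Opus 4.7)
\medskip
\noindent\textbf{Proof proposal.} The plan is to identify $C(\Sigma)^{V[G]}\setminus C$ explicitly, using the Mathias criterion of genericity, as a union of few sets which are each nowhere dense in the $\UEP$-topology. By Mathias' criterion, a point $x\in C(\Sigma)^{V[G]}$ fails to be $\mathbb{P}(\vec{\mathcal{U}})$-generic over $V$ precisely when there exists a sequence $\vec A=\langle A_n\mid n<\omega\rangle\in (\prod_{n<\omega}\mathcal{U}_n)^V$ such that $\{n<\omega\mid x(n)\notin A_n\}$ is infinite. Hence, setting
\[
D_{\vec A}:=\{x\in C(\Sigma)\mid \{n<\omega\mid x(n)\notin A_n\}\text{ is infinite}\},
\]
we obtain the decomposition $C(\Sigma)^{V[G]}\setminus C=\bigcup_{\vec A\in (\prod_n\mathcal{U}_n)^V} D_{\vec A}.$

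\medskip

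The next step is to count these sets. Each sequence $\vec A$ can be coded by an element of $\mathcal{P}(\kappa)^V$ (using a fixed bijection between $\kappa$ and $\omega\times\kappa$ in $V$, which exists by Setup~\ref{setup: GDST} since $\beth_\kappa=\kappa$). Thus the index set has cardinality at most $|\mathcal{P}(\kappa)^V|^{V[G]}$, which by hypothesis is strictly less than $\kappa^+$, hence $\leq\kappa$. So $C(\Sigma)^{V[G]}\setminus C$ is a union of at most $\kappa$ many sets of the form $D_{\vec A}$.

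\medskip

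The core step is to show that each $D_{\vec A}$ is nowhere dense in $\mathcal{T}_{\UEP}$, which immediately yields that $C(\Sigma)^{V[G]}\setminus C$ is $\kappa$-meager and hence $C$ is $\kappa$-comeager. Since the sets $N_p$ form a basis, it suffices to exhibit, for every $p\in\mathbb{P}(\vec{\mathcal{U}})$, a basic open $N_{p'}\subseteq N_p$ disjoint from $D_{\vec A}$. Given $p$, simply define
\[
p':=\langle \alpha_0^p,\dots,\alpha_{\ell(p)-1}^p, A^p_{\ell(p)}\cap A_{\ell(p)},\, A^p_{\ell(p)+1}\cap A_{\ell(p)+1},\dots\rangle.
\]
Each $A^p_n\cap A_n$ lies in $\mathcal{U}_n$ (as an intersection of two members of the ultrafilter), and since $A^p_{n+1}\cap A_{n+1}\subseteq A^p_{n+1}$, the largeness-above-$\kappa_n$ condition is preserved. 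Hence $p'\in\mathbb{P}(\vec{\mathcal{U}})$, $p'\leq p$ and $N_{p'}\subseteq N_p$. For every $x\in N_{p'}$ one has $x(n)\in A^p_n\cap A_n\subseteq A_n$ for all $n\geq\ell(p)$, so $\{n<\omega\mid x(n)\notin A_n\}\subseteq\ell(p)$ is finite; therefore $N_{p'}\cap D_{\vec A}=\emptyset$, as required.

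\medskip

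I do not foresee any genuine obstacle beyond checking that the coding-and-counting step goes through with the given hypothesis $|\mathcal{P}(\kappa)^V|^{V[G]}<\kappa^+$: the intersection-of-ultrafilter-sets trick produces the disjoint refinement for free, and Mathias' criterion hands us the right description of the non-generic points. The only conceptual subtlety is that the sequences $\vec A$ must be chosen in $V$ (not $V[G]$), which is exactly what Mathias' criterion provides.
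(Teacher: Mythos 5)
Your proof is correct and is essentially the paper's argument in dual form: the paper shows each $O(\vec A):=C(\Sigma)\setminus D_{\vec A}$ is dense open and that $C$ contains $\bigcap_{\vec A}O(\vec A)$, while you show each $D_{\vec A}$ is nowhere dense and that $C(\Sigma)\setminus C=\bigcup_{\vec A}D_{\vec A}$; the counting step via $|\mathcal{P}(\kappa)^V|^{V[G]}\leq\kappa$ is identical. Your intersection-of-measure-one-sets refinement $p'$ is exactly the (implicit) density argument in the paper, just spelled out.
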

    \begin{proof}
         
         For each $\Vec{A}=\langle A_n\mid n<\omega\rangle\in(\prod_{n<\omega}\mathcal{U}_n)^V$, set $$\textstyle O({\Vec{A}}):=\{x\in C(\Sigma)^{V}\mid \exists n<\omega\,\forall m\geq n\ (x_n\in A_n)\}.$$ 
         The set $O(\vec{A})$ is dense and open in the $\UEP$ topology (as computed in $V$). 
         
         Moreover, appealing to the Mathias criterion for genericity, we have that  $$\textstyle \bigcap_{\Vec{A}\in(\prod_{n<\omega}\mathcal{U}_n)^V} O({\Vec{A}})\subseteq C.$$ 
        Our assumption, $``V[G]\models |\mathcal{P}(\kappa)^V|<(\kappa^+)$'' implies that  
         $(\prod_{n<\omega}\mathcal{U}_n)^V$ has cardinality $\kappa$ in $V[G]$ and from this we infer that  
         $C$ contains the intersection of $\kappa$-many $\UEP$-open dense subsets of $C(\Sigma)^{V[G]}$, which amounts to saying that $C$ is $\kappa$-comeager.
    \end{proof}

\section{The $\Sigma$-Prikry tool box}\label{sec: Sigma Prikry Tool Box}
This section presents our abstract $\Sigma$-Prikry-based framework to get the consistency of the  $\kappa$-$\psp$ and $\UBP$  for a singular strong limit cardinal $\kappa$ of countable cofinality.\footnote{Results of Shelah \cite{ShelahChoice} reveal that the configurations obtained here cannot be generalized to singular cardinals of uncountable cofinality.  } 
The consistency of these configurations will be established in \S\ref{sec: merimovich} and \S\ref{sec: alternative AIM} as an application of this technology.

\subsection{Two general lemmas}\label{s: general lemmas} Let us begin proving two general lemmas:
\begin{lemma}\label{functions in the intermediate model}
Let $\kappa$ be an infinite (possibly singular) cardinal,  $\mathbb{P}$ a poset not adding bounded subsets of  $\kappa$, $\tau$  a $\mathbb{P}$-name for a function in $^\omega\kappa$ and $p$ a condition in $\mathbb{P}$ such that $p\Vdash_{\mathbb{P}}\tau\colon \Check{\omega}\rightarrow\Check{\kappa}$. 

Consider the set of possible decisions about $\tau$ made by conditions below $p$:
    $$\Delta(p):=\{\xi<\kappa\mid\exists q\in\mathbb{P}\big(\exists m<\omega(q\leq p\,\wedge\,q\Vdash_{\mathbb{P}}\tau(\Check{m})=\Check{\xi})\big)\}.$$
 If $\Delta(p)$ has cardinality ${<}\kappa$ then $p\forces_{\mathbb{P}}\tau\in V$. 
\end{lemma}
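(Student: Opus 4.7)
My plan is to reduce $\tau$ to a function taking values in a set bounded in $\kappa$, and then use the hypothesis that $\mathbb{P}$ adds no bounded subsets of $\kappa$.

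Working in $V$, set $\mu:=|\Delta(p)|$, so $\mu<\kappa$, and fix a bijection $\pi\colon\mu\to\Delta(p)$. Let $G$ be a $\mathbb{P}$-generic filter with $p\in G$. The first step is to observe that $\mathrm{ran}(\tau_G)\subseteq\Delta(p)$: for every $m<\omega$ there is some condition $q\in G$ deciding $\tau(\check m)$, and by intersecting with $p$ we may take $q\leq p$, which witnesses $\tau_G(m)\in\Delta(p)$.

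Consequently, inside $V[G]$ we can define $f\colon\omega\to\mu$ by $f:=\pi^{-1}\circ\tau_G$. The key observation is that $f$ is coded by a bounded subset of $\kappa$: since $\mu<\kappa$ and $\kappa$ is a cardinal, the ordinal $\mu\cdot\omega$ has cardinality $<\kappa$ and hence is an ordinal below $\kappa$, so the natural coding of $f$ (e.g.\ $\{\mu\cdot m+f(m)\mid m<\omega\}$) is a subset of some ordinal $\eta<\kappa$. By our hypothesis on $\mathbb{P}$, this coded set, and therefore $f$ itself, belongs to $V$. But then $\tau_G=\pi\circ f$ is the composition of two elements of $V$, hence lies in $V$.

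Since this conclusion holds in every $\mathbb{P}$-generic extension $V[G]$ with $p\in G$, the forcing theorem yields $p\Vdash_{\mathbb{P}}\tau\in V$. The only potentially delicate point is the coding step, which is where the assumption that $\kappa$ is a \emph{cardinal} (rather than an arbitrary ordinal) is used: it guarantees that any subset of cardinality $<\kappa$ is bounded as a set of ordinals below $\kappa$ after a harmless reindexing.
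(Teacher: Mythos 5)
Your proof is correct and follows essentially the same route as the paper's: both encode $\tau_G$ through a bijection fixed in $V$ onto a set of ordinals bounded below $\kappa$, invoke the hypothesis that $\mathbb{P}$ adds no bounded subsets of $\kappa$, and then decode inside $V$ — the paper works with the set $B_p=\{\langle m,\xi\rangle\mid\exists q\leq p\,(q\Vdash\tau(\check m)=\check\xi)\}$ and a bijection $B_p\to\eta<\kappa$, while you code the graph of $\pi^{-1}\circ\tau_G$ via ordinal arithmetic, a purely cosmetic difference. (Both arguments tacitly use $\kappa>\omega$ when concluding that the coded set is bounded, which is harmless in all of the paper's applications.)
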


\begin{proof}
Let $B_p=\{\langle m,\xi\rangle\in\omega\times\kappa\mid\exists q\in\mathbb{P}(q\leq p\wedge q\Vdash_{\mathbb{P}}\tau(\Check{m})=\Check{\xi})\}$. As $|\Delta(p)|<\kappa$, it follows that $|B_p|<\kappa$. Pick $\eta<\kappa$ such that $|B_p|=\eta$ and let $f\colon B_p\to \eta$ be a bijection in $V$ witnessing it. Let $h$ be $\mathbb{P}$-generic  with $p\in h$. In $V[h]$, $\tau_h\subseteq B_p$ and $f``\tau_h\subseteq\eta$. So $f``\tau_h$ is a bounded subset of $\kappa$ in $V[h]$. But $\mathbb{P}$ does not add bounded subsets to $\kappa$ 
so $f``\tau_h\in V$. Finally note that $\tau_h$ is definable from $f$ and $f``\tau_h$ as $\tau_h=\{\langle m,\xi\rangle\in\omega\times\kappa\mid f(\langle m,\xi\rangle)\in f``\tau_h\}$. 
\end{proof}

\begin{lemma}\label{lemma: transitive closure}
Suppose that $\mathbb{P}$ is a forcing poset and that $\kappa$ and $\lambda$  are cardinals such that $p\forces_{\mathbb{P}}`` (\kappa^+)^{V[\dot{G}]}=\lambda$'' for a condition $p\in\mathbb{P}.$ 
Then, $$p\forces_{\mathbb{P}}(H_\lambda)^{V[\dot{G}]}\subseteq L(V[\dot{G}]_{\kappa+1}).$$ In particular, if there is a weak projection $\pi\colon \mathbb{P}\rightarrow \mathbb{Q}$,  $G$ is a $\mathbb{P}$-generic containing $p$ and $\lambda$ is inaccessible in $V[\pi``G]$,
$(^\omega\kappa)^{V[\pi``G]}\in L(V[G]_{\kappa+1})$. 
\end{lemma}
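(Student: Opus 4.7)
The plan is to establish the main assertion first via a direct coding argument, and then derive the ``in particular'' clause as a short consequence. Fix a $\mathbb{P}$-generic filter $G\ni p$; the task is to show every $x\in (H_\lambda)^{V[G]}$ lies in $L(V[G]_{\kappa+1})$. Since $p\forces_{\mathbb{P}}(\kappa^+)^{V[\dot G]}=\lambda$, inside $V[G]$ we have $|\mathrm{trcl}(\{x\})|\leq \kappa$. Pick any bijection $f\colon \kappa\to \mathrm{trcl}(\{x\})$ in $V[G]$ and consider the relation
$$E:=\{\langle\alpha,\beta\rangle\in\kappa\times\kappa\mid f(\alpha)\in f(\beta)\}.$$
Using a G\"odel-style pairing function on $\kappa$, identify $E$ with a subset of $\kappa$. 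Because $\kappa\subseteq V[G]_{\kappa}$, this coded set belongs to $\mathcal{P}(\kappa)^{V[G]}\subseteq V[G]_{\kappa+1}$. The structure $(\kappa,E)$ is extensional and well-founded, and its Mostowski collapse is uniformly definable from $E$ and hence computable inside $L(V[G]_{\kappa+1})$; it produces $\mathrm{trcl}(\{x\})$ and hence recovers $x$. Therefore $x\in L(V[G]_{\kappa+1})$, yielding $(H_\lambda)^{V[G]}\subseteq L(V[G]_{\kappa+1})$, as required.

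For the ``in particular'' clause, by Fact~\ref{fact: basics of weak projections}(1) the upwards closure $\pi``G$ is a $\mathbb{Q}$-generic filter over $V$. Since $\lambda$ is (in particular) a strong limit cardinal in $V[\pi``G]$, one has $|{}^\omega\kappa|^{V[\pi``G]}\leq \kappa^{\omega}<\lambda$, so $({}^\omega\kappa)^{V[\pi``G]}\in (H_\lambda)^{V[\pi``G]}$. Because $V[\pi``G]\subseteq V[G]$, any bijection witnessing this bound in $V[\pi``G]$ lives in $V[G]$ as well, hence $({}^\omega\kappa)^{V[\pi``G]}\in (H_\lambda)^{V[G]}$. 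Invoking the first part gives $({}^\omega\kappa)^{V[\pi``G]}\in L(V[G]_{\kappa+1})$.

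There is no genuine obstacle beyond careful book-keeping: the only step requiring a second look is the membership $E\in V[G]_{\kappa+1}$, which is precisely why the hypothesis pins $\lambda$ down to be $(\kappa^+)^{V[\dot G]}$ rather than an arbitrary cardinal above $\kappa$. Note that nothing in the argument demands that $\mathbb{P}$ preserve $\mathcal{P}(\kappa)$ or add no bounded subsets of $\kappa$ (unlike Lemma~\ref{functions in the intermediate model}), which is what makes this lemma suitable for the Prikry-type forcings of \S\ref{sec: Sigma Prikry Tool Box}, whose generics do alter $V_{\kappa+1}$.
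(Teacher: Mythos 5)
Your proof is correct and follows essentially the same route as the paper's: code $\mathrm{trcl}(\{x\})$ via a well-founded extensional membership relation on (an ordinal at most) $\kappa$, observe that the code lies in $V[G]_{\kappa+1}$, recover $x$ by computing the Mostowski collapse inside $L(V[G]_{\kappa+1})$, and handle the ``in particular'' clause by the strong-limit bound on $|{}^\omega\kappa|$ in $V[\pi``G]$ together with upward persistence into $(H_\lambda)^{V[G]}$. The only cosmetic slip is that when $|\mathrm{trcl}(\{x\})|<\kappa$ there is no bijection with $\kappa$ itself; take a bijection with some ordinal $\eta\le\kappa$, exactly as the paper does.
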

\begin{proof}
Fix $G$ a $\mathbb{P}$-generic filter with  $p\in G.$ Let $x\in V[G]$ be such that $|\mathrm{tcl}(\{x\})|^{V[G]}<\lambda$. As 
 $(\kappa^+)^{V[G]}=\lambda$ the transitive closure of $\{x\}$  has cardinality $\eta$ in $V[G]$ 
for some $\eta\leq\kappa$. Let $f\colon \eta\rightarrow\mathrm{tcl}(\{x\})$ be a bijection in $V[G]$ and define the relation $\triangleleft\subseteq\eta\times\eta$ as $\text{$\xi\triangleleft\delta:\Longleftrightarrow f(\xi)\in f(\delta)$.}$ Note that $\triangleleft$ can be coded as a subset of $\eta\leq\kappa$ (e.g., using G\"odel's pairing function) and thus $(\eta,\triangleleft)\in L(V[G]_{\kappa+1})$. If we compute  the transitive collapse of $(\eta,\triangleleft)$ (in $ L(V[G]_{\kappa+1})$) it has to take the form $(\mathrm{tcl}(\{x\}),\in)$ and thus,  by transitivity, $x\in L(V[G]_{\kappa+1})$. All of these show $(H_\lambda)^{V[G]}\s L(V[G]_{\kappa+1}).$

For the second assertion note that $V[G]\models``(|\mathrm{tcl}((^\omega\kappa)^{V[\pi``G]})|<\lambda)$'' by inaccessibility of $\lambda$  in the intermediate model $V[\pi``G]$. Combining this with our previous conclusion, $(^\omega\kappa)^{V[\pi``G]}\in (H_\lambda)^{V[\pi``G]}\subseteq L(V[G]_{\kappa+1}).$

\end{proof}

The forthcoming sections (i.e., \S\ref{s: sigma prikry and goodness}, \S\ref{sec: direct systems of posets}, \S\ref{sec: sigma prikry and kappaperfect}, \S\ref{sec: Sigma Prikry and BP}) will instead regard $\Sigma$-Prikry forcings. Our blanket assumptions therein will be as follows:

\begin{setup}\label{setup kappapsp}

  $\Sigma=\langle \kappa_n\mid n<\omega\rangle$ is a fixed non-decreasing (possibly constant) sequence of regular uncountable cardinals with  $\kappa:=\sup(\Sigma)$ and $\lambda>\kappa$ is an inaccessible. Unless otherwise stated, posets will be assumed to be $\Sigma$-Prikry.

\end{setup}

\subsection{The interpolation lemma}\label{s: sigma prikry and goodness}
The goal of this section is to prove the \emph{Interpolation Lemma} (Lemma~\ref{lemma: interpolation}) which is one of the main technical devices in the proof of the $\kappa$-$\psp$ and the $\UBP$. Before that we prove another  lemma saying that if $\kappa$ is singular in a generic extension $V[G]$ by a  $\Sigma$-Prikry forcing then every function $f\colon \kappa\rightarrow E$ in $V[G]$ has its \emph{traces} in $V$.

\begin{lemma}\label{lemma: goodness}
     Let $G$ a $\mathbb{P}$-generic filter and suppose that in $V[G]$,  $\cf(\kappa)=\omega$ and $f\colon \kappa\rightarrow E$ is a function with $E\in V$. Then there is $\langle B_n\mid n<\omega\rangle\in V[G]$ consisting of bounded subsets of $\kappa$ such that $\bigcup_{n<\omega}B_n=\kappa$ and $f\restriction B_n\in V$. 
\end{lemma}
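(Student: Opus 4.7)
Fix a $\mathbb{P}$-name $\dot f$ for $f$ and a condition $p_0\in G$ forcing $\dot f\colon\check\kappa\to\check E$; set $\ell_0:=\ell(p_0)$. The plan is to attach, in $V$, a ``cost'' $n_\alpha<\omega$ to each $\alpha<\kappa$ measuring how deep below $p_0$ one must descend to decide $\dot f(\check\alpha)$, and then use $\cf(\kappa)^{V[G]}=\omega$ together with the $\Sigma$-Prikry directed closure to cover $\kappa$ by bounded ground-model pieces on which $f$ is already captured by a single ground-model condition of $G$.

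By the strong Prikry property (Lemma~\ref{lem: indiscernibles}(\ref{Strong Prikry Property})) applied to the dense-open set $D_\alpha:=\{q\leq p_0\mid q\text{ decides }\dot f(\check\alpha)\}$, choose in $V$ a direct extension $r_\alpha\leq^\ast p_0$ together with $n_\alpha<\omega$ such that $\mathbb{P}^{r_\alpha}_{\geq n_\alpha}\subseteq D_\alpha$, and set $C_n:=\{\alpha<\kappa\mid n_\alpha\leq n\}\in V$, so that $\kappa=\bigcup_{n<\omega}C_n$. Working in $V[G]$ and using $\kappa_m\nearrow\kappa$, fix a strictly increasing cofinal sequence $\langle\gamma_n\mid n<\omega\rangle$ of $V$-ordinals below $\kappa$ with $\gamma_n<\kappa_{\ell_0+n}$. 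Each index set $I_n:=C_n\cap\gamma_n$ lies in $V$ and has cardinality $<\kappa_{\ell_0+n}$. By Definition~\ref{SigmaPrikry}(\ref{c2}) the direct-extension relation on $\mathbb{P}_{\ell_0+n}$ contains a $\kappa_{\ell_0+n}$-directed-closed dense subposet; amalgamating the family $\{r_\alpha\mid\alpha\in I_n\}$ inside this subposet yields a common direct lower bound $s_n\leq^\ast p_0$ in $V$. Since $n_\alpha\leq n$ for every $\alpha\in I_n$, any $q\leq s_n$ with $\ell(q)\geq\ell_0+n$ belongs to $D_\alpha$ for each $\alpha\in I_n$, and a density argument in $V[G]$ produces $q_n\in G$ refining $s_n$ of length at least $\ell_0+n$. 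Setting $B_n:=I_n\setminus\bigcup_{m<n}B_m$ gives bounded $V$-sets (each contained in $\gamma_n<\kappa$) whose sequence lies in $V[G]$, whose union is $\kappa$ (because $\bigcup_n C_n=\kappa$ and $\gamma_n\nearrow\kappa$), and on each of which $f\restriction B_n$ is read off from the ground-model decisions forced by $q_n$, so $f\restriction B_n\in V$.

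The main technical point is the density step producing $q_n\in G$ below the amalgamated condition $s_n$. The crux is that $s_n$ itself lies in $V$: it is built purely from $p_0$, the ordinal-indexed ground-model family $\{r_\alpha\}_\alpha$, and the $V$-set $I_n=C_n\cap\gamma_n$ (the ordinal $\gamma_n\in V$ even though the sequence $\langle\gamma_n\rangle$ only exists in $V[G]$). Together with the $\Sigma$-Prikry completeness features encoded in Definitions~\ref{SigmaPrikry}(\ref{c2}) and \ref{SigmaPrikry}(\ref{csize}), this makes $s_n$ ``generic-reachable'': extensions of $s_n$ of length $\geq\ell_0+n$ form a $V$-definable dense set below $p_0$, so $G$ meets it.
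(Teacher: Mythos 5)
Your construction breaks down at exactly the step you single out as the main technical point: producing $q_n\in G$ below the amalgamated condition $s_n$. The fact that $s_n$ lies in $V$ and is a direct extension of $p_0\in G$ does not make the cone below $s_n$ dense: the set $\{q\in\mathbb{P}\mid q\leq s_n\ \wedge\ \ell(q)\geq\ell_0+n\}$ is dense \emph{below $s_n$}, not below $p_0$, and a generic filter containing $p_0$ need not contain any extension of a prescribed pure extension of $p_0$. (Already in ordinary Prikry forcing, if $s=\langle\emptyset,B\rangle\leq^*\langle\emptyset,A\rangle=p_0$ with $B\subsetneq A$ measure one, the generic Prikry sequence may pick points of $A\setminus B$, after which no condition of $G$ refines $s$.) So genericity gives you nothing here, and without $q_n\in G$ you cannot read $f\restriction I_n$ off a single ground-model decision. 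A secondary defect compounds this: the conditions $r_\alpha$ are direct extensions of $p_0$ and hence live in $\mathbb{P}_{\ell_0}$, where Definition~\ref{SigmaPrikry}(\ref{c2}) only provides $\kappa_{\ell_0}$-directed closure (and only on a dense subposet that the $r_\alpha$ need not belong to); amalgamating the $|I_n|<\kappa_{\ell_0+n}$ many conditions $r_\alpha$, $\alpha\in I_n$, is therefore not justified once $n$ is large.

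Both defects have the same cure, and it is the route the paper takes: the construction must be carried out \emph{densely} --- one shows that for every $q\leq p_0$ there is $q'\leq^\ast q$ with the desired decision properties, so that $G$ is guaranteed to meet the resulting set. But then a further problem appears which your argument never has to confront: such a $q'\in G$ only guarantees that all sufficiently long extensions of $q'$ \emph{decide} $\dot f(\check\alpha)$, not that they all decide it with the same value, so $f\restriction I_n$ is still not visibly a ground-model object. The paper resolves this with the indiscernibility machinery of Lemma~\ref{lem: indiscernibles}(\ref{cones of indiscernibles}) applied to the $0$-open colorings $d_\xi$ (where $d_\xi(q)=1$ iff $q$ decides $\tau(\check\xi)$): below a suitable $r_0\in G$ the set of $\xi$ decided by a condition depends only on its length, which lets one cover each block $[\nu_{n-1},\nu_n)$ by the decision sets $C_{r_i}$ of a sequence $r_i\in G$ of unbounded lengths, each $r_i$ forcing $\tau\restriction \check C_{r_i}$ into $\check V$. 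That ingredient is missing from your proposal and cannot be dispensed with.
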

\begin{proof}
Fix $\langle \nu_n\mid n<\omega\rangle\in \prod_{n<\omega}\kappa_n$ an increasing cofinal sequence in $\kappa$. In the proof we define auxiliary sequences $\langle r_{n,i}\mid i,n<\omega\rangle$ and $\langle C_{n,i}\mid i,n<\omega\rangle$ such that the following properties hold for each $n<\omega:$
\begin{enumerate}
    \item[$(\alpha)$] $\langle r_{n,i}\mid i<\omega\rangle\s G$ is a $\leq$-decreasing sequence with $\ell(r_{n,i+1})>\ell(r_{n,i}).$
    \item[$(\beta)$] $\langle C_{n,i}\mid i<\omega\rangle$ is a $\subsetneq$-increasing sequence of subsets of $\nu_n\setminus \nu_{n-1}$ in $V[G]$ such that $\bigcup_{i<\omega}C_{n,i}=\nu_n\setminus \nu_{n-1}.$ Here, by convention, $\nu_{-1}:=0.$
    \item[$(\gamma)$] $r_{n,i}\forces_{\mathbb{P}}``\dot{f}\restriction C_{n,i}\in\check{V}$'', where $\dot{f}$ is a $\mathbb{P}$-name such that $\dot{f}_G=f.$ 
\end{enumerate}
For the rest of the proof we fix $n<\omega$ such that $f\restriction (\nu_{n}\setminus \nu_{n-1})\notin V$ -- otherwise one simply sets $B_n:=\nu_n\setminus \nu_{n-1}$. Since $n$ will be fixed, in what follows we opt to write $\langle r_i\mid i<\omega\rangle$ and $\langle C_i\mid i<\omega\rangle$ in place of the more cumbersome notation involving the extra subindex $n$.

\smallskip

Let $p\in G$ and $\tau$ be a $\mathbb{P}$-name such that $p\forces_{\mathbb{P}}``\tau=\dot{f}\restriction (\nu_n\setminus \nu_{n-1})$``. The construction of the above sequences is accomplished by induction on $i<\omega.$

\smallskip
\underline{Case $i=0$:} For each $\xi\in  [\nu_{n-1},\nu_n)$ consider the coloring $d_\xi:\mathbb{P}\rightarrow 2$,
    \[
d_\xi(q):=\begin{cases*}
  1, & if $q\parallel\tau(\Check{\xi})$ (i.e., $q\Vdash_\mathbb{P}\tau(\check{\xi})=\Check{e}_\xi$, for some $e_\xi\in E$);\\
  0, & otherwise.
\end{cases*}
\]   
$d_\xi$ is a $0$-open coloring (see Definition~\ref{def: open coloring}) and so Lemma~\ref{lem: indiscernibles} applies.

\begin{claim}
  Let $D_0$ be the set of all conditions $r\leq p$ such that: \begin{enumerate}
        \item\label{def of D_0, 1} For all $\xi\in\nu_n\backslash\nu_{n-1}$, $\mathbb{P}_{\downarrow r}$ is a set of indiscernibles for $d_\xi$;
        \item\label{def of D_0, 2} $C_r$ is non-empty and $r\Vdash_{\mathbb{P}}\tau\restriction\check{C}_{r}\in \check{V}$, being
        $$C_r:=\{\xi\in [\nu_{n-1},\nu_n)\mid r\parallel \tau(\check{\xi})\}.$$
        
    \end{enumerate}
    Then $D_0$ is dense below $p\in G.$
\end{claim}
\begin{proof}[Proof of claim]
    Fix $q\leq p$. We find $r\leq q$ in $D_0$. Without loss of generality $\ell(q)>n$, hence the $\leq^*$-order of $\mathbb{P}$ below $q$ is $\kappa_n$-closed and  $\nu_n<\kappa_n$. 
    
    First, we show how to meet Clause~(1) of the claim. For this we construct a $\leq^*$-increasing sequence of auxiliary conditions $\langle q_\xi\mid \xi\in [\nu_{n-1},\nu_{n})\rangle$ below $q$ such that each $\mathbb{P}_{\downarrow q_{\xi}}$ is a set of indiscernibles for $d_\xi.$ To do so put $q_{\nu_{n-1}}:=q$, invoke Lemma~\ref{lem: indiscernibles}(1) at successor stages and combine the  closure of $\langle \mathbb{P}_{\downarrow q},\leq^*\rangle$ with the said lemma. This allows us to construct each $q_\xi$. In fact, more is true -- this allows us to define $r_0\leq^* q$ that serves as a $\leq^*$-lower bound for the $q_\xi$'s. Clearly, $r_0$ witnesses Clause~(1).  By $\leq$-extending $r_0$ to $r$ if necessary, we may assume that $r\forces_{\mathbb{P}} \tau(\check{\nu}_{n-1})=\check{e}_{\nu_{n-1}}$ for some $e_{\nu_{n-1}}\in E$, which yields $C_r\neq \emptyset$. By setting $h:=\{\langle \xi, e_\xi\rangle \mid \xi\in C_r\}$ we conclude that $r\forces_{\mathbb{P}} \tau\restriction \check{C}_r=\check{h}\in \check{V}.$ 
\end{proof}
 Let $r_0\in G\cap D_0$ and stipulate $C_0:=C_{r_0}$. This witnesses $(\gamma)$.

\smallskip

\underline{Induction step:} Suppose that $\langle r_i\mid i\leq j\rangle$ and $\langle C_i\mid i\leq j\rangle$ have been constructed complying with Clauses~$(\alpha)-(\gamma)$ above. Note that $C_j$ is not $[\nu_{n-1}, \nu_{n})$ for otherwise $r_j\in G$ would have decided  the value of $f\restriction [\nu_{n-1},\nu_n)$, contrary to our departing assumption. As a result the set
$$D_{j+1}:=\{r\leq r_j\mid \ell(r)>\max(\ell(r_j), j)\;\wedge\; r\forces_{\mathbb{P}}\text{$``\tau\restriction \check{C}_r\in \check{V}$''}\;\wedge\; C_j\subsetneq C_r\}$$
is dense below $r_j\in G$. 

So again we can pick $r_{j+1}\in G\cap D_{j+1}$ and set $C_{j+1}:=C_{r_{j+1}}.$

\medskip

This finishes the construction of $\langle r_i, C_i\mid i<\omega\rangle$ witnessing $(\alpha)$--$(\gamma)$.

\begin{claim}
    $\bigcup_{i<\omega}C_i=[\nu_{n-1},\nu_n)$ and $f\restriction C_i\in V.$
\end{claim}
\begin{proof}[Proof of claim]
    By construction,  $r_i\forces_{\mathbb{P}} \dot{f}\restriction \check{C}_i=\tau\restriction \check{C}_i\in \check{V}$ hence (as $r_i\in G$) $f\restriction C_i\in V.$ For the other claim we use the indiscernibility of $r_0$: Let $\xi\in [\nu_{n-1},\nu_n)$ and $r\leq r_0$ such that $r\parallel \tau(\xi)$. Since the length of the $r_i$'s is cofinal in $\omega$ we can find $r_i\leq r_0$ such that $\ell(r_i)>\ell(r)$. By extending $r$ further if necessary we may assume that both $r_i$ and $r$ have the same length. Since $r$ decides the value of $\tau(\xi)$ that means that $d_\xi(r)=1$. However, every member of the cone $\mathbb{P}_{\downarrow r_0}$ is an indiscernible for $d_\xi$ so that $d_\xi(r_i)=1$, as well. This tantamounts to saying $\xi\in C_{i}$, and we are done.
\end{proof}
The above argument was carried out for a fixed $n<\omega$ so 
we obtain a sequence 
$\langle C_{n,i}\mid i,n<\omega\rangle$ such that $\bigcup_{i<\omega}C_{n,i}:=[\nu_{n-1},\nu_n)$. Pick a  bijection $F\colon \omega\rightarrow\omega\times\omega$ (e.g., the inverse of G\"odel pairing
function) and stipulate $B_m:=C_{F(m)}$, for all $m<\omega$. Clearly $\langle B_m\mid m<\omega\rangle$ is as desired.
\end{proof}

The time is now ripe to formulate and prove the \emph{Interpolation Lemma}:

\begin{lemma}[Interpolation]\label{lemma: interpolation}
    Let $\mathbb{P}$ and $\mathbb{Q}$ be $\Sigma$-Prikry forcings such that $$\text{$\mathbb{Q}\in H_\lambda$ and $\one\forces_{\mathbb{P}}``\cf(\kappa)^{V[\dot{G}]}=\omega\ \wedge \ (\kappa^+)^{V[\dot{G}]}=\lambda$'',}$$ and let $\mathbb{R}$ be any forcing (not necessarily $\Sigma$-Prikry and possibly trivial). 
    
    Suppose also that we are given a commutative system of weak projections 
    \begin{displaymath}
       \begin{tikzcd}
  \mathbb{P} \arrow{r}{\pi_1} \arrow[bend right]{rr}{\pi} & \mathbb{Q} \arrow{r}{\pi_{2}}  & \mathbb{R}
\end{tikzcd}
    \end{displaymath}
    
    and that its mirror $\mathcal{B}$-system of projections (see Lemma~\ref{from a weak system to a system}) is
       \begin{displaymath}
       \begin{tikzcd}
  \ro(\mathbb{P}) \arrow{r}{\rho_1} \arrow[bend right]{rr}{\rho} & \ro(\mathbb{Q}) \arrow{r}{\rho_{2}}  & \ro(\mathbb{R})
\end{tikzcd}.\footnote{Recall that by Convention~\ref{conv: convention about projections of boolean algebras} we are implicitly agreeing that the new weakest conditions of $\ro(\mathbb{Q})$ and $\ro(\mathbb{R})$ are, respectively, $\rho_1(\one_{\mathbb{P}})$ and $\rho(\one_{\mathbb{Q}}).$ Formally speaking, these may not be the original trivial conditions in $\ro(\mathbb{Q})$ and $\ro(\mathbb{R})$  but in that case we simply force below the cone given by the new  trivial conditions.  }
    \end{displaymath}

   \emph{\textbf{(Interpolation)}} Let $G$ be a $\ro(\mathbb{P})$-generic, $g$ be a $\ro(\mathbb{R})$-generic with $g\in V[G]$, and $p\in\ro(\mathbb{P})/g$. Then there is a $\ro(\mathbb{Q})/g$-generic filter $h$ such that $\rho_1(p)\in h$. 
    
    Moreover, $h$ is obtained as the upwards closure of a $\leq$-decreasing sequence $\langle p_n\mid n<\omega\rangle\in V[G]$  of conditions in  $\ro(\mathbb{Q})/g$ below $\rho_1(p)$. Explicitly,
    $$h=\{b\in\ro(\mathbb{Q})\mid \exists n<\omega\,(p_n\leq b)\}$$
    whose corresponding sequence of lengths $\langle \ell(p_n)\mid n<\omega\rangle$ is cofinal in $\omega$. 

    \smallskip
    
    \emph{\textbf{(Capturing)}} In addition, if in $V[g]$ there is a projection $$\sigma\colon(\ro(\mathbb{Q})/g)_{\downarrow t}\rightarrow\ro(\mathbb{R})_{\downarrow\Bar{p}},$$ for some $t\in(\ro(\mathbb{Q})/g)_{\downarrow\rho_1(p)}$ and $\Bar{p}\in \rho``G$, we can choose $h$ above in such a way that $t\in h$ and $\rho``G\in V[h]$.
\end{lemma}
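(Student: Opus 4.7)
The plan is to construct the $\omega$-sequence $\langle p_n\mid n<\omega\rangle\in V[G]$ by recursion, exploiting the $\Sigma$-Prikry structure of $\mathbb{Q}$ in $V$ together with the partition technology of Lemma~\ref{lemma: goodness}.

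\textbf{Preliminary reductions.} From $\mathbb{Q}\in H_\lambda$ and $\one\Vdash_{\mathbb{P}}(\kappa^+)^{V[\dot G]}=\lambda$ one gets $|\ro(\mathbb{Q})/g|^{V[G]}\leq \kappa$. Working in $V[G]$ I fix a strictly increasing sequence $\langle \mu_n\mid n<\omega\rangle$ cofinal in $\kappa$, and, using density of $\mathbb{Q}$ in $\ro(\mathbb{Q})$, pick an initial $p_0\in \mathbb{Q}/g$ with $p_0\leq \rho_1(p)$ (for the Capturing clause I strengthen this to $p_0\leq t$, which is possible since $t\leq \rho_1(p)$).

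\textbf{Enumeration of dense open sets.} In $V[G]$, enumerate the dense open subsets of $\ro(\mathbb{Q})/g$ that belong to $V[g]$ as $\langle D_\alpha\mid\alpha<\kappa\rangle$; each $D_\alpha$ arises as $(\dot D_\alpha)_g$ for some $\ro(\mathbb{R})$-name $\dot D_\alpha\in V$. The assignment $\alpha\mapsto\dot D_\alpha$ is a function $\kappa\to V$, so Lemma~\ref{lemma: goodness} (applied to the $\Sigma$-Prikry forcing $\mathbb{P}$) supplies $\langle B_n\mid n<\omega\rangle\in V[G]$ with each $B_n$ bounded in $\kappa$, $\bigcup_n B_n=\kappa$, and each trace $\langle \dot D_\alpha\mid \alpha\in B_n\rangle$ living in $V$.

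\textbf{Inductive step (the main obstacle).} Given $p_n$, I aim at $p_{n+1}\in\mathbb{Q}/g$ with $p_{n+1}\leq p_n$, $\ell(p_{n+1})\geq\ell(p_n)+1$, and $p_{n+1}\in D_\alpha$ for every $\alpha\in B_n$. Since $|B_n|<\kappa$, I pick $m>\ell(p_n)$ with $|B_n|<\kappa_m$. The $V$-trace $\langle \dot D_\alpha\mid\alpha\in B_n\rangle$ allows me to stage the diagonalization inside $V$: I successively apply the Strong Prikry property (Lemma~\ref{lem: indiscernibles}(3)) to find, for each $\alpha\in B_n$ and below a condition in $g$ forcing $\dot D_\alpha$ to be dense open, a $\leq^*$-extension whose cone of length $\geq m$ lands in $\dot D_\alpha$. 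Using the $\kappa_m$-directed-closure of the direct extension order on $\mathbb{Q}_m$ I collect these into a single $\leq^*$-lower bound, and then take an $\leq$-extension of appropriate length. Interpreting via $g$ yields the desired $p_{n+1}$. The crux here is that although the $D_\alpha$'s live in $V[g]$ for a wholly arbitrary $\mathbb{R}$, Lemma~\ref{lemma: goodness} packs the enumeration into $V$-sized batches, so the diagonalization can be carried out using the $V$-side $\Sigma$-Prikry machinery.

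\textbf{Output of Interpolation.} The upwards closure $h=\{b\in\ro(\mathbb{Q})/g\mid \exists n\,(p_n\leq b)\}$ is a filter meeting every $D_\alpha$, hence $\ro(\mathbb{Q})/g$-generic over $V[g]$. It contains $\rho_1(p)$ since $p_0\leq\rho_1(p)$, and $\langle\ell(p_n)\rangle$ is cofinal in $\omega$ by construction.

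\textbf{Capturing.} Under the extra hypothesis I enlarge the collection of dense open sets to be met by including, for each $r$ enumerated from $\rho``G\cap \ro(\mathbb{R})_{\downarrow\bar p}$, the set $\{q\in(\ro(\mathbb{Q})/g)_{\downarrow t}\mid \sigma(q)\leq r\}$, which is dense below $t$ by the projection property of $\sigma$; simultaneously I enforce $\sigma(p_{n+1})\in \rho``G$ at every step (possible because $\rho``G$ is a filter and $\sigma\in V[g]$). These extra constraints are compatible with the inductive diagonalization, since $\sigma$ is already available in $V[g]$. The resulting $h$ satisfies $t\in h$ and $\sigma``h\subseteq \rho``G\cap\ro(\mathbb{R})_{\downarrow\bar p}$; by maximality of generic filters these two sides coincide, whence $\rho``G\in V[h]$.
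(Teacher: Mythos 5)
Your overall architecture mirrors the paper's: a recursively built $\leq$-decreasing sequence $\langle p_n\mid n<\omega\rangle$ with increasing lengths, Lemma~\ref{lemma: goodness} to break a $\kappa$-enumeration of dense sets into traces lying in $V$, the Strong Prikry property together with the $\kappa_m$-directed-closure of $\leq^*$ for the diagonalization, and, for Capturing, interleaving the requirement $\sigma(p_{n+1})\in\rho``G$ using that $\sigma``$ of a dense set is dense below $\bar p$. However, the enumeration step contains a genuine gap. You enumerate in order type $\kappa$ the dense open subsets of $\ro(\mathbb{Q})/g$ that belong to $V[g]$. Nothing in the hypotheses bounds the number of such sets by $\kappa$: since $\mathbb{R}$ is an arbitrary forcing, these sets form a subfamily of $\mathcal{P}(\ro(\mathbb{Q}))^{V[G]}$, whose cardinality in $V[G]$ is $(2^{\kappa})^{V[G]}\geq(\kappa^+)^{V[G]}=\lambda>\kappa$, and $V[g]$ may well contain $\lambda$-many of them. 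So the sequence $\langle D_\alpha\mid\alpha<\kappa\rangle$ you start from need not exist. A second, related problem is your use of the Strong Prikry property: Lemma~\ref{lem: indiscernibles}(3) applies to honest dense open subsets of $\mathbb{Q}$ lying in $V$, not to an $\ro(\mathbb{R})$-name $\dot D_\alpha$ for a dense subset of the quotient ``below a condition in $g$''. Converting each $\dot D_\alpha$ into a dense open subset of $\mathbb{Q}$ in $V$ is precisely the work being skipped, and it is also what collapses the count back to $\kappa$.

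The paper avoids both issues by enumerating instead the dense open subsets of $\mathbb{Q}$ that lie in $V$: there are fewer than $\lambda$ of them because $\mathbb{Q}\in H_\lambda$ and $\lambda$ is inaccessible, hence at most $\kappa$-many in $V[G]$, and each is directly amenable to the Strong Prikry property. One then arranges $\rho_2(p_n)\in g$ at every step (here the projection property of $\rho_2$ is what guarantees that the dense set $E_n\subseteq\mathbb{Q}$ can be met inside $\ro(\mathbb{Q})/g$; mere density of $\mathbb{Q}$ in $\ro(\mathbb{Q})$, which is all you invoke when choosing $p_0$, does not suffice), concludes that $h$ is $\ro(\mathbb{Q})$-generic over $V$ with $h\subseteq\ro(\mathbb{Q})/g$, and finishes with the standard quotient fact that such a filter is $\ro(\mathbb{Q})/g$-generic over $V[g]$. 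The same cardinality objection applies to the first half of your Capturing argument, where you add one dense set per element of $\rho``G\cap\ro(\mathbb{R})_{\downarrow\bar p}$; the second half of that paragraph (enforcing $\sigma(p_{n+1})\in\rho``G$ step by step) is the correct mechanism and is what the paper actually does.
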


\begin{proof}
First we prove \emph{Interpolation} and then we will dispose with \emph{Capturing}.

\smallskip

    \underline{\textbf{Interpolation}:} As $|\mathbb{Q}|<\lambda$ and $\lambda$ is inaccessible, ${|\mathcal{P}(\mathbb{Q})|}<\lambda$. By our assumption $V[G]\models``\lambda=(\kappa^+)$'', hence there is an enumeration $\mathcal{D}:=\langle D_\alpha\mid\alpha<\kappa\rangle$ in $V[G]$ of the set  $E:=\{D\in \mathcal{P}(\mathbb{Q})^V\mid D\;\text{is dense open in}\; \mathbb{Q}\}.$ Applying Lemma~\ref{lemma: goodness} to $\mathbb{P}$ and $\mathcal{D}$, we find a sequence $\langle B_n\mid n<\omega\rangle\in V[G]$ of bounded subsets of $\kappa$ such that $\bigcup_{n<\omega}B_n=\kappa$ and $\mathcal{D}\restriction B_n\in V$ for $n<\omega$. 
    
    We construct a sequence $\langle p_n\mid n<\omega\rangle$ such that $p_{n+1}\leq p_n\leq\rho_1(p)$ and $\ell(p_{n+1})>\ell(p_n)$, for all $n<\omega$. We proceed by induction.  For $n=0$ let  $$E_0:=\{q\in\mathbb{Q}_{\downarrow\rho_1(p)}\mid \forall\alpha\in B_0\,\exists k_\alpha<\omega \ \mathbb{Q}^q_{\geq k_\alpha}\subseteq D_\alpha\}.$$ 
    Here $\mathbb{Q}_{\downarrow\rho_1(p)}$ is a shorthand for the poset $\mathbb{Q}\cap \ro(\mathbb{Q})_{\downarrow\rho_1(p)}$ (note that formally speaking $\rho_1(p)$ may not be a condition in $\mathbb{Q}$ but only a condition in $\ro(\mathbb{Q})$).
    \begin{claim}\label{claim: E_0 is dense in Q}
        $E_0$ is a dense subset of $\ro(\mathbb{Q})_{\downarrow\rho_1(p)}$.
    \end{claim}
    \begin{proof}[Proof of claim]
        Note that $E_0$ is in $V$ because $\mathcal{D}\restriction B_0\in V$ -- this is crucial to run the forthcoming argument.
    Let $b\in\ro(\mathbb{Q})_{\downarrow\rho_1(p)}$ and let $q\in\mathbb{Q}_{\downarrow b}$. Since $B_0$ is bounded in $\kappa$ there is $n<\omega$ such that $B_0\subseteq \kappa_m$. Set $\langle \alpha_\beta\mid \beta<\gamma\rangle$ a one-to-one enumeration of $B_0$ and (by extending $p$ if necessary) suppose that $\ell(p)>n$. We construct a decreasing sequence $\langle u_\beta\mid \beta<\gamma\rangle$ $\leq^*$-below $q$ such that   $\mathbb{Q}^{u_\beta}_{\geq k_\beta}$ is included in $D_{\alpha_\beta}$ for some $k_\beta<\omega$. The construction of these $u_\beta$'s   combines the \emph{Strong Prikry Lemma} (Lemma~\ref{lem: indiscernibles}(3)) with  $\kappa_m$-closure of $\langle \mathbb{Q}_{\downarrow q},\leq^*\rangle.$ 
    Finally, let $q^*\leq^* q$ a lower bound for the $u_\beta$'s. 
    \end{proof} Since $\rho_2$ is a projection $\rho_2``E_0$ is a dense subset of $\ro(\mathbb{R})_{\downarrow \rho(p)}$ (recall that $\rho_2(\rho_1(p))=\rho(p)$). Moreover, $\rho(p)\in g$ and so $\rho_2``E_0\cap g\neq\emptyset$. Pick a condition $q\in E_0$ with $\rho(q)\in\rho_2``E_0\cap g$ and stipulate $p_0:=q$. 
    
    \smallskip

    Now suppose $\langle p_m\mid m\leq n\rangle$ has already been defined, for some $n<\omega$, and define $$E_{n+1}=\{q\in\mathbb{Q}_{\downarrow p_n}\mid \ell(q)>\ell(p_n)\ \wedge\ \forall\alpha\in B_{n+1}\,\exists k_\alpha<\omega \ \mathbb{Q}^q_{\geq k_\alpha}\subseteq D_\alpha\}.$$ Carrying out exactly the same argument in the proof of Claim \ref{claim: E_0 is dense in Q}, we deduce that $E_{n+1}$ is dense below $p_n$. Therefore, there is  $q\in E_{n+1}$ such that $\rho_2(q)\in g$. Stipulate $p_{n+1}:=q$. Thus we have constructed a $\leq$-decreasing sequence $\langle p_n\mid n<\omega\rangle$ such that $\ell(p_{n+1})>\ell(p_n)$, $p_n\in E_n$ and $\rho_2(p_n)\in g$, for all $n<\omega$. Let $h$ be the $\ro(\mathbb{Q})$-upwards closure of $\langle p_n\mid n<\omega\rangle$, i.e. $$h=\{b\in\ro(\mathbb{Q})\mid \exists n<\omega\ (p_n\leq b)\}.$$
\begin{claim}\label{claim: h is ro(Q)/g-generic}
    $h$ is $\ro(\mathbb{Q})/g$-generic over $V[g]$.
\end{claim}
\begin{proof}[Proof of claim]
    By construction $p_n\in \ro(\mathbb{Q})/g$ for all $n<\omega$ so is enough to verify that $h$ is $\ro(\mathbb{Q})$-generic. Let $D$ be dense open in $\ro(\mathbb{Q})$. 
    Then the set $$D^\ast:=\{q'\in\mathbb{Q}\mid q'\in D\cap\mathbb{Q}_{\downarrow \rho_1(p)}\,\vee\, q'\text{ and }\rho_1(p)\text{ are incompatible}\}$$ is a dense open subset of $\mathbb{Q}$. 
   
   Therefore there are $\alpha<\kappa$ and $n<\omega$ such that $D^\ast=D_\alpha$ and $\alpha\in B_n$. By definition of $E_n$, there is some $k_\alpha<\omega$ such that $\mathbb{Q}^{p_n}_{\geq k_\alpha}\subseteq D^\ast$. But by construction the sequence $\langle \ell(p_n)\mid n<\omega\rangle$ is cofinal in $\omega$ and so there is some $m<\omega$ with $\ell(p_m)\geq\ell(p_n)+k_\alpha$, yielding $p_m\in D^\ast$. Thus, $p_m$ witnesses  $h\cap D\neq\emptyset$, as needed.
\end{proof}

\smallskip

    \underline{\textbf{Capturing:}} Work in $V[G]$ and assume $t\in(\ro(\mathbb{Q})/g)_{\downarrow\rho_1(p)}$, $\Bar{p}\in\rho``G$ and $$\sigma\colon(\ro(\mathbb{Q})/g)_{\downarrow t}\rightarrow\ro(\mathbb{R})_{\downarrow\Bar{p}}$$ are as in the assumptions of the lemma. Mimicking the previous construction we  define a $\ro(\mathbb{Q})/g$-generic $h$ which \emph{captures} the generic filter $\rho``G$.

\smallskip

To simplify notations let us agree that $(\mathbb{Q}/g)_{\downarrow t}:= \mathbb{Q}\cap (\ro(\mathbb{Q})/g)_t$. 
Let $$F_0=\{u\in(\mathbb{Q}/g)_{\downarrow t}\mid\forall\alpha\in B_0\exists n_\alpha<\omega\ \mathbb{Q}^u_{\geq n_\alpha}\subseteq D_\alpha\}.$$
        \begin{claim}\label{claim: F_0 is a dense subset of ro(Q)/g}
            $F_0$ is a dense subset of $(\ro(\mathbb{Q})/g)_{\downarrow t}$.
        \end{claim}
        \begin{proof}
            Let $s\in(\ro(\mathbb{Q})/g)_{\downarrow t}$. Since $\rho_2:\ro(\mathbb{Q})\rightarrow\ro(\mathbb{R})$ is a projection and$$E_0=\{v\in \mathbb{Q}_{\downarrow s}\mid \forall\alpha\in B_0\,\exists n_\alpha<\omega\ \mathbb{Q}^v_{\geq n_\alpha}\subseteq D_\alpha\}$$ is a dense subset of $\ro(\mathbb{Q})_{\downarrow s}$ (see Claim \ref{claim: E_0 is dense in Q}), it follows that $\rho_2``E_0$ is a dense subset of $\ro(\mathbb{R})$ below $\rho_2(s)$. Since $\rho_2(s)\in g$,  $g\cap \rho_2``E_0\neq\emptyset$. Let $s'\in E_0$ be such that $\rho_2(s')\in g$. Clearly, $s'\leq s$ and moreover $s'\in F_0$.
        \end{proof} Since $\sigma\colon(\ro(\mathbb{Q})/g)_{\downarrow t}\rightarrow\ro(\mathbb{R})_{\downarrow\Bar{p}}$ is a projection, $\sigma``F_0$ is a dense subset of $\ro(\mathbb{R})_{\downarrow\Bar{p}}$ and since  $\Bar{p}\in\rho``G$ it follows that $\rho``G\cap\sigma``F_0\neq\emptyset.$ Let $t_0$ be a condition in $F_0$ such that $\sigma(t_0)\in \rho``G$. It is clear by construction that $t_0\leq t$, $\rho_2(t_0)\in g$ and $\sigma(t_0)\leq \Bar{p}$. In general, we define a $\leq$-decreasing sequence $\langle t_n\mid n<\omega\rangle$ with $\sigma(t_n)\in \rho``G$ as in the previous argument taking as a dense set 
        
        $$F_n=\{u\in(\mathbb{Q}/g)_{{\downarrow t_{n-1}}}\mid\ell(u)>\ell(t_{n-1})\ \wedge\ \forall\alpha\in B_n\exists n_\alpha<\omega\ \mathbb{Q}^u_{\geq n_\alpha}\subseteq D_\alpha\}.$$ 
        
       As in the ``interpolation argument'', we let $$h=\{b\in\ro(\mathbb{Q})\mid \exists n<\omega\ (t_n\leq b)\}.$$ The argument in Claim \ref{claim: h is ro(Q)/g-generic} shows that $h$ is $\ro(\mathbb{Q})/g$-generic over $V[g]$ and by construction $t\in h$ and $\sigma(t_n)\in\rho``G$ for all $n<\omega$. Since $\sigma\colon(\ro(\mathbb{Q})/g)_{\downarrow t}\rightarrow\ro(\mathbb{R})_{\downarrow\Bar{p}}$ is a projection and $t\in h$,  $\sigma``h_{\downarrow t}$ is $\ro(\mathbb{R})_{\downarrow\Bar{p}}$-generic. Also, $\sigma``h_{\downarrow t}\subseteq(\rho``G)_{\downarrow\Bar{p}}$ so, by maximality, $\sigma``h_{\downarrow t}=(\rho``G)_{\downarrow\Bar{p}}$.

        This implies that $\rho``G\in V[h]$,  
        thus showing that $h$ has the desired capturing feature.
        
\end{proof}
\begin{remark}
    If the original $\pi_i$'s were projections one would be able to prove interpolation lemma (including its capturing feature) replacing the Boolean algebras $\ro(\mathbb{P})$, $\ro(\mathbb{Q})$ and $\ro(\mathbb{R})$ for the posets $\mathbb{P},\mathbb{Q}$ and $\mathbb{R}$.
\end{remark}

\subsection{The constellation lemma}\label{sec: direct systems of posets}

In order to build a model of $\zfc$ in which all subsets of $^{\omega}{\kappa}$ in $L(V_{\kappa+1})$ have the $\kappa$-$\psp$ we mimic Solovay's original proof \cite{solovay1970model} that (consistently) every set in $\mathcal{P}({}^{\omega}{\omega})\cap L(\mathbb{R})$ has the $\psp\,$ and $\bp$. 

Solovay's configuration is established in a generic extension where an inaccessible cardinal $\lambda$ is collapsed to  $\omega_1$ via the standard LÃ©vy collapse $\Col(\omega,{<}\lambda)$ (\cite[p.~127]{Kan}). Solovay  factors  $\col(\omega,{<}\lambda)$ as $$\col(\omega,{<}\alpha)\times\col(\alpha,{<}\beta)\times\col(\beta,{<}\lambda)$$ and shows that if $A$ is an ordinal-definable  uncountable set of reals in a generic extension by ${\col(\omega,{<}\lambda)}$ then there is a perfect set of $\col(\alpha,{<}\beta)$-generics over $V[G\restriction\alpha]$ (here homogeneity of $\col(\alpha,{<}\beta)$ becomes crucial).  Since the other factor $\col(\beta,{<}\lambda)$ is also homogeneous, each element $x$ in the perfect set satisfies the formula  defining $A$ in $V[G\restriction\alpha][x][H]$ for all $\col(\beta,{<}\lambda)$-generic $H$. But Solovay proved that there is a $\col(\beta,{<}\lambda)$-generic $H_x$ such that $V[G]=V[G\restriction\alpha][x][H_x]$, so the perfect set is actually inside $A$. 

\smallskip

In our case, we start with an inaccessible $\lambda$ above some (possibly singular) cardinal $\kappa$ that will eventually become (or remain) of countable cofinality after a $\Sigma$-Prikry forcing $\mathbb{P}$. Like Solovay, we collapse $\lambda$ to $\kappa^+$ via $\mathbb{P}$. 
In our case, we will pick a subset $A$ of the $\kappa$-Baire space ${}^\omega\kappa$ of cardinality ${>}\kappa$ living in $L(V_{\kappa+1})$ (this latter as computed in the
generic extension by $\mathbb{P}$). Then we will factor $\mathbb{P}$ as a three-stages iteration, where the first factor of the iteration will be responsible for capturing the parameters defining $A$ (in $V^{\mathbb{P}}$). The Interpolation Lemma (Lemma \ref{lemma: interpolation}) will help us construct a $\kappa$-perfect set of small generics -- the construction will take place in the second stage of the iteration.
Eventually, this set of generics will induce a $\kappa$-perfect set in $L(V_{\kappa+1})$ as it will be demonstrated in Theorem~\ref{the main construction}. It would remain to provide an analogue of the generic $H_x$ in Solovay's proof but it turns out that this is unnecessary: to prove that the $\kappa$-perfect set is inside $A$ we will argue that it suffices to exhibit a $\mathbb{P}$-generic $H^*$ such that $\mathcal{P}(\kappa)^{V[H]}=\mathcal{P}(\kappa)^{V[H^*]}$. 

\smallskip

This section is devoted to define a poset (the \emph{constellating forcing})   which yields this generic $H^*$. The poset is a generalization of Kafkoulis forcing $\mathcal{Q}$ in \cite[Definition~3.1.1]{Kafkoulis1994-KAFTCS}. The section's main result is the \emph{Constellation Lemma} (Lemma~\ref{lemma: computing correctly the power set}) which displays the main properties of the  forcing.

\smallskip

Recall that we are still working under the precepts of Setup~\ref{setup kappapsp}.

\begin{definition}\label{def: directed system}
Let $(\mathcal{D},\sle)$ be a directed set with a maximal element $\infty$.\footnote{I.e., for all $d,e\in\mathcal{D}\setminus \{\infty\}$ there is $f\in\mathcal{D}\setminus \{\infty\}$ such that $d,e\sle f$.}

A sequence $$\mathcal{P}=\langle\mathbb{P}_d, \pi_{e,d}\colon \mathbb{P}_e\rightarrow\mathbb{P}_d\mid d,e\in \mathcal{D}\ \wedge\ d\sle e\rangle$$
is a \emph{(weak) directed system of forcings} (shortly, a \emph{(weak) system}) whenever:
\begin{enumerate}
	\item $\mathbb{P}_d$ is a  forcing poset for all $d\in \mathcal{D}$;
	\item $\pi_{e,d}$ is a (weak) projection 
 and $\pi_{d,d}=\id$ for all $d\sle e$ in  $\mathcal{D}$;
 \item $\pi_{f,d}=\pi_{e,d}\circ \pi_{f,e}$ for all $d\sle e\sle f$ in $\mathcal{D}$.
	
\end{enumerate}
For an index $e\in \mathcal{D}$ the \emph{$e$-truncation} of $\mathcal{P}$ is the (weak) system 
$$\mathcal{P}\restriction e:=\langle\mathbb{P}_d, \pi_{d,f}\colon \mathbb{P}_d\rightarrow\mathbb{P}_f\mid d,f\in \mathcal{D}\ \wedge\ d\sle f\sle e\rangle$$

If $\langle \mathbb{P}_d\mid d\in\mathcal{D}\rangle$  happen to be  $\Sigma$-Prikry one says that $\mathcal{P}$ is a \emph{(weak) system of $\Sigma$-Prikry forcings} (or, shortly, a \emph{(weak) $\Sigma$-system}).

     For the sake of readability we will denote  $$\text{$\mathcal{D}^*:=\mathcal{D}\setminus \{\infty\}$,  $\mathbb{P}:=\mathbb{P}_{\infty}$ and $\pi_d:=\pi_{\infty,d}$.}$$
\end{definition}

Given a weak system $\mathcal{P}$ there is a natural way to construct a system $\mathcal{B}$ via the duality between weak projections and projections between the corresponding regular open Boolean algebras (see Fact~\ref{fact: projection to the rescue} and Lemma~\ref{from a weak system to a system}):
\begin{lemma}\label{a system of boolean algebras}
    Given a weak system $$\mathcal{P}=\langle\mathbb{P}_d, \pi_{e,d}\colon \mathbb{P}_e\rightarrow\mathbb{P}_d\mid d,e\in \mathcal{D}\ \wedge\ d\sle e\rangle$$ there is an associated system between their regular open algebras $$\mathcal{B}:=\langle{\ro(\mathbb{P}_d)}_{\downarrow\rho_d(\one_{\mathbb{P}})}, \rho_{e,d}\colon \ro(\mathbb{P}_e)_{\downarrow\rho_e(\one_{\mathbb{P}})}\rightarrow{\ro(\mathbb{P}_d)}_{\downarrow\rho_d(\one_{\mathbb{P}})}\mid d,e\in \mathcal{D}\ \wedge\ d\sle e\rangle.\qed$$ 
\end{lemma}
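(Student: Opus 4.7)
The plan is to construct each individual projection $\rho_{e,d}$ from the weak projection $\pi_{e,d}$ via Fact~\ref{fact: projection to the rescue}, and then to read off the system property by applying Lemma~\ref{from a weak system to a system} to each triangle of weak projections in $\mathcal{P}$.

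First, fix a pair $d\sle e$ in $\mathcal{D}$. By Fact~\ref{fact: basics of weak projections}(1), the canonical name $\dot{g}_{e,d}$ for the upwards closure of $\pi_{e,d}``\dot{G}_{\mathbb{P}_e}$ is forced by $\one_{\mathbb{P}_e}$ to be $\mathbb{P}_d$-generic. Feeding $\dot{g}_{e,d}$ and $p=\one_{\mathbb{P}_e}$ into Fact~\ref{fact: projection to the rescue} delivers the desired projection
\[
\rho_{e,d}\colon \ro(\mathbb{P}_e)\longrightarrow \ro(\mathbb{P}_d)_{\downarrow\rho_{e,d}(\one_{\mathbb{P}_e})}.
\]
Specializing $e=\infty$ yields in particular the ``top'' projections $\rho_d:=\rho_{\infty,d}$ for $d\in \mathcal{D}^*$.

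Second, I verify the system identity $\rho_{f,d}=\rho_{e,d}\circ \rho_{f,e}$ for all $d\sle e\sle f$ in $\mathcal{D}$. By clause~(3) of Definition~\ref{def: directed system} the triple $(\pi_{f,e},\pi_{e,d},\pi_{f,d})$ forms a commutative triangle of weak projections, so Lemma~\ref{from a weak system to a system} produces a commutative triangle of projections among the corresponding regular open algebras. An inspection of the proof of Lemma~\ref{from a weak system to a system} shows that these are precisely the projections induced by the names for $\pi_{f,e}``\dot{G}_{\mathbb{P}_f}$, $\pi_{e,d}``\dot{G}_{\mathbb{P}_e}$ and $\pi_{f,d}``\dot{G}_{\mathbb{P}_f}$, and hence coincide with the maps $\rho_{f,e}, \rho_{e,d}, \rho_{f,d}$ defined above. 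Setting $f=\infty$ in particular gives the key identity $\rho_d=\rho_{e,d}\circ \rho_e$ for every $d\sle e$ in $\mathcal{D}^*$.

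Third, I reconcile the codomains with those displayed in the statement. In light of Convention~\ref{conv: convention about projections of boolean algebras}, each $\rho_{e,d}$ is already valued in $\ro(\mathbb{P}_d)_{\downarrow\rho_{e,d}(\one_{\mathbb{P}_e})}$; evaluating the identity $\rho_d=\rho_{e,d}\circ \rho_e$ at $\one_{\mathbb{P}}$ yields
\[
\rho_{e,d}(\rho_e(\one_{\mathbb{P}}))=\rho_d(\one_{\mathbb{P}}),
\]
so restricting $\rho_{e,d}$ to the subcone $\ro(\mathbb{P}_e)_{\downarrow\rho_e(\one_{\mathbb{P}})}$ produces a projection with codomain contained in $\ro(\mathbb{P}_d)_{\downarrow\rho_d(\one_{\mathbb{P}})}$, and the three defining clauses of a projection persist under restriction to a cone. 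This assembles the system $\mathcal{B}$ as required. The only substantive issue throughout is the bookkeeping around Convention~\ref{conv: convention about projections of boolean algebras} \textemdash{} ensuring that the ``top'' conditions cohere along the projections \textemdash{} and this is precisely what the commutativity identities above resolve; there is no deeper obstacle, since the construction is simply Lemma~\ref{from a weak system to a system} applied triangle-by-triangle across the directed system.
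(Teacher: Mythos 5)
Your proposal is correct and follows exactly the route the paper intends: the paper states this lemma without proof (it is marked as immediate), the point being precisely to combine Fact~\ref{fact: projection to the rescue} for the individual induced projections with Lemma~\ref{from a weak system to a system} applied triangle-by-triangle for commutativity. Your third step, checking $\rho_{e,d}(\rho_e(\one_{\mathbb{P}}))=\rho_d(\one_{\mathbb{P}})$ and restricting to the cones, correctly fills in the codomain bookkeeping that the paper leaves implicit via Convention~\ref{conv: convention about projections of boolean algebras}.
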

 
\begin{definition}\label{def: boolean completion of a system}
    The \emph{Boolean completion} of a weak system $\mathcal{P}$ is the system  $\mathcal{B}$ defined above. We will denote $\mathbb{B}:=\ro(\mathbb{P})$ and $\mathbb{B}_e:=\ro(\mathbb{P}_e)_{\downarrow\rho_e(\one_{\mathbb{P}})}$ for $e\in\mathcal{D}$.
\end{definition}
Formally speaking the Boolean completion $\mathcal{B}$ of a weak $\Sigma$-system $\mathcal{P}$ may not be a $\Sigma$-system. Nevertheless, $\mathcal{B}$ will inherit all the pleasant $\Sigma$-Prikry features of the individuals of $\mathcal{P}$ in that $\mathbb{P}_e$ is a dense subforcing of $\mathbb{B}_e$.

\begin{remark}
Let us explain our reasons for considering weak projections instead of projections and for passing to the corresponding Boolean completions. The first reason is that there are \(\Sigma\)-Prikry forcings which do not admit projections to their corresponding ``small" forcings -- only weak projections are available. Examples of such \(\Sigma\)-Prikry forcings will be given in \S\ref{sec: applications}, but these pathologies were already observed by Foreman and Woodin \cite{ForWoo}. The reason for looking at the Boolean completion of weak systems is because projections provide a safe environment to make claims of the following nature: ``If \(D\) is dense in \(\mathbb{P}\), then \(\pi_e``D\) is dense in \(\mathbb{P}_e\)" or ``If \(g\) is \(\mathbb{P}_e\)-generic and \(G\) is \(\mathbb{P}/g\)-generic, then \(G\) is \(\mathbb{P}\)-generic and \(V[G] = V[g][G]\)." If the \(\pi_e\)'s are mere weak projections, these claims  may  fail.
\end{remark}

\smallskip

The main technical devise developed in this section is the \emph{constellating forcing $\mathbb{C}$}. Before introducing it and explain its functionality we  first present the notion of a $\mathcal{P}$-sky and $\mathcal{P}$-constellation of a $\mathbb{P}$-generic filter $G$:
\begin{definition}
    Let $\mathcal{P}$ be a system and $G$ a $\mathbb{P}$-generic filter.
    \begin{enumerate}
        \item The \emph{$\mathcal{P}$-sky of $G$} is $\sky_{\mathcal{P}}(G):=\{g\in V[G]\mid \exists e\in \mathcal{D}^*\, \text{($g$ is $\mathbb{P}_e$-generic)}\}$.
        \item The \emph{$\mathcal{P}$-constellation of $G$} is $\con_{\mathcal{P}}(G):=\{\pi_e``G\mid e\in \mathcal{D}^*\}$.
    \end{enumerate}
\end{definition}
\begin{remark}
    Clearly, $\con_{\mathcal{P}}(G)\s \sky_{\mathcal{P}}(G)$. Also note since $\mathcal{P}$ is a system, $$\text{$g\in \con_{\mathcal{P}}(G)$ if and only if 
 $G$ is $\mathbb{P}/g$-generic.}$$
\end{remark}
Given a system $\mathcal{P}$ and a $\mathbb{P}$-generic filter $G$,  the \emph{constellating poset} $$\mathbb{C}:=\mathbb{C}(\mathcal{P},G)$$ will be  defined in $V[G]$ and will have the next two properties:
\begin{enumerate}
    \item \textbf{($\kappa$-captures)} Forcing with $\mathbb{C}$  induces a $\mathbb{P}$-generic $G^*$ such that $$\mathcal{P}(\kappa)^{V[G]}=\mathcal{P}(\kappa)^{V[G^*]}.$$
    \item \textbf{(Constellates)} Given $g\in \sky_{\mathcal{P}}(G)$ there is $c\in\mathbb{C}$ such that
    $$V[G]\models\text{$``c\forces_{\mathbb{C}}\check{g}\in \con_{\mathcal{P}}(\dot{G}^*)$''.}$$
 \end{enumerate}

\begin{definition}[Nice systems]\label{def: nice system}
A  (weak) system $\mathcal{P}$ is  \emph{$(\kappa,\lambda)$-nice} if:
\begin{enumerate}[label=(\greek*)]
    \item $\mathcal{P}$ is \emph{$\kappa$-capturing}: \begin{equation*}\label{equation: capturing subsets}
       \one\forces_{\mathbb{P}} \forall x\in\mathcal{P}(\kappa)^{V[\dot{G}]}\,\forall d\in \mathcal{D}^*\,\exists e\in \mathcal{D}^*(d\sle e\,\wedge\, x\in V[\pi_{e}``\dot{G}]);
    \end{equation*}
 \item\label{directed system: Clause 2} $\mathcal{P}$ is \emph{$\lambda$-bounded}:   $$\text{$\langle \mathbb{P}_d\mid d\in\mathcal{D}^*\rangle \s H_\lambda$};$$
 
 \item\label{directed system: Clause 3} $\mathcal{P}$ is \emph{amenable to interpolations}: $$\text{$\one \Vdash_{\mathbb{P}}``\lambda=(\kappa^+)^{V[\dot{G}]}\ \wedge\ \cf(\kappa)^{V[\dot{G}]}=\omega$''.}$$
\end{enumerate} 
\end{definition}
If $\mathcal{P}$ is a weak $(\kappa,\lambda)$-nice system its Boolean completion $\mathcal{B}$ is a $(\kappa,\lambda)$-nice system. This is because $\mathbb{B}$ and $\mathbb{P}$ are forcing equivalent (hence $(\alpha)$ and $(\gamma)$ hold for $\mathbb{B}$) and members of $\mathcal{B}$ belong to $H_\lambda$ because $\lambda$ is  inaccessible.

\begin{lemma}\label{lemma: capturing generic filters with a small generic}
     
Suppose that $\mathcal{P}$ is a $(\kappa,\lambda)$-nice system and $G\s \mathbb{P}$ is $V$-generic. For each $g\in \sky_{\mathcal{P}}(G)$ there is $e\in \mathcal{D}^*$ such that $g\in \sky_{\mathcal{P}\restriction e}(\pi_e``G).$   
\end{lemma}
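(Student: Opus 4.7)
The plan is to reduce the assertion to a direct application of the $\kappa$-capturing property (Clause~$(\alpha)$ in the definition of niceness). The key observation is that any $\mathbb{P}_d$-generic $g$ living in $V[G]$ is, modulo a coding fixed in $V$, a subset of $\kappa$ in $V[G]$; hence it must be captured inside some intermediate model $V[\pi_e``G]$ for a sufficiently high $e\in\mathcal{D}^*$. Once this is achieved, genericity is preserved for free and the desired membership in $\sky_{\mathcal{P}\restriction e}(\pi_e``G)$ follows from the very definition.

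Concretely, take $g\in\sky_{\mathcal{P}}(G)$; then $g\in V[G]$ and $g$ is $\mathbb{P}_d$-generic over $V$ for some $d\in\mathcal{D}^*$. By $\lambda$-boundedness (Clause~$(\beta)$), $\mathbb{P}_d\in H_\lambda^V$, so $\eta:=|\mathbb{P}_d|^V<\lambda$. Since amenability to interpolations (Clause~$(\gamma)$) gives $\lambda=(\kappa^+)^{V[G]}$, we have $\eta\leq\kappa$ in $V[G]$. Fix in $V$ a bijection $b\colon \mathbb{P}_d\to\eta$ and set $x:=b``g\subseteq\eta\leq\kappa$. Thus $x\in\mathcal{P}(\kappa)^{V[G]}$. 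Applying the $\kappa$-capturing property (Clause~$(\alpha)$) to $x$ and $d$, we obtain some $e\in\mathcal{D}^*$ with $d\preceq e$ and $x\in V[\pi_e``G]$. Because $b\in V\subseteq V[\pi_e``G]$, decoding yields $g=b^{-1}``x\in V[\pi_e``G]$.

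Finally, since $g$ is $\mathbb{P}_d$-generic over $V$ and $d\preceq e$, we conclude $g\in\sky_{\mathcal{P}\restriction e}(\pi_e``G)$, as desired. There is no serious obstacle in this argument: it is essentially a matter of unfolding definitions and invoking the $\kappa$-capturing clause, which was designed precisely for this situation. The only care required is that the coding of $g$ be effected via a bijection chosen in the ground model $V$, so that the decoding step can be carried out inside the intermediate model $V[\pi_e``G]$ without introducing new generic information.
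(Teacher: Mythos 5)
Your reduction to the $\kappa$-capturing clause is the right instinct, but the coding step has a genuine gap. You fix a bijection $b\colon\mathbb{P}_d\to\eta$ in $V$ with $\eta:=|\mathbb{P}_d|^V$ and then assert $\eta\le\kappa$. This conflates the ordinal $\eta$ with its cardinality in $V[G]$: $\lambda$-boundedness only gives $\eta<\lambda$, and while clause $(\gamma)$ ensures $|\eta|^{V[G]}\le\kappa$, the ordinal $\eta$ itself may well lie strictly between $\kappa$ and $\lambda$ (in the applications of \S\ref{sec: applications} the posets $\mathbb{P}_d$ do in general have $V$-cardinality ${>}\kappa$, since the domains $d$ only have size ${<}\lambda$). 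Hence $x=b``g$ is a subset of $\eta$, not of $\kappa$, so it need not be an element of $\mathcal{P}(\kappa)^{V[G]}$ and clause $(\alpha)$ does not apply to it.

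The obvious repair --- composing $b$ with a bijection $c\colon\eta\to\mu$, $\mu\le\kappa$, chosen in $V[G]$ --- runs into exactly the difficulty you flag at the end of your argument: $c\notin V$, so after capturing $c``x$ inside $V[\pi_e``G]$ you cannot decode $g$ unless $c$ itself is available there, and $c$ is again only codeable as a subset of an ordinal ${>}\kappa$, so you go in circles. The paper's proof resolves this with a two-stage argument: it first captures a subset of $\kappa$ coding a well-founded relation $(\nu,\triangleleft)$ with $\nu\le\kappa$ and $(\nu,\triangleleft)\cong(\mathrm{tcl}(\mathbb{P}_d),\in)$; by uniqueness of the Mostowski collapse, the intermediate model $V[\pi_{e_0}``G]$ then recovers not only $\mathrm{tcl}(\mathbb{P}_d)$ but the collapsing bijection $f$ itself. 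A second application of $(\alpha)$ to $a:=\{\alpha\in\nu\mid f(\alpha)\in g\}\subseteq\nu\le\kappa$ then yields $g=f``a\in V[\pi_{e_1}``G]$. Some version of this collapse trick, with two applications of capturing, is needed to make the decoding legitimate; a single coding fixed in $V$ does not suffice.
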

\begin{proof}
Let $d\in\mathcal{D}^*$ such that $g$ is $\mathbb{P}_d$-generic.
    By Clauses~\ref{directed system: Clause 2} and $(\gamma)$ in  Definition~\ref{def: nice system},  
    $|\mathrm{tcl}(\mathbb{P}_d)|<\lambda=(\kappa^+)^{V[G]}.$ Working in $V[G]$, pick a bijection $f\colon \nu\rightarrow \mathrm{tcl}(\mathbb{P}_d)$, for some $\nu\leq\kappa$, and define 
    $\text{$\xi\triangleleft\delta:\Longleftrightarrow f(\xi)\in f(\delta)$.}$ Clearly $\triangleleft$ can be coded as a subset of $\eta\leq\kappa$. 
    By the $\kappa$-capturing property of $\mathcal{P}$  there is $d\sle e_0\in \mathcal{D}^*$ such that $(\nu,\triangleleft)\in V[\pi_{e_0}``G]$. The transitive collapse of $(\nu,\triangleleft)$ as computed in $V[\pi_{e_0}``G]$ has to be $(\mathrm{tcl}(\mathbb{P}_d),\in)$, and consequently $\mathrm{tcl}(\mathbb{P}_d)\in V[\pi_{e_0}``G]$ as well. In fact, the isomorphism in $V[\pi_{e_0}``G]$ witnessing $(\nu,\triangleleft)\cong(\mathrm{tcl}(\mathbb{P}_d),\in)$ is unique and so it has to be $f$, yielding $f\in V[\pi_{e_0}``G]$. On the other hand, $g\subseteq\mathbb{P}_d\subseteq\mathrm{tcl}(\mathbb{P}_d)$ so as before we can code $g$ as a subset of $\kappa$. Specifically,  let $a:=\{\alpha\in\nu\mid f(\alpha)\in g\}$, which is clearly a subset of $\kappa$ in $V[G]$. Use again  \ref{equation: capturing subsets} to pick $e_0\sle e_1\in \mathcal{D}^*$ such that $a\in V[\pi_{e_1}``G]$. 
    Since $g=f``a$, $g$ belongs to $V[\pi_e``G]$; that is, $g\in \sky_{\mathcal{P}\restriction e}(\pi_e``G).$  
\end{proof}

\textbf{Disclaimer:}\label{Disclaimer} All the results in this section as well as those in the subsequent ones (i.e., \S\ref{sec: sigma prikry and kappaperfect} and $\S\ref{sec: Sigma Prikry and BP}$) work for arbitrary $(\kappa,\lambda)$-nice $\Sigma$-systems. Thus if we are given a $(\kappa,\lambda)$-nice \textbf{$\Sigma$-system} $\mathcal{P}$ passing to its Boolean completion $\mathcal{B}$ becomes  unnecessary. The reason for why we formulate the next results for the Boolean completions of a \textbf{weak}  $(\kappa,\lambda)$-nice $\Sigma$-system $\mathcal{P}$ is that $\Sigma$-Priky members of $\mathcal{P}$ densely (and canonically) embed into members of $\mathcal{B}$. Nevertheless, notice that the Boolean algebras in $\mathcal{B}$ are not $\Sigma$-Prikry. \qed

\medskip

The time is now ripe to present the constellating forcing, $\mathbb{C}$. For this we  begin with a weak $(\kappa,\lambda)$-nice $\Sigma$-system $\mathcal{P}$ and pass to its Boolean completion $\mathcal{B}$, which is a $(\kappa,\lambda)$-nice system.  $\Sigma$-Prikry forcings are necessary to fulfill the hypothesis of the \emph{Interpolation Lemma} proved in \pageref{lemma: interpolation}. This, as we will show,  is the key ingredient to ensure that $\mathbb{C}$ indeed ``constellates''.
\begin{definition}\label{def: fancy poset}
     Let $\mathcal{B}$ be the Boolean completion of a $(\kappa,\lambda)$-nice $\Sigma$-system $\mathcal{P}$  
    and let $G$ be $\mathbb{B}$-generic. Working in $V[G]$, we define the \emph{constellating poset} $\mathbb{C}:=\mathbb{C}(\mathcal{B},G)$ as the collection of all triples $\langle p,d,g\rangle$ such that $d\in\mathcal{D}^*$ witnesses $g\in \sky_{\mathcal{B}}(G)$ and $p\in \mathbb{B}/g$.

     The order between conditions is $$\langle q,e,h\rangle\leq \langle p,d,g\rangle$$ if and only one of the following requirements is met: 
     \begin{itemize}
         \item  $q\leq_{\mathbb{B}}p$ and $\langle d, g\rangle=\langle e,h\rangle$;
         \item  $q\leq_{\mathbb{B}} p$, $d\prec e$  and $g\in\con_{\mathcal{B}\restriction e}(h)$ (i.e., $h$ is $\mathbb{B}_e/g$-generic).
     \end{itemize} 
\end{definition}
\begin{lemma}\label{lemma: properties Pbar}
    Let $\bar{G}$ be a $\mathbb{C}$-generic over $V[G]$. Then, 
    \begin{enumerate}
        \item\label{G star} $G^*=\{p\in\mathbb{B}\mid \langle q,d,g\rangle\in \bar{G}\,\text{for some $q\leq p$, $d\in\mathcal{D}$ and $g$}\}$ is a $\mathbb{B}$-generic filter.
        \item\label{density of E_d}  $E_d=\{\langle p,e,h\rangle\in\mathbb{C}\mid d\sle e \}$ is dense for each $d\in\mathcal{D}^*$.
        \item\label{projection of G star} $\rho_e``G^*=g$ for each $\langle p, e,g\rangle\in \bar{G}$.
    \end{enumerate}
\end{lemma}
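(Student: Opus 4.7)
The plan is to prove (1), (2), (3) in sequence, with the main work concentrated in step~(2), where the Interpolation Lemma (Lemma~\ref{lemma: interpolation}) is invoked.

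For (1), $G^*$ is upward closed by definition. If $p_1,p_2\in G^*$ are witnessed by $\langle q_i,d_i,g_i\rangle\in\bar{G}$ with $q_i\leq_{\mathbb{B}}p_i$, any common extension $\langle r,e,h\rangle\in\bar{G}$ of these two $\mathbb{C}$-conditions satisfies $r\leq_{\mathbb{B}}q_1,q_2\leq p_1,p_2$, so $r\in G^*$ is a common lower bound. For genericity, given a dense $D\subseteq\mathbb{B}$ the set $D':=\{\langle p,d,g\rangle\in\mathbb{C}\mid p\in D\}$ is dense in $\mathbb{C}$: from any $\langle p,d,g\rangle\in\mathbb{C}$ pick $p'\leq_{\mathbb{B}} p$ with $p'\in D$ and note $\rho_d(p')\leq\rho_d(p)\in g$, so $\langle p',d,g\rangle\in\mathbb{C}$ extends $\langle p,d,g\rangle$ and lies in $D'$. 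Any element of $\bar{G}\cap D'$ then witnesses that its first coordinate belongs to $G^*\cap D$.

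For (2), fix $\langle p,e,h\rangle\in\mathbb{C}$ and $d\in\mathcal{D}^*$. If $d\sle e$ the condition itself already lies in $E_d$. Otherwise, pick $f\in\mathcal{D}^*$ with $d\sle f$ and $e\prec f$ using directedness of $\mathcal{D}^*$ (this is possible since $d\not\sle e$). The key step is to produce $h'\in V[G]$ that is $\mathbb{B}_f$-generic over $V$, with $\rho_{f,e}``h'=h$ and $\rho_f(p)\in h'$; granting this, $\langle p,f,h'\rangle$ is a $\mathbb{C}$-condition extending $\langle p,e,h\rangle$ and lying in $E_d$. The existence of such $h'$ is precisely the content of the Interpolation Lemma, applied to the weak projections $\pi_f\colon\mathbb{P}\to\mathbb{P}_f$ and $\pi_{f,e}\colon\mathbb{P}_f\to\mathbb{P}_e$ (together with their mirror system on the Boolean completions), with generic $G$, small generic $h\in\sky_{\mathcal{B}}(G)$ and condition $p\in\mathbb{B}/h$. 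The $\Sigma$-Prikry property of $\mathbb{P}$ and $\mathbb{P}_f$, together with $\lambda$-boundedness ($\mathbb{P}_f\in H_\lambda$) and amenability to interpolations ($\one\Vdash_{\mathbb{P}}``\cf(\kappa)=\omega\wedge\kappa^+=\lambda\text{''}$), verify the lemma's hypotheses.

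For (3), fix $\langle p,e,g\rangle\in\bar{G}$. By (1) and Fact~\ref{fact: basics of weak projections}(1) applied to the projection $\rho_e$, $\rho_e``G^*$ is a $\mathbb{B}_e$-generic filter in $V[\bar{G}]$; since $\mathbb{B}_e$ is a complete Boolean algebra this filter is an ultrafilter. Likewise $g$ is a $\mathbb{B}_e$-generic ultrafilter by definition of $\mathbb{C}$, so it suffices to show $\rho_e``G^*\subseteq g$. Given $p'\in G^*$ with witness $\langle q,d',g'\rangle\in\bar{G}$ and $q\leq p'$, let $\langle r,f,h\rangle\in\bar{G}$ be a common extension of $\langle q,d',g'\rangle$ and $\langle p,e,g\rangle$. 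Unfolding the $\mathbb{C}$-order gives $g=\rho_{f,e}``h$ (with $\rho_{e,e}=\id$ when $f=e$). Since $r\in\mathbb{B}/h$ we have $\rho_f(r)\in h$, whence $\rho_e(r)=\rho_{f,e}(\rho_f(r))\in g$, and therefore $\rho_e(p')\geq\rho_e(r)\in g$. Hence $\rho_e``G^*\subseteq g$ as desired.

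The main obstacle is producing the interpolating generic $h'$ in step~(2): the prescribed small generic $h$ need not coincide with $\rho_e``G$, so one cannot simply set $h':=\rho_f``G$. The existence of a $\mathbb{B}_f$-generic filter projecting to $h$ (and containing $\rho_f(p)$) is exactly what the Interpolation Lemma delivers, and it is the point at which the $\Sigma$-Prikry framework and the full package of niceness hypotheses on $\mathcal{P}$ are essential.
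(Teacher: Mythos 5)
Your proof follows essentially the same route as the paper's: (1) by pulling dense subsets of $\mathbb{B}$ back to dense subsets of $\mathbb{C}$, (2) by combining directedness of $\mathcal{D}^*$ with the Interpolation Lemma to lengthen the index while interpolating a generic for the larger quotient, and (3) by using that $\rho_e``G^*$ is generic, taking a common extension in $\bar{G}$, and concluding by maximality. The application of the Interpolation Lemma in (2) is set up with exactly the data the paper uses.

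One justification in (1) is mis-stated: from $\rho_d(p')\leq\rho_d(p)\in g$ you cannot conclude $\rho_d(p')\in g$, since $g$ is upward (not downward) closed. The correct argument is the standard quotient-density one: $\rho_d``(D\cap\mathbb{B}_{\downarrow p})$ is dense below $\rho_d(p)\in g$ because $\rho_d$ is a projection, so genericity of $g$ yields some $p'\in D$ with $p'\leq p$ and $\rho_d(p')\in g$, whence $\langle p',d,g\rangle\in\mathbb{C}$. With that one-line repair the proof is complete.
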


\begin{proof}
    (1) It is not hard to show that $G^*$ is a filter. To check genericity  we argue that $D\cap G^\ast\neq\emptyset$ for all dense open sets $D\in\mathcal{P}(\mathbb{B})^V$. First note that $\Bar{D}=\{\langle q,e,h\rangle\in\mathbb{C}\mid q\in D\}$ is dense in $\mathbb{C}$. To see this pick $\langle q,e,h\rangle\in\mathbb{C}$. In particular $q\in\mathbb{B}$ and so, by density of $D$ and since $\rho_e(q)\in h$, there is $q'\leq q$ with $q'\in D\cap \mathbb{B}/h$. It follows that $\langle q',e,h\rangle\leq\langle q,e,h\rangle$ and $\langle q',e,h\rangle\in\Bar{D}$, showing that $\Bar{D}$ is dense in $\mathbb{C}$. Thus, by genericity of $\Bar{G}$, there is $\langle q,e,h\rangle\in\Bar{D}\cap\Bar{G}$. By definition of $\Bar{D}$ and $\Bar{G}$, we deduce that $q\in D\cap G^\ast$.
    
    \smallskip
    
    (2) Suppose that $\langle q_0,e_0,h_0\rangle\in\mathbb{C}$. By directedness of $\mathcal{D}$ there is $e\in\mathcal{D}^*$ such that $e_0,d\sle e$. We are going to find $h$ such that $\langle q_0,e,h\rangle\leq\langle q_0,e_0,h_0\rangle$.  The main obstacle  is to find a $\mathbb{B}_e$-generic $h$ (over $V$) which is also $\mathbb{B}_e/h_0$-generic (over $V[h_0]$). Here the \emph{Interpolation Lemma} (Lemma~\ref{lemma: interpolation}) comes into our rescue. Note that since $\mathcal{P}$ is a weak $(\kappa,\lambda)$-nice $\Sigma$-system the assumptions of this lemma are fulfilled when regarded with respect to 
    \begin{displaymath}
             \begin{tikzcd}
  \mathbb{P} \arrow{r}{\pi_e} \arrow[bend right]{rr}{\pi_{e,e_0}} & \mathbb{P}_e \arrow{r}{\pi_{e,e_0}}  & \mathbb{P}_{e_0}
\end{tikzcd} 
             \begin{tikzcd}
  \mathbb{B} \arrow{r}{\rho_e} \arrow[bend right]{rr}{\rho_{e,e_0}} & \mathbb{B}_e \arrow{r}{\rho_{e,e_0}}  & \mathbb{B}_{e_0}
\end{tikzcd}
    \end{displaymath}
    and the condition $q_0\in \mathbb{B}/h_0$. 
    The interpolation lemma thus gives us a $\mathbb{B}_e/h_0$-generic $h\in\sky_{\mathcal{B}}(G)$  such that $\rho_{e}(q_0)\in h$.  As a result, $\langle q_0,e,h\rangle\in\mathbb{C}$.

    \smallskip

    (3)  Since $G^\ast$ is $\mathbb{B}$-generic, $\rho_e``G^\ast$ is $\mathbb{B}_e$-generic so that it suffices to check $$\rho_e``G^*\s g.$$ Fix $q\in G^*$. By definition, there are $r\leq q$, $d\in\mathcal{D}^*$ and $h$ such that  $\langle r, d, h\rangle\in \bar{G}$. Since $\langle r, d, h\rangle, \langle p, e, g\rangle\in \bar{G}$ we can let $\langle r', d', h'\rangle\in \bar{G}$ witnessing compatibility. If $\langle d', h'\rangle=\langle e,g\rangle$ then $\rho_e(r')\in g$ (because $r'\in\mathbb{B}/h'$) and so $\rho_e(r')\leq  \rho_e(q)\in g$. Alternatively, $e\sle d'$ and $h'$ is $\mathbb{B}_{d'}/g$-generic. In that case we have that $\rho_{d'}(r')\leq \rho_{d'}(q)\in h'$ and so $\rho_{d',e}(\rho_{d'}(q))=\rho_e(q)\in g$. 
   
\end{proof}
\begin{cor}\label{remark after lemma: properties Pbar}\hfill
\begin{enumerate}
    \item For each $d\in \mathcal{D}$, $\rho_d`` G^*$ is $\mathbb{B}_d$-generic. 
    \item For each  $g\in \sky_{\mathcal{B}}(G)$ the condition $\langle \one, d, g\rangle\in \mathbb{C}$ constellates $g$: i.e.,
    $$V[G]\models \langle \one,d,g\rangle\forces_{\mathbb{C}}\check{g}\in \con_{\mathcal{B}}(\dot{G}^*).$$
\end{enumerate}
\end{cor}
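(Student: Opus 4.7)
\medskip

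The plan is to derive both clauses as essentially direct consequences of Lemma~\ref{lemma: properties Pbar}, since the heavy lifting (namely the construction of $G^*$ and the verification of its properties) has already been carried out.

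\medskip

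For Clause~(1), I would first invoke Lemma~\ref{lemma: properties Pbar}(\ref{G star}) to know that $G^*$ is $\mathbb{B}$-generic over $V$. Since $\rho_d\colon \mathbb{B}\rightarrow \mathbb{B}_d$ is a projection (this is the content of the Boolean completion $\mathcal{B}$ being a genuine system, per Lemma~\ref{a system of boolean algebras} and Definition~\ref{def: boolean completion of a system}), the general fact that projections send generic filters to generic filters (Fact~\ref{fact: basics of weak projections}(1), specialized to projections) gives at once that $\rho_d``G^*$ is $\mathbb{B}_d$-generic, once identified with its upwards closure as per our standing convention.

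\medskip

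For Clause~(2), fix $g\in\sky_{\mathcal{B}}(G)$ together with a witnessing $d\in \mathcal{D}^*$ (so that $g$ is $\mathbb{B}_d$-generic). Then $\rho_d(\one_{\mathbb{B}})=\one_{\mathbb{B}_d}\in g$, whence $\one_\mathbb{B}\in\mathbb{B}/g$ and the triple $\langle \one_{\mathbb{B}},d,g\rangle$ is a legitimate condition in $\mathbb{C}$. Let $\bar{G}$ be any $\mathbb{C}$-generic filter over $V[G]$ containing $\langle\one_{\mathbb{B}},d,g\rangle$. Applying Lemma~\ref{lemma: properties Pbar}(\ref{projection of G star}) to this triple yields $\rho_d``G^*=g$. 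By the definition of the constellation of $G^*$, this precisely means $g\in\con_{\mathcal{B}}(G^*)$. Since $\bar{G}$ was an arbitrary $\mathbb{C}$-generic containing $\langle \one_{\mathbb{B}},d,g\rangle$, we conclude
\[
V[G]\models \langle \one_{\mathbb{B}},d,g\rangle\forces_{\mathbb{C}}\check{g}\in\con_{\mathcal{B}}(\dot{G}^*),
\]
as required.

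\medskip

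There is no genuine obstacle here: the corollary is a direct repackaging of Lemma~\ref{lemma: properties Pbar}. The only point to be slightly careful with is the convention (see Convention~\ref{conv: convention about projections of boolean algebras}) regarding the ``trivial'' element of $\mathbb{B}_d$ after passing to the cone $\mathbb{B}_d=\ro(\mathbb{P}_d)_{\downarrow\rho_d(\one_{\mathbb{P}})}$, but this only amounts to noting that $\one_{\mathbb{B}_d}$ is by definition in every generic filter, so the triple $\langle\one_{\mathbb{B}},d,g\rangle$ is always well-defined.
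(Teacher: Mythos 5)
Your proposal is correct and matches the paper's (implicit) argument: the corollary is stated without proof precisely because it follows directly from Lemma~\ref{lemma: properties Pbar} in the way you describe — Clause~(1) from part~(1) of that lemma plus the fact that projections push generic filters to generic filters, and Clause~(2) from part~(3) via the standard characterization of the forcing relation. Your care about $\one_{\mathbb{B}_d}=\rho_d(\one_{\mathbb{P}})$ being the top of the cone is the right (and only) point of bookkeeping.
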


Next we show that $\mathbb{C}$ captures all sets in $\mathcal{P}(\kappa)^{V[G]}$. The main technical observation towards this end is the following lemma:

\begin{lemma}\label{D_x is dense in barP}
         $D_x=\{\langle q,e,h\rangle\in\mathbb{C}\mid x\in V[h]\}$ is dense for all $x\in\mathcal{P}(\kappa)^{V[G]}$.
\end{lemma}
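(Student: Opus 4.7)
Given $\langle p, d, g\rangle \in \mathbb{C}$ and $x \in \mathcal{P}(\kappa)^{V[G]}$, my goal is to produce an extension $\langle q, e, h\rangle \leq \langle p, d, g\rangle$ satisfying $x \in V[h]$. Invoking the $\kappa$-capturing property of $\mathcal{P}$ (Definition~\ref{def: nice system}, Clause~($\alpha$)) on the pair $(x, d)$, I first fix $e_0 \in \mathcal{D}^*$ with $d \sle e_0$ such that $x \in V[\rho_{e_0}``G]$. The naive choice $h := \rho_{e_0}``G$ is generally not $\mathbb{B}_{e_0}/g$-generic over $V[g]$, since that would force $g = \rho_d``G$, which fails in general (recall that $p$ need not lie in $G$, and even if it did, the given $g$ is not constrained to be $\rho_d``G$).

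To bypass this, I proceed by a two-step extension. First, appealing to the density of $E_{e_0}$ from Lemma~\ref{lemma: properties Pbar}(\ref{density of E_d}) --- which itself rests on the Interpolation Lemma (Lemma~\ref{lemma: interpolation}) --- I extend $\langle p, d, g\rangle$ to a condition $\langle p, e_0, h_0\rangle \in \mathbb{C}$, where $h_0 \in V[G]$ is some $\mathbb{B}_{e_0}/g$-generic filter over $V[g]$. In general $h_0 \neq \rho_{e_0}``G$, so $x$ need not yet lie in $V[h_0]$. Then, treating $h_0$ as the ``$g$'' of a new Interpolation, I apply Lemma~\ref{lemma: interpolation} with its Capturing feature to the system $\mathbb{B} \to \mathbb{B}_e \to \mathbb{B}_{e_0}$ for some $e$ with $e_0 \sle e$, producing a $\mathbb{B}_e/h_0$-generic $h$ such that $\rho_{e_0}``G \in V[h]$. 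Combined with $x \in V[\rho_{e_0}``G]$, this delivers $x \in V[h]$, and transitivity of $\leq_{\mathbb{C}}$ yields $\langle p, e, h\rangle \leq \langle p, d, g\rangle$, as required.

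The principal technical obstacle is verifying the Capturing hypothesis of the second application: one must exhibit, in $V[h_0]$, a projection from a cone of $\mathbb{B}_e/h_0$ into a cone of $\mathbb{B}_{e_0}$ meeting some $\bar{p} \in \rho_{e_0}``G$. The naive candidate $\rho_{e, e_0}\restriction(\mathbb{B}_e/h_0)$ has image inside $h_0$ rather than in an arbitrary cone about $\bar{p}$, so a more delicate construction is required. I expect this to be achievable by a judicious choice of $e$ and of the base cone, exploiting the $\Sigma$-Prikry structure of the forcings in $\mathcal{P}$ together with the genuine projections between Boolean completions provided by Lemma~\ref{from a weak system to a system}.
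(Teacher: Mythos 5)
Your skeleton matches the paper's argument: capture $x$ in $V[\rho_{e_0}``G]$, pass to a condition of the form $\langle p,e_0,h_0\rangle$ via density of $E_{e_0}$, and then use the Capturing clause of the Interpolation Lemma to manufacture an $h$ with $\rho_{e_0}``G\in V[h]$. You have also correctly diagnosed where the difficulty sits. But the step you defer --- exhibiting, in $V[h_0]$, a projection $\sigma\colon(\mathbb{B}_{e}/h_0)_{\downarrow t}\rightarrow(\mathbb{B}_{e_0})_{\downarrow\bar p}$ for some $\bar p\in\rho_{e_0}``G$ --- is the actual content of the lemma, and it does not follow from ``a judicious choice of $e$ and of the base cone'' or from the $\Sigma$-Prikry structure of the individual posets. (Note also that when $h_0=\rho_{e_0}``G$ no such $\sigma$ is needed and one is already done; the projection is only required in the complementary case, where one fixes $\bar p\in\rho_{e_0}``G\setminus h_0$.)

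The paper produces $\sigma$ by a bootstrap through the constellating forcing itself. One takes a $\mathbb{C}$-generic $\bar H$ over $V[G]$ containing the current condition; by Lemma~\ref{lemma: properties Pbar} this induces a $\mathbb{B}$-generic $H^*$ with $h_0\in\con_{\mathcal{B}}(H^*)$. Applying the Interpolation Lemma inside $V[H^*]$ to the degenerate system $\mathbb{P}\rightarrow\mathbb{P}_{e_0}\rightarrow\{\bar p\}$ yields a $\mathbb{B}_{e_0}$-generic $g_2\in\sky_{\mathcal{B}}(H^*)$ with $\bar p\in g_2$; Lemma~\ref{lemma: capturing generic filters with a small generic} then locates $g_2$ in $V[\rho_{e}``H^*]$ for some $e\succeq e_0$. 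Since $\rho_{e}``H^*$ is $\mathbb{B}_{e}/h_0$-generic, one extracts a condition $t$ (which can be arranged to lie below $\rho_e$ of the first coordinate of the condition) and an open $\mathbb{B}_{e}/h_0$-name $\dot g_2$ with $t\Vdash``\dot g_2$ is $(\mathbb{B}_{e_0})_{\downarrow\bar p}$-generic''. The name $\dot g_2$ induces a complete embedding $(\mathbb{B}_{e_0})_{\downarrow\bar p}\rightarrow(\mathbb{B}_{e}/h_0)_{\downarrow t}$ via $r\mapsto\llbracket\check r\in\dot g_2\rrbracket$, and Fact~\ref{fact: connection between projections and complete embeddings} converts this into the desired projection $\sigma$. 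Without this construction (or a substitute for it), your proof is incomplete precisely at its load-bearing step.
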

\begin{proof}
Let $\langle q_0, e_0,h_0\rangle\in\mathbb{C}$. By the $\kappa$-capturing property of $\mathcal{B}$ there is an index $e_1\in\mathcal{D}^*$ such that $e_0\sle e_1$ and $x\in V[\rho_{e_1}``G]$. Also,  
as shown in Lemma \ref{lemma: properties Pbar} (\ref{density of E_d}), the set $E_{e_1}=\{\langle p,e,g\rangle\in\mathbb{C}\mid e_1\sle e\}$ is dense, so we may let $\langle q_2,e_2,h_2\rangle\in E_{e_1}$ with $\langle q_2,e_2,h_2\rangle\leq \langle q_0,e_0,h_0\rangle$. We have two cases: 

\medskip

        \textbf{\underline{Case $h_2=\rho_{e_2}``G$:}} In that case we are done because $$x\in V[\rho_{e_1}``G]\subseteq V[\rho_{e_2}``G]=V[h_2].$$ 

       \smallskip

        \textbf{\underline{Case $h_2\neq\rho_{e_2}``G$:}} Let $\Bar{p}\in \rho_{e_2}``G\setminus h_2$. This condition must exist as otherwise $h_2\s \rho_{e_2}``G$ and thus, by maximality of generic filters,  $h_2= \rho_{e_2}``G$. 

        \smallskip

        The proof strategy is divided in the following three steps.
        \begin{enumerate}
            \item For some $e_3\in\mathcal{D}^*$ with $e_2\sle e_3$, we find a $\mathbb{B}_{e_3}/h_2$-name $\dot g_2$, and a condition $t\leq\rho_{e_3}(q_2)$ such that $\rho_{e_3,e_2}(t)\in h_2$ and $$t\Vdash_{\mathbb{B}_{e_3}/h_2}``\dot g_2\text{ is }(\mathbb{B}_{e_2})_{\downarrow\Bar{p}}\text{-generic"}.$$
            \item\label{setpthree} Using $\dot g_2$ we construct the complete embedding induced by $\dot{g}_2$, $$\iota\colon  ({\mathbb{B}_{e_2}})_{\downarrow\Bar{p}}\rightarrow (\mathbb{B}_{e_3}/h_2)_{\downarrow t}$$ and appealing to Fact \ref{fact: connection between projections and complete embeddings} we get a projection $$\sigma\colon(\mathbb{B}_{e_3}/h_2)_{\downarrow t}\rightarrow(\mathbb{B}_{e_2})_{\downarrow\Bar{p}}.$$
            \item\label{step3} Apply the \emph{Capturing} part of Lemma \ref{lemma: interpolation} we find a $\mathbb{B}_{e_3}/h_2$-generic filter $h$ such that $t\in h$ and $\rho_{e_2}``G\in V[h]$.
        \end{enumerate} 
        
        Once those three steps are verified we will be done with density of $D_x$:

        \begin{claim}
 $\langle q_2,e_3,h\rangle\leq \langle q_0,e_0,h_0\rangle$ and $\langle q_2,e_3,h\rangle\in D_x$.
        \end{claim}
        \begin{proof}[Proof of claim]
                   First $x\in V[\rho_{e_1}``G]\subseteq V[\rho_{e_2}``G]\s V[h]$ (by the choice of $h$). 
                   Second, since $t\leq\rho_{e_3}(q_2)$ and $t\in h$, it follows that $\langle q_2,e_3,h\rangle$ is a condition in $\mathbb{C}$. To conclude, note that $\langle q_2,e_3,h\rangle\leq \langle q_2,e_2,h_2\rangle\leq \langle q_0,e_0,h_0\rangle.$
        \end{proof}

        \smallskip

        \underline{\textbf{{Step one:}}} Let $\Bar{H}$ be a $\mathbb{C}$-generic filter over $V[G]$ with $\langle q_2,e_2,h_2\rangle\in\Bar{H}$. By Lemma \ref{lemma: properties Pbar}, this induces a $\mathbb{B}$-generic $H^*$ such that $h_2\in\con_{\mathcal{B}}(H^*)$. 
        
      Consider the system of weak projections and its Boolean completion
  \begin{displaymath}
             \begin{tikzcd}
  \mathbb{P} \arrow{r}{\pi_{e_2}}  & \mathbb{P}_{e_2} \arrow{r}{\tau}  & \{\bar p\},
\end{tikzcd} 
             \begin{tikzcd}
  \mathbb{B} \arrow{r}{\rho_{e_2}}  & \mathbb{B}_{e_2} \arrow{r}{\tau^*}  & \{\bar p\}
\end{tikzcd}
    \end{displaymath}
    being $\tau$ the trivial projection sending every condition of $\mathbb{P}_e$ to $\bar{p}$. Let us invoke the Interpolation Lemma (Lemma~\ref{lemma: interpolation}) with respect to this maps, taking also $\{\bar p\}$ as $\{\bar p\}$-generic (in $V[H^*]$) and as condition $p\in \mathbb{B}/\{\bar p\}$ the $\rho_{e_2}$-preimage of $\bar{p}$ (recall that $\bar{p}\in \rho_{e_2}``G$). The lemma thus give us a $\mathbb{B}_{e_2}$-generic $g_2\in \sky_{\mathcal{B}}(H^*)$ such that $\rho_{e_2}(p)=\bar{p}\in g_2$.
       
    Now apply Lemma \ref{lemma: capturing generic filters with a small generic} to $\sky_{\mathcal{B}}(H^*)$ and $e_2$ thus finding $e_3\in \mathcal{D}^*$ such that $e_2\sle e_3$ and $g_2\in \sky_{\mathcal{B}}(\rho_{e_3}``H^*).$
   As the condition $\langle q_2,e_2,h_2\rangle$ is in $\Bar{H}$, Lemma \ref{lemma: properties Pbar} (\ref{projection of G star}) yields $$h_2\in\con_{\mathcal{B}}(\rho_{e_3}``H^*).$$ Thus, $\rho_{e_3}``H^*$ is $\mathbb{B}_{e_3}/h_2$-generic (over $V[h_2]$) and  $$V[h_2][\rho_{e_3}``H^\ast]\models``g_2\text{ is }(\mathbb{B}_{e_2})_{\downarrow \Bar{p}}\text{-generic}\text{"}.$$ 
       
       Let $t\in\rho_{e_3}``H^\ast\subseteq\mathbb{B}_{e_3}/h_2$ and an \emph{open} $\mathbb{B}_{e_3}/h_2$-name $\dot g_2$ such that $$\text{$V[h_2]\models t\forces_{\mathbb{B}_{e_3}/h_2}``\dot g_2\text{ is }(\mathbb{B}_{e_2})_{\downarrow \Bar{p}}\text{-generic}$".}$$

       Recall that a  name $\tau$ is called \emph{open} whenever $\langle \sigma, q\rangle\in \tau$ yields $\langle\sigma, r\rangle\in \tau$ for all $r\leq q$. For any name $\sigma$ there is always an open name $\tau$ with $\one\forces\tau=\sigma.$  

       \smallskip

        \underline{\textbf{{Step two:}}} Since $\dot g_2$ is open we may assume that it is actually a $(\mathbb{B}_{e_3}/h_2)_{\downarrow t}$-name.\footnote{If not define $\dot g'_2=\{\langle\check{u},v\rangle\mid v\leq t\ \wedge\ \langle\check{u},v\rangle\in\dot g_2\}$. By openness $t\forces_{\mathbb{B}_{e_3}/h_2}\dot g_2=\dot g_2'$.} Moreover, as $t\in\rho_{e_3}``H^\ast$ and $q_2\in H^\ast$, we also assume that $t\leq\rho_{e_3}(q_2)$.\footnote{If not, say $\rho_{e_3}(s)\leq t$ for some $s\in H^\ast$. Then there is $s'\leq s,q_2$ with $s'\in H^\ast$. Replace $t$ with $\rho_{e_3}(s')\leq\pi_{e_3}(q_2)$.} Note that $\Bar{p}$ is trivially the maximal element of $(\mathbb{B}_{e_2})_{\downarrow\Bar{p}}$ and thus $$V[h_2]\models t\forces_{\mathbb{B}_{e_3}/h_2}\check{\Bar{p}}\in \dot g_2.$$

        Working for a moment inside $V[h_2]$, let $\iota:(\mathbb{B}_{e_2})_{\downarrow\Bar{p}}\rightarrow(\mathbb{B}_{e_3}/h_2)_{\downarrow t}$ be the complete embedding induced by $\dot g_2$. Namely, $$\iota(r):=\llbracket\check{r}\in\dot g_2\rrbracket =\bigvee\{s\in(\mathbb{B}_{e_3}/h_2)_{\downarrow t}\mid s\Vdash_{(\mathbb{B}_{e_3}/h_2)_{\downarrow t}}\check{r}\in\dot g_2\}\big.$$
       Appealing to Fact~\ref{fact: connection between projections and complete embeddings}  we deduce (inside $V[h_2]$) the existence of a projection $$\sigma\colon(\mathbb{B}_{e_3}/h_2)_{\downarrow t}\rightarrow(\mathbb{B}_{e_2})_{\downarrow\Bar{p}}.$$

        \underline{\textbf{{Step three:}}} Once again the \emph{Interpolation Lemma} (Lemma~\ref{lemma: interpolation}) comes to our rescue -- this time we will be able to use its ``capturing feature'' too thanks to our finding of the projection $\sigma$. Thus, call the lemma with  
         \begin{displaymath}
             \begin{tikzcd}
  \mathbb{P} \arrow{r}{\pi_{e_3}}  \arrow[bend right]{rr}{\pi_{e_2}}  & \mathbb{P}_{e_3} \arrow{r}{\pi_{e_3,e_2}}  & \mathbb{P}_{e_2},
\end{tikzcd} 
             \begin{tikzcd}
  \mathbb{B} \arrow{r}{\rho_{e_3}}\arrow[bend right]{rr}{\rho_{e_2}}  & \mathbb{B}_{e_3} \arrow{r}{\rho_{e_3,e_2}}  & \mathbb{B}_{e_2}
\end{tikzcd}
    \end{displaymath}
   as maps; as a $\mathbb{B}_{e_2}$-generic $h_2\in V[G]$;  as conditions $q_2\in\mathbb{B}/h_2$ (for the interpolation part) and $t\leq\rho_{e_3}(q_2)$ and $\Bar{p}\in\rho_{e_2}``G$ (for the capturing part); as a projection in $V[h_2]$ we let $\sigma\colon(\mathbb{B}_{e_3}/h_2)_{\downarrow t}\rightarrow(\mathbb{B}_{e_2})_{\downarrow\Bar{p}}$.

\smallskip

The lemma gives us a $\mathbb{B}_{e_3}/h_2$-generic filter $h$  such that $t\in h$ and $\rho_{e_2}``G\in V[h]$. This completes the goal of \textbf{Step three} (see (3) in page~\pageref{step3}).
\end{proof}

\begin{lemma}[The constellation lemma]\label{lemma: computing correctly the power set} 
Let $G$ be a $\mathbb{B}$-generic filter and  $\langle p,d,g\rangle\in\mathbb{C}$. 
   There is a $\mathbb{B}$-generic $G^\ast$ with $p\in G^*$, $g\in\con_{\mathcal{B}}(G^*)$ 
    and $$\mathcal{P}(\kappa)^{V[G^\ast]}=\mathcal{P}(\kappa)^{V[G]}.$$
    In particular, for each $g\in \sky_{\mathcal{B}}(G)$ there is a $\mathbb{B}$-generic filter $G^*$ such that $g\in \con_{\mathcal{B}}(G^*)$ and $\mathcal{P}(\kappa)^{V[G^*]}= \mathcal{P}(\kappa)^{V[G]}.$
\end{lemma}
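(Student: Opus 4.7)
The plan is to force over $V[G]$ with the constellating poset $\mathbb{C}=\mathbb{C}(\mathcal{B},G)$, choosing a $\mathbb{C}$-generic filter $\bar{G}$ over $V[G]$ that contains the prescribed condition $\langle p,d,g\rangle$, and then to let $G^*$ be the $\mathbb{B}$-generic filter assigned to $\bar{G}$ by Lemma~\ref{lemma: properties Pbar}(1). With this setup, two of the three desired properties of $G^*$ become immediate: $p\in G^*$ follows directly from the definition of $G^*$ since $\langle p,d,g\rangle\in\bar{G}$, and $g\in\con_{\mathcal{B}}(G^*)$ follows from Lemma~\ref{lemma: properties Pbar}(3), which yields $\rho_d``G^*=g$.

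The nontrivial content is the equality $\mathcal{P}(\kappa)^{V[G^*]}=\mathcal{P}(\kappa)^{V[G]}$, and I would split it into the two inclusions. For $\mathcal{P}(\kappa)^{V[G]}\subseteq\mathcal{P}(\kappa)^{V[G^*]}$ I would invoke Lemma~\ref{D_x is dense in barP}: given $x\in\mathcal{P}(\kappa)^{V[G]}$, the set $D_x$ is dense in $\mathbb{C}$, so by genericity $\bar{G}$ meets $D_x$ at some $\langle q,e,h\rangle$, meaning that $x\in V[h]$; then Lemma~\ref{lemma: properties Pbar}(3) identifies $h=\rho_e``G^*$, placing $x$ inside $V[G^*]$.

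For the reverse inclusion $\mathcal{P}(\kappa)^{V[G^*]}\subseteq\mathcal{P}(\kappa)^{V[G]}$ I would rely on the $\kappa$-capturing property of $\mathcal{B}$ (inherited from $\mathcal{P}$ through its Boolean completion): any $y\in\mathcal{P}(\kappa)^{V[G^*]}$ must live in $V[\rho_e``G^*]$ for some $e\in\mathcal{D}^*$. The remaining task is to argue that $\rho_e``G^*$ itself lies in $V[G]$, which I would obtain via the density of $E_e$ (Lemma~\ref{lemma: properties Pbar}(2)): genericity of $\bar{G}$ produces some $\langle q,e',h\rangle\in\bar{G}$ with $e\sle e'$, and $h$ already belongs to $V[G]$ because it was selected in $V[G]$ as the third coordinate of a condition in $\mathbb{C}$; since $h=\rho_{e'}``G^*$ by Lemma~\ref{lemma: properties Pbar}(3), applying $\rho_{e',e}$ yields $\rho_e``G^*=\rho_{e',e}``h\in V[h]\subseteq V[G]$. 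The ``In particular'' clause then follows by applying the first part with the condition $\langle\one_{\mathbb{B}},d,g\rangle$, where $d$ is any witness that $g\in\sky_{\mathcal{B}}(G)$.

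The crux of the argument is the reverse inclusion: one must recognize that every level generic $\rho_e``G^*$ -- not merely the finitely many appearing explicitly as components of conditions in $\bar{G}$ -- already belongs to $V[G]$, and that no subset of $\kappa$ in $V[G^*]$ can escape $V[G]$. The conjunction of $\kappa$-capturing (producing some capturing level $e$ for $y$) with the density of $E_e$ (forcing $\bar{G}$ to see a condition at some level $e'\succeq e$) is exactly what makes this work; once these two ingredients are isolated, the rest of the verification is routine bookkeeping.
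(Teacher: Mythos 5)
Your proposal is correct and follows essentially the same route as the paper's proof: both derive $p\in G^*$ and $g\in\con_{\mathcal{B}}(G^*)$ from Lemma~\ref{lemma: properties Pbar}, prove the inclusion $\mathcal{P}(\kappa)^{V[G]}\subseteq\mathcal{P}(\kappa)^{V[G^*]}$ via the density of $D_x$, and prove the reverse inclusion by combining the $\kappa$-capturing property with the density of $E_e$ and the identification $h=\rho_{e'}``G^*$. Your explicit distinction between the capturing index $e$ and the larger index $e'$ supplied by $E_e$ is a slightly more careful rendering of a step the paper compresses, but the argument is the same.
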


\begin{proof}
Let $\bar{G}$ a $\mathbb{C}$-generic filter (over $V[G]$) with $\langle p,d,g\rangle\in \bar{G}$ and let $G^*$ be its induced $\mathbb{B}$-generic. Applying Corollary~\ref{remark after lemma: properties Pbar} we have that $g\in \con_{\mathcal{B}}(G^*)$ and clearly $p\in G^*$. Let us now show that $G^*$ \emph{captures} $\mathcal{P}(\kappa)^{V[G]}$.
\begin{claim}
    $\mathcal{P}(\kappa)^{V[G^\ast]}\subseteq\mathcal{P}(\kappa)^{V[G]}$.
\end{claim}
\begin{proof}[Proof of claim]
    Let $x\in \mathcal{P}(\kappa)^{V[G^*]}$. Since $\mathcal{B}$ is $\kappa$-capturing (Definition~\ref{def: nice system}) there is $d\sle e$ in $\mathcal{D}^*$ such that $x\in V[\rho_e``G^*]$. Let $\langle q,e,h\rangle\in \bar{G}\cap E_e$ (see Lemma~\ref{lemma: properties Pbar}(\ref{projection of G star})). Then, $\rho_e``G^*=h\in V[G]$, which yields $x\in\mathcal{P}(\kappa)^{V[G]}.$ 
\end{proof}

\begin{claim}
    $\mathcal{P}(\kappa)^{V[G]}\subseteq\mathcal{P}(\kappa)^{V[G^\ast]}$.
\end{claim}
\begin{proof}[Proof of claim]
    Let $x\in \mathcal{P}(\kappa)^{V[G]}$. Since $D_x$ is dense (Lemma~\ref{D_x is dense in barP}) there is $\langle q,e,h\rangle\in \bar{G}\cap D_x$. This  yields $x\in V[h]=V[\rho_e``G^*]$.
\end{proof}
This completes the main claim of the lemma. As for the ``in particular'' one, this follows by applying the first part to the condition $\langle \one, d, g\rangle$.\qedhere

\end{proof}

\subsection{$\Sigma$-Prikry forcings and the $\kappa$-$\psp$}\label{sec: sigma prikry and kappaperfect}
Mimicking Solovay's argument \cite{solovay1970model} (see page~\pageref{sec: direct systems of posets} for an account on it), in this section we show how to construct sets with the $\kappa$-$\psp$ 
in generic extensions by $\Sigma$-Prikry forcings.

\smallskip

Recall that we are still working under Setup~\ref{setup kappapsp} in page~\pageref{setup kappapsp}.
\begin{theorem}\label{the main construction}
 Let $\mathbb{P},\mathbb{Q}$ and $\mathbb{R}$ be forcings satisfying the assumptions of the Interpolation Lemma (Lemma~\ref{lemma: interpolation}). Suppose also that
 
   \begin{displaymath}
       \begin{tikzcd}
  \mathbb{P} \arrow{r}{\pi_1} \arrow[bend right]{rr}{\pi} & \mathbb{Q} \arrow{r}{\pi_{2}}  & \mathbb{R}
\end{tikzcd}
       \begin{tikzcd}
  \ro(\mathbb{P}) \arrow{r}{\rho_1} \arrow[bend right]{rr}{\rho} & \ro(\mathbb{Q}) \arrow{r}{\rho_{2}}  & \ro(\mathbb{R})
\end{tikzcd}
\end{displaymath}
are, respectively, weak projections and their $\mathcal{B}$-mirror projections.

Let $G\s\mathbb{B}$ be a $V$-generic filter, $\tau$ a $\ro(\mathbb{Q})/\rho``G$-name and $p_0\in \ro(\mathbb{Q})/\rho``G$ such that 
$$V[\rho``G]\models \text{$``\one\forces_{{\ro(\mathbb{Q})/\rho``G}}\tau\notin \check{V} \wedge p_0\Vdash_{\ro(\mathbb{Q})/\rho``G} \tau\colon \check{\omega}\rightarrow\check{\kappa}$''}.$$

 Then, the set defined as
    \begin{center}
    $\mathcal{T}:=\{\tau_h \mid h$ is $\ro(\mathbb{Q})/\rho``G$-generic over $V[\rho``G]$ and $p_0\in h\}$, 
    \end{center}
    contains a copy of a $\kappa$-perfect set. More specifically, if $\langle \nu_n\mid n<\omega\rangle$ is a cofinal increasing sequence in $\kappa$ living in $V[G]$ then there is a map $$\textstyle\text{$\iota\colon \prod_{n<\omega}\nu_n\rightarrow {}^\omega\kappa$}$$ that defines a topological embedding
     with $\mathrm{ran}(\iota)\s \mathcal{T}$, so $\mathcal{T}$ satisfies Fact \ref{fact PSP}(\ref{fact PSP-2}).

\end{theorem}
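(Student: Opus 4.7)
The plan is to mimic Solovay's classical perfect-set-of-generics construction: build a splitting tree $\{(p_s,\sigma_s):s\in\prod_{n<m}\nu_n,\,m<\omega\}$ of conditions $p_s\in\ro(\mathbb{Q})/\rho``G$ below $p_0$ paired with finite sequences $\sigma_s\in {}^{<\omega}\kappa$, arranged so that $p_\emptyset=p_0$, $\sigma_\emptyset=\emptyset$, and inductively (i) $s\sq t$ implies $p_t\le p_s$ with $\sigma_s\sq\sigma_t$ and $p_s\forces \sigma_s\sq\tau$, and (ii) for each $s$ of length $m$, the $\nu_m$-many sequences $\sigma_{s^\frown\alpha}$ are pairwise incomparable proper end-extensions of $\sigma_s$. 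Once such a tree is in hand, for each branch $x\in\prod_n\nu_n$ let $h_x$ be the upward closure in $\ro(\mathbb{Q})$ of $\langle p_{x\restriction m}:m<\omega\rangle$ and set $\iota(x):=\tau_{h_x}$; the forcing clause gives $\iota(x)=\bigcup_m\sigma_{x\restriction m}$, the splitting clause makes $\iota$ both injective and open onto its image, and coherence together with $|\sigma_{x\restriction m}|\to\infty$ makes $\iota$ continuous. Since $p_0\in h_x$ trivially, $\mathrm{ran}(\iota)\subseteq\mathcal{T}$, so $\iota$ verifies Fact~\ref{fact PSP}(\ref{fact PSP-2}).

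To ensure each $h_x$ is actually $\ro(\mathbb{Q})/\rho``G$-generic over $V[\rho``G]$, import the genericity apparatus from the proof of the \emph{Interpolation Lemma} (Lemma~\ref{lemma: interpolation}). Working in $V[G]$, enumerate the $V$-dense open subsets of $\mathbb{Q}$ as $\langle D_\alpha:\alpha<\kappa\rangle$ (possible since $\mathbb{Q}\in H_\lambda$ and $\lambda=(\kappa^+)^{V[G]}$) and apply Lemma~\ref{lemma: goodness} to the $\Sigma$-Prikry forcing $\mathbb{P}$ to partition $\kappa=\bigcup_n B_n$ into bounded pieces with each $\langle D_\alpha:\alpha\in B_m\rangle\in V$. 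Let $E_m:=\{p\le p_0:\forall\alpha\in B_m\,\exists k_\alpha<\omega\,\mathbb{Q}^p_{\ge k_\alpha}\subseteq D_\alpha\}$; the density argument in the proof of Lemma~\ref{lemma: interpolation} shows each $E_m$ is dense in $\ro(\mathbb{Q})/\rho``G$ below $p_0$. Demand that $p_s\in E_m$ whenever $|s|=m$ and that the lengths $\ell(p_{x\restriction m})$ are cofinal in $\omega$ along each branch; the final claim in the proof of Lemma~\ref{lemma: interpolation} then applies verbatim and yields that $h_x$ meets every $V$-dense open subset of $\mathbb{Q}$, hence is $\ro(\mathbb{Q})/\rho``G$-generic over $V[\rho``G]$.

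The crux, and what I expect to be the main obstacle, is the splitting step: given $p_s$ one must produce $\nu_m$-many extensions forcing pairwise incomparable proper end-extensions of $\sigma_s$. First extend $p_s$ inside $E_{m+1}$ to some $p^\ast$, and form $\Delta(p^\ast):=\{\xi<\kappa:\exists q\le p^\ast\,\exists k<\omega\,q\forces\tau(k)=\xi\}$. Because $\mathbb{Q}$ is $\Sigma$-Prikry, the quotient $\ro(\mathbb{Q})/\rho``G$ adds no bounded subsets of $\kappa$ over $V[\rho``G]$, so Lemma~\ref{functions in the intermediate model} combined with the hypothesis $p^\ast\forces\tau\notin V$ yields $|\Delta(p^\ast)|=\kappa$. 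Decomposing $\Delta(p^\ast)=\bigcup_{k<\omega}\Delta_k(p^\ast)$ according to the coordinate being decided, and observing that $\Delta_k(p^\ast)$ is a singleton for each $k<|\sigma_s|$ (because $p^\ast\forces\sigma_s\sq\tau$), some $k\ge|\sigma_s|$ must satisfy $|\Delta_k(p^\ast)|\ge|\nu_m|$—otherwise a countable union of sets of cardinality ${<}|\nu_m|$ would force $|\Delta(p^\ast)|<\kappa$. Picking $\nu_m$-many extensions $q_\alpha\le p^\ast$ forcing pairwise distinct values of $\tau(k)$ and then extending each $q_\alpha$ inside $E_{m+1}$ to decide $\tau\restriction(k+1)$ delivers children $p_{s^\frown\alpha}\in E_{m+1}$ whose associated sequences $\sigma_{s^\frown\alpha}\in{}^{k+1}\kappa$ strictly extend $\sigma_s$ and are pairwise incomparable (since they differ at coordinate $k$). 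This closes the recursion and completes the construction.
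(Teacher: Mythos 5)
Your proposal is correct and takes essentially the same route as the paper's proof: a splitting tree of conditions in $\ro(\mathbb{Q})/\rho``G$, with the $\nu_m$-fold splitting extracted from Lemma~\ref{functions in the intermediate model} (noting, as you do, that the quotient adds no bounded subsets of $\kappa$), the dense sets $E_m$ obtained from Lemma~\ref{lemma: goodness} and kept inside the quotient via the projection $\rho_2$ exactly as in the Interpolation Lemma, and the same injectivity/bicontinuity verification. The only cosmetic difference is that you merge the paper's triples $\langle p_s,q_s,r_s\rangle$ into a single condition $p_s$ paired with the decided initial segment $\sigma_s$ of $\tau$, which plays the combined role of the paper's splitting conditions $q_s$ and deciding conditions $r_s$.
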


\begin{proof}
    For the sake of readability we denote $\Vdash_{\ro(\mathbb{Q})/\rho``G}$ just by $\Vdash$ as this is the only forcing relation involved in the arguments. As $\cf(\kappa)^{V[G]}=\omega$, we may pick a strictly increasing sequence $\langle\nu_n\mid n<\omega\rangle\in V[G]$ cofinal in $\kappa$. 

    \smallskip

    In $L(V[G]_{\kappa+1})$ we are going to define  a tree  of triples $\langle p_s,q_s,r_s\rangle$ consisting of conditions in  $\ro(\mathbb{Q})/\rho``G$ with indices  $s\in \bigcup_{1\leq n<\omega}\prod_{i<n}\nu_i$ and such that $$\text{$q_{s^\smallfrown\langle\xi\rangle}\leq p_s\leq r_s\leq q_s$ provided $\xi<\nu_{|s|}$.}$$ 
    The idea behind the construction is summarized by the following points:
 \begin{enumerate}
  \item For each index $s$ and distinct  $\xi_1,\xi_2< \nu_{|s|}$, $q_{s^\smallfrown\langle\xi_1\rangle}$ and $q_{s^\smallfrown\langle\xi_2\rangle}$ are going to force a different behavior for the $\ro(\mathbb{Q})/\rho``G$-name $\tau$. For the construction of these conditions  we will use Lemma~\ref{functions in the intermediate model}.
  \item If $x\in\prod_{i<\omega}\nu_i$ is a branch then $\langle p_{x\restriction n}\mid n<\omega\rangle$ generates a $\ro(\mathbb{Q})/\rho``G$-generic filter, $h_x$. For this we use the  $\Sigma$-Prikryness of $\mathbb{Q}$, which is a dense subforcing of $\ro(\mathbb{Q}).$ 
  \item  $r_s$ decides the behavior of $\tau\colon \check{\omega}\rightarrow \check{\kappa}$ up to a certain natural number.
 \end{enumerate}    
At the end of this construction we shall show that the map 
$$\textstyle \iota\colon x\in \prod_{i<\omega}\nu_i \mapsto  \tau_{h_x}\in  \mathcal{T}$$
is a topological embedding; namely, it yields a homeomorphism with $\mathrm{ran}(\iota)$.

 Working in $V[G]$, pick an enumeration $\mathcal{D}:=\langle D_\alpha\mid\alpha<\kappa\rangle$ of the family $$\{D\subseteq\mathbb{Q}\mid D\;\text{is dense open in}\; \mathbb{Q}\}.$$ An enumeration of that length exists because $\lambda$ is inaccessible in $V$, $\mathbb{Q}\in H_\lambda$ and $\lambda$ becomes the successor of $\kappa$ in $V[G]$. 
 Also, since $\mathbb{P}$ is  $\Sigma$-Prikry, 
 apply Lemma~\ref{lemma: goodness} to $\mathbb{P}$ and $\mathcal{D}$, and deduce the existence of a sequence $\langle B_n\mid n<\omega\rangle$ in $V[G]$ consisting of bounded subsets of $\kappa$ such that $\bigcup_{n<\omega}B_n=\kappa$ and $$\text{$\mathcal{D}\restriction B_n\in V$ for all $n<\omega$.}$$

 We start the construction by setting $p_{\emptyset}=p_0$ (recall that $p_0$ was chosen so as to force $``\tau\colon \check{\omega}\rightarrow\check{\kappa}$"). Since $\one_{\ro(\mathbb{Q})}\forces``\tau\notin V[\rho``G]$" Lemma~\ref{functions in the intermediate model} implies that the set of possible decisions about $\tau$ made by conditions $q\leq_{\ro(\mathbb{Q})/\rho``G}p_{\emptyset}$ is of cardinality $\kappa$ in $V[\rho``G]$ (note Lemma~\ref{functions in the intermediate model} is applied to $\ro(\mathbb{Q})/\rho``G$, $\tau$, $p_{\emptyset}$ and  $V[\rho``G]$, where $V[\rho``G]$ is playing the role of the ground model).

 For each $m<\omega$, consider the set of possible decisions for $\tau(\check{m})$
$$\Delta_m(p_\emptyset):=\{\gamma<\kappa \mid  \exists q\leq p_\emptyset\ (q\in\ro(\mathbb{Q})/\rho``G\ \wedge\ q\Vdash\tau(\check{m})=\check{\gamma})\}.$$
There must exist an index $m_{\emptyset}<\omega$ for which there are at least $\nu_0$-many possible decisions about $\tau(m_\emptyset)$; namely, there is $m_\emptyset$ with  $|\Delta_{m_\emptyset}(p_\emptyset)|\geq\nu_0$.

Let $\langle\gamma_{\emptyset,\xi} \mid \xi<\nu_0\rangle$ be an injective enumeration of a set in $[\Delta_{m_\emptyset}(p_\emptyset)]^{\nu_0}$. Then for each $\xi<\nu_0$ choose $q_{\langle\xi\rangle}\leq p_\emptyset$ such that $q_{\langle\xi\rangle}\in\ro(\mathbb{Q})/\rho``G$ and $$q_{\langle\xi\rangle}\Vdash\tau(\check{m}_\emptyset)=\check{\gamma}_{\emptyset,\xi}.$$ 
\begin{claim}
    Without loss of generality we may assume that $q_{\langle\xi\rangle}\in\mathbb{Q}$.
\end{claim}
\begin{proof}[Proof of claim]
     If this is not the case, note that $\mathbb{Q}$ is dense in $\ro(\mathbb{Q})$ 
     and so $\rho_2``\mathbb{Q}_{\downarrow q_{\langle\xi\rangle}}$ (i.e., $\rho_2``(\mathbb{Q}\cap \ro(\mathbb{Q})_{\downarrow q_{\langle\xi\rangle}}))$ is dense below $\rho_2(q_{\langle\xi\rangle})\in\rho``G\subseteq\ro(\mathbb{R})$. In particular, $\rho_2``\mathbb{Q}_{\downarrow q_{\langle\xi\rangle}}\cap\rho``G\neq\emptyset$. Now replace $q_{\langle\xi\rangle}$ with some condition $q'_{\langle\xi\rangle}\leq q_{\langle\xi\rangle}$ satisfying $\rho_2(q'_{\langle\xi\rangle})\in\rho``G$. Clearly, $q'_{\langle\xi\rangle}\in\mathbb{Q}/\rho``G$. 
\end{proof}

The sequence $\langle q_{\langle\xi\rangle}\mid\xi<\nu_0\rangle$ has been constructed in $V[\rho``G]$ but actually: 

\begin{claim}\label{claim: coding the poset in Lpkappa}
     $\langle q_{\langle\xi\rangle}\mid\xi<\nu_0\rangle\in L(V[G]_{\kappa+1})$.
\end{claim}
\begin{proof}[Proof of claim]
This is a consequence of one of our general preliminary lemmas; namely, Lemma \ref{lemma: transitive closure}.
    The sequence $\langle q_{\langle\xi\rangle}\mid\xi<\nu_0\rangle$ is a subset of $\nu_0\times\mathbb{Q}$. But $\lambda$ is inaccessible so that $(\nu_0\times\mathbb{Q})\in V_\lambda=H_\lambda$. In particular, $\mathrm{tcl}(\langle q_{\langle\xi\rangle}\mid\xi<\nu_0\rangle)\subseteq\mathrm{tcl}(\nu_0\times\mathbb{Q})\in (H_\lambda)^{V[G]}$. Finally, use Lemma~\ref{lemma: transitive closure}  to infer that $\mathrm{tcl}(\{\langle q_{\langle\xi\rangle}\mid\xi<\nu_0\rangle\})\in (H_\lambda)^{V[G]}\subseteq L(V[G]_{\kappa+1})$.
\end{proof}

\smallskip

Next extend $q_{\langle\xi\rangle}$ to a condition $r_{\langle\xi\rangle}$ in $\ro(\mathbb{Q})/\rho``G$ so that for each $m\leq m_\emptyset$, $r_{\langle\xi\rangle}$ decides the value of $\tau(m)$. As argued in the previous paragraph, we may assume that $r_{\langle\xi\rangle}\in\mathbb{Q}$. The choice of $p_{\langle\xi\rangle}\in\ro(\mathbb{Q})/\rho``G$ is provided by the following \emph{Interpolation-like} argument. Recall that $B_0$ was a bounded subset of $\kappa$ such that $\mathcal{D}\restriction B_0\in V$ and $\mathcal{D}$  an enumeration of all dense open subsets $D_\alpha\s \mathbb{Q}$ in $V$. In  Claim~\ref{claim: E_0 is dense in Q} we already proved the following:
\begin{claim}
    Define 
    $$E_{0,\xi}=\{p\in\mathbb{Q}_{\downarrow r_{\langle\xi\rangle}}\mid\forall \alpha\in B_0\,\exists n_\alpha<\omega \ \mathbb{Q}^p_{\geq n_\alpha}\subseteq D_\alpha\}.$$ Then $E_{0,\xi}$ is a dense subset of $\ro(\mathbb{Q})_{\downarrow r_{\langle\xi\rangle}}$.\qed
\end{claim}

Since $\rho_2:\ro(\mathbb{Q})\rightarrow\ro(\mathbb{R})$ is a projection, $\rho_2``E_{0,\xi}$ is dense below $\rho_2(r_{\langle\xi\rangle})$ in $\ro(\mathbb{R})$. On the other hand, $\rho_2(r_{\langle\xi\rangle})\in\rho``G$ (recall that $r_{\langle\xi\rangle}\in \ro(\mathbb{Q})/\rho``G$) and so we may pick a condition $$r\in\rho_2``E_{0,\xi}\cap\rho``G.$$ Thus there is $p\in E_{0,\xi}$ such that $\rho_2(p)=r$. Stipulate $p_{\langle\xi\rangle}:=p$. It is clear that $p_{\langle\xi\rangle}\in \ro(\mathbb{Q})/\rho``G$ and by definition of $E_{0,\xi}$,  $p_{\langle\xi\rangle}\in \mathbb{Q}_{\downarrow r_{\langle\xi\rangle}}$.

\medskip

The above completes the construction of the first level of the tree $$\langle \langle p_{\langle \xi\rangle}, q_{\langle \xi\rangle}, r_{\langle \xi\rangle}\rangle\mid \xi<\nu_0\rangle\in{}^{\nu_0}(\mathbb{Q}\times\mathbb{Q}\times\mathbb{Q}).$$

\smallskip

For an arbitrary node $s\in \bigcup_{1\leq n<\omega}\prod_{i<n}\nu_i$ we  construct the next level 
$$\langle \langle p_{s^\smallfrown\langle \xi\rangle}, q_{s^\smallfrown\langle \xi\rangle}, r_{s^\smallfrown\langle \xi\rangle}\rangle\mid \xi<\nu_{|s|}\rangle\in{}^{\nu_{|s|}}(\mathbb{Q}\times\mathbb{Q}\times\mathbb{Q})$$
in the very same fashion. More precisely, we start the process with $p_s\in\ro(\mathbb{Q})/\rho``G$ then find an index $m_s<\omega$ such that $|\Delta_{m_s}(p_s)|\geq \nu_{|s|}$ and obtain the various $r_{s^\smallfrown\langle \xi\rangle}\leq q_{s^\smallfrown\langle \xi\rangle}\leq p_s$ with $r_{s^\smallfrown\langle \xi\rangle}, q_{s^\smallfrown\langle \xi\rangle}\in\mathbb{Q}$ and $r_{s^\smallfrown\langle \xi\rangle}$ forcing a value for $\tau(i)$ for every $i<\max\{m_s,|s|+1\}$. Then for each $\xi<\nu_{|s|}$
$$E_{|s|,\xi}=\{p\in\mathbb{Q}_{\downarrow r_{s^\smallfrown\langle\xi\rangle}}\mid\,\ell(p)>\ell(p_s)\,\wedge\, \forall \alpha\in B_{|s|}\,\exists n_\alpha<\omega \ \mathbb{Q}^p_{\geq n_\alpha}\subseteq D_\alpha\},$$  is a dense subset of $\ro(\mathbb{Q})_{\downarrow r_{s^\smallfrown\langle\xi\rangle}}$. Finally, we get $p_{s^\smallfrown\langle\xi\rangle}$ in $(\ro(\mathbb{Q})/\rho``G)\cap E_{|s|,\xi}$.

\smallskip

For each branch $x\in\prod_{i<\omega}\nu_i$ in the tree, let $h_x$ be the upwards closure of the sequence $\{ p_{x\restriction n}\mid n<\omega\}$ in $\ro(\mathbb{Q})$. That is, we let 
$$h_x:=\{b\in\ro(\mathbb{Q})\mid\exists n<\omega \ p_{x\restriction n}\leq b\}.$$
By Claim \ref{claim: h is ro(Q)/g-generic}, $h_x$ is $\ro(\mathbb{Q})/\rho``G$-generic over $V[\rho``G]$.

\smallskip

To conclude the proof it remains to show that 
$$\textstyle \iota\colon x\in \prod_{i<\omega}\nu_i \mapsto  \tau_{h_x}\in  \mathcal{T}$$
is a (topological) embedding. 
\begin{claim}
    $\iota$ is an embedding.
\end{claim}
\begin{proof}[Proof of claim]
    The injectivity is granted by the the $q_s$'s. Let $x,y\in\prod_{n<\omega}\nu_n$ distinct and let $n<\omega$ be maximal such that $x\restriction n=y\restriction n$. By construction, 
    $q_{x\restriction n^\smallfrown\langle x_n\rangle}$ and $q_{x\restriction n^\smallfrown\langle y_n\rangle}$ are conditions below $p_{x\restriction n}$ satisfying $$\text{$q_{x\restriction n^\smallfrown\langle x_n\rangle}\Vdash\tau(\check{m}_{x\restriction n})=\check{\gamma}_{x\restriction n, x_n}$ and $q_{x\restriction n^\smallfrown\langle y_n\rangle}\Vdash\tau(\check{m}_{x\restriction n})=\check{\gamma}_{y\restriction n, y_n}$.}$$ The construction was performed so that  $x_n\neq y_n$ implies $\gamma_{x\restriction n, x_n}\neq\gamma_{x\restriction n, y_n}$. On the other hand, $q_{x\restriction (n+1)}\in h_x$ and $q_{y\restriction (n+1)}\in h_y$ because $$\text{$p_{x\restriction (n+1)}\leq q_{x\restriction (n+1)}$ and $p_{y\restriction (n+1)}\leq q_{y\restriction (n+1)}$}.$$ So $\tau_{h_x}(m_{x\restriction n})=\gamma_{x\restriction n, x_n}\neq\gamma_{x\restriction n, y_n}=\tau_{h_y}(m_{x\restriction n})$. 

    \smallskip

    Now we deal with the continuity of $\iota^{-1}\colon \tau_{h_x}\mapsto x$ (the proof that $\iota$ is continuous is also similar): for each $x,y\in\prod_{i<\omega}\nu_i$, if $n$ is maximal such that $x\restriction n=y\restriction n$, consider $r_{x\restriction n}$. Since $p_{x\restriction(n+1)}, p_{y\restriction(n+1)}\leq r_{x\restriction n}$, $r_{x\restriction n}$ is both in $h_x$ and in $h_y$. What is more, $r_{x\restriction n}$ forces a value for $\tau(i)$ for every $$i<\max\{m_{x\restriction (n-1)},|x\restriction (n-1)|+1\}=\max\{m_{x\restriction (n-1)},n\}.$$ Therefore, $\tau_{h_x}(i)=\tau_{h_y}(i)$, whenever $i<\max\{m_{x\restriction (n-1)},n\}$.
\end{proof}
The above completes the proof of the theorem.
\end{proof}
\begin{remark}
    As a bonus result, note that the $\kappa$-perfect set $\mathrm{ran}(\iota)=\{\tau_{h_x}\mid x\in \prod_{n<\omega}\nu_n\}$ in fact belongs to $L(V[G]_{\kappa+1})$. The reason is that the generics $h_x$ belong to $L(V[G]_{\kappa+1})$ (by an argument analogous to Claim~\ref{claim: coding the poset in Lpkappa}) and the map assigning  $x\mapsto\tau_{h_x}$ can be coded in $L(V[G]_{\kappa+1}).$
\end{remark}
We are now in conditions to prove our first abstract theorem on the $\kappa$-$\psp$ in $\Sigma$-Prikry generic extensions:
\begin{theorem}\label{main theorem 1 in terms of nice systems}
    If $\mathcal{P}=\langle \mathbb{P}_e, \pi_{e,d}\colon\mathbb{P}_e\rightarrow\mathbb{P}_d\mid e,d\in\mathcal{D}\,\wedge\, d\sle e\rangle$ is a weak $(\kappa,\lambda)$-nice $\Sigma$-system 
    then
   $$\text{$\one\forces_{\mathbb{P}}``\forall X\in \mathcal{P}(^\omega\kappa)\cap L(V_{\kappa+1})$\,\text{($X$ has the $\kappa$-$\psp$)''}}.$$ 
\end{theorem}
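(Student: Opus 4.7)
My plan is to adapt Solovay's classical argument \cite{solovay1970model}, using Theorem~\ref{the main construction} to build the perfect tree and the Constellation Lemma (Lemma~\ref{lemma: computing correctly the power set}) to substitute for the homogeneity of the LÃ©vy collapse. Given a $\mathbb{B}$-generic $G$ and $X\in \mathcal{P}({}^\omega\kappa)\cap L(V[G]_{\kappa+1})$, I would first reduce to the case $|X|^{V[G]}>\kappa$ and write $X=\{v:L(V[G]_{\kappa+1})\models \psi(v,a)\}$ for some formula $\psi$ and parameter $a\in\mathcal{P}(\kappa)^{V[G]}$. By $\kappa$-capturing of $\mathcal{P}$ I would pick $d\in\mathcal{D}^*$ with $a\in V[\pi_d``G]$. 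Since $\mathbb{P}_d\in H_\lambda$ and $\lambda=(\kappa^+)^{V[G]}$, Lemma~\ref{lemma: transitive closure} yields $|{}^\omega\kappa\cap V[\pi_d``G]|^{V[G]}\leq\kappa$, so I could select $x\in X\setminus V[\pi_d``G]$ and then some $e\in\mathcal{D}^*$ with $d\sle e$ and $x\in V[\pi_e``G]$.

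The next step is to set up the perfect-tree construction. In $V[\pi_d``G]$ I would define the $\mathbb{B}/\pi_d``G$-name $\dot X$ so that $\dot X^K:=\{v\in{}^\omega\kappa : L((V[\pi_d``G][K])_{\kappa+1})\models\psi(v,a)\}$, and take a $\mathbb{B}_e/\pi_d``G$-name $\tau$ with $\tau_{\pi_e``G}=x$. Identifying $G$ with its induced $\mathbb{B}/\pi_d``G$-generic, $\dot X^G=X\ni x$, so the Boolean value $b:=\llbracket\tau\in\dot X\rrbracket$ (in the algebra of $\mathbb{B}/\pi_d``G$) lies in $G$; hence $\rho_e(b)\in\pi_e``G$. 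I would then choose $p_0\in\pi_e``G$ with $p_0\leq\rho_e(b)$ forcing (over $\mathbb{B}_e/\pi_d``G$) both ``$\tau\colon\check\omega\to\check\kappa$'' and ``$\tau\notin\check V[\pi_d``G]$'', passing below $p_0$ so that the latter is forced by $\one$. Theorem~\ref{the main construction}, applied to $\mathbb{P},\mathbb{P}_e,\mathbb{P}_d$ (and their $\mathcal{B}$-mirrors) with this $\tau$ and $p_0$, then delivers a topological embedding $\iota\colon\prod_{n<\omega}\nu_n\to{}^\omega\kappa$ whose range lies in $\mathcal{T}:=\{\tau_h:h\text{ is }\mathbb{B}_e/\pi_d``G\text{-generic over }V[\pi_d``G]\text{ with }p_0\in h\}$.

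The final step is to verify $\mathcal{T}\s X$; granting this, Fact~\ref{fact PSP}(\ref{fact PSP-2}) delivers the $\kappa$-$\psp$. For $h\in V[G]$ as above, upward closure of $h$ gives $\rho_e(b)\in h$, so $b\in\mathbb{B}/h$. The Constellation Lemma applied to $\langle b,e,h\rangle\in\mathbb{C}$ then produces a $\mathbb{B}$-generic $G^*$ with $b\in G^*$, $\pi_e``G^*=h$, and crucially $\mathcal{P}(\kappa)^{V[G^*]}=\mathcal{P}(\kappa)^{V[G]}$. The last identity forces $L(V[G^*]_{\kappa+1})=L(V[G]_{\kappa+1})$ and hence $\dot X^{G^*}=X$; since $b\forces_{\mathbb{B}/\pi_d``G}\tau\in\dot X$ and $b\in G^*$, the forcing theorem yields $\tau_{G^*}\in\dot X^{G^*}=X$, and as $\tau$ is a $\mathbb{B}_e/\pi_d``G$-name, $\tau_{G^*}=\tau_{\pi_e``G^*}=\tau_h$, so $\tau_h\in X$.

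The hard part is the tension between the two levels on which $\tau$ and $X$ live: $\tau$ is named at the intermediate stage $\mathbb{B}_e/\pi_d``G$, whereas $X$ is only definable using the full model $V[G]$. The Constellation Lemma is designed to bridge precisely this gap, simultaneously preserving $\mathcal{P}(\kappa)$---so that $X$ is absolute between $V[G]$ and the new extension $V[G^*]$---and allowing one to prescribe that the condition $b$ forcing $\tau\in\dot X$ belongs to the produced $\mathbb{B}$-generic $G^*$.
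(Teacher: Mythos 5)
Your argument is correct and follows essentially the same route as the paper's proof: capture the parameter at some $d$, pick $x\in X\setminus V[\rho_d``G]$ and capture it at some $e$ with $d\sle e$, build the $\kappa$-perfect set of interpolated generics via Theorem~\ref{the main construction}, and use the Constellation Lemma together with $\mathcal{P}(\kappa)$-absoluteness of the defining formula to push each branch $\tau_h$ into $X$ (the paper packages the last step as an iterated forcing statement over $V[\rho_e``G]$ and $V[\rho_d``G]$ rather than via a name $\dot X$, but the content is identical). The one small repair: $\llbracket\tau\in\dot X\rrbracket$, computed in $V[\rho_d``G]$, lives in the Boolean completion of the quotient $\mathbb{B}/\rho_d``G$ formed in $V[\rho_d``G]$ rather than in $\mathbb{B}$ itself, so you should instead fix an actual condition $b\in G\subseteq\mathbb{B}$ forcing $\tau\in\dot X$ over $V[\rho_d``G]$ with $p_0\leq\rho_e(b)$; then $\langle b,e,h\rangle$ is a legitimate condition of the constellating poset and the rest of your argument goes through.
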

\begin{proof}
Let $\mathcal{B}=\langle \mathbb{B}_e, \rho_{e,d}\colon\mathbb{B}_e\rightarrow\mathbb{B}_d\mid e,d\in\mathcal{D}\,\wedge\, d\sle e\rangle$ be the Boolean completion of the system $\mathcal{P}$ (see Definition~\ref{def: boolean completion of a system}) and recall that $\mathcal{B}$ is a $(\kappa,\lambda)$-nice system (i.e., the $\rho_{e,d}$ now are projections but $\mathbb{B}_e$ are not $\Sigma$-Prikry).

Let $G$ be a $\mathbb{B}$-generic. 
Unless otherwise stated,  we shall be working in $V[G]$. 
Fix $\langle \nu_n\mid n<\omega\rangle\in V[G]$ an increasing cofinal sequence in $\kappa$. This is possible in that the  definition of $(\kappa,\lambda)$-niceness requires $\one\Vdash_{\mathbb{P}}``\cf(\kappa)=\omega$". 

\smallskip

Fix $A\in \mathcal{P}({}^\omega\kappa)\cap L(V[G]_{\kappa+1})$  with $|A|^{V[G]}>\kappa.$  Let $\varphi(x,y_0,\dots, y_n)$ be a formula in the language of set theory and parameters $$\text{$a_0,\dots, a_n\in \mathcal{P}(\kappa)^{V[G]}\cup \ord$}$$ which together with $\varphi(x,y_0,\dots, y_n)$ define $A$; to wit,
    $$A=\{x\in{}^\omega\kappa\mid V[G]\models \varphi(x,a_0,\dots,a_n)\}.$$

   Our formula $\varphi(x,y_0,\dots, y_n)$ has the pleasant property of being  absolute between inner models sharing the same $\mathcal{P}(\kappa)$ (equivalently, $V_{\kappa+1}$). Indeed, there is a formula $\psi(x,y_0,\dots, y_n)$ in the language of set theory such that $$V[G]\models \varphi(x,a_0,\dots, a_n)\;\Leftrightarrow\;V[G]\models``\text{$L(V_{\kappa+1})\models \psi(x,a_0,\dots,  a_n)$''}.$$

 In particular, if $G^*$ is another $V$-generic filter for the complete Boolean algebra $\mathbb{B}$ such that $\mathcal{P}(\kappa)^{V[G]}=\mathcal{P}(\kappa)^{V[G^*]}$ then 
   \begin{equation}\label{eq: absoluteness}
      \tag{$\mathcal{A}$}V[G]\models \varphi(x,a_0,\dots, a_n)\;\Leftrightarrow\;V[G^*]\models \varphi(x,a_0,\dots, a_n).  
   \end{equation}
   We will obtain this alternative generic $G^*$ at the end of our argument employing to this effect the \emph{Constellation Lemma} (Lemma~\ref{lemma: computing correctly the power set}).

   \smallskip

   As $\mathcal{B}$ is a $(\kappa,\lambda)$-nice system, we may appeal to its $\kappa$-\emph{capturing property} to infer the existence of an index $d\in \mathcal{D}^*$ such that $$\{a_0,\dots, a_n\}\in V[\rho_d``G].$$ 

    \smallskip

    The proof is divided into two steps: First, we show that there is a sequence $b\in A$ that is not in $V[\rho_d``G]$. Since this sequence can be coded as a subset of $\kappa$ the capturing property of $\mathcal{B}$ allows us to pick an index $e$ with  $d\sle e\in \mathcal{D}^*$ such that $b\in V[\rho_e``G]$. Thus, there is a condition $q\in \mathbb{B}_e/\rho_d``G$ forcing  $b$ to be a function $\dot{b}\colon \check{\omega}\rightarrow \check{\kappa}$. This would enable us to invoke Theorem~\ref{the main construction} with respect to  the posets $\mathbb{P}$, $\mathbb{P}_e$, $\mathbb{P}_d$, the $\mathbb{B}$-generic $G$, the $\mathbb{B}_e/\rho_d``G$-name $\dot{b}$ and the condition $q\in\mathbb{B}_e/\rho_d``G$. Thus, we will obtain a $\kappa$-perfect set\footnote{In fact, this $\kappa$-perfect set will be a member of $L(V[G]_{\kappa+1})$.} 
    $$\textstyle P:=\{b_{h_x}\mid x\in \prod_{n<\omega}\nu_n\},\footnote{For readability, $b_{h_x}$ stands for $\dot{b}_{h_x}$.}\label{the perfect set}$$ where the $h_x$'s are going to be $\mathbb{B}_e/\rho_d``G$-generic filters over $V[\rho_d``G]$ in $L(V[G]_{\kappa+1})$ containing $q$. In the second step of the proof we will show that $P\s A$. For this we will make a crucial use of Lemma~\ref{lemma: computing correctly the power set} (\emph{Constellation}).

    \medskip

   \underline{\textbf{{Step one:}}} By $\lambda$-boundedness of $\mathcal{B}$, 
   Lemma~\ref{lemma: transitive closure} yields $$({}^\omega\kappa)^{V[\rho_d``G]}\in L(V[G]_{\kappa+1})$$ Also, $|({}^\omega\kappa)^{V[\rho_d``G]}|^{V[G]}=\kappa$ as $\lambda$ is collapsed to be the successor of $\kappa$ by $\mathbb{B}$. Let $f\colon \kappa\rightarrow ({}^\omega\kappa)^{V[\rho_d``G]}$ be a bijection in $V[G]$. Since this can be coded as a member of $\mathcal{P}(\kappa)^{V[G]}$ it follows that $f\in L(V[G]_{\kappa+1})$ and thus $$L(V[G]_{\kappa+1})\models ``|({}^\omega\kappa)^{V[\rho_d``G]}|=\kappa\text{"}.$$ 
   Since $A$ was chosen with cardinality ${>}\kappa$ we can certainly pick $b\in A$ not in $({}^\omega\kappa)^{V[\rho_d``G]}.$ Once again, since $b$ can be coded (within $V[G]$) as a subset of $\kappa$ the $\kappa$-capturing property of $\mathcal{B}$ yields $d\sle e$ in $\mathcal{D}^*$ such that $b\in V[\rho_e``G]$. 

   \smallskip
   
   Since $b$ is taken from $A$ we have:
   \begin{equation}\label{eq 1: the function b is in A}
        V[G]\models \varphi(b,a_0,\dots,a_n).
   \end{equation}

Also, since $\rho_e\colon \mathbb{B}\rightarrow \mathbb{B}_e$ and $\rho_{e,d}\colon \mathbb{B}_e\rightarrow \mathbb{B}_d$ are projections,
   \begin{equation}\label{eq 2: intermediate models via projection}
       \text{$V[G]=V[\rho_e``G][G]$ and $V[\rho_e``G]=V[\rho_d``G][\rho_e``G]$}.
   \end{equation}

   \underline{\textbf{Step two:}} Combining the previous equations, we deduce the existence of a condition $p\in G$ such that 
   $$V[\rho_e``G]\models ``p\forces_{\mathbb{B}/\rho_e``G} \text{$\varphi(\check{b},\check{a}_0,\dots,\check{a}_n)$''.}$$
   Denote by $\Phi(p,\mathbb{B}, \rho_e``G, b, a_0,\dots, a_n)$ the  formula $``p\forces_{\mathbb{B}/\rho_e``G} \text{$\varphi(\check{b},\check{a}_0,\dots,\check{a}_n)$}$''.
   
   By the second part of equation~\eqref{eq 2: intermediate models via projection} there is $q\in \rho_e``G$ such that 
   \begin{equation}\label{eq 3: q forces the formula Phi}
       V[\rho_d``G]\models ``\text{$q\forces_{\mathbb{B}_e/\rho_d``G}\Phi(\check{p},\check{\mathbb{B}}, \dot{G}, \dot{b}, \check{a}_0,\dots, \check{a}_n)$''}.
   \end{equation}
  
  Without loss of generality we may assume that $q=\rho_e(p')$ for some condition $p'$ in $G$ below $p$. Moreover, since $b\in V[\rho_e``G]$, by extending $q$ (inside $\rho_e``G$) if necessary we may assume that $$q\forces_{\mathbb{B}_e/\rho_d``G} \dot{b}\colon \check{\omega}\rightarrow\check{\kappa}.$$
 
  Equation~\eqref{eq 3: q forces the formula Phi} tantamouts to 
   $$V[\rho_d``G][h]\models ``\text{$\Phi(\check{p},{\mathbb{B}}, h, \dot{b}_h, {a}_0,\dots, {a}_n)$''};$$
   provided  $h$ is a $\mathbb{B}_e/\rho_d``G$-generic filter (over $V[\rho_d``G]$) containing $q$.
   
   Equivalently,
   \begin{equation}\label{eq 4: forcing over V[h]}
     V[\rho_d``G][h]\models ``p\forces_{\mathbb{B}/h} \text{$\varphi(\check{{b}}_h,\check{a}_0,\dots,\check{a}_n)$''.}
   \end{equation}

\smallskip

   Since $b\notin V[\rho_d``G]$ (this was the conclusion of \textbf{Step one}) we can invoke Theorem~\ref{the main construction}, applied to $\pi_d=\pi_{e,d}\circ\pi_e\colon \mathbb{P}\xrightarrow{\pi_e}\mathbb{P}_e\xrightarrow{\pi_{e,d}}\mathbb{P}_d$, $G$, $\dot{b}$ and $q$, to infer that the set $P:=\{b_{h_x}\mid x\in \prod_{n<\omega}\nu_n\}\in L(V[G]_{\kappa+1})$ is $\kappa$-perfect. 
   
   \smallskip
   
      To finish the proof, we have to verity that $P\subseteq A$. Recall that by construction the $h_x$'s were $\mathbb{B}_e/\rho_d``G$-generics over $V[\rho_d``G]$ with $q\in h_x$. By equation~\eqref{eq 4: forcing over V[h]} above we may infer that for each $x\in \prod_{n<\omega}\nu_n$,
   $$V[h_x]\models``p\forces_{\mathbb{B}/h_x} \text{$\varphi(\check{{b}}_{h_x},\check{a}_0,\dots,\check{a}_n)$''}$$
   (note that here we implicitly used that $V[\rho_d``G][h_x]=V[h_x]$).

   Next apply the \emph{Constellation Lemma} (i.e., Lemma~\ref{lemma: computing correctly the power set})  with respect to the condition $\langle p,e, h_x\rangle$ in the \emph{constellating poset} $\mathbb{C}$ of Definition~\ref{def: fancy poset}.\footnote{Recall that we have intentionally assumed that $q\leq\rho_e(p)$ and so $p\in\mathbb{B}/h_x$.}

   This lemma gives us a $\mathbb{B}$-generic filter $G_x$ such that $p\in G_x$ and that \emph{constellates} $h_x$ -- i.e., $h_x\in\con_{\mathcal{B}}(G_x)$ or, equivalently, $G_x$ is $\mathbb{B}/h_x$-generic. In addition, $\mathcal{P}(\kappa)^{V[G_x]}=\mathcal{P}(\kappa)^{V[G]}$. Therefore, by $p\in G_x$ we deduce that $$V[h_x][G_x]\models \varphi(b_{h_x}, a_0,\dots, a_{n})$$
   and since $V[h_x][G_x]$ (equivalently, $V[G_x]$) knows about the full power set $\mathcal{P}(\kappa)^{V[G]}$ we conclude that $\varphi(b_{h_x}, a_0,\dots, a_n)$ also holds in $V[G]$ (by virtue of equation~\eqref{eq: absoluteness} above and the explanations preceding it). This in turn  is equivalent to saying that $b_{h_x}\in A$. All in all we have shown that
   $P\s A$ where $P$  is the $\kappa$-perfect set defined in page~\pageref{the perfect set}. 
\end{proof}

\smallskip

\begin{cor}
    Let $\mathcal{P}$ be a weak $(\kappa,\lambda)$-nice system. Then $$\one\Vdash_{\mathbb{P}}``L(V_{\kappa+1})\models\neg\ac+\dc_\kappa".$$
\end{cor}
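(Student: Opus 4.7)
The plan is to derive both $\dc_\kappa$ and $\neg\ac$ in $L(V_{\kappa+1})^{V[G]}$ directly from Theorem~\ref{main theorem 1 in terms of nice systems}, working in an arbitrary generic extension $V[G]$ by $\mathbb{P}$.

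First I would dispose with $\dc_\kappa$. By $\lambda$-boundedness and amenability to interpolations, $\one\Vdash_{\mathbb{P}}``\cf(\kappa)=\omega\wedge\lambda=(\kappa^+)^{V[\dot G]}$'' with $\lambda$ inaccessible in $V$, so in $V[G]$ we have $|V_\kappa|=\kappa$ (hence $\beth_\kappa=\kappa$) and the configuration of Setup~\ref{setup: GDST} applies. In particular $V_{\kappa+1}$ and $\mathcal{P}(\kappa)$ are interchangeable in $L(V_{\kappa+1})$, so by the cited result \cite[Lemma~4.10]{DimonteRankIntoRank} (which transports the classical argument that $L(\mathbb{R})\models\dc$) we obtain $L(V_{\kappa+1})\models\dc_\kappa$.

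Next I would prove $L(V_{\kappa+1})\models\neg\ac$ by contradiction. Suppose $L(V_{\kappa+1})\models\ac$. Then the standard Bernstein-style recursion carried out in \cite[Proposition~11.4]{Kan} (choice of $2^\kappa$ many disjoint representatives meeting every closed copy of ${}^\kappa 2$) can be executed inside $L(V_{\kappa+1})$, producing a set $X\in\mathcal{P}({}^\omega\kappa)\cap L(V_{\kappa+1})$ which, as computed in $L(V_{\kappa+1})$, neither has cardinality ${\leq}\kappa$ nor contains a closed homeomorphic copy of ${}^\kappa 2$. Since the $\kappa$-$\psp$ is absolute between inner models sharing the same $\mathcal{P}(\kappa)$ (as noted in \S\ref{sec: Generalized Descriptive Set Theory}, a closed embedding $\iota\colon{}^\kappa 2\rightarrow{}^\omega\kappa$ is coded by its restriction to ${}^{<\omega}\kappa\in V_{\kappa+1}$), the set $X$ also fails the $\kappa$-$\psp$ in $V[G]$. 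This contradicts Theorem~\ref{main theorem 1 in terms of nice systems}.

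The only delicate point is the absoluteness of the $\kappa$-$\psp$ between $L(V_{\kappa+1})$ and $V[G]$; no new ideas are required here since the manuscript has already established that continuous maps on the canonical $\kappa$-Polish spaces are coded inside $V_{\kappa+1}$, and closedness of the range is likewise witnessed by elements of $V_{\kappa+1}$. The two conclusions together yield $\one\Vdash_{\mathbb{P}}``L(V_{\kappa+1})\models\neg\ac+\dc_\kappa$'', as desired.
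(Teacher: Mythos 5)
Your proposal is correct and is exactly the argument the paper intends (the corollary is stated without proof, but its two ingredients are precisely the remarks already made in \S\ref{sec: Generalized Descriptive Set Theory}: that $L(\mathcal{P}(\kappa))\models\dc_\kappa$ via \cite[Lemma~4.10]{DimonteRankIntoRank}, and that $\ac$ yields a Bernstein-type set without the $\kappa$-$\psp$, which together with the absoluteness of the $\kappa$-$\psp$ between models sharing $\mathcal{P}(\kappa)$ contradicts Theorem~\ref{main theorem 1 in terms of nice systems}). The only implicit point, which the paper also leaves implicit, is that the generic extension satisfies $\beth_\kappa=\kappa$ so that $L(V_{\kappa+1})$ and $L(\mathcal{P}(\kappa))$ coincide and Setup~\ref{setup: GDST} applies.
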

By absoluteness of the $\kappa$-$\psp$ we get:
\begin{cor}\label{main thm for all Polish spaces}
   Assume there is a $(\kappa,\lambda)$-nice system. Then there is a model of $\zf+\dc_\kappa$ satisfying the following property:
    $$\text{``For all $\kappa$-Polish space $\mathcal{X}$ all subsets of $\mathcal{X}$ have the $\kappa$-\psp"}.$$

\end{cor}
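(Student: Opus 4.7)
The plan is to take the desired model to be $L(V[G]_{\kappa+1})$, where $G$ is a $V$-generic filter for the Boolean completion $\mathbb{B}$ of the given $(\kappa,\lambda)$-nice system (we pass to the Boolean completion so as to work with actual projections, in case the system is only a weak one). The remarks after Setup~\ref{setup: GDST}, together with the preceding corollary, already ensure that $L(V[G]_{\kappa+1})\models\zf+\dc_\kappa$, so this is an admissible base model. The argument then proceeds in two steps: transfer the $\kappa$-$\psp$ from $V[G]$ into $L(V[G]_{\kappa+1})$ by absoluteness, and then extend it from ${}^\omega\kappa$ to arbitrary $\kappa$-Polish spaces by embedding the latter into $\mathcal{P}(\kappa)$.

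For the first step, Theorem~\ref{main theorem 1 in terms of nice systems} delivers in $V[G]$ that every $A\in\mathcal{P}({}^\omega\kappa)\cap L(V[G]_{\kappa+1})$ has the $\kappa$-$\psp$. A witness is either a bijection $A\to\eta$ for some $\eta\leq\kappa$, or a continuous injection $\iota\colon{}^\omega\kappa\to A$ given by Fact~\ref{fact PSP}(\ref{fact PSP-3}); in either case the witness is coded by an element of $V[G]_{\kappa+1}$ and hence belongs to $L(V[G]_{\kappa+1})$. Consequently
\[
L(V[G]_{\kappa+1})\models\text{``every subset of ${}^\omega\kappa$ has the $\kappa$-$\psp$''},
\]
and the canonical homeomorphisms listed in Example~\ref{example: kappa polish spaces} transport the same statement to each of ${}^\kappa 2$, $\mathcal{P}(\kappa)$ and $C(\Sigma)$.

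For the second step I would work inside $L(V[G]_{\kappa+1})$ and fix a $\kappa$-Polish space $\mathcal{X}$ together with $A\subseteq\mathcal{X}$ with $|A|>\kappa$. Since $\mathcal{X}$ is completely metrizable of weight $\kappa$, $\dc_\kappa$ supplies a basis $\{U_\alpha\mid\alpha<\kappa\}$ of $\mathcal{X}$, and the generalized Urysohn-style argument shows that the assignment $e(x):=\{\alpha<\kappa\mid x\in U_\alpha\}$ is a topological embedding $e\colon\mathcal{X}\hookrightarrow\mathcal{P}(\kappa)$. The image $e(A)\subseteq\mathcal{P}(\kappa)$ has cardinality $>\kappa$, so the first step provides a continuous injection $\iota\colon{}^\omega\kappa\to e(A)$, and composing with $e^{-1}\restriction e(\mathcal{X})$ (continuous because $e$ is a homeomorphism onto its range) yields a continuous injection ${}^\omega\kappa\to A$. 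Fact~\ref{fact PSP}(\ref{fact PSP-3}) then gives the $\kappa$-$\psp$ of $A$ inside $\mathcal{X}$. The only step with genuine content, as opposed to bookkeeping, is the generalized Urysohn embedding; everything else reduces to the absoluteness of the $\kappa$-$\psp$ between $V[G]$ and $L(V[G]_{\kappa+1})$ (since a witness can always be coded inside $V[G]_{\kappa+1}$) and to the homeomorphisms among the canonical $\kappa$-Polish spaces.
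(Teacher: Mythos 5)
Your first step (taking $L(V[G]_{\kappa+1})$ as the model and transferring the $\kappa$-$\psp$ for subsets of ${}^\omega\kappa$ from Theorem~\ref{main theorem 1 in terms of nice systems} by absoluteness of the witnesses) is fine and is exactly what the paper does. The gap is in your second step: the ``generalized Urysohn embedding'' $e\colon\mathcal{X}\hookrightarrow\mathcal{P}(\kappa)$, $e(x)=\{\alpha<\kappa\mid x\in U_\alpha\}$, does not exist for a general $\kappa$-Polish space. First, $e$ is not continuous: the $e$-preimage of a basic open $N_{\eta,a}$ is $\bigcap_{\alpha\in a}U_\alpha\cap\bigcap_{\alpha\in\eta\setminus a}(\mathcal{X}\setminus U_\alpha)$, which involves complements of basic open sets and is in general only an intersection of open and closed sets, not open. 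Second, and more fundamentally, no topological embedding of $\mathcal{X}$ into $\mathcal{P}(\kappa)$ can exist unless $\mathcal{X}$ is zero-dimensional: $\mathcal{P}(\kappa)$ in the bounded topology has a clopen basis, hence is totally disconnected, whereas a $\kappa$-Polish space in the sense of Definition~2.13 need not be (e.g.\ $\mathbb{R}\times{}^\omega\kappa$ is completely metrizable of weight $\kappa$ but contains nontrivial connected subsets). So the reduction of an arbitrary $\kappa$-Polish space to $\mathcal{P}(\kappa)$ cannot go in the direction you propose.

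The correct reduction goes the other way, as in the paper: every $\kappa$-Polish space $\mathcal{X}$ is the image of a closed set $F\subseteq{}^\omega\kappa$ under a \emph{continuous bijection} $f\colon F\to\mathcal{X}$ (this is the generalization of the classical representation theorem; a continuous bijection, unlike an embedding, is compatible with $\mathcal{X}$ failing to be zero-dimensional). Given $A\subseteq\mathcal{X}$ in $L(V_{\kappa+1})$ one sets $B:=f^{-1}(A)\subseteq{}^\omega\kappa$, applies the already-established $\kappa$-$\psp$ to $B$, and then pushes the witness forward: if $|B|\leq\kappa$ then $|A|\leq\kappa$ since $f$ is a bijection, and if $\iota\colon{}^\kappa2\to B$ is an embedding then $f\circ\iota$ is a continuous injection into $A$, which suffices by Fact~\ref{fact PSP}(\ref{fact PSP-3}). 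Note that this last clause is exactly why only a continuous injection, and not an embedding, is needed on the $\mathcal{X}$ side.
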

\begin{proof}
   Let $G$ be $\mathbb{P}$-generic. We prove that $L(V[G]_{\kappa+1})$ is the desired model.
   
   Work in $V[G]$. We already noticed that $L(V_{\kappa+1})$ is a model of $\zf+\dc_\kappa$.  Let $\mathcal{X}$ be a $\kappa$-Polish space in $L(V_{\kappa+1})$. In \cite[Section 3.3]{Stone} it is proven that there is a closed set $F\subseteq{}^\omega\kappa$ and a continuous bijection $f\colon F\to X$. So let $A\subseteq\mathcal{X}$, $A\in L(V_{\kappa+1})$ and call $B=f^{-1}(A)$. In particular, $B\s {}^\omega\kappa$ and $B\in L(V_{\kappa+1})$, so by Theorem~\ref{main theorem 1 in terms of nice systems} and absoluteness $B$ has the $\kappa$-$\psp$ in $L(V_{\kappa+1})$. 
  Now work in $L(V_{\kappa+1})$. If $|B|\leq\kappa$, since $f$ is a bijection also $|A|\leq\kappa$. If there exists an embedding $\iota$ from ${}^{\lambda} 2$ to $B$, then $f\circ \iota$ is a continuous injection from ${}^\lambda 2$ to $A$, so $A$ satisfies Fact \ref{fact PSP}(\ref{fact PSP-3}). Therefore we proved that $A$ has the $\kappa$-$\psp$ in $L(V_{\kappa+1})$ for all $A\subseteq\mathcal{X}$ in $L(V_{\kappa+1})$.
\end{proof}

\begin{cor}
 Assume there is a $(\kappa,\lambda)$-nice system. Then there is a model of $\zfc$ such that all the $\kappa$-projective subsets of $\mathcal{C}$ have the $\kappa$-$\psp$, where $\mathcal{C}$ is any space in Example~\ref{example: kappa polish spaces}.
\end{cor}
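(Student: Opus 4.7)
The plan is to simply take as our model $V[G]$, where $G$ is $\mathbb{P}$-generic for the forcing $\mathbb{P}$ coming from the given $(\kappa,\lambda)$-nice system. As a forcing extension of $V$ (which is a model of $\zfc$), $V[G]$ is itself a model of $\zfc$. The whole point is then to transfer the conclusion of Corollary~\ref{main thm for all Polish spaces}, which was stated inside $L(V[G]_{\kappa+1})$, to $V[G]$ \emph{for definable sets only}.

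First I would fix a $\kappa$-Polish space $\mathcal{C}$ from Example~\ref{example: kappa polish spaces} and a $\kappa$-projective set $A\subseteq \mathcal{C}$ in $V[G]$. By the remark following the definition of $\kappa$-projectivity, $A$ is definable in $V[G]_{\kappa+1}$ from parameters in $\mathcal{C}\subseteq V[G]_{\kappa+1}$. Since $L(V[G]_{\kappa+1})$ contains $V[G]_{\kappa+1}$ as an element and has access to all such definitions, one immediately concludes $A\in L(V[G]_{\kappa+1})$.

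Next I would invoke Corollary~\ref{main thm for all Polish spaces} inside $V[G]$: the model $L(V[G]_{\kappa+1})$ satisfies $\zf+\dc_\kappa$ together with ``every subset of any $\kappa$-Polish space has the $\kappa$-$\psp$''. Applied to $\mathcal{C}$ and $A$, this yields that $A$ has the $\kappa$-$\psp$ as witnessed in $L(V[G]_{\kappa+1})$.

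Finally I would appeal to the absoluteness discussion preceding Fact~\ref{fact PSP}: the $\kappa$-$\psp$ is absolute between any two transitive models sharing the same $\mathcal{P}(\kappa)$, because the relevant embeddings $\iota$ are coded by elements of $V_{\kappa+1}$. Since $L(V[G]_{\kappa+1})$ and $V[G]$ share the same $\mathcal{P}(\kappa)$ (and hence the same $V_{\kappa+1}$), the $\kappa$-$\psp$ of $A$ transfers from $L(V[G]_{\kappa+1})$ to $V[G]$, completing the proof. There is no substantive obstacle here: all the work has already been done in Theorem~\ref{main theorem 1 in terms of nice systems} and Corollary~\ref{main thm for all Polish spaces}, and the present corollary is essentially a bookkeeping observation that $\kappa$-projectivity in $V[G]$ forces membership in $L(V[G]_{\kappa+1})$, where the general $\kappa$-$\psp$ theorem applies.
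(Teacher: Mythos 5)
Your proof is correct and essentially follows the paper's approach, with one small, logically redundant detour. The paper argues directly: $\kappa$-projective subsets of $\mathcal{C}$ in $V[G]$ are definable over $V[G]_{\kappa+1}$ and hence belong to $L_1(V[G]_{\kappa+1})\subseteq L(V[G]_{\kappa+1})$, so Theorem~\ref{main theorem 1 in terms of nice systems} (which already asserts the $\kappa$-$\psp$ \emph{in $V[G]$} for subsets of ${}^\omega\kappa$ lying in $L(V[G]_{\kappa+1})$) finishes the job, modulo the standard identifications between the spaces of Example~\ref{example: kappa polish spaces}. You instead route through Corollary~\ref{main thm for all Polish spaces}, obtaining the $\kappa$-$\psp$ first \emph{inside} $L(V[G]_{\kappa+1})$, and then pull it back to $V[G]$ by the absoluteness of the $\kappa$-$\psp$ between transitive models sharing the same $\mathcal{P}(\kappa)$. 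Since Corollary~\ref{main thm for all Polish spaces} was itself obtained from Theorem~\ref{main theorem 1 in terms of nice systems} by transferring from $V[G]$ into $L(V[G]_{\kappa+1})$ via that same absoluteness, your argument makes a harmless round trip; it does, however, buy you a uniform treatment of all the canonical spaces $\mathcal{C}$ without separately invoking the homeomorphisms with ${}^\omega\kappa$.
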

\begin{proof}
  Note that all the spaces in Example~\ref{example: kappa polish spaces} are definable over $V_{\kappa+1}$, therefore their $\kappa$-projective subsets are exactly their subsets in $L_1(V_{\kappa+1})$. But then if $G$ is $\mathbb{P}$-generic, all the $\kappa$-projective subsets of $\mathcal{C}$ in $V[G]$ belong to $L_1(V[G]_{\kappa+1})$, and by Theorem~\ref{main theorem 1 in terms of nice systems} the corollary follows.
\end{proof}

\subsection{$\Sigma$-Prikry forcings and the $\vec{\mathcal{U}}$-Baire property}\label{sec: Sigma Prikry and BP}
In this section we strengthen  the blanket assumptions of Setup~\ref{setup kappapsp} and assume that:

\begin{setup}\label{setup: baire property}
    $\Sigma=\langle\kappa_n\mid n<\omega\rangle$  is a strictly increasing sequence  of measurable cardinals with limit $\kappa$. This fact will be witnessed by a sequence of normal (uniform) ultrafilters that we denote by $\vec{\mathcal{U}}=\langle \mathcal{U}_n\mid n<\omega\rangle$.  As in Setup~\ref{setup kappapsp}, we also require $\lambda$ to be an inaccessible cardinal above $\kappa:=\sup(\Sigma).$ 
\end{setup}

 Recall that we denote by $\mathbb{P}(\vec{\mathcal{U}})$ the \emph{Diagonal Prikry Forcing relative to $\vec{\mathcal{U}}$} (see Definition~\ref{def: diagonal prikry}). The following is a slight tweak of the notion of $(\kappa,\lambda)$-nice $\Sigma$-system (see Definition~\ref{def: nice system}) accommodating $\mathbb{P}(\vec{\mathcal{U}})$:

\begin{definition}
    
    A (weak) $\Sigma$-system $\mathcal{P}$ is said to be \emph{$(\vec{\mathcal{U}}, \lambda)$-nice} if $\mathcal{P}$ is  $(\kappa,\lambda)$-nice and $\mathbb{P}({\Vec{\mathcal{U}}})$  equals $\mathbb{P}_d$ for some index $d\in\mathcal{D}^*$.\footnote{Note that $\vec{\mathcal{U}}$ in fact codes the sequence $\langle \kappa_n\mid n<\omega\rangle$ and therefore  $\kappa$ too. As a result the terminology \emph{$``\mathcal{P}$ is $(\vec{\mathcal{U}}, \lambda)$-nice''} is unambiguous. } 
\end{definition}
And with this at hand we can prove our abstract theorem about the $\UBP$:

\begin{theorem}\label{main theorem 2: BP for all subsets}
    Let $\mathcal{P}=\langle \mathbb{P}_e, \pi_{e,d}\colon\mathbb{P}_e\rightarrow\mathbb{P}_d\mid e,d\in\mathcal{D}\,\wedge\, d\sle e\rangle$ be a weak $(\Vec{\mathcal{U}},\lambda)$-nice $\Sigma$-system (say with $\mathbb{P}_d=\mathbb{P}(\vec{\mathcal{U}})$).   
    Then $$\text{$\textstyle \one\forces_{\mathbb{P}}``\forall A\in \mathcal{P}(\prod_{n<\omega}\kappa_n)\cap L(V[\dot{G}]_{\kappa+1})$\,\text{($A$ has the $\Vec{\mathcal{U}}$-\bp)''}}.\footnote{Note that $\vec{\mathcal{U}}$ remains a sequence of measures in the generic extension by $\mathbb{P}$ for by $\Sigma$-Prikryness the latter does not add bounded subsets to $\kappa$.}$$ 
\end{theorem}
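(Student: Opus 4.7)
The plan is to mirror the proof of Theorem~\ref{main theorem 1 in terms of nice systems}, replacing the $\kappa$-perfect-set construction with a Solovay-style meager approximation. The two engines will be the $\kappa$-comeager set of $\mathbb{P}(\vec{\mathcal{U}})$-generics from Lemma~\ref{lemma: C is comeager} and the Constellation Lemma (Lemma~\ref{lemma: computing correctly the power set}), exactly as in the $\kappa$-$\psp$ argument.

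Let $\mathcal{B}$ be the Boolean completion of $\mathcal{P}$ and $G$ a $\mathbb{B}$-generic over $V$. I would write $A=\{x\in C(\Sigma):V[G]\models\varphi(x,\vec{a})\}$, reformulating $\varphi$ (just as in the proof of Theorem~\ref{main theorem 1 in terms of nice systems}) so that it is absolute between inner models sharing the same $\mathcal{P}(\kappa)$, with parameters $\vec{a}\in\mathcal{P}(\kappa)^{V[G]}\cup\ord$. By $\kappa$-capturing pick $e\in\mathcal{D}^*$ with $d\sle e$ (so $\mathbb{P}_d=\mathbb{P}(\vec{\mathcal{U}})$) such that $\vec{a}\in M:=V[\rho_e``G]$. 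Since $\lambda$-boundedness gives $|\mathcal{P}(\kappa)^M|^{V[G]}<\lambda=(\kappa^+)^{V[G]}$, the straightforward relativization of Lemma~\ref{lemma: C is comeager} (with $M$ in place of $V$) yields that
$$C_M:=\{x\in C(\Sigma)^{V[G]}:x\text{ is }\mathbb{P}(\vec{\mathcal{U}})\text{-generic over }M\}$$
is $\kappa$-comeager in the $\vec{\mathcal{U}}$-EP topology. Hence it suffices to construct a $\vec{\mathcal{U}}$-EP open set $O$ with $A\cap C_M=O\cap C_M$, since then $A\triangle O$ is contained in the $\kappa$-meager set $C(\Sigma)\setminus C_M$.

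Working in $M$, I would call $p\in\mathbb{P}(\vec{\mathcal{U}})$ \emph{$A$-positive} if $p\Vdash^M_{\mathbb{P}(\vec{\mathcal{U}})}\varphi(\dot{x}_{\mathrm{gen}},\check{\vec{a}})$ (via the absolute reformulation of $\varphi$), where $\dot{x}_{\mathrm{gen}}$ is the canonical name for the Prikry sequence, and set $O:=\bigcup\{N_p:p\text{ is $A$-positive}\}$. Since $\mathbb{P}(\vec{\mathcal{U}})$ is $\Sigma$-Prikry inside $M$, the Prikry property (Lemma~\ref{lem: indiscernibles}(\ref{Prikry Property})) guarantees that for each $x\in C_M$ the filter $\mathcal{F}_x$ contains some $p_x$ deciding $\varphi(\dot{x}_{\mathrm{gen}},\check{\vec{a}})$. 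The crux is to show $p_x$ is $A$-positive iff $x\in A$. For this I would invoke Constellation on $\mathcal{F}_x\in\sky_{\mathcal{B}}(G)$: Lemma~\ref{lemma: computing correctly the power set} produces a $\mathbb{B}$-generic $G^*$ over $V$ with $\rho_d``G^*=\mathcal{F}_x$ and $\mathcal{P}(\kappa)^{V[G^*]}=\mathcal{P}(\kappa)^{V[G]}$. Then $\rho_e``G\in\mathcal{P}(\kappa)^{V[G^*]}$ gives $M[\mathcal{F}_x]\subseteq V[G^*]$, and by absoluteness of $\varphi$ we get $V[G^*]\models\varphi(x,\vec{a})\iff x\in A$, thereby linking $p_x$'s decision with $A$-membership.

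The main technical obstacle is this final transfer step: $p_x$ decides $\varphi(\dot{x}_{\mathrm{gen}},\check{\vec{a}})$ over $M$ (hence its decision is a statement about $M[\mathcal{F}_x]$), whereas absoluteness equates the value of $\varphi$ in $V[G^*]$ with the value in $V[G]$. Because $\mathcal{P}(\kappa)^{M[\mathcal{F}_x]}\subsetneq\mathcal{P}(\kappa)^{V[G^*]}$ in general, the absolute reformulation of $\varphi$ is not \emph{a priori} invariant across this inclusion. The plan to bridge the gap is to iterate the Constellation/Interpolation machinery inside $V[G^*]$, producing a further $\mathcal{P}(\kappa)$-preserving intermediate extension in which the forcing decision by $p_x$ propagates — analogous to the role of the condition $r^*\in G^*$ in the transfer step of Theorem~\ref{main theorem 1 in terms of nice systems}. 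Once the equivalence ``$p_x$ is $A$-positive $\iff x\in A$'' is established, $A\triangle O\subseteq C(\Sigma)\setminus C_M$ is $\kappa$-meager, and $A$ has the $\vec{\mathcal{U}}$-$\bp$.
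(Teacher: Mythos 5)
Your overall architecture (a $\kappa$-comeager set of Prikry-generics plus the Constellation Lemma for absoluteness) is the right one, but the definition of the open set $O$ contains a genuine gap that you yourself flag and then do not close. You declare $p$ to be $A$-positive when $p\Vdash^M_{\mathbb{P}(\vec{\mathcal{U}})}\varphi(\dot{x}_{\mathrm{gen}},\check{\vec{a}})$, i.e.\ when $p$ decides, \emph{over $M$ and for the small forcing alone}, the truth of $\varphi$ in $M[\mathcal{F}_x]$. But $\varphi$ is (after the absoluteness reformulation) a statement about $L(V_{\kappa+1})$ of the model in which it is evaluated, and $\mathcal{P}(\kappa)^{M[\mathcal{F}_x]}\subsetneq\mathcal{P}(\kappa)^{V[G]}$: the tail forcing adds subsets of $\kappa$ that can flip the truth value of $\psi^{L(V_{\kappa+1})}(x,\vec a)$. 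So "$p_x$ is $A$-positive" and "$x\in A$" are simply not equivalent, and no amount of re-running Constellation fixes this, because Constellation only matches $\mathcal{P}(\kappa)^{V[G^*]}$ with $\mathcal{P}(\kappa)^{V[G]}$ — it says nothing about the one-step decision made over $M$. This is exactly the point where Solovay's classical argument (and the paper's proof) uses a \emph{two-level} forcing statement: the paper's relevant conditions $q\in P(a,\sigma)$ live in the intermediate quotient $(\mathbb{B}_e/\rho_{e_0}``G)_{\downarrow p}$ and force the statement $\Phi(\check v,\check{\mathbb{B}},\dot g,\check\sigma``\dot g,\check a)$, which asserts that a condition $v$ of the \emph{full} algebra $\mathbb{B}$ forces $\varphi(\tau_{\sigma``\dot g},a)$ over the quotient $\mathbb{B}/\dot g$; the open set is then $\bigcup_{\sigma}\bigcup_{q}N_{\sigma(q)}$. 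It is this outer quantification over $\mathbb{B}/\dot g$, combined with Constellation (which supplies, for each candidate $x$, a full generic $G^*$ constellating the relevant $h$ with the same $\mathcal{P}(\kappa)$), that makes the decision genuinely equivalent to membership in $A$.

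A second, related omission: your $O$ quantifies only over conditions of $\mathbb{P}(\vec{\mathcal{U}})$, so you never set up the auxiliary projections needed to pass between $\mathcal{F}_x$ and generics for the intermediate algebras. The paper replaces your $C_M$ by the set $C(a)$ of those $x$ for which there exist $e\succeq e_0$, a projection $\sigma\colon(\mathbb{B}_e/\rho_{e_0}``G)_{\downarrow p}\to(\mathbb{B}_d)_{\downarrow\sigma(p)}$ in $V[\rho_{e_0}``G]$, and a generic $h$ with $\sigma``h=\ro(\mathcal{F}_x)$ (and proves $C=C(a)$ via Lemma~\ref{lemma: capturing generic filters with a small generic} and Fact~\ref{fact: projection to the rescue}). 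These $\sigma$'s are indispensable in both directions: for $x\in O\Rightarrow x\in A$ one must manufacture, from $\sigma(q)\in\ro(\mathcal{F}_x)$, a generic $h\ni q$ with $\sigma``h=\ro(\mathcal{F}_x)$ — this is precisely the \emph{Capturing} clause of the Interpolation Lemma (Lemma~\ref{lemma: interpolation}), which your sketch never invokes. Without the relevant-projection machinery and the two-level formula $\Phi$, the equivalence $A\cap C=O\cap C$ cannot be established, so as written the proposal does not yield the $\vec{\mathcal{U}}$-$\bp$.
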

\begin{proof}
    Let $\mathcal{B}=\langle \mathbb{B}_e, \rho_{e,d}\colon\mathbb{B}_e\rightarrow\mathbb{B}_d\mid e,d\in\mathcal{D}\,\wedge\, d\sle e\rangle$ be the Boolean completion of $\mathcal{P}$ and recall that $\mathcal{B}$ is a $(\kappa,\lambda)$-nice system.  Let $G$ be $\mathbb{B}$-generic and unless otherwise stated let us work in $V[G]$. 
    
    Recall that $$\textstyle C(\Sigma):=(\prod_{n<\omega}\kappa_n)^{V[G]}$$ is a $\kappa$-Polish space with respect to the product topology (see Example~\ref{example: kappa polish spaces}) and that the $\UBP$ is formulated in the accessory $\UEP$-topology $\mathcal{T}_{\UEP}$.

    \smallskip

    Since we will be moving from generics for a poset to generics on its corresponding regular open algebra the following convention becomes handy:

\smallskip

\textbf{Convention:} If $H\s \mathbb{Q}$ is generic for a poset, by $\mathcal{B}(H)$ we denote its upwards closure in $\mathcal{B}(\mathbb{Q})$; namely, $\ro(H):=\{b\in\ro(\mathbb{Q})\mid \exists q\in H\, q\leq b\}.$

\medskip

    Let $A\in L(V[G]_{\kappa+1})\cap \mathcal{P}(C(\Sigma))^{V[G]}$. Then there is a first-order formula $\varphi(x,y_0,\dots,y_n)$  and  parameters $a_0,\dots, a_n\in \mathcal{P}(\kappa)^{V[G]}\cup \ord$ such that $$\textstyle A=\{x\in C(\Sigma)\mid V[G]\vDash\varphi(x,a_0,\dots,a_n)\}.$$ 
    By coding $\langle a_0,\dots, a_n\rangle$ if necessary we may assume that there is a unique parameter $a\in \mathcal{P}(\kappa)^{V[G]}$ (this will enhance  readability).  Using the $\kappa$-capturing property of $\mathcal{B}$ there is an index $d\sle e_0\in\mathcal{D}^*$ such that $a\in V[\rho_{e_0}``G]$. 
    
  Recalling that $\mathbb{P}_d$ is  the diagonal Prikry forcing  and we defined $$C:=\textstyle \{x\in C(\Sigma)\mid \mathcal{F}_x \text{ is }\mathbb{P}_d\text{-generic over } V\},$$
where $\mathcal{F}_x$ is the filter \emph{generated by $x$}; more pedantically,  $$\mathcal{F}_x=\{p\in \mathbb{P}(\vec{\mathcal{U}})\mid s^p\sqsubseteq x\,\wedge\, x(n)\in A^p_n\; \text{for all $n\geq |s^p|$}\}.$$
We already proved in  Lemma \ref{lemma: C is comeager} that $C$ 
is $\kappa$-comeager in the Ellentuck-Prikry $\vec{\mathcal{U}}$ topology and 
as result $\textstyle C(\Sigma)\setminus C$ is $\kappa$-meager. To prove that $A$ has the $\vec{\mathcal{U}}$-$\bp$ we have to work with $C(a)$, a seemingly refinement of $C$.

\smallskip

Let $C(a)$ be the collection of $x\in C$ for which there is $e\in \mathcal{D}^*$ with $e_0\sle e$, a condition $p\in \rho_e``G$, a projection $\sigma\colon (\mathbb{B}_e/\rho_{e_0}``G)_{\downarrow p}\rightarrow{\mathbb{B}_d}_{\downarrow\sigma(p)}$  in $V[\rho_{e_0}``G]$ and a $(\mathbb{B}_e/\rho_{e_0}``G)_{\downarrow p}$-generic $h\in V[G]$ such that 
$\sigma``h=\ro(\mathcal{F}_x)$.
\begin{claim}
    $C=C(a)$. In particular, $C(a)$ is $\kappa$-comeager in $C(\Sigma)$.
\end{claim}
\begin{proof}[Proof of claim]
     Let $x\in C$. By definition $\mathcal{F}_x$ is $\mathbb{P}_d$-generic and belongs to $V[G]$ (i.e., $\mathcal{F}_x\in \sky_{\mathcal{B}}(G)$). By Lemma~\ref{lemma: capturing generic filters with a small generic} there is $e\in \mathcal{D}^*$, with $e_0\prec e$, such that $\mathcal{F}_x\in \sky_{\mathcal{B}\upharpoonright e}(\rho_e``G)$. 
     Let a $(\mathbb{B}_e/\rho_{e_0}``G)$-name $\dot{\mathcal{F}}_x$ such that $\mathcal{F}_x=(\dot{\mathcal{F}}_x)_{\rho_e``G}$ and  $p\in \rho_e``G$  a condition such that
    $V[\rho_{e_0}``G]\models ``p\forces_{(\mathbb{B}_e/\rho_{e_0}``G)}\text{``$\dot{\mathcal{F}}_x$ is $\mathbb{P}_d$-generic"}$.  
     Let  $\sigma\colon(\mathbb{B}_e/\rho_{e_0}``G)_{\downarrow p}\rightarrow{\mathbb{B}_d}_{\downarrow \sigma(p)}$ be the projection associated to $\dot{\mathcal{F}}_x$ and $p$: $$q\mapsto \bigwedge \{r\in \mathbb{P}_d\mid q\forces_{(\mathbb{B}_e/\rho_{e_0}``G)}\check{r}\in \dot{\mathcal{F}}_x\}.$$
   By Fact~\ref{fact: projection to the rescue}  this is indeed a projection and $\sigma``(\rho_{e}``G)_{\downarrow p}=\ro(\mathcal{F}_x)$. All in all we have showed that $x\in C(a)$, which yields $C\s C(a).$
\end{proof}
Let us set some additional notation. We define the set of \emph{relevant projections}   $R(a)$ and \emph{relevant conditions} $P(a,\sigma)$ as follows:

\begin{itemize}
    \item[$(\aleph)$] Let $R(a)$ be the set of all projections  $\sigma\colon (\mathbb{B}_e/\rho_{e_0}``G)_{\downarrow p}\rightarrow {\mathbb{B}_d}_{\downarrow\sigma(p)}$ such that, for some $x\in C(\Sigma)$, $\sigma$ witnesses $x\in C(a)$.
    \item[$(\beth)$]\label{item: relevant conditions} For each $\sigma\in R(a)$, by  $P(a,\sigma)$ we denote the set of all conditions $q\in (\mathbb{B}_e/ \rho_{e_0}``G)_{\downarrow p}$ such that, for some $v\in\mathbb{B}$, $\rho_e(v)=q$\footnote{Note that for each $q\in (\mathbb{B}_e/ \rho_{e_0}``G)_{\downarrow p}$ we do not lose generality by assuming $q\in\rho_e``\mathbb{B}$. If not, note that $(\rho_e``\mathbb{B})_{\downarrow q}$ is dense below $q$, and so $\rho_{e,e_0}``(\rho_e``\mathbb{B})_{\downarrow 
 q}$ is dense below $\rho_{e,e_0}(q)$. On the other hand, $\rho_{e,e_0}(q)\in\rho_{e_0}``G$, and so $\rho_{e,e_0}``(\rho_e``\mathbb{B})_{\downarrow 
 q}\cap\rho_{e_0}``G\neq\emptyset$. Let $u\in\rho_{e,e_0}``(\rho_e``\mathbb{B})_{\downarrow 
 q}\cap\rho_{e_0}``G$. Thus $u=\rho_{e,e_0}(v_0)$ for some $v_0\in(\rho_e``\mathbb{B})_{\downarrow 
 q}$, and $v_0=\rho_{e}(v)$ for some $v\in\mathbb{B}$. Now replace $q$ with $v_0\leq q$, and observe that $v_0\in(\mathbb{B}_e/\rho_{e_0}``G)_{\downarrow p}$. } and $$V[\rho_{e_0}``G]\models``\text{$q\forces_{(\mathbb{B}_e/ \rho_{e_0}``G)_{\downarrow p}}\Phi(\check{v},\check{\mathbb{B}}, \dot{g},\check{\sigma}``\dot{g},\check{a})$''}, $$

 where we denoted
 $$\Phi(v,\mathbb{B}, g, \sigma``g, a)\equiv ``v\forces_{\mathbb{B}/g} \text{$\varphi(\check{\tau}_{\sigma``g},\check{a})$''}.$$
 Here $\tau_{\sigma``g}$ denotes the $\mathbb{P}(\vec{\mathcal{U}})$-generic sequence generated by $\sigma``\dot{g}$.
 
\end{itemize}

The definitions of $R(a)$ and $P(a,\sigma)$ have been posed so as to be in conditions to use the \emph{Capturing part} of the \emph{Interpolation Lemma} (Lemma~\ref{lemma: interpolation}). This will become more transparent at a due time.

   Finally, we define an open set in the $\vec{\mathcal{U}}$-Ellentuck-Prikry topology by
   $$O:=\textstyle \bigcup_{\sigma\in R(a)}\bigcup_{q\in P(a,\sigma)}\bigcup_{r\in (\mathbb{P}_d)_{ \downarrow \sigma(q)}}N_r$$ where  $N_r$ is the basic open neighborhood defined as
   $$N_{r}:=\{x\in C(\Sigma)\mid r\in \mathcal{F}_x\}.$$
     For each $\sigma\in R(a)$ and $q\in P(a,\sigma)$, set
$$N_{\sigma(q)}:=\{x\in C(\Sigma)\mid \sigma(q)\in \ro(\mathcal{F}_x)\}.$$

\begin{claim}
 For $\sigma\in R(a)$ and $q\in P(a,\sigma)$, $\bigcup_{r\in (\mathbb{P}_d)_{ \downarrow \sigma(q)}}N_r=N_{\sigma(q)}$. 
\end{claim}
\begin{proof}[Proof of claim]
    Suppose that $x$ belongs to the left-hand-side. Then $r\in \mathcal{F}_x$ and so $\sigma(q)\in \ro(\mathcal{F}_x)$. Conversely, let $x\in N_{\sigma(q)}$. By definition, $\sigma(q)\in \ro(\mathcal{F}_x)$ which means that for some $r'\in \mathcal{F}_x$, $r'\leq \sigma(q)$. But then $x\in N_{r'}$.
\end{proof}
   
We claim that $A\Delta O$ is $\kappa$-meager in  $\mathcal{T}_{\UEP}$. By $\kappa$-comeagerness of $C(a)$, it is enough to prove that $A\Delta O\subseteq C(\Sigma)\setminus C(a)$, which amounts to saying that for each $x\in C(a)$, $x\in A$ if and only if $x\in O$. Bear in mind that for every sequence $x\in C(a)$ the corresponding filter $\mathcal{F}_x$ is $\mathbb{P}_d$-generic over $V$.
\begin{claim}\label{claim: A included in O}
    Let $x\in C(a)$. If $x\in A$ then $x\in O$.
\end{claim}
\begin{proof}[Proof of claim]
    Let $x\in A$. By definition, $V[G]\models \varphi(x,a)$. Since $x\in C(a)$ there is $p\in \rho_e``G$ a projection $\sigma\colon (\mathbb{B}_e/\rho_{e_0}``G)_{\downarrow p}\rightarrow {\mathbb{B}_d}_{\downarrow\sigma(p)}$ in $V[\rho_{e_0}``G]$ with $\sigma\in R(a)$ and a $(\mathbb{B}_e/\rho_{e_0}``G)_{\downarrow p}$-generic filter $h\in V[G]$  with $\sigma``h=\ro(\mathcal{F}_x)$. 

    \smallskip

    Since $h\in \sky_{\mathcal{B}}(G)$ the \emph{Constellation Lemma} in page~\pageref{lemma: computing correctly the power set} yields a $\mathbb{B}$-generic $G^*$ such that $h\in \con_{\mathcal{B}}(G^*)$ and 
 
    $\mathcal{P}(\kappa)^{V[G^*]}=\mathcal{P}(\kappa)^{V[G]}.$
    Thus by absoluteness of $\varphi(\cdot,\cdot)$ we have $V[G^*]\models \varphi(x,a)$.  Let $t\in G^*$ be such that \begin{equation}\label{equation: t forces varphi}
        V[h]\models \text{$``t\forces_{\mathbb{B}/h}\varphi(\check{x},\check{a})$''}
    \end{equation}

    Since $\sigma``h=\ro(\mathcal{F}_x)$ it follows that $\tau_{\sigma``h}$ is the $\mathbb{P}(\Vec{\mathcal{U}})$-generic sequence induced by $\mathcal{F}_x$. 
    Thus, the above is equivalent to saying
     $$V[h]\models \text{$``t\forces_{\mathbb{B}/h}\varphi(\check{\tau}_{\sigma``h},\check{a})$''}.$$

     So $V[h]\models \Phi(t,\mathbb{B}, h, \sigma``h, a)$.

     \smallskip

     Let $r\in h\s (\mathbb{B}_e/\rho_{e_0}``G)_{\downarrow p}$ be such that \begin{equation}\label{equation: r forces Phi}
V[\rho_{e_0}``G]\models\text{$``r\forces_{(\mathbb{B}_e/\rho_{e_0}``G)_{\downarrow p}}\Phi(\check{t},\check{\mathbb{B}}, \dot{g}, \check{\sigma}``\dot{g},\check{a})$''}.
     \end{equation} 
     If we show that there is an extension  $r'\leq r$ such that $\rho_e(v)=r'\in P(a,\sigma)\cap h$ (for some $v\in\mathbb{B}$) we will be done in our task of showing that $x\in O$. Indeed, in that case $\sigma(\rho_e(v))\in \ro(\mathcal{F}_x)$ and then, by definition,  $x\in N_{\sigma(\rho_e(v))}\s O$.

     \begin{subclaim}
         There is $\rho_e(v)\in h\cap P(a,\sigma)$ such that $\rho_e(v)\leq r$.
     \end{subclaim}
     \begin{proof}[Proof of subclaim]
         As $t\in G^*\subseteq\mathbb{B}/h$, $\rho_e(t)\in h$. So we may pick a condition $r'\in h$ witnessing the compatibility between $r$ and $\rho_e(t)$. Since $E:=(\rho_e``(\mathbb{B}_{\downarrow t}))_{\downarrow r'}$ is dense below $r'$ and $r'\in h$ there is $s\in E\cap h$. Let $v\in\mathbb{B}_{\downarrow t}$ be such that $\rho_e(v)=s\leq r'$. Thus $\rho_e(v)\in h$ and so $v\in(\mathbb{B}/h)_{\downarrow t}$. 
         
         Since $v\leq t$ equation~\eqref{equation: t forces varphi} above ensures that $$V[h]\models \text{$``v\forces_{\mathbb{B}/h^{}}\varphi(\check{x},\check{a})$''}.$$ Similarly, since $\rho_e(v)\leq r'\leq r$, equation~\eqref{equation: r forces Phi} ensures that $$V[\rho_{e_0}``G]\models\text{$``\rho_e(v)\forces_{(\mathbb{B}_e/\rho_{e_0}``G)_{\downarrow p}}\Phi(\check{t},\check{\mathbb{B}}, \dot{g}, \check{\sigma}``\dot{g},\check{a})$''}.$$ But now note that $$V[\rho_{e_0}``G]\models\text{$``\rho_e(v)\forces_{(\mathbb{B}_e/\rho_{e_0}``G)_{\downarrow p}}\big(\Phi(\check{t},\check{\mathbb{B}}, \dot{g}, \check{\sigma}``\dot{g},\check{a})\rightarrow\Phi(\check{v},\check{\mathbb{B}}, \dot{g}, \check{\sigma}``\dot{g},\check{a})\big)$''},$$ simply because $v\leq t$. Therefore, $$V[\rho_{e_0}``G]\models\text{$``\rho_e(v)\forces_{(\mathbb{B}_e/\rho_{e_0}``G)_{\downarrow p}}\Phi(\check{v},\check{\mathbb{B}}, \dot{g}, \check{\sigma}``\dot{g},\check{a})$''}.$$ 
     This is equivalent to saying that $\rho_e(v)\in P(a,\sigma)$ (see $(\beth)$ in page~\pageref{item: relevant conditions}).
     \end{proof}
     The above argument disposes with the proof of Claim~\ref{claim: A included in O}.

\end{proof}

Let us now prove the second part of our assertion:

\begin{claim}
    Let $x\in C(a)$. If $x\in O$ then $x\in A$.
\end{claim}
\begin{proof}[Proof of claim]
     Suppose $x\in O$. Then there are $\sigma\in R(a)$ and $q\in P(a,\sigma)$ such that $x\in N_{\sigma(q)}$. That is to say, there is a projection $$\sigma\colon (\mathbb{B}_e/\rho_{e_0}``G)_{\downarrow p}\rightarrow{(\mathbb{B}_d)}_{\downarrow\sigma(p)}$$ in $V[\rho_{e_0}``G]$ and a condition $q\in P(a,\sigma)$ such that $\sigma(q)\in \ro(\mathcal{F}_x).$ 
     
     After inspecting the definition of the set of relevant conditions $P(a,\sigma)$ one realizes that $q\leq p$ (recall that $p$ is determined by $\sigma$). Thus, trivially,
     $$\sigma\colon (\mathbb{B}_e/\rho_{e_0}``G)_{\downarrow q}\rightarrow({\mathbb{B}_d})_{\downarrow\sigma(q)}$$ 
     is a projection.  This puts us in an ideal position to invoke the \emph{Capturing clause} of the \emph{Interpolation Lemma} (see p.~\pageref{lemma: interpolation}). Namely, we apply the lemma with respect to the following parameters: As projections,
       \begin{displaymath}
       \begin{tikzcd}
 \mathbb{B} \arrow{r}{\rho_e} \arrow[bend right]{rr}{\rho_d} & \mathbb{B}_e \arrow{r}{\rho_{e,d}}  & \mathbb{B}_d
\end{tikzcd}
\begin{tikzcd}
   (\mathbb{B}_e/\rho_{e_0}``G)_{\downarrow q}\arrow{r}{\sigma} &({\mathbb{B}_d})_{\downarrow\sigma(q)};
\end{tikzcd}
\end{displaymath}
as a condition in $\mathbb{B}$ we take $v$ the $\rho_e$-preimage of $q$ (which exists because $q\in P(a,\sigma)$ and  as a $\mathbb{B}_d$-generic $\mathcal{B}(\mathcal{F}_x)$ (note that $\rho_d(v)=\rho_{e,d}(q)\in \mathcal{B}(\mathcal{F}_x)$).

  The lemma yields a $(\mathbb{B}_e/\rho_{e_0}``G)_{\downarrow p}$-generic $h$ with $q\in h$  such that $$\sigma``h=\ro(\mathcal{F}_x).$$
   
     Unwrapping the definition of $q\in P(a,\sigma)$ we have that 
$$V[\rho_{e_0}``G]\models``q\forces_{(\mathbb{B}_e/ \rho_{e_0}``G)_{\downarrow p}}\Phi(\check{v},\check{\mathbb{B}}, \dot{g},\check{\sigma}``\dot{g},\check{a})\text{''}.$$ and since $q\in h$ and $h$ is $(\mathbb{B}_e/\rho_{e_0}``G)_{\downarrow p}$-generic,
     $$V[h]\models \Phi(v,\mathbb{B}, h,\sigma``h,a).$$
     But by our comments derived from the Interpolation Lemma, 
       $$V[h]\models \Phi(v,\mathbb{B}, h,\ro(\mathcal{F}_x),a),$$
       which literally means 
         $V[h]\models v\forces_{\mathbb{B}/h}\varphi(\check{x},\check{a}).\footnote{Here we used that $\tau_{\mathcal{B}(\mathcal{F}_x)}$ (i.e., the $\mathbb{P}_d$-generic sequence induced by $\mathcal{B}(\mathcal{F}_x)$) is $x$. }$

     Once again there is a $\mathbb{B}$-generic $G^*$ that constellates $h$ (i.e., $h\in \con_{\mathcal{B}}(G^*)$), captures all subsets of $\kappa$ in $V[G]$ (i.e., $\mathcal{P}(\kappa)^{V[G^*]}=\mathcal{P}(\kappa)^{V[G]}$) and $v\in G^*$. 

     Therefore, $V[G^*]\models \varphi(x,a)$
     and by absoluteness, $V[G]\models \varphi(x,a)$. This amounts to saying  $x\in A$, as needed.
    
\end{proof}
  The above claims ensure that $A\Delta O\subseteq C(\Sigma)\setminus C$. Therefore $A\Delta O$ is $\kappa$-meager since so is $C(\Sigma)\setminus C$. This proves that $A$ has the $\Vec{\mathcal{U}}$-\textsf{BP}.
\end{proof}

\begin{cor}

Assume there is a  a weak $(\Vec{\mathcal{U}},\lambda)$-nice system. 
Then there is a model of $\zf+\dc_\kappa+\text{``All subsets of $C(\Sigma)$ have the $\Vec{\mathcal{U}}$-$\bp$"}.$
\end{cor}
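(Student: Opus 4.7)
The plan is to show that $L(V[G]_{\kappa+1})$ is the desired model, where $G$ is $\mathbb{P}$-generic over $V$ and $\mathbb{P}=\mathbb{P}_\infty$ is the top of the given weak $(\vec{\mathcal{U}},\lambda)$-nice system. This mirrors the strategy used in Corollary~\ref{main thm for all Polish spaces} for the $\kappa$-$\psp$. First I would note that $L(V[G]_{\kappa+1})\models\zf+\dc_\kappa$: the $\zf$ part is standard, while $\dc_\kappa$ follows from the same Solovay-style argument showing $L(\mathbb{R})\models\dc$ already quoted in \S\ref{sec: Generalized Descriptive Set Theory}, together with the fact that $\beth_\kappa=\kappa$ is preserved in $L(V[G]_{\kappa+1})$.

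Next I would pick an arbitrary $A\in L(V[G]_{\kappa+1})\cap\mathcal{P}(C(\Sigma))$ and invoke Theorem~\ref{main theorem 2: BP for all subsets} to obtain, in $V[G]$, a $\UEP$-open set $O\subseteq C(\Sigma)$ and a $\kappa$-meager set $M\subseteq C(\Sigma)$ with $A\triangle O\subseteq M$. The remainder of the argument is an absoluteness step, asserting that these witnesses can be found (and recognized as witnesses) inside $L(V[G]_{\kappa+1})$. The key observations will be that $\mathbb{P}(\vec{\mathcal{U}})\in V[G]_{\kappa+1}$, so every basic neighborhood $N_p$ lies in $V[G]_{\kappa+1}$, and hence $O$ is coded by a subset of $\mathbb{P}(\vec{\mathcal{U}})$ and thereby belongs to $L(V[G]_{\kappa+1})$; similarly, a witnessing $\kappa$-sequence of nowhere dense subsets of $C(\Sigma)$ for $M$ can be coded as a single subset of $V[G]_{\kappa+1}$ via G\"odel pairing (using $\beth_\kappa=\kappa$), so such a sequence also lies in $L(V[G]_{\kappa+1})$. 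Since $L(V[G]_{\kappa+1})$ and $V[G]$ share the same $\mathcal{P}(\kappa)$ and hence the same $\mathcal{P}(C(\Sigma))$, the statements ``$O$ is $\UEP$-open'' and ``$M$ is $\kappa$-meager in the $\UEP$-topology'' hold in one iff they hold in the other, exactly as with $\kappa$-$\psp$ in the preceding subsection.

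The main conceptual obstacle is precisely this absoluteness step, and in particular checking that the ``$\kappa$-meager'' predicate transfers down correctly; the argument should go through because $\dc_\kappa$ inside $L(V[G]_{\kappa+1})$ lets one manipulate $\kappa$-sequences of subsets of $C(\Sigma)$ freely, and every such sequence is already coded in $V[G]_{\kappa+1}$. Once absoluteness is in hand, every $A\in\mathcal{P}(C(\Sigma))\cap L(V[G]_{\kappa+1})$ has the $\vec{\mathcal{U}}$-$\bp$ in $L(V[G]_{\kappa+1})$, and since trivially every subset of $C(\Sigma)$ in $L(V[G]_{\kappa+1})$ is in $L(V[G]_{\kappa+1})$, this yields the corollary.
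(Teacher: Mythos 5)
Your overall strategy --- taking $L(V[G]_{\kappa+1})$ as the model, applying Theorem~\ref{main theorem 2: BP for all subsets} in $V[G]$, and transferring the witnesses down --- is the intended derivation, parallel to Corollary~\ref{main thm for all Polish spaces}. However, the absoluteness step as you wrote it rests on two false claims. First, $L(V[G]_{\kappa+1})$ and $V[G]$ do \emph{not} share the same $\mathcal{P}(C(\Sigma))$; what they share is $\mathcal{P}(\kappa)$, hence the same set of \emph{points} of $C(\Sigma)$ and the same poset $\mathbb{P}(\vec{\mathcal{U}})$, and it is this that makes ``$\UEP$-open'' and ``nowhere dense'' absolute between the two models. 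Second, and more seriously, being ``coded as a single subset of $V[G]_{\kappa+1}$'' does not place an object in $L(V[G]_{\kappa+1})$: that model contains every \emph{element} of $V[G]_{\kappa+1}$, but by no means every subset of it (otherwise it would contain $V[G]_{\kappa+2}$ and, inductively, all of $V[G]$, contradicting $\neg\ac$ there). An arbitrary nowhere dense subset of $C(\Sigma)$ is a subset of a set of size $2^\kappa$ and need not lie in $L(V[G]_{\kappa+1})$ at all, so your justification for the meager witness fails as stated.

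The repair goes through Lemma~\ref{lemma: transitive closure}. Since $\mathbb{P}(\vec{\mathcal{U}})\in H_\lambda^V$ and $\one\forces_{\mathbb{P}}\lambda=(\kappa^+)^{V[\dot G]}$, we have $|\mathbb{P}(\vec{\mathcal{U}})|^{V[G]}\leq\kappa$; hence every $\UEP$-open set $U$, being recoverable from the set $\{r\in\mathbb{P}(\vec{\mathcal{U}})\mid N_r\subseteq U\}$, has transitive closure of size ${<}\lambda$ in $V[G]$ and therefore lies in $(H_\lambda)^{V[G]}\subseteq L(V[G]_{\kappa+1})$. For the meager side, replace each nowhere dense set by its closure; a closed set is determined by its open complement, so a $\kappa$-sequence of closed nowhere dense sets is coded by a subset of $\kappa\times\mathbb{P}(\vec{\mathcal{U}})$ and again lands in $(H_\lambda)^{V[G]}\subseteq L(V[G]_{\kappa+1})$. (Concretely, the comeager set of Lemma~\ref{lemma: C is comeager} is witnessed by the family indexed by $(\prod_{n<\omega}\mathcal{U}_n)^V$, which has size $\leq\kappa$ in $V[G]$ and so lies in $L(V[G]_{\kappa+1})$ by the same lemma.) With these corrections the absoluteness of openness, nowhere-density and $\kappa$-meagerness between $V[G]$ and $L(V[G]_{\kappa+1})$ does go through, and the corollary follows as you intend.
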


\section{Applications}\label{sec: applications}
In the present section we show that Merimovich's \emph{Supercompact Extender-Based Prikry forcing} \cite{Merimovich} and a new {Diagonal Extender-Based forcing} yield weak  $(\kappa,\lambda)$-nice $\Sigma$-systems. This will enable us to appeal to the abstract machinery developed in the previous sections and deduce the consistency of every set in $\mathcal{P}({}^\omega \kappa)\cap L(V_{\kappa+1})$ have the $\kappa$-$\psp$, as well as the consistency of the $\vec{\mathcal{U}}$-Baire Property of every set in $\mathcal{P}(\prod_{n<\omega}\kappa_n)\cap L(V_{\kappa+1})$. 
\subsection{The consistency of the $\kappa$-$\psp$}\label{sec: merimovich}
This section begins providing a succinct account of \emph{Merimovich Supercompact Extender-Based Prikry forcing} introduced in \cite{MerSuper}. Our presentation will be  different from \cite{MerSuper} and instead will be akin to the account on the Supercompact Extender Based Radin forcing in \cite{SupercompatRadinExtender}. This digression has to do with our need for the \emph{projected forcings} $\mathbb{P}_e$ to be \emph{small} (i.e., of cardinality ${<}\lambda$). 

\smallskip

\begin{setup}
    We assume that $\kappa$ is a ${<}\lambda$-supercompact cardinal with $\lambda>\kappa$ inaccessible. Namely, we assume the existence of an elementary embedding $j:V\rightarrow M$ such that $\crit(j)=\kappa$, $j(\kappa)>\lambda$ and $M^{<\lambda}\subseteq M$.
\end{setup}

\begin{definition}[Domains]
The set of \emph{domains} is $$\mathcal{D}(\kappa,\lambda):=\{d\in [\lambda\setminus \kappa]^{<\lambda}\mid \kappa=\min(d)\}.$$  
\end{definition}

\begin{definition}[Objects]
    For $d\in \mathcal{D}(\kappa,\lambda)$ a $\emph{$d$-object}$ is a map $\nu\colon \dom\nu\rightarrow \kappa$ fulfilling the following requirements:
    \begin{enumerate}
        \item $\kappa\in\dom\nu\s d$ and $|\dom\nu|<\kappa$.
        
        \item $\nu(\alpha)<\nu(\beta)$ for each $\alpha<\beta$ in $\dom\nu$.
    \end{enumerate}
    The set of $d$-objects will be denoted by $\ob(d)$. Given $\nu,\mu\in \ob(d)$ we write $\nu\prec \mu$ whenever $\dom\nu\s\dom\mu$, $|\nu|<\mu(\kappa)$ and $\nu(\alpha)<\mu(\kappa)$ for all $\alpha\in\dom(\nu).$ 
   
\end{definition}
The definition of a $d$-object is motivated by the properties of the \emph{maximal coordinate of $d$} (in symbols, $\mc(d)$) in the $M$-side of $j\colon V\rightarrow M$:
$$\mc(d):=\{\langle j(\alpha), \alpha\rangle\mid \alpha\in d\}.$$
For each  $d\in\mathcal{D}(\kappa,\lambda)$, $\mc(d)$ induces  a $\kappa$-complete ultrafilter over $\ob(d)$:
$$E(d):=\{X\s \ob(d)\mid \mc(d)\in j(X)\}.$$
This in turn yields a sequence 
$E:=\langle E(d)\mid d\in\mathcal{D}(\kappa,\lambda)\rangle$ called \emph{the $\mathcal{D}(\kappa,\lambda)$-extender induced by $j$}. Notice that while $E$ is not an extender in the traditional sense (see e.g., \cite{Kan}) it nevertheless shares with them most of their natural properties -- for instance, $E$ is a directed system of $\kappa$-complete ultrafilters and there is a canonical projection between $E(d)$ and $E(d')$ for $d\s d'$: $$\pi_{d',d}\colon \nu\in\ob(d')\mapsto\nu\restriction d\in\ob(d).$$ 
One  defines the \emph{Supercompact Extender-Based Prikry forcing} $\mathbb{P}_E$ as follows:
\begin{definition}
    A condition in $\mathbb{P}_E$ is a pair $p=\langle f^p, A^p\rangle$ such that:
    \begin{enumerate}
        \item $\dom(f^p)\in\mathcal{D}(\kappa,\lambda)$. 
        \item $f^p\colon \dom(f^p)\rightarrow{}^{{<}\omega}\kappa$ and
         $f^p(\alpha)$  
        is $<$-increasing for $\alpha\in\dom(f^p)$.
        \item $A^p\in E(\dom(f^p))$ and for each $\nu\in A^p$, $\nu(\kappa)>\max(f^p(\alpha))$.
    \end{enumerate}
    Given $p,q\in\mathbb{P}_E$ we write $p\leq^* q$ if $f^p\subseteq f^q$ and $\pi_{\dom(f^p),\dom(f^q)}``A^q\s A^p$.

    For $p\in\mathbb{P}_E$ and $\nu\in A^p$, $p\cat \nu$ is the condition $\langle f^p_{\nu}, A^p_{\nu}\rangle$ defined by:
    \begin{enumerate}
        \item $f^p_\nu$ is the function with $\dom(f^p_{\nu})=\dom(f^p)$ and $$f^p_{\nu}(\alpha):=\begin{cases}
            f^p(\alpha), & \text{if $\alpha\notin\dom\nu$;}\\
            f^p(\alpha)^\smallfrown\langle \nu(\alpha)\rangle, & \text{if $\alpha\in\dom\nu$.}
        \end{cases}$$
        \item $A^p_{\nu}:=\{\mu\in A^p\mid \nu\prec \mu\}.$ 
    \end{enumerate}
    If $p\in\mathbb{P}_E$ and $\vec\nu\in {}^{{<}\omega}(A^p)$ is $\prec$-increasing one defines $p\cat\vec\nu$ recursively as $$p\cat\vec\nu:=(p\cat\vec\nu\restriction|\vec\nu|-1)\cat\langle \nu_{|\vec\nu|-1}\rangle.$$

    The ordering in $\mathbb{P}_E$ is defined as follows: $q\leq p$ if and only if there is a $\prec$-increasing sequence $\vec\nu\in{}^{{<}\omega}(A^p)$ such that $q\leq^* p\cat\vec\nu.$
\end{definition}
\begin{notation}
    Let $e, d$ be domains and $A\in E(e)$. We shall denote $$A\restriction d:=\{\nu\restriction d\mid \nu\in A\}.$$
     Since the ``extender'' $E$ will be fixed hereafter we shall write $\mathbb{P}$ in place of the more informative notation $\mathbb{P}_E$. Also, following Merimovich, we denote by $\mathbb{P}^*$ the poset with universe $\{f^p\mid p\in\mathbb{P}\}$ endowed with the natural order.
\end{notation}

\begin{lemma}\label{lemma: PE SigmaPrikry}
 $\mathbb{P}$ is a $\Sigma$-Prikry poset taking $\Sigma:=\langle \kappa\mid n<\omega\rangle$.
\end{lemma}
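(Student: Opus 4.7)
The plan is to verify the six clauses of Definition~\ref{SigmaPrikry} with length function $\ell(p) := |f^p(\kappa)|$. The graded-poset clause (1) is immediate: since $\kappa \in \dom\nu$ for every $\nu \in A^p$, the extension $p\cat\nu$ strictly lengthens the $\kappa$-coordinate by one, giving $\ell(p\cat\nu) = \ell(p) + 1$, while weak monotonicity of $\ell$ along $\leq$ is read off the definition. For clause (3), if $q \leq^{n+m} p$ then $q \leq^* p\cat\vec\nu$ for a \emph{unique} $\prec$-increasing $\vec\nu \in {}^{n+m}(A^p)$ (uniqueness because $f^q$ determines which coordinates were extended at each step); setting $m(p,q) := p\cat{(\vec\nu\restriction n)}$ manifestly gives the $\leq$-greatest element of $\{r \leq^n p : q \leq^m r\}$, since any such $r$ has the form $p\cat{(\vec\nu\restriction n)}$ up to a $\leq^*$-strengthening. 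Order-preservation of $w$ in clause (5) follows immediately. For clause (4), $W(p) \subseteq \{p\cat\vec\mu : \vec\mu \in {}^{<\omega}(A^p)\text{ is $\prec$-increasing}\}$; since $|\dom f^p| < \lambda$ and $\lambda$ is inaccessible, $|\ob(\dom f^p)| < \lambda$, hence $|A^p| < \lambda$ and $|W(p)| < \lambda$.

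For clause (2), the direct extension order $\leq^*$ on $\mathbb{P}_n$ is itself $\kappa$-directed closed: given a $\leq^*$-directed family $\{p_i\}_{i<\eta} \subseteq \mathbb{P}_n$ with $\eta < \kappa$, directedness forces the $f^{p_i}$ to cohere, so $f := \bigcup_i f^{p_i}$ is a legitimate element of $\mathbb{P}^*$ with $|f(\kappa)| = n$, while $A := \bigcap_i \pi_{\dom f, \dom f^{p_i}}^{-1}(A^{p_i})$ belongs to $E(\dom f)$ by $\kappa$-completeness; then $\langle f, A\rangle \in \mathbb{P}_n$ is the required lower bound, and $\mathbb{P}_n$ is its own dense $\kappa$-directed closed subposet.

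The main obstacle is clause (6), the Complete Prikry Property. Fix $p \in \mathbb{P}$, $n < \omega$ and a $0$-open set $U \subseteq \mathbb{P}$. For each $\prec$-increasing $\vec\nu \in {}^n(A^p)$, the $0$-openness of $U$ means that the question ``$p\cat\vec\nu \in U$'' is $\leq^*$-invariant on the cone below $p\cat\vec\nu$, so by the Prikry Property for Merimovich's forcing we may $\leq^*$-strengthen $p\cat\vec\nu$ to a condition that decides this question. Coloring each $\vec\nu$ by the resulting binary answer and invoking $\kappa$-completeness and normality of the extender ultrafilter $E(\dom f^p)$, we then diagonally intersect the $\leq^*$-strengthenings coordinatewise to produce $A^* \in E(\dom f^p)$ with $A^* \subseteq A^p$ on which the coloring is constant. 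The condition $q := \langle f^p, A^*\rangle \leq^* p$ then witnesses the dichotomy $\mathbb{P}^q_n \cap U = \emptyset$ or $\mathbb{P}^q_n \subseteq U$. The delicate point is that the $\leq^*$-strengthenings of different $p\cat\vec\nu$ may shrink $A^p$ along different coordinates in $\dom f^p$, so the diagonalization must be executed uniformly in the extender structure; this is the standard content of the Strong Prikry Lemma for Merimovich's supercompact extender-based Prikry forcing.
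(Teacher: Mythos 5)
Your proposal is correct and follows essentially the same route as the paper's own (very brief) proof sketch: the same length function $\ell(p)=|f^p(\kappa)|$, the same identification of the weak extensions $p\cat\vec\nu$ for clauses (3)--(5), inaccessibility of $\lambda$ for clause (4), $\kappa$-directed-closure of $\leq^*$ on $\mathbb{P}_n$ for clause (2), and a deferral of the genuinely hard amalgamation in the Complete Prikry Property to Merimovich's argument in \cite[\S4.2]{SupercompatRadinExtender}. You supply more detail than the paper does on clauses (1)--(5), and you correctly flag that clause (6) is where the real work lies.
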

\begin{proof}[Proof sketch]
This follows from Merimovich's work \cite{SupercompatRadinExtender}, so we just sketch the proof for the reader's benefit. Let us go over the clauses of Definition~\ref{SigmaPrikry}: (1) The \emph{length function} is $\ell(\langle f,A\rangle):=|f(\kappa)|$; (2) $(\mathbb{P}_n,\leq^*)$ is clearly $\kappa$-directed-closed; (3) The weakest extensions of a condition $\langle f,A\rangle$ are $\{\langle f,A\rangle\cat\vec\nu\mid \vec\nu\in\prod_{n\leq \ell}A_n\,\wedge\,\vec\nu\,\text{is $\prec$-increasing} \}$; (4) the previous set has size ${<}\lambda$ by inaccessibility of the latter; (5) this is routine; (6) this can be proved arguing as in \cite[\S.4.2]{SupercompatRadinExtender} replacing ``dense open'' by ``$0$-open''.
\end{proof}
\begin{lemma}[Cardinal structure, {\cite{SupercompatRadinExtender}}]\label{lemma: cardinal structure in PE}\hfill
\begin{enumerate}
    \item $\mathbb{P}$ is $\lambda^+$-cc and  preserves both $\kappa$ and $\lambda$.
    \item $\one\forces_{\mathbb{P}}``(\kappa^+)^{V[\dot{G}]}=\lambda\,\wedge\,\cf(\kappa)^{V[\dot{G}]}=\omega$''.\qed
\end{enumerate}
\end{lemma}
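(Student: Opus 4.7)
The plan is to prove (1) via a cardinality count plus the standard $\Sigma$-Prikry toolkit, and (2) via density arguments on the generic together with an inductive analogue of (1) applied to small subforcings.

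For (1), I would first bound $|\mathbb{P}|\le\lambda$. A condition $\langle f,A\rangle$ is determined by its domain $\dom f\in[\lambda\setminus\kappa]^{<\lambda}$ (of which there are $\lambda$-many by inaccessibility of $\lambda$), by the function $f\colon\dom f\to{}^{<\omega}\kappa$ (at most $\kappa^{|\dom f|}<\lambda$ many, using that $\kappa$ is ${<}\lambda$-supercompact, hence measurable, hence strong limit), and by $A\subseteq\ob(\dom f)$ (of which there are $<\lambda$ many by an analogous count). So $\lambda^+$-cc is immediate. Preservation of cardinals up to and including $\kappa$ follows from Lemma~\ref{lemma: PE SigmaPrikry}: combining $\kappa$-directed-closure of $(\mathbb{P}_0,\le^*)$ with the Prikry property (Lemma~\ref{lem: indiscernibles}(\ref{Prikry Property})), one deduces $\mathbb{P}$ adds no new bounded subsets of $\kappa$, and hence preserves every $V$-cardinal $\le\kappa$. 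The most delicate part is $\lambda$-preservation itself; following Merimovich's strategy in \cite{SupercompatRadinExtender}, one factors $\mathbb{P}$ through the directed family of small subposets $\mathbb{P}\restriction d:=\{p\in\mathbb{P}\mid\dom f^p\subseteq d\}$ for $d\in\mathcal{D}(\kappa,\lambda)$, each of size $<\lambda$, and argues that a hypothetical $\mathbb{P}$-name for a surjection $\mu\twoheadrightarrow\lambda$ with $\mu<\lambda$ gets absorbed, via the Prikry-type $\le^*$-machinery, into some $(\mathbb{P}\restriction d)$-name, contradicting $|\mathbb{P}\restriction d|<\lambda$.

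For (2), $\cf(\kappa)^{V[G]}=\omega$ is witnessed by $\bigcup_{p\in G}f^p(\kappa)\in({}^\omega\kappa)^{V[G]}$: this sequence is cofinal in $\kappa$ because for every $\gamma<\kappa$, any $p$ admits a one-step extension $p\cat\nu$ with $\nu(\kappa)>\gamma$ -- indeed $\{\nu\in\ob(\dom f^p)\mid\nu(\kappa)>\gamma\}\in E(\dom f^p)$, since $\mc(\dom f^p)(\kappa)=j(\kappa)>\gamma$ (as $\gamma<\crit j=\kappa$). For $(\kappa^+)^{V[G]}=\lambda$, by (1) $\lambda$ is a cardinal in $V[G]$, so it remains to show $|\mu|^{V[G]}\le\kappa$ for every $V$-cardinal $\kappa<\mu<\lambda$. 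The map $\alpha\in[\kappa,\mu)\mapsto F_\alpha:=\bigcup_{p\in G}f^p(\alpha)$ injects $[\kappa,\mu)$ into $({}^\omega\kappa)^{V[G]}$ by a density argument separating distinct coordinates. Applying the analogue of (1) to the scaled-down forcing $\mathbb{P}\restriction d$ with $d=\{\kappa\}\cup[\kappa,\mu)$ -- in which $\mu$ plays the role of $\lambda$ relative to the restricted extender -- gives $|\mu|^{V[G\cap(\mathbb{P}\restriction d)]}=\kappa$, so $|\mu|^{V[G]}\le\kappa$ as well.

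The main obstacle is the $\lambda$-preservation clause in (1); this is the technical core of Merimovich's cardinal-structure analysis and hinges on a delicate interplay between the direct-extension structure of $\mathbb{P}$ and its factorization through the subposets $\mathbb{P}\restriction d$.
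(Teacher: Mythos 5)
The paper itself states this lemma without proof, citing Merimovich, so the fair benchmark is the proof it does give of the parallel Lemma~\ref{lemma: cardinal structure in pseudoaim} for the diagonal forcing. Your part (1) is essentially sound: the count $|\mathbb{P}|\le\lambda$ (which rests on the inaccessibility of $\lambda$ rather than on $\kappa$ being strong limit) gives $\lambda^+$-cc outright, preservation up to $\kappa$ follows from the absence of new bounded subsets, and you correctly flag $\lambda$-preservation as the core. For that core the paper's route (for the analogue) is the Complete Prikry Property combined with $|W(p)|<\lambda$, via \cite[Lemma~2.10(3)]{PartI}; your alternative of ``absorbing'' a name for a surjection onto $\lambda$ into some $\mathbb{P}\restriction d$ is vaguer and not obviously available, since the capturing machinery of \S\ref{sec: merimovich} only absorbs subsets of $\kappa$, not functions into $\lambda$ -- making it work would reintroduce exactly the counting argument you are trying to bypass.

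The genuine gap is in part (2), in the collapse of the cardinals in $(\kappa,\lambda)$. The injection $\alpha\mapsto F_\alpha$ of $[\kappa,\mu)$ into $({}^\omega\kappa)^{V[G]}$ only shows $\mu\le 2^\kappa$ in $V[G]$; this is the mechanism for blowing up the power set of $\kappa$ (cf.\ Lemma~\ref{lemma: cardinal structure in pseudoaim}(3)), not for collapsing $\mu$ to $\kappa$. Worse, the appeal to the scaled-down forcing $\mathbb{P}\restriction d$ with $d=[\kappa,\mu)$, ``in which $\mu$ plays the role of $\lambda$,'' proves the opposite of what you need: the analogue of (2) for that forcing would make $\mu$ equal to $(\kappa^+)^{V[G\cap(\mathbb{P}\restriction d)]}$, i.e.\ it would \emph{preserve} $\mu$ rather than give $|\mu|=\kappa$ (and $\mu$ need not be inaccessible, so the analogue need not even apply). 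The correct argument, as in the proof of Lemma~\ref{lemma: cardinal structure in pseudoaim}(2), is a covering argument: densely many $p$ satisfy $\alpha\setminus\kappa\subseteq\dom(f^p)$, and for such $p\in G$ a further density argument shows that the generic $\prec$-increasing sequence of objects $\langle\nu_n\mid n<\omega\rangle$ added below $p$ satisfies $\alpha\setminus\kappa=\bigcup_{n<\omega}(\dom(\nu_n)\cap\alpha)$; since each $|\dom(\nu_n)|<\kappa$, this exhibits $\alpha$ as a countable union of sets of size $<\kappa$, whence $|\alpha|^{V[G]}=\kappa$. (A small slip elsewhere: the set $\{\nu\in\ob(d)\mid\nu(\kappa)>\gamma\}$ is in $E(d)$ because $\mc(d)(j(\kappa))=\kappa>\gamma$, not because ``$\mc(d)(\kappa)=j(\kappa)$''; $\mc(d)$ is the map $j(\alpha)\mapsto\alpha$ and is not defined at $\kappa$.)
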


After having checked that $\mathbb{P}$ is $\Sigma$-Prikry we change gears and check that there is a natural $(\kappa,\lambda$)-nice $\Sigma$-system associated to $\mathbb{P}.$

\begin{definition}
   For  $d\in  \mathcal{D}(\kappa,\lambda)$, $\mathbb{P}_d$ is the subposet of $\mathbb{P}$ whose universe is $$\{\langle f, T\rangle\in \mathbb{P}\mid \dom(f)\s d\}.$$
\end{definition}
The next lemma is key to prove the existence of weak projections:
\begin{lemma}
    Let $e, d\in \mathcal{D}$ and  $A\in E(e)$. Then 
    $$A(d):=\{\eta\in A\mid \forall \nu\in A\restriction d\, (\nu\prec \eta\restriction d \rightarrow \exists\tau\in A\, (\tau\restriction d=\nu\,\wedge\, \tau\prec \eta)\}\in E(e).$$
\end{lemma}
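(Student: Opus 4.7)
The plan is to verify that $\mc(e) \in j(A(d))$, since by the definition of $E(e)$ this is exactly what $A(d) \in E(e)$ means. Unfolding the defining property of $A(d)$ through elementarity, this reduces to two things: (i) $\mc(e) \in j(A)$, which is immediate from the assumption $A \in E(e)$; and (ii) for every $\nu \in j(A) \restriction j(d)$ with $\nu \prec \mc(e) \restriction j(d)$, there is some $\tau \in j(A)$ satisfying $\tau \restriction j(d) = \nu$ and $\tau \prec \mc(e)$.

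A key preliminary identity is $\mc(e) \restriction j(d) = \mc(d)$, which follows from $d \subseteq e$ together with the elementary equivalence $j(\alpha) \in j(d) \Leftrightarrow \alpha \in d$ (for $\alpha \in e$). Consequently, the hypothesis $\nu \prec \mc(d)$ forces $\dom \nu \subseteq j``d$, $|\dom \nu|^M < \kappa$, and $\nu(\xi) < \kappa$ for every $\xi \in \dom \nu$. This smallness is the engine of the proof, since it will allow $\nu$ to be \emph{decoded} as an object already living in the ground model.

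Concretely, the plan is to set $\bar d := \{\alpha \in d \mid j(\alpha) \in \dom \nu\}$ and define $\bar\nu \colon \bar d \to \kappa$ by $\bar\nu(\alpha) := \nu(j(\alpha))$. Since $M \subseteq V$, both $\bar d$ and $\bar\nu$ automatically lie in $V$, and because $j$ fixes ordinals below $\kappa$ and sends $\bar d$ bijectively onto $\dom \nu = j``\bar d$, a direct check shows $\bar\nu \in \ob(d)$ together with $j(\bar\nu) = \nu$. Combining $\nu \in j(A) \restriction j(d) = j(A \restriction d)$ with elementarity applied to the identity $j(\bar\nu) = \nu$ then yields $\bar\nu \in A \restriction d$, so one may pick $\bar\tau \in A$ with $\bar\tau \restriction d = \bar\nu$.

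Setting $\tau := j(\bar\tau) \in j(A)$ finishes the construction: $\tau \restriction j(d) = j(\bar\tau \restriction d) = j(\bar\nu) = \nu$, and the three clauses of $\tau \prec \mc(e)$ are immediate, since $\dom \tau = j``(\dom \bar\tau) \subseteq j``e = \dom \mc(e)$ while $|\dom \tau|$ and all values of $\tau$ remain strictly below $\kappa = \mc(e)(j(\kappa))$. The only non-routine step is the decoding $\nu = j(\bar\nu)$, which genuinely depends on the $\prec \mc(e) \restriction j(d)$ hypothesis; without it $\nu$ could be a genuinely new object in $M$ with no ground model counterpart, and the pullback by elementarity would collapse.
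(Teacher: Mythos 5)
Your proposal is correct and follows essentially the same route as the paper: verify $\mc(e)\in j(A(d))$ by decoding any $\nu\prec\mc(e)\restriction j(d)$ in $j(A)\restriction j(d)$ as $j(\bar\nu)$ for some $\bar\nu\in A\restriction d$ (using that $\dom\nu\subseteq j``(e\cap d)$ and all values lie below $\kappa$), then lifting a preimage $\bar\tau\in A$ of $\bar\nu$ to $\tau=j(\bar\tau)$ and checking $\tau\prec\mc(e)$. The only cosmetic difference is that you assume $d\subseteq e$ to write $\mc(e)\restriction j(d)=\mc(d)$, whereas the paper works with $e\cap d$ throughout; the argument is unaffected.
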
 
\begin{proof}
    The claim is equivalent to $\mc(e)\in j(A(d)).$ First, $\mc(e)\in j(A)$ because $A\in E(e)$. Next, let $\nu\in j(A)\restriction j(d)$ be such that $\nu\prec \mc(e)\restriction j(d)$.

    \begin{claim}
        $\nu=j(\bar{\nu})$ for some $\bar\nu\in A\restriction d$.
    \end{claim}
    \begin{proof}[Proof of claim]
        Since $\nu\prec \mc(e)\restriction j(d)$ that means that: 
        \begin{itemize}
            \item  $\dom(\nu)\s \dom(\mc(e)\restriction j(d))=j``(e\cap d)$.
            \item $\nu(\alpha)<\mc(e)(j(\kappa))=\kappa$ for all $\alpha\in\dom(\nu).$
            \item $|\nu|<\mc(e)(j(\kappa))=\kappa.$
        \end{itemize}
       Combining the above items, there is $\dom(\bar{\nu})\in [e\cap d]^{<\kappa}$ such that $$\dom(\nu)=j``\dom(\bar{\nu})=j(\dom(\bar{\nu})).$$
     Define $\bar\nu$ a map with domain $\dom(\bar\nu)$ such that $\bar\nu(\beta):=\nu(j(\beta))$. Note that $j(\bar\nu)(j(\beta))=j(\bar\nu(\beta))=j(\nu(j(\beta)))=\nu(j(\beta))$. This shows that $j(\bar\nu)=\nu\in j(A\restriction d)$, and therefore $\bar{\nu}\in A\restriction d$. 
    \end{proof} 
    Let $\bar{\tau}\in A$ be such that $\bar{\tau}\restriction d=\bar{\nu}$. Let $\tau=j(\bar{\tau})$. We have $\tau\restriction j(d)=\nu$ and we also claim that $\tau\prec \mc(e)$. Indeed, 
    \begin{itemize}
        \item $\dom(j(\bar{\tau}))=j``\dom(\bar{\tau})\s j``e=\dom(\mc(e))$.\footnote{This is  because $|\dom(\bar{\tau})|<\kappa$ in that $\bar{\tau}$ is an $e$-object.}
        \item $j(\bar{\tau})(j(\alpha))=\bar{\tau}(\alpha)<\kappa=\mc(e)(j(\kappa))$ for all $\alpha\in \dom(\bar{\tau})$.
        \item $|j(\bar{\tau})|=|\bar{\tau}|<\kappa$, once again because $\bar{\tau}\in \ob(e).$
    \end{itemize}
    The above completes the verification.
\end{proof}
 
Fix a domain $d\in \mathcal{D}$ and let $A\in E(e)$ for some other domain $e$. Define, $A^{(0)}(d):=A$ and $A^{(n+1)}(d):=(A^{(n)}(d))(d)$. By the above claim, $\langle A^{(n)}(d)\mid n<\omega\rangle\s E(e)$ so $A^{(\infty)}(d):=\bigcap_{n<\omega}A^{(n)}(d)\in E(e).$

\begin{lemma}\label{lemma: definable Polish spaces}
 There is a directed system of weak projections
    $$\mathcal{P}=\langle \pi_{e,d}\colon \mathbb{P}_e\rightarrow \mathbb{P}_d\mid e,d\in\mathcal{D}(\kappa,\lambda)\cup\{\infty\}\, \wedge\, d\s e\rangle$$
    given by $\langle f, A\rangle\mapsto \langle f\restriction d, A\restriction d\rangle$. Also,   $|\mathrm{tcl}(\mathbb{P}_d)|<\lambda$ for $d\neq \infty.$
\end{lemma}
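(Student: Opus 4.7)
The plan is to verify three things: (a) each $\pi_{e,d}$ defined by $\langle f,A\rangle\mapsto\langle f\restriction d, A\restriction d\rangle$ lands in $\mathbb{P}_d$ and preserves $\leq$; (b) the system commutes and the direct-extension clause of Definition~\ref{def: projections} holds, so each $\pi_{e,d}$ is a weak projection; and (c) $|\mathrm{tcl}(\mathbb{P}_d)|<\lambda$ for $d\neq\infty$. Clause (a) and commutativity are routine: the equalities $\dom(f\restriction d)\subseteq d$, $A\restriction d\in E(d\cap \dom f)$ (image ultrafilter), and $(A\restriction e')\restriction d=A\restriction d$ for $d\subseteq e'\subseteq e$ are immediate, and a witnessing extension sequence $\vec{\tau}\in{}^{<\omega}(A^p)$ in $\mathbb{P}_e$ restricts to a $\prec$-increasing $\vec{\nu}=\langle\tau_i\restriction d\mid i<|\vec{\tau}|\rangle$ in $A^p\restriction d$, giving $\pi_{e,d}(q)\leq \pi_{e,d}(p)$.

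The heart of the matter is the direct-extension clause of weak projection. Given $p=\langle f^p,A^p\rangle\in\mathbb{P}_e$, let $p^*:=\langle f^p,A^p\cap A^{(\infty)}(d)\rangle$, which is a legitimate $\leq^*$-extension of $p$ since $A^{(\infty)}(d)\in E(e)$ (by the preceding lemma and $\kappa$-completeness). Suppose $q\leq \pi_{e,d}(p^*)$ in $\mathbb{P}_d$. Unpacking the order, $q\leq^* \pi_{e,d}(p^*)\cat\vec{\nu}$ for some $\prec$-increasing $\vec{\nu}=\langle\nu_0,\ldots,\nu_{k-1}\rangle$ in $(A^p\cap A^{(\infty)}(d))\restriction d$. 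The key step is to lift $\vec{\nu}$ to a $\prec$-increasing $\vec{\tau}\in A^p$ with $\tau_i\restriction d=\nu_i$, which we do by backward recursion: choose $\tau_{k-1}\in A^{(\infty)}(d)\cap A^p$ with $\tau_{k-1}\restriction d=\nu_{k-1}$ (possible as $\nu_{k-1}\in A^{(\infty)}(d)\restriction d$); given $\tau_i\in A^{(i)}(d)\subseteq A^{(i-1)}(d)(d)$ and $\nu_{i-1}\in A^{(\infty)}(d)\restriction d\subseteq A^{(i-1)}(d)\restriction d$ with $\nu_{i-1}\prec\nu_i=\tau_i\restriction d$, apply the defining property of $A^{(i-1)}(d)(d)$ to obtain $\tau_{i-1}\in A^{(i-1)}(d)$ with $\tau_{i-1}\restriction d=\nu_{i-1}$ and $\tau_{i-1}\prec\tau_i$. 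After $k$ steps $\tau_0\in A^{(0)}(d)=A^p$, so the whole sequence lies in $A^p$.

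Now consider $p^*\cat\vec{\tau}\in\mathbb{P}_e$. Its $\pi_{e,d}$-image equals $\pi_{e,d}(p^*)\cat\vec{\nu}$, which $\geq^* q$ modulo absorbing the extra coordinates $\dom f^q\setminus\dom f^p$ and the shrinking of the measure-one set from $(A^p)_{\vec{\tau}}$ to $A^q$. To absorb both, define $p'=\langle F,B\rangle$ where $\dom F:=\dom f^p\cup\dom f^q\subseteq e$, $F$ extends $(f^{p^*})_{\vec{\tau}}$ on $\dom f^p$ and $f^q$ on $\dom f^q$ (these agree on $\dom f^p\cap d$ because $\tau_i\restriction d=\nu_i$), and $B\in E(\dom F)$ is the intersection of the preimages of $(A^p)_{\vec{\tau}}\cap A^q$ under the canonical restriction maps to $\ob(\dom f^p)$ and $\ob(\dom f^q)$; this is in $E(\dom F)$ by $\kappa$-completeness and the coherence of $E$. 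Then $p'\leq p$ and $\pi_{e,d}(p')\leq^* q$.

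For (c), since $\lambda$ is inaccessible, $|d|<\lambda$, and $\ob(\dom f)$ has size ${<}\lambda$ for every $\dom f\in[d]^{<\lambda}$, a counting argument yields $|\mathbb{P}_d|<\lambda$; each condition is a pair of small sets of ordinals and finite sequences of ordinals, so $|\mathrm{tcl}(\mathbb{P}_d)|<\lambda$ as well. The main obstacle is the backward-lifting argument in the second paragraph: the careful bookkeeping of the iteration depth of $A(d)$ is exactly what makes $A^{(\infty)}(d)$ the right object to start with, and it is where the weak projection (rather than a strict projection) is needed, since $\vec{\tau}$ cannot in general be chosen canonically from $\vec{\nu}$.
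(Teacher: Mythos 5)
Your proposal is correct and follows essentially the same route as the paper's proof: pass to the direct extension $p^*$ built from $A^{(\infty)}(d)$, lift the $\prec$-increasing sequence $\vec\nu$ to a $\prec$-increasing $\vec\tau\subseteq A^p$ by backward recursion through the sets $A^{(i)}(d)$, and then amalgamate $f^q$ with $(f^{p})_{\vec\tau}$ using the intersection of the pullbacks of the two measure-one sets. The only cosmetic difference is that you track the iteration depth $A^{(i)}(d)$ explicitly at each step of the recursion, whereas the paper works throughout inside $A^{(\infty)}(d)$ and peels off one application of $(\cdot)(d)$ per step; the two bookkeeping schemes are interchangeable.
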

\begin{proof}
    It is routine to check that 
    the map is order-preserving.  Let us now check Clause~(2) of being a weak projection. Let $p=\langle f,A\rangle$ and $\leq^*$-extend it to $p^*=\langle f,A^{\infty}(d)\rangle.$ Suppose $q\leq p^*\restriction d$. That means that there is a $\prec$-increasing sequence $\vec\nu\in (A^{(\infty)}(d)\restriction d)^{|n+1|}$ such that $q\leq^* (p^*\restriction d)\cat\vec\nu$. By definition there is $\langle \eta_0,\dots, \eta_n\rangle\in (A^{\infty}(d))^{|d+1|}$ (non-necessarily $\prec$-increasing) such that $\eta_i=\nu_i\restriction d$. The first goal is to find a $\prec$-increasing sequence $\langle \tau_0,\dots, \tau_n\rangle\in A^{|d|}$ with the same property.

    Since $\eta_{n-1},\eta_n\in A^{(\infty)}(d)$  they belong, respectively, to $A^{(n)}(d)$ and $A^{(n+1)}(d)$. By our choice,  $\eta_{n-1}\restriction d\in(A^{(n)}(d))\restriction d$ and also $$\eta_{n-1}\restriction d=\nu_{n-1}\prec \nu_n=\eta_{n}\restriction d.$$ The definition of $A^{(n+1)}(d)$ guarantees the existence of an object $\tau\in A^{(n)}(d)$ such that $\tau\prec \eta_n$ and $\tau\restriction d=\eta_{n-1}\restriction d$. Set $\tau_n:=\eta$ and $\tau_{n-1}:=\tau$.

    Repeat the same procedure as before but this time with $\eta_{n-2}$ and $\tau_{n-1}$ thus obtaining $\tau'\prec \tau_{n-1}$ such that $\tau'\restriction d=\eta_{n-2}\restriction d$. Set $\tau_{n-2}:=\tau'$. Proceeding this way we obtain a $\prec$-increasing sequence $\langle \tau_0,\dots, \tau_n\rangle\in A^{|d|}$.

    \smallskip

    Note that since $\langle \tau_0,\dots, \tau_{n}\rangle\in A^{|d|}$ is $\prec$-increasing it is ``addable'' to $p$; thus, $p\cat\langle \tau_0,\dots\tau_n\rangle$ is a well-defined condition. Said this define
    $$p':=\langle f^q\setminus \dom(f)\cup f\cat\langle \tau_0,\dots,\tau_n\rangle, C \rangle$$
    where $C$ is the intersection of the pullbacks of $A_{\langle \tau_0,\dots, \tau_n\rangle}$ and $A^q$ under the projections, $\pi_{\dom(f)\cup \dom(f^q), \dom(f)}$ and $\pi_{\dom(f)\cup \dom(f^q), \dom(f^q)}.$

    A moment of reflection makes clear that $p'\leq p$ and that $p'\restriction d\leq^* q$.
\end{proof}

\begin{lemma}[Capturing]\label{lemma: capturing subsets}
    Let $G$ a $\mathbb{P}$-generic filter. For each $a\in\mathcal{P}(\kappa)^{V[G]}$ there is a domain $d\in \mathcal{D}(\kappa,\lambda)$ such that $a\in \mathcal{P}(\kappa)^{V[g_d]}$ where $g_d:=\pi_d``G.$\footnote{This is sufficient to establish the \emph{$\kappa$-capturing} property displayed in Definition~\ref{def: nice system}: Let $d\in\mathcal{D}^*$ be as in the lemma. Given $d_0\in\mathcal{D}^*$, $d_0\sle e:=d\cup d_0$ and $x\in \mathcal{P}(\kappa)^{V[g_{e}]}$.}
\end{lemma}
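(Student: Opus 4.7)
Fix $a\in\mathcal{P}(\kappa)^{V[G]}$ with $\mathbb{P}$-name $\dot a$. The plan is first to reduce the problem to capturing functions in $({}^\omega\kappa)^{V[G]}$, and then to locate a condition $p\in G$ whose $f$-support already provides the required domain.

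For the reduction, I would apply Lemma~\ref{lemma: goodness} to the characteristic function $\chi_a\colon\kappa\to 2$, obtaining $\langle B_n\mid n<\omega\rangle\in V[G]$ of bounded subsets of $\kappa$ with $\bigcup_n B_n=\kappa$ and $a\cap B_n\in V$ for each $n$. Fixing in $V$ a bijection $e\colon\kappa\to V_\kappa$ (available since $\beth_\kappa=\kappa$ by the inaccessibility of $\kappa$), the sequence $\langle(B_n,a\cap B_n)\mid n<\omega\rangle$ can be coded by some $h\in({}^\omega\kappa)^{V[G]}$ from which $a$ is recoverable in $V$. Hence it suffices to show every $h\in({}^\omega\kappa)^{V[G]}$ lies in $V[g_d]$ for some $d\in\mathcal{D}(\kappa,\lambda)$.

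For the main step, fix a name $\dot h$ for such an $h$ and a condition $p_0\in\mathbb{P}$. For each $n<\omega$ I would apply the Strong Prikry Property (Lemma~\ref{lem: indiscernibles}(\ref{Strong Prikry Property})) to the dense open set $D_n$ of conditions deciding $\dot h(n)$, producing $q_n\leq^\ast p_0$ and $k_n<\omega$ with $\mathbb{P}^{q_n}_{\geq k_n}\subseteq D_n$. The $\kappa$-directed closure of $(\mathbb{P}_0^{p_0},\leq^\ast)$ granted by Definition~\ref{SigmaPrikry}(\ref{c2}) would absorb this countable family into a single $\leq^\ast$-lower bound $p\leq^\ast p_0$ satisfying $\mathbb{P}^p_{\geq k_n}\subseteq D_n$ for every $n$. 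A standard density argument then supplies such $p\in G$; one sets $d:=\dom(f^p)\in\mathcal{D}(\kappa,\lambda)$, which has cardinality $<\lambda$ and contains $\kappa$.

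To recover $h$ in $V[g_d]$, the key observation is that for each $n<\omega$ and each $\prec$-increasing $\vec\nu\in(A^p)^{k_n}$ the $k_n$-step extension $p\cat\vec\nu$ lies in $\mathbb{P}^p_{\geq k_n}\subseteq D_n$ and therefore decides $\dot h(n)$, say $p\cat\vec\nu\Vdash_{\mathbb{P}}\dot h(n)=\check v_{n,\vec\nu}$. Since $\dom(f^{p\cat\vec\nu})=\dom(f^p)=d$, the condition $p\cat\vec\nu$ already lies in $\mathbb{P}_d$, and the table $\vec\nu\mapsto v_{n,\vec\nu}$ belongs to $V$. In $V[g_d]$ one then reads $h(n)$ by picking the unique $\vec\nu$ with $p\cat\vec\nu\in g_d$. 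The step I expect to require the most care is this final identification: because $\pi_d$ is only a \emph{weak} projection (Lemma~\ref{lemma: definable Polish spaces}), one must verify that $g_d=\pi_d``G$ contains enough $k_n$-step extensions of $\pi_d(p)$ to pin down $\vec\nu$. By Fact~\ref{fact: basics of weak projections}(1) $g_d$ is $\mathbb{P}_d$-generic and contains $\pi_d(p)$, so this is a routine but necessary verification before concluding $a\in V[g_d]$.
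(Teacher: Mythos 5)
Your proposal is correct, but it takes a genuinely different route from the paper's. The paper follows Merimovich's own technology: it works with \emph{good pairs} $\langle\mathcal{M},f^*\rangle$, where $\mathcal{M}=\bigcup_{\xi<\kappa}\mathcal{M}_\xi$ is a tower of ${<}\kappa$-closed elementary submodels of $H_\chi$ of size ${<}\lambda$ and $f^*$ is a fusion of conditions of $\mathbb{P}^*$ meeting all dense sets of the $\mathcal{M}_\xi$'s; a density argument yields an $\langle\mathcal{M},\mathbb{P}\rangle$-generic $q\in G$ with $f^q=f^*$, and then for each $\alpha<\kappa$ the dense set $D_\alpha$ of conditions deciding $\check\alpha\in\dot a$ is met by some $q_\alpha\in\mathcal{M}\cap G$, which is shown to satisfy $\pi_d(q_\alpha)=q_\alpha\in g_d$ for $d:=\dom(f^q)$; finally $a=\{\alpha<\kappa\mid\exists r\in g_d\,(r\Vdash_{\mathbb{P}}\check\alpha\in\dot a)\}$. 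You avoid elementary submodels altogether: you first invoke the abstract Lemma~\ref{lemma: goodness} to chop $a$ into countably many traces lying in $V$, code them by a single $h\in{}^\omega\kappa$, and then only $\omega$-many dense sets $D_n$ have to be handled, which the Strong Prikry Property plus the closure of $\leq^*$ can absorb into one condition $p$, with $d:=\dom(f^p)$. This is more modular, reusing the $\Sigma$-Prikry machinery of \S3 (the fusion is of course hidden inside the proof of Lemma~\ref{lemma: goodness}), and it makes the smallness $|d|<\lambda$ immediate. Two points to tighten. First, the $q_n$ must be built as a $\leq^*$-decreasing chain, applying the Strong Prikry Property below $q_{n-1}$ at stage $n$: a countable family of direct extensions of $p_0$ need not be $\leq^*$-directed, so directed-closure alone does not produce a common lower bound. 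Second, in the final read-off you neither get nor need uniqueness of the $\vec\nu$ with $p\cat\vec\nu\in g_d$: existence follows by pushing the unique $\vec\nu^*$ with $p\cat\vec\nu^*\in G$ through $\pi_d$ (which fixes it, since $\dom(f^{p\cat\vec\nu^*})=d$), and any two such $\vec\nu$ yield conditions compatible in $\mathbb{P}_d\subseteq\mathbb{P}$, hence deciding $\dot h(n)$ identically; so the value read off in $V[g_d]$ is well-defined and equals $h(n)$.
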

\begin{proof}
  
  Following Merimovich \cite{MerSuper} a pair $\langle \mathcal{M}, f^*\rangle$ will be called \emph{good} if:
  \begin{enumerate}
        \item $\mathcal{M}$ is the union of an  $\s$-increasing sequence   $\langle \mathcal{M}_\xi\mid \xi<\kappa\rangle$ with:
         $$  \text{$\mathcal{M}_\xi\prec H_\chi$, $|\mathcal{M}_\xi|<\lambda$  and  $\mathcal{M}_{\xi}^{<\kappa}\s \mathcal{M}_{\xi}.$}$$
        \item $f^*=\bigcup_{\xi<\kappa} f_\xi$ for a $\leq_{\mathbb{P}^*}$-decreasing sequence $\langle f_\xi\mid \xi<\kappa\rangle$ such that
       $$ \text{$\textstyle f_\xi\in \bigcap\{D\in\mathcal{M}_\xi\mid \text{$D$ is dense open for $\mathbb{P}^*$}\}$}$$ 
    and
    $f_\xi\cup\{f_\xi\}\s \mathcal{M}_{\xi+1}$ for all $\xi<\kappa.$
  \end{enumerate}
Let $\dot{a}$ be a $\mathbb{P}$-name with $a=\dot{a}_G$.  The next can be proved as  \cite[Claim~3.29]{Prikryonextenders}:
  \begin{claim}The set of $q\in\mathbb{P}$ for which there is a good pair $\langle \mathcal{M}, f^*\rangle$ such that $\{ \dot{a},\mathbb{P},\mathbb{P}^*\}\cup\kappa\s \mathcal{M}$,  $f^q=f^*$ and $q$ is $\langle \mathcal{M}, \mathbb{P}\rangle$-generic, 
    
    is dense.
  \end{claim}
  
  Let $q\in G$ be as above. Recall that $\langle \mathcal{M},\mathbb{P}\rangle$-genericity means
  $$q\forces_{\mathbb{P}}``\forall D\in \check{\mathcal{M}}\,(\text{$D$ is dense open $\rightarrow$ $D\cap \check{\mathcal{M}}\cap \dot{G}\neq \emptyset$)''}.$$
 For each $\alpha<\kappa$ denote
  $$D_\alpha:=\{q\in\mathbb{P}\mid q\parallel_{\mathbb{P}}\check{\alpha}\in\dot{a}\}.$$
  Clearly,  $D_\alpha\in\mathcal{M}$ (because it is definable via parameters in $\mathcal{M}$) and is dense open. Thus, for each $\alpha<\kappa$ there is $q_\alpha\in D_\alpha \cap \mathcal{M}\cap G$ as itself $q$ is in $G$. 
  
  Let us note that $q_\alpha\in g_d$ being $d:=\dom(f^q)$: Indeed, since $q_\alpha\in\mathcal{M}$ there is $\xi<\kappa$ such that $q_\alpha\in \mathcal{M}_{\xi}$ and as a result $\dom(f^{q_\alpha})\in\mathcal{
  M}_{\xi}$ too. Since $$\{h\in\mathbb{P}^*_E\mid \dom(h)\supseteq \dom(f^{q_\alpha})\}\in\mathcal{M}_\xi$$ is dense open $f_{\xi+1}$ enters it and hence $\dom(f^{q_\alpha})\s \dom(f_{\xi+1})\s d$. Thereby we have $\pi_d(q_\alpha)=q_\alpha$. Note that this yields our claim because $\pi_d(q_\alpha)\in g_d$.

  \smallskip

  To complete the proof define $$A:=\{\alpha<\kappa\mid \exists r\in g_d\; (r\forces_{\mathbb{P}}\check{\alpha}\in \dot{a})\}.$$
  \begin{claim}
      $A=\dot{a}_G$.
  \end{claim}
  \begin{proof}[Proof of claim]
      Let $\alpha\in A$ and $r\in g_d$ witnessing it. Let $\hat{r}\in G$ be such that $\pi_d(\hat{r})=r$. By definition of $\pi_d$, $\hat{r}\leq \pi_d(\hat{r})$ so that $\hat{r}\forces_{\mathbb{P}}\check{\alpha}\in\dot{a}$, which ultimately yields $\alpha\in\dot{a}_G$. Conversely, let $r\in G$ with $r\forces_{\mathbb{P}}\check{\alpha}\in\dot{a}$. Since $q_\alpha$ already $\mathbb{P}$-decides $``\check{\alpha}\in\dot{a}$'' and both $r$ and $q_\alpha$ are compatible it must be that $q_\alpha\forces_{\mathbb{P}}\check{\alpha}\in\dot{a}$. Since $q_\alpha\in g_d$  we conclude that $\alpha\in A$.
  \end{proof}

The lemma has thus been proved.
\end{proof}
Putting all the results into the same canopy we conclude: 

\begin{lemma}\label{lemma: PE nice system}
There is a weak $(\kappa,\lambda)$-nice $\Sigma$-system
    $$\mathcal{P}=\langle \mathbb{P}_e, \pi_{e,d}\colon\mathbb{P}_e\rightarrow\mathbb{P}_d\mid e,d\in\mathcal{D}(\kappa,\lambda)\cup\{\infty\}\,\wedge\, d\s e\rangle.$$
\end{lemma}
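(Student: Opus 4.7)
The plan is to assemble the four lemmas already established in the section into a verification of the three clauses in Definition~\ref{def: nice system} (capturing, $\lambda$-boundedness, amenability to interpolations), together with the structural requirements of being a weak $\Sigma$-system (each $\mathbb{P}_d$ being $\Sigma$-Prikry, and the maps $\pi_{e,d}$ forming a directed system of weak projections with maximal element $\mathbb{P}_\infty=\mathbb{P}$).

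First I would check that the index set $\mathcal{D}(\kappa,\lambda)\cup\{\infty\}$ is directed with maximal element $\infty$: for $d,e\in\mathcal{D}(\kappa,\lambda)$ the union $d\cup e$ has cardinality below $\lambda$ (by regularity/inaccessibility of $\lambda$) and still contains $\kappa$ as minimum, so it lies in $\mathcal{D}(\kappa,\lambda)$. The fact that the $\pi_{e,d}$ are weak projections, and that $|\mathrm{tcl}(\mathbb{P}_d)|<\lambda$ for $d\neq\infty$, is exactly Lemma~\ref{lemma: definable Polish spaces}; commutativity $\pi_{f,d}=\pi_{e,d}\circ\pi_{f,e}$ when $d\subseteq e\subseteq f$ is immediate from the explicit formula $\langle g,A\rangle\mapsto\langle g\restriction d, A\restriction d\rangle$. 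This already delivers clause~$(\beta)$ of $\lambda$-boundedness.

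Next I would verify that each $\mathbb{P}_d$ (including $d=\infty$) is $\Sigma$-Prikry with $\Sigma=\langle\kappa\mid n<\omega\rangle$. For the top poset $\mathbb{P}=\mathbb{P}_\infty$ this is Lemma~\ref{lemma: PE SigmaPrikry}. For $d\in\mathcal{D}(\kappa,\lambda)$, note that $\mathbb{P}_d$ has exactly the same combinatorial description as $\mathbb{P}$ but with domains restricted to subsets of $d$: conditions, the length function $\ell(\langle f, A\rangle):=|f(\kappa)|$, the direct extension order, the weakest-extension sets $W(p)$, and the Complete Prikry Property can all be verified by literally the same arguments invoked in Lemma~\ref{lemma: PE SigmaPrikry}, replacing the full extender by its restriction $E\restriction \{e\in\mathcal{D}:e\subseteq d\}$; here $|W(p)|<\lambda$ follows from $|\mathrm{tcl}(\mathbb{P}_d)|<\lambda$. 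With this in hand $\mathcal{P}$ is a bona fide weak $\Sigma$-system.

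It remains to verify clauses $(\alpha)$ and $(\gamma)$. Clause~$(\gamma)$, amenability to interpolations, is exactly the content of Lemma~\ref{lemma: cardinal structure in PE}(2), since $\mathbb{P}=\mathbb{P}_\infty$. For clause~$(\alpha)$, the $\kappa$-capturing property, Lemma~\ref{lemma: capturing subsets} says that for any $\mathbb{P}$-generic $G$ and any $a\in\mathcal{P}(\kappa)^{V[G]}$ there is some $d_a\in\mathcal{D}(\kappa,\lambda)$ with $a\in V[\pi_{d_a}``G]$. Given an arbitrary $d_0\in\mathcal{D}^*$, set $e:=d_0\cup d_a\in\mathcal{D}(\kappa,\lambda)$; then $d_0\subseteq e$ and $a\in V[\pi_{d_a}``G]\subseteq V[\pi_e``G]$ (using $\pi_{d_a}=\pi_{e,d_a}\circ\pi_e$ and the weak-projection chain), as required by the definition. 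The main subtlety, as flagged in the footnote of Lemma~\ref{lemma: capturing subsets}, is exactly this upward-closure step, but it is straightforward once the capturing lemma is in place. Assembling these ingredients yields that $\mathcal{P}$ is a weak $(\kappa,\lambda)$-nice $\Sigma$-system, concluding the proof.
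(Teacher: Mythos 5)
Your proposal is correct and takes essentially the same route as the paper, which states the lemma as an immediate consequence of Lemmas~\ref{lemma: PE SigmaPrikry}, \ref{lemma: cardinal structure in PE}, \ref{lemma: definable Polish spaces} and \ref{lemma: capturing subsets} ("putting all the results into the same canopy") without spelling out the assembly. Your write-up merely makes explicit the routine checks the paper leaves implicit — directedness of $\mathcal{D}(\kappa,\lambda)$, the $\Sigma$-Prikryness of the restricted posets $\mathbb{P}_d$, and the upward-closure step for $\kappa$-capturing flagged in the paper's footnote.
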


The next corollary follow from  the previous lemmas and Theorem~\ref{the main construction}: 
\begin{cor}\label{cor: the kappapsp}
    Assume that $\kappa$ is a ${<}\lambda$-supercompact cardinal and $\lambda>\kappa$ is inaccessible. Then there is a model of $\zfc$ where: 
    \begin{enumerate}
        \item  $\kappa$ is a strong limit singular cardinal with $\cf(\kappa)=\omega$;
        \item Every $A\in L(V_{\kappa+1})\cap \mathcal{P}({}^\omega\kappa)$ has the $\kappa$-\psp. In particular, all the $\kappa$-projective subsets of ${}^\omega\kappa$ have the $\kappa$-$\psp$.
    \end{enumerate} 
\end{cor}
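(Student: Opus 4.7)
The plan is to derive the corollary as a direct application of Theorem~\ref{main theorem 1 in terms of nice systems} to the weak $(\kappa,\lambda)$-nice $\Sigma$-system supplied by Merimovich's Supercompact Extender-Based Prikry forcing $\mathbb{P}=\mathbb{P}_E$, taking $\Sigma=\langle\kappa\mid n<\omega\rangle$ to be the constant sequence at $\kappa$.

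First I would fix $j\colon V\to M$ witnessing the ${<}\lambda$-supercompactness of $\kappa$ and form the induced $\mathcal{D}(\kappa,\lambda)$-extender $E=\langle E(d)\mid d\in\mathcal{D}(\kappa,\lambda)\rangle$. Invoking Lemma~\ref{lemma: PE nice system}, this provides a weak $(\kappa,\lambda)$-nice $\Sigma$-system
$$\mathcal{P}=\langle\mathbb{P}_e,\ \pi_{e,d}\colon\mathbb{P}_e\to\mathbb{P}_d\mid d\subseteq e,\ e,d\in\mathcal{D}(\kappa,\lambda)\cup\{\infty\}\rangle,$$
whose top poset is $\mathbb{P}_\infty=\mathbb{P}_E$. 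Let $G\subseteq\mathbb{P}$ be $V$-generic; I would claim $V[G]$ is the desired model.

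For Clause~(1): By Lemma~\ref{lemma: cardinal structure in PE}, $\cf(\kappa)^{V[G]}=\omega$ and $(\kappa^+)^{V[G]}=\lambda$. Since $\kappa$ is measurable in $V$ (as any ${<}\lambda$-supercompact cardinal is), it is in particular a strong limit there; and since $\mathbb{P}$ is $\Sigma$-Prikry by Lemma~\ref{lemma: PE SigmaPrikry}, it adds no bounded subsets of $\kappa$ (see Definition~\ref{SigmaPrikry}(\ref{c2})), so $\kappa$ remains strong limit in $V[G]$. For Clause~(2): applying Theorem~\ref{main theorem 1 in terms of nice systems} to $\mathcal{P}$, every $A\in\mathcal{P}({}^\omega\kappa)^{V[G]}\cap L(V[G]_{\kappa+1})$ has the $\kappa$-$\psp$. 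The ``in particular'' assertion follows because the space ${}^\omega\kappa$ is definable over $V[G]_{\kappa+1}$, so its $\kappa$-projective subsets (being exactly those definable over $V[G]_{\kappa+1}$ with parameters from ${}^\omega\kappa$) lie in $L_1(V[G]_{\kappa+1})\subseteq L(V[G]_{\kappa+1})$, and thus inherit the $\kappa$-$\psp$.

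Since the genuine work has been packaged into Theorem~\ref{main theorem 1 in terms of nice systems} and into Lemma~\ref{lemma: PE nice system}, there is essentially no residual obstacle at the level of this corollary. The only conceptual point worth flagging is that the weak projection witnessed in Lemma~\ref{lemma: definable Polish spaces} is \emph{weak}, not a genuine projection (Foreman--Woodin phenomenon), which is precisely why the abstract framework was built to pass through Boolean completions; but Theorem~\ref{main theorem 1 in terms of nice systems} has already absorbed this subtlety, so here it suffices to cite it.
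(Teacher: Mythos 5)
Your proposal is correct and follows exactly the paper's route: the corollary is obtained by applying the abstract Theorem~\ref{main theorem 1 in terms of nice systems} to the weak $(\kappa,\lambda)$-nice $\Sigma$-system furnished by Merimovich's forcing (Lemma~\ref{lemma: PE nice system}), with Clause~(1) read off from Lemma~\ref{lemma: cardinal structure in PE} together with the fact that $\Sigma$-Prikry forcings add no bounded subsets of $\kappa$, and the $\kappa$-projective case handled via the observation that such sets lie in $L_1(V_{\kappa+1})$. No gaps.
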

\begin{cor}
    Assume that $\kappa$ is a ${<}\lambda$-supercompact cardinal and $\lambda>\kappa$ is inaccessible. Then there is a model of $\zf+\dc_\kappa+\neg\ac$ where: 
    \begin{enumerate}
        \item $\kappa$ is a strong limit singular cardinal with $\cf(\kappa)=\omega$;
        \item for each $\kappa$-Polish $\mathcal{X}$ and each $A\in\mathcal{P}(\mathcal{X})$, $A$ has the $\kappa$-$\psp$.
    \end{enumerate}
\end{cor}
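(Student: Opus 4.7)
The plan is to specialize Corollary~\ref{main thm for all Polish spaces} to the weak $(\kappa,\lambda)$-nice $\Sigma$-system produced in Lemma~\ref{lemma: PE nice system} from Merimovich's Supercompact Extender-Based Prikry forcing $\mathbb{P}_E$, and then to verify the two additional bullets (singularity of $\kappa$ and failure of $\ac$) in the resulting inner model. Since the heavy lifting has already been done in the abstract part of the paper and in Section~\ref{sec: merimovich}, the proof is essentially a packaging exercise.

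First, I would fix $G\subseteq \mathbb{P}_E$ a $V$-generic filter and take as the desired model $N:=L(V[G]_{\kappa+1})$. The weak $(\kappa,\lambda)$-nice $\Sigma$-system
$$\mathcal{P}=\langle \mathbb{P}_e, \pi_{e,d}\colon \mathbb{P}_e\rightarrow\mathbb{P}_d\mid e,d\in\mathcal{D}(\kappa,\lambda)\cup\{\infty\}\,\wedge\, d\s e\rangle$$
supplied by Lemma~\ref{lemma: PE nice system} meets the hypothesis of Corollary~\ref{main thm for all Polish spaces}; that corollary (whose proof was already recorded in the excerpt) gives at once that $N\models\zf+\dc_\kappa$ and that in $N$ every subset of every $\kappa$-Polish space $\mathcal{X}$ has the $\kappa$-$\psp$. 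This disposes with clause~(2) and the base axiomatic requirement.

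For clause~(1), I would argue in $N$ that $\kappa$ is a strong limit singular cardinal of cofinality $\omega$. Cofinality $\omega$ is inherited from $V[G]$ by Lemma~\ref{lemma: cardinal structure in PE}: any cofinal $\omega$-sequence in $\kappa$ witnessing $\cf(\kappa)^{V[G]}=\omega$ belongs to $V[G]_{\kappa+1}\subseteq N$, so $N\models\cf(\kappa)=\omega$. Strong limitness in $N$ is inherited from $V$: $\kappa$ is strong limit in $V$ by ${<}\lambda$-supercompactness, and by the $\Sigma$-Prikry property (Lemma~\ref{lemma: PE SigmaPrikry}) $\mathbb{P}_E$ adds no bounded subsets of $\kappa$; hence $(V_\kappa)^{V[G]}=(V_\kappa)^V$ and in particular $2^\mu<\kappa$ for every $\mu<\kappa$ as computed inside $N\supseteq V[G]_{\kappa+1}$.

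Finally, the failure of $\ac$ in $N$ is forced by clause~(2) itself. Indeed, $N$ contains $\mathcal{P}(\kappa)^{V[G]}$, so $|{}^{\omega}\kappa|^N\geq |{}^{\omega}\kappa|^{V[G]}>\kappa$ (for instance $\lambda=(\kappa^+)^{V[G]}$-many elements are available). Should $N$ satisfy $\ac$, the standard recursive construction recalled in the paragraph after Fact~\ref{fact PSP} (see \cite[Proposition~11.4]{Kan}) would produce, inside $N$, a subset of the $\kappa$-Polish space ${}^\omega\kappa$ without the $\kappa$-$\psp$, contradicting clause~(2). Hence $N\models\neg\ac$, completing the proof. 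I do not anticipate any genuine obstacle here — the only mild subtlety is confirming that the cofinality and strong-limit witnesses survive the restriction from $V[G]$ to $L(V[G]_{\kappa+1})$, and both are already coded by objects in $V[G]_{\kappa+1}$.
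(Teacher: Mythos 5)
Your proposal is correct and follows essentially the same route the paper intends: instantiate Corollary~\ref{main thm for all Polish spaces} (together with the corollary giving $\neg\ac+\dc_\kappa$ in $L(V_{\kappa+1})$) with the weak $(\kappa,\lambda)$-nice $\Sigma$-system of Lemma~\ref{lemma: PE nice system}, taking $N=L(V[G]_{\kappa+1})$, and read off clause~(1) from the facts that $\mathbb{P}_E$ adds no bounded subsets of $\kappa$ and that a cofinal $\omega$-sequence is coded in $V[G]_{\kappa+1}$. The verifications of strong limitness, $\cf(\kappa)=\omega$, and $\neg\ac$ via the diagonal construction against the $\kappa$-$\psp$ are all as in the paper.
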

\subsection{The consistency of the $\kappa$-$\psp$ and  $\UBP$}\label{sec: alternative AIM}
In this section we construct a diagonal version  of Merimovich's forcing \S\ref{sec: merimovich}. 
The forcing was designed by Poveda and Thei during a visit of the latter to Harvard in the Spring of 2024.

\begin{setup}
    Suppose that $\Sigma=\langle \kappa_n\mid n<\omega\rangle$ is an increasing sequence  of ${<}\lambda$-supercompact cardinals with $\lambda>\sup_{n<\omega}\kappa_n$ inaccessible (by convention, $\kappa_{-1}:=\aleph_1$). Specifically, for each $n<\omega$, we assume the existence of $j_n\colon V\rightarrow M_n$ with $\crit(j)=\kappa_n$, $j_n(\kappa_n)>\lambda$ and ${}^{<\lambda} M_n\s M_n$. We also let $\lambda\leq \varepsilon\leq \min_{n<\omega}j_n(\kappa_n)$ which will determine the value of the power set of $\kappa:=\sup(\Sigma)$ in the generic extension. In our intended application, $\varepsilon$ will be just $\lambda$.
\end{setup}

\begin{definition}
    Fix $n<\omega$. An \emph{$n$-domain} is a set $d\in [\{\kappa_n\}\cup (\varepsilon\setminus \kappa)]^{<\lambda}$ such that $\kappa_n=\min(d).$ The set of $n$-domains will be denoted by $\mathcal{D}_n(\varepsilon,\lambda)$.
\end{definition}
    Note that $\{\kappa_n\}$ is always an $n$-domain -- \emph{the trivial $n$-domain.}

\smallskip

Given $d\in \mathcal{D}_n(\varepsilon,\lambda)$ one defines its \emph{maximal coordinate} by
$$\mc_n(d):=\{\langle j_n(\alpha),\alpha\rangle\mid \alpha\in d\}.$$
\begin{definition}
    Let $n<\omega$ and $d\in \mathcal{D}_n(\varepsilon,\lambda)$. A \emph{$d$-object} is a function $\nu\colon d\rightarrow \kappa_n$ with the following properties:
    \begin{enumerate}
        \item $\kappa_n\in \dom(\nu)$ and $|\dom(\nu)|<\kappa_n$.
        \item $\nu(\kappa_n)$ is an inaccessible cardinal above $\kappa_{n-1}$.
        \item $\nu(\alpha)<\nu(\beta)$ for all $\alpha<\beta$ in $\dom(\nu)$.
    \end{enumerate}
    The set of $d$-objects will be denoted by $\ob_n(d)$.\footnote{While formally speaking the notion of $d$-object depends on $n$ this latter can be read off from $d$; indeed, $n$ is the unique integer $m$ such that $\kappa_m=\min(d)$. Thus, the reader should not find ambiguities in our terminologies.}
\end{definition}
As usual, $d$-objets attempt to resemble the properties of $\mc_n(d)$ in the ultrapower $M_n$. More formally, $$\text{$M_n\models``\mc_n(d)\in j_n(\ob_n(d))$''}.$$
\begin{definition}
    Let $n<\omega$ and $d\in \mathcal{D}_n(\varepsilon,\lambda)$. Define 
    $$E_n(d):=\{X\s \ob_n(d)\mid \mc_n(d)\in j_n(X)\}.$$
    
\end{definition}
Clearly, $E_n(d)$ is a $\kappa_n$-complete (yet, not necessarily normal) measure on $\ob_n(d)$. Since $\kappa_n\in \dom(\nu)$ for any $\nu\in\ob_n(d)$, it is nevertheless the case that $E_n(d)$ projects onto a normal measure; specifically, it projects to
$$\mathcal{U}_n:=\{X\s \kappa_n\mid \kappa_n\in j_n(X)\}$$
via the map $\textsf{eval}_{\kappa_n}\colon \ob_n(d)\rightarrow\kappa_n$ defined as $\nu\mapsto \nu(\kappa_n).$

\smallskip

Let us next discuss a few more important aspects of the measures $E_n(d)$. Before doing so we have to define the ordering between objects. Please note that our definition yields a transitive ordering between objects.
\begin{conv}
   Let $n\leq m$ and $d_n,d_m$ be $n$ and $m$-domains, respectively. We shall write $d_n\s^\star d_m$ whenever $d_n\setminus\{\kappa_n\}\s d_m$.
\end{conv}
\begin{definition}
    Let $n<m$ and $d_n\in \mathcal{D}_n(\varepsilon,\lambda)$ and $d_m\in \mathcal{D}_m(\varepsilon,\lambda)$ such that $d_n\s^\star d_m$. Given $\nu\in \ob_n(d_n)$ and $\mu\in \ob_m(d_m)$ we write $\nu\prec \mu$ whenever $\dom(\nu)\s^\star \dom(\mu)$, $\nu(\alpha)<\mu(\kappa_m)$ for all $\alpha\in\dom(\nu)$ and $|\nu|<\mu(\kappa_m).$
\end{definition}

\begin{lemma}\label{lemma: prenormal}
     Let $n<m$, $d_n\in \mathcal{D}_n(\varepsilon,\lambda)$ and $d_m\in \mathcal{D}_m(\varepsilon,\lambda)$ be such that $d_n\s^\star d_m$. Suppose that $X\s \ob_n(d_n)$ and that $\nu\in j_m(X)$ is such that $\nu\prec \mc_m(d_m)$. Then there is $\bar{\nu}\in X$ such that $\nu=j_m(\bar\nu)$.
\end{lemma}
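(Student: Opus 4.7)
\medskip

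My plan is to imitate the proof of the companion claim in the proof of the preceding lemma (the one showing $A(d)\in E(e)$), where $\nu\prec\mc(e)\!\restriction\! j(d)$ was shown to come from $j$. The conclusion here is essentially a definability/reflection statement about $\nu$: since $\nu$ is ``bounded below $\crit(j_m)$'' in the appropriate sense, it must lie in the range of $j_m$.

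First I would unpack the hypotheses. Because $\nu\in j_m(X)\subseteq j_m(\ob_n(d_n))$, inside $M_m$ the function $\nu$ is a $j_m(d_n)$-object, so $\dom(\nu)\subseteq j_m(d_n)$, $\kappa_n=j_m(\kappa_n)\in\dom(\nu)$, $|\dom(\nu)|^{M_m}<\kappa_n$ (and hence $<\kappa_n<\kappa_m=\crit(j_m)$ in $V$), and $\nu$ takes values in $j_m(\kappa_n)=\kappa_n$. The hypothesis $\nu\prec\mc_m(d_m)$ further tells us that $\dom(\nu)\setminus\{\kappa_n\}\subseteq\dom(\mc_m(d_m))=j_m``d_m$ and that $\nu(\alpha)<\mc_m(d_m)(j_m(\kappa_m))=\kappa_m$ for every $\alpha\in\dom(\nu)$.

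The key step (which I expect to be the only one requiring some care) is to show $\dom(\nu)\subseteq j_m``d_n$. For $\kappa_n$ this is trivial since $\kappa_n=j_m(\kappa_n)$. For any other $\alpha\in\dom(\nu)$, the previous paragraph gives $\alpha\in j_m``d_m$, so $\alpha=j_m(\beta)$ for a unique $\beta\in d_m$. But also $\alpha\in j_m(d_n)$, so $j_m(\beta)\in j_m(d_n)$, and by elementarity $\beta\in d_n$; thus $\alpha\in j_m``d_n$, as needed.

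Now I would define $\bar d_n:=\{\beta\in d_n\mid j_m(\beta)\in\dom(\nu)\}\in V$ (this is a set in $V$ because $\dom(\nu)\in M_m\subseteq V$ and $j_m\!\restriction\! d_n\in V$, and $|\bar d_n|<\kappa_n<\lambda$), and put $\bar\nu\colon\bar d_n\to\kappa_n$ by $\bar\nu(\beta):=\nu(j_m(\beta))$. Since $|\bar d_n|<\kappa_n=\crit(j_m)$ we have $j_m(\bar d_n)=j_m``\bar d_n=\dom(\nu)$ by the previous step, and for each $\beta\in\bar d_n$ the value $\bar\nu(\beta)=\nu(j_m(\beta))$ is below $\kappa_n=\crit(j_m)$, so $j_m(\bar\nu)(j_m(\beta))=j_m(\bar\nu(\beta))=\bar\nu(\beta)=\nu(j_m(\beta))$. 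Hence $j_m(\bar\nu)$ and $\nu$ agree on a common domain, giving $j_m(\bar\nu)=\nu$. Finally, from $j_m(\bar\nu)=\nu\in j_m(X)$ elementarity yields $\bar\nu\in X$, completing the proof.
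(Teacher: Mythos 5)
Your proof is correct and follows essentially the same route as the paper's: pull $\dom(\nu)$ back along $j_m$ (using that its size and the values of $\nu$ lie below the critical point), define $\bar\nu(\beta):=\nu(j_m(\beta))$, check $j_m(\bar\nu)=\nu$, and conclude $\bar\nu\in X$ by elementarity. One minor slip: $\crit(j_m)=\kappa_m$, not $\kappa_n$, but since everything relevant sits below $\kappa_n<\kappa_m$ the argument is unaffected; your extra verification that $\dom(\nu)\subseteq j_m``d_n$ is a detail the paper's proof leaves implicit (there it falls out of the final appeal to elementarity).
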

\begin{proof}
   Since $\nu\prec \mc_m(d_m)$ it follows that $\dom(\nu)\s^\star j_m``d_m$, $\nu(\alpha)<\kappa_m$ and  $|\nu|<\mc_m(d_m)(j_m(\kappa_m))=\kappa_m$. Hence there is $\dom(\bar\nu)\in[d_m]^{<\kappa_m}$ such that $\dom(\nu)=j_m``\dom(\bar\nu)$ and $\nu(\alpha)=j_m(\xi_\alpha)$ for certain $\xi_\alpha<\kappa_m$. Define $\bar\nu\colon \beta\mapsto \xi_{j_m(\beta)}$. It follows that $j_m(\bar\nu)=\nu$ and by elementarity $\bar\nu\in X$.
\end{proof}

\begin{lemma}[Normality]
    Suppose that $\ell<m<\omega$ are integers and that
    \begin{enumerate}
        \item $\langle d_n\mid \ell\leq n<m\rangle$ is a $\s^\star$-increasing sequence in $\prod_{\ell\leq n<m}\mathcal{D}_n(\varepsilon,\lambda)$,
        \item  $S\s \prod_{\ell<n<m}\ob_n(d_n)$ is a family  of $\prec$-increasing sequences of objects such that for each $\vec\nu\in S$ we are given a  set $X(\vec\nu)\in E_m(d_m)$.
    \end{enumerate}
    Then, $\triangle_{\vec{\nu}\in S}X(\vec\nu):=\{\eta\in\ob_m(d_m)\mid \forall\vec\nu \in S\, (\max_{\prec}\vec\nu\prec \eta\,\rightarrow\, \eta\in X(\vec\nu)\}$ is large with respect to $E_m(d_m).$
\end{lemma}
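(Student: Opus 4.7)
\medskip
\noindent\textbf{Proof plan.}
The plan is to unwind the defining condition ``$\triangle_{\vec\nu\in S}X(\vec\nu)\in E_m(d_m)$'' to $\mc_m(d_m)\in j_m(\triangle_{\vec\nu\in S}X(\vec\nu))$ and then verify the universal clause in $M_m$ by using Lemma~\ref{lemma: prenormal} to reflect every relevant sequence back to $V$.

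First I would write out, using elementarity of $j_m$,
\[
j_m(\triangle_{\vec\nu\in S}X(\vec\nu))=\{\eta\in j_m(\ob_m(d_m))\mid \forall\vec\nu\in j_m(S)\,(\max\nolimits_{\prec}\vec\nu\prec\eta\to\eta\in j_m(X)(\vec\nu))\},
\]
where $j_m(X)$ denotes the image of the function $\vec\nu\mapsto X(\vec\nu)$ under $j_m$. Since $\mc_m(d_m)\in j_m(\ob_m(d_m))$, what remains is to fix an arbitrary $\vec\nu\in j_m(S)$ with $\max_{\prec}\vec\nu\prec \mc_m(d_m)$ and show $\mc_m(d_m)\in j_m(X)(\vec\nu)$.

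The key step, and the main content of the argument, is to show that every such $\vec\nu$ is of the form $j_m(\vec{\bar\nu})$ for some $\vec{\bar\nu}\in S$. By elementarity $\vec\nu$ is a $\prec$-increasing sequence whose $n$-th entry $\nu_n$ lies in $j_m(\ob_n(d_n))$. By transitivity of $\prec$, the hypothesis $\max_{\prec}\vec\nu\prec \mc_m(d_m)$ yields $\nu_n\prec \mc_m(d_m)$ for every relevant $n$. Applying Lemma~\ref{lemma: prenormal} (with $X=\ob_n(d_n)$, using $d_n\s^\star d_m$) to each coordinate produces $\bar\nu_n\in\ob_n(d_n)$ with $j_m(\bar\nu_n)=\nu_n$. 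Setting $\vec{\bar\nu}:=\langle \bar\nu_n\mid \ell<n<m\rangle$, we get $j_m(\vec{\bar\nu})=\vec\nu\in j_m(S)$, hence $\vec{\bar\nu}\in S$ by elementarity.

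Finally, $j_m(X)(\vec\nu)=j_m(X)(j_m(\vec{\bar\nu}))=j_m(X(\vec{\bar\nu}))$, and by the hypothesis $X(\vec{\bar\nu})\in E_m(d_m)$ we conclude $\mc_m(d_m)\in j_m(X(\vec{\bar\nu}))$, which is exactly what was needed. The main obstacle is the coordinate-by-coordinate reflection in the key step; once Lemma~\ref{lemma: prenormal} is in hand, the only care required is to transitively propagate $\prec \mc_m(d_m)$ from the top entry $\nu_{m-1}$ down to each $\nu_n$ so that the hypotheses of Lemma~\ref{lemma: prenormal} are met at every coordinate.
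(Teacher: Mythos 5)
Your proposal is correct and follows essentially the same route as the paper's proof: reduce to $\mc_m(d_m)\in j_m(\triangle_{\vec\nu\in S}X(\vec\nu))$, use Lemma~\ref{lemma: prenormal} coordinate-by-coordinate (via transitivity of $\prec$) to pull any $\vec\nu\in j_m(S)$ with $\max_\prec\vec\nu\prec\mc_m(d_m)$ back to some $\vec{\bar\nu}\in S$, and then conclude from $X(\vec{\bar\nu})\in E_m(d_m)$. No gaps.
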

\begin{proof}
 Let us show that $\mc_m(d_m)\in j_m(\triangle_{\vec\nu\in S}X(\vec\nu)).$ Suppose that $\vec\mu\in j_m(S)$ is a sequence such that $\max_{\prec }\vec\mu\prec \mc_m(d_m)$. Applying the previous lemma to each of the individuals of $\vec\mu$ we conclude that $$\vec\mu=\langle j_m(\nu_\ell),\dots, j_m(\nu_{m-1})\rangle.$$
 Denote $\langle X^*(\vec\mu)\mid \vec\mu\in j_m(S)\rangle=j_m(\langle X(\vec\nu)\mid \vec\nu\in S\rangle)$. By our previous comment it follows that $X^*(\vec\mu)=X^*(j_m(\vec{{\nu}}))=j_m(X)(j_m(\vec{{\nu}}))=j_m(X(\vec\nu))$. Therefore $\mc_m(d_m)\in X^*(\vec\mu)$ in that $X(\vec\nu)\in E_m(d_m).$
\end{proof}
The next technical lemma will be key in establishing the existence of a family of weak projections between our main forcing and its subforcings:

\begin{lemma}\label{lemma: before weak projections of AIM}
    Let $e_n\s^\star e_{n+1}$ be members of $\mathcal{D}_n(\varepsilon,\lambda)$ and $\mathcal{D}_{n+1}(\varepsilon,\lambda)$, respectively. Suppose that $A\in E_n(e_n)$, $B\in E_{n+1}(e_{n+1})$ and  $d_{n}\s^\star d_{n+1}$ are also $n$ and $n+1$-domains, respectively. Then, the following set is $E_{n+1}(e_{n+1})$-large:
    $$B^*:=\{\eta\in B\mid \forall \nu\in A\restriction d_n\, ((\nu\prec \eta\restriction d_{n+1})\rightarrow\exists\tau\in A\, (\tau\restriction d_n=\nu\,\wedge\, \tau\prec \eta))\}.$$
\end{lemma}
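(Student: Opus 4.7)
The plan is to verify $B^*\in E_{n+1}(e_{n+1})$ by showing $\mc_{n+1}(e_{n+1})\in j_{n+1}(B^*)$ and reading off the definition of $B^*$ under $j_{n+1}$ by elementarity. The template is identical to the single-extender case handled just above: having fixed a potential $\nu\in j_{n+1}(A)\restriction j_{n+1}(d_n)$ with $\nu\prec\mc_{n+1}(e_{n+1})\restriction j_{n+1}(d_{n+1})$, find a $V$-side lift $\bar\sigma\in A$ whose $j_{n+1}$-image $\tau:=j_{n+1}(\bar\sigma)$ serves as the required witness. That $\mc_{n+1}(e_{n+1})\in j_{n+1}(B)$ is immediate from $B\in E_{n+1}(e_{n+1})$.

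First I would extract an inverse image $\bar\nu$ of $\nu$. From $\nu\prec\mc_{n+1}(e_{n+1})\restriction j_{n+1}(d_{n+1})$ one reads $|\dom(\nu)|<\kappa_{n+1}$ and $\dom(\nu)\setminus\{\kappa_n\}\s j_{n+1}``e_{n+1}\cap j_{n+1}(d_{n+1})$. From $\nu\in j_{n+1}(A)\restriction j_{n+1}(d_n)$ we also get $\dom(\nu)\s j_{n+1}(d_n)$. Since $|\dom(\nu)|<\crit(j_{n+1})$, one inverts $j_{n+1}$ pointwise on $\dom(\nu)\setminus\{\kappa_n\}$ to obtain $\bar D_0\s e_{n+1}\cap d_n\cap d_{n+1}$ with $j_{n+1}``\bar D_0=\dom(\nu)\setminus\{\kappa_n\}$; set $\bar D:=\bar D_0\cup\{\kappa_n\}$. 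Define $\bar\nu\colon\bar D\to\kappa_n$ by $\bar\nu(\kappa_n):=\nu(\kappa_n)$ and $\bar\nu(\bar\alpha):=\nu(j_{n+1}(\bar\alpha))$ for $\bar\alpha\in\bar D_0$. All values lie below $\kappa_n<\crit(j_{n+1})$ and $|\dom(\bar\nu)|<\crit(j_{n+1})$, so a routine computation gives $j_{n+1}(\bar\nu)=\nu$; in particular $\bar\nu\in\ob_n(d_n)$.

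Second, apply elementarity to ``$\bar\nu\in A\restriction d_n$'', which under $j_{n+1}$ becomes ``$\nu=j_{n+1}(\bar\nu)\in j_{n+1}(A)\restriction j_{n+1}(d_n)$'', precisely the hypothesis on $\nu$. Hence there is $\bar\sigma\in A$ with $\bar\sigma\restriction d_n=\bar\nu$. Set $\tau:=j_{n+1}(\bar\sigma)\in j_{n+1}(A)$. The identity $\tau\restriction j_{n+1}(d_n)=j_{n+1}(\bar\sigma\restriction d_n)=j_{n+1}(\bar\nu)=\nu$ is immediate. For the verification $\tau\prec\mc_{n+1}(e_{n+1})$: the height clauses are automatic because $\bar\sigma$ is an $n$-object, so $|\bar\sigma|<\kappa_n<\kappa_{n+1}$ and $\bar\sigma(\alpha)<\kappa_n<\kappa_{n+1}$, and these pass under $j_{n+1}$. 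For the domain clause, $\dom(\tau)=j_{n+1}``\dom(\bar\sigma)$ and $\dom(\bar\sigma)\setminus\{\kappa_n\}\s e_n\setminus\{\kappa_n\}\s e_{n+1}$ by $e_n\s^\star e_{n+1}$, so $\dom(\tau)\setminus\{\kappa_n\}\s j_{n+1}``e_{n+1}=\dom(\mc_{n+1}(e_{n+1}))$, which is exactly $\dom(\tau)\s^\star\dom(\mc_{n+1}(e_{n+1}))$.

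The main obstacle is the bookkeeping around the $\s^\star$ relation at the interface between the two extender indices: because $\kappa_n\notin e_{n+1}$, the coordinate $\kappa_n$ must be treated as exceptional both when inverting $j_{n+1}$ on $\dom(\nu)\setminus\{\kappa_n\}$ and when transporting $\dom(\bar\sigma)$ into $e_{n+1}$ via $e_n\s^\star e_{n+1}$. A parallel simplification comes from the shift in type: since $\nu$ and $\tau$ are $n$-objects while $\mc_{n+1}(e_{n+1})$ is an $(n+1)$-object, the height constraints in the relevant instances of $\prec$ are ``loose'' (bounded by $\kappa_{n+1}$, whereas the values live below $\kappa_n$), so those clauses are essentially free and all the content is in the domain inclusion.
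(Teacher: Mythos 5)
Your proposal is correct and follows essentially the same route as the paper's proof: reduce to showing $\mc_{n+1}(e_{n+1})\in j_{n+1}(B^*)$, pull the candidate $\nu$ back to some $\bar\nu\in A\restriction d_n$ via the prenormality argument (the paper cites Lemma~\ref{lemma: prenormal} for this), lift a witness $\bar\sigma\in A$ with $\bar\sigma\restriction d_n=\bar\nu$ to $\tau=j_{n+1}(\bar\sigma)$, and check $\tau\prec\mc_{n+1}(e_{n+1})$. You simply spell out the domain/height bookkeeping around $\s^\star$ and the exceptional coordinate $\kappa_n$ that the paper dismisses as routine.
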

\begin{proof}
  We show that $\mc_{n+1}(e_{n+1})\in j_{n+1}(B^*).$ First, $\mc_{n+1}(e_{n+1})\in j_{n+1}(B)$ because $B\in E_{n+1}(e_{n+1}).$ Second, let $\nu\in j_{n+1}(A\restriction d_n)$ such that $\nu\prec \mc_{n+1}(e_{n+1})\restriction j_{n+1}(d_{n+1})$. By the argument of Lemma~\ref{lemma: prenormal}, $\nu=j_{n+1}(\bar{\nu})$ for certain $\bar\nu\in A\restriction d_n$. Thus we deduce the existence of an object $\tau\in A$ such that $\tau\restriction d_{n}=\bar{\nu}$. Notice that $j_{n+1}(\tau\restriction d_{n})=j_{n+1}(\tau)\restriction j_{n+1}(d_{n})$. Since $\tau\in \ob_{n}(e_n)$ is routine to check that $j_{n+1}(\tau)\prec \mc_{n+1}(e_{n+1})$. 
\end{proof}
At this point everything is in place to introduce the main forcing notion:

\begin{definition}[Main forcing]
    A condition in $\mathbb{P}:=\mathbb{P}(\Sigma,\varepsilon,\lambda)$ is a vector
    $$p=\langle f^p_0,\dots, f^p_{\ell(p)-1},\langle f^p_{\ell(p)}, A^p_{\ell(p)}\rangle,\langle f^p_{\ell(p)+1},A^p_{\ell(p)+1}\rangle\dots\rangle$$
    meeting the following requirements:
    \begin{enumerate}
        \item For  each $n<\omega$, $f^p_n\colon \lambda\rightarrow\kappa_n$ is a partial function with $\dom(f^p_n)\in \mathcal{D}_n(\varepsilon,\lambda)$ and  $f^p_n(\kappa_n)$ is an inaccessible cardinal above $\kappa_{n-1}$.
        
        \item For each $n\geq \ell(p)$, $A^p_n\in E_n(\dom(f^p_n))$ and the sequence $$\textstyle \langle \dom(f^p_n)\mid n\geq\ell(p)\rangle\in \prod_{n\geq \ell(p)}\mathcal{D}_n(\varepsilon,\lambda)$$ is $\s^\star$-increasing.
    \end{enumerate}
    Given conditions $p,q\in \mathbb{P}$ the \emph{pure-extension ordering} $p\leq^* q$ is defined in the following fashion; namely, $p\leq^* q$ if and only if
    \begin{itemize}
        \item $\ell(p)=\ell(q)$;
        \item $f^q_n\s f^p_n$ for all $n<\omega$;

        \item $A^p_n\restriction \dom(f^q_n)\s A^q_n$ where 
        $A^p_n\restriction \dom(f^q_n):=\{\nu\restriction \dom(f^q_n)\mid \nu\in A^p_n\}.$
    \end{itemize}
\end{definition}
\begin{definition}[One-point extensions]
    Let $p\in \mathbb{P}$ and $\nu\in A^p_{\ell(p)}$. The \emph{one-point extension of $p$ by $\nu$}, denoted $p\cat\nu$, is declared to be the vector
    $$\langle f^p_0,\dots, f^p_{\ell(p)-1},f^p_{\ell(p)}\oplus \nu, \langle f^p_{\ell(p)+1}, (A^p_{\ell(p)+1})_{\langle\nu\rangle}\rangle,\langle f^p_{\ell(p)+2}, (A^p_{\ell(p)+2})_{\langle\nu\rangle}\rangle\dots\rangle$$
    where we have defined:
    \begin{enumerate}
        \item $f^p_{\ell(p)}\oplus \nu$ is the function with domain $\dom(f^p_{\ell(p)})$ and values
        $$(f^p_{\ell(p)}\oplus \nu)(\alpha):=\begin{cases}
            \nu(\alpha), & \text{if $\alpha\in\dom(\nu)$;}\\
            f^p_{\ell(p)}(\alpha), & \text{otherwise.}
        \end{cases}
        $$
        \item $(A^p_n)_{\langle \nu\rangle}:=\{\eta\in A^p_n\mid \nu\prec \eta\}.$
    \end{enumerate}
    More generally, given a vector of objects $\langle \nu_{\ell(p)},\dots,\nu_{m-1}\rangle\in \prod_{\ell(p)\leq n<m} A^p_n$ one defines $p\cat\vec\nu$ by recursion as follows: $p\cat\varnothing:=p$ and $$p\cat\vec\nu=(p\cat\langle \nu_{\ell(p)},\dots,\nu_{m-2}\rangle)\cat\nu_{m-1}.$$
\end{definition}
\begin{remark}
    Since every $\nu\in\ob_n(d)$ is required to have $\kappa_n$ in its domain the value of $f_n\oplus \nu$ at  $\kappa_n$ is always determined by $\nu(\kappa_n).$
\end{remark}
\begin{fact}
    $p\cat\vec\nu$ is a condition for all $\vec\nu\in \prod_{\ell(p)\leq i<n}A^p_i$ and $n\geq \ell(p).$
\end{fact}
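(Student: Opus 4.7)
The plan is to argue by induction on the length of $\vec\nu$, reducing to the single-step case: given any condition $q$ with $\ell(q)=m$ and any $\mu\in A^q_m$, the one-point extension $q\cat\mu$ is again a condition. Once this is established the iteration is routine, since the recursive clause $p\cat\vec\nu=(p\cat\langle\nu_{\ell(p)},\dots,\nu_{m-2}\rangle)\cat\nu_{m-1}$ presupposes $\nu_{m-1}$ to lie in the updated measure-one set at the top index of the intermediate condition (tacitly forcing the $\nu_i$'s to be $\prec$-increasing), and each one-point extension raises the length by exactly one.

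For the one-step claim I would first dispose of the bookkeeping clauses. The function $f^{q\cat\mu}_m=f^q_m\oplus\mu$ has domain $\dom(f^q_m)$ and takes the value $\mu(\kappa_m)$ at $\kappa_m$, which by the definition of $\ob_m(\dom(f^q_m))$ is an inaccessible cardinal above $\kappa_{m-1}$; hence $m+1$ is the new length. For indices $n>m$ only the measure-one components are altered (from $A^q_n$ to $(A^q_n)_{\langle\mu\rangle}$), while the functions $f^q_n$ and their domains remain untouched. Consequently the $\s^\star$-increasing condition on $\langle\dom(f^{q\cat\mu}_n)\mid n\geq m+1\rangle$ is inherited verbatim from $q$.

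The heart of the argument is to verify that $(A^q_n)_{\langle\mu\rangle}=\{\eta\in A^q_n\mid\mu\prec\eta\}$ still belongs to $E_n(\dom(f^q_n))$ for each $n>m$. By $\kappa_n$-completeness of $E_n(\dom(f^q_n))$ this reduces to showing that the ambient set $\{\eta\in\ob_n(\dom(f^q_n))\mid\mu\prec\eta\}$ is $E_n(\dom(f^q_n))$-large, equivalently that $j_n(\mu)\prec\mc_n(\dom(f^q_n))$. Since $\crit(j_n)=\kappa_n>\kappa_m$ and $\mu$ takes values below $\kappa_m$, $j_n(\mu)$ is simply $\mu$ with its domain relabelled as $j_n``\dom(\mu)$. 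The domain clause $\dom(j_n(\mu))\s^\star\dom(\mc_n(\dom(f^q_n)))$ then reduces, by injectivity of $j_n$ and the fact that $j_n(\kappa_m)=\kappa_m$, to $\dom(\mu)\setminus\{\kappa_m\}\s\dom(f^q_n)$; and this follows from $\dom(\mu)\s\dom(f^q_m)$ together with the $\s^\star$-increasing property $\dom(f^q_m)\s^\star\dom(f^q_n)$ built into $q$. The remaining clauses $j_n(\mu)(\alpha)<\mc_n(\dom(f^q_n))(j_n(\kappa_n))=\kappa_n$ and $|j_n(\mu)|<\kappa_n$ are automatic, since $\mu(\alpha),|\mu|<\kappa_m<\kappa_n$.

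The only genuine subtlety is to unfold the definition of $\prec$ between objects living at different indices and to notice that $j_n$ acts essentially trivially on $\mu$ (whose range sits below $\crit(j_n)$); once this is in place, the $\s^\star$-increasing constraint placed on every condition does all the work.
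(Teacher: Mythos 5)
Your argument is correct; the paper states this as a Fact without proof, and your verification --- reducing to the one-point case and checking that $(A^q_n)_{\langle\mu\rangle}\in E_n(\dom(f^q_n))$ via $j_n(\mu)\prec\mc_n(\dom(f^q_n))$, using that $j_n$ fixes the range of $\mu$ (which lies below $\crit(j_n)=\kappa_n$) and that the $\s^\star$-increasing domains give $\dom(\mu)\setminus\{\kappa_m\}\s\dom(f^q_n)$ --- is exactly the intended one. You are also right to flag that the recursion implicitly requires $\vec\nu$ to be $\prec$-increasing, a hypothesis the statement omits but which appears elsewhere in the paper (e.g.\ in the definition of the ordering $\leq$).
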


\begin{definition}[The main forcing order]
Given  $p,q\in\mathbb{P}$ we write $p\leq q$ if there is $\vec\nu\in \{\varnothing\}\cup \prod_{\ell(q)\leq n<\ell(p)}A^q_{n}$ $\prec$-increasing such that $p\leq^* q\cat\vec\nu$.
\end{definition}

\begin{fact}
    The ordering $\leq$ is transitive. 
\end{fact}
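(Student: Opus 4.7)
The plan is to unpack the two witnesses of $r\leq q$ and $q\leq p$ and concatenate them after suitable restriction. Suppose $r\leq q\leq p$; by definition there exist $\prec$-increasing sequences $\vec\nu\in\prod_{\ell(p)\leq n<\ell(q)}A^p_n$ with $q\leq^{*}p\cat\vec\nu$, and $\vec\mu\in\prod_{\ell(q)\leq n<\ell(r)}A^q_n$ with $r\leq^{*}q\cat\vec\mu$. My candidate witnessing $r\leq p$ is
$$\vec\sigma:=\vec\nu\smallfrown\langle \mu_j\restriction\dom(f^p_{\ell(q)+j})\mid j<\ell(r)-\ell(q)\rangle.$$
The goal is then to verify $r\leq^{*}p\cat\vec\sigma$.

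First I would check the $f$-components. For $n<\ell(p)$ one has $f^{p\cat\vec\sigma}_n=f^p_n\subseteq f^q_n\subseteq f^r_n$. For $n\in[\ell(p),\ell(q))$ the construction of $\vec\sigma$ gives $f^{p\cat\vec\sigma}_n=f^p_n\oplus\nu_{n-\ell(p)}=f^{p\cat\vec\nu}_n\subseteq f^q_n\subseteq f^r_n$, using that $q\leq^{*}p\cat\vec\nu$. For $n\in[\ell(q),\ell(r))$ a direct pointwise computation based on the definition of $\oplus$ shows that $(f^p_n\oplus(\mu_{n-\ell(q)}\restriction\dom(f^p_n)))$ and $f^r_n\supseteq f^q_n\oplus\mu_{n-\ell(q)}$ coincide on $\dom(f^p_n)$: at $\alpha\in\dom(\mu_{n-\ell(q)})$ both equal $\mu_{n-\ell(q)}(\alpha)$, and elsewhere both equal $f^p_n(\alpha)=f^q_n(\alpha)$ (thanks to $f^p_n\subseteq f^q_n$).

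The more delicate verification concerns (i) that $\vec\sigma$ is $\prec$-increasing and each $\sigma_i$ lies in the appropriately shrunken measure-one set $(A^p_{\ell(p)+i})_{\vec\sigma\restriction i}$, and (ii) that $(A^p_n)_{\vec\sigma}\restriction\dom(f^r_n)\subseteq A^r_n$ for $n\geq\ell(r)$. Part (i) for the $\vec\nu$-part is immediate. For the $\prec$-increasingness of the restricted $\mu_j$'s one unfolds the definition of $\prec$ and notes that restriction to smaller $\s^{\star}$-increasing domains preserves both the domain-inclusion and value-bound clauses (since $|\mu_j\restriction\dom(f^p)|\leq|\mu_j|<\mu_{j+1}(\kappa_{\ell(q)+j+1})$). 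Part (ii) follows by composing the two measure-one inclusions $A^p_n\restriction\dom(f^q_n)\subseteq A^q_n$ (from $q\leq^{*}p\cat\vec\nu$) and $A^q_n\restriction\dom(f^r_n)\subseteq A^r_n$ (from $r\leq^{*}q\cat\vec\mu$), together with the fact that restricting commutes with the shrinkings $(\cdot)_{\vec\nu}$ and $(\cdot)_{\vec\mu}$.

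The main obstacle I anticipate is the membership clause of (i) at the junction: a priori an element $\mu_j\in A^q_{\ell(q)+j}$ need not be the restriction of any element of $(A^p_{\ell(q)+j})_{\vec\nu}$, so $\mu_j\restriction\dom(f^p_{\ell(q)+j})$ need not lie in $A^p_{\ell(q)+j}$ without further work. My plan is to invoke Lemma~\ref{lemma: before weak projections of AIM} in a telescoping fashion, applied levelwise to the pairs of domains $\dom(f^p_{\ell(q)+j})\s^{\star}\dom(f^q_{\ell(q)+j})$, in order to pass to a measure-one subset of $A^q_{\ell(q)+j}$ on which the required lifting property holds. Combined with the $\kappa$-completeness of the extender measures this will allow one to assume $\vec\mu$ has the desired lifting property, which in turn places the restricted objects in the targeted shrunk measure-one sets and completes the verification of $r\leq^{*}p\cat\vec\sigma$.
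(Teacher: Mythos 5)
The paper states this as a \emph{Fact} and gives no proof, so there is nothing to compare against; your witness $\vec\sigma=\vec\nu{}^\smallfrown\langle \mu_j\restriction\dom(f^p_{\ell(q)+j})\mid j<\ell(r)-\ell(q)\rangle$ is the natural (and correct) choice, and your verifications of the $f$-components, of the preservation of $\prec$ under restriction to $\s^{\star}$-smaller domains, and of the measure-one inclusion for $n\geq\ell(r)$ are all sound.

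The problem is your final paragraph. The ``main obstacle'' you anticipate is not there: the third clause of $q\leq^{*}p\cat\vec\nu$ says precisely that $A^q_n\restriction\dom(f^{p\cat\vec\nu}_n)\subseteq A^{p\cat\vec\nu}_n$ for $n\geq\ell(q)$, i.e.\ $A^q_n\restriction\dom(f^p_n)\subseteq (A^p_n)_{\vec\nu}$. Hence $\mu_j\restriction\dom(f^p_{\ell(q)+j})\in (A^p_{\ell(q)+j})_{\vec\nu}$ immediately, which gives both the membership in $A^p_{\ell(q)+j}$ and the $\prec$-increasingness across the junction with $\vec\nu$ for free. (You appear to have read the restriction inclusion in the wrong direction.) Worse, the remedy you propose would not work even if the obstacle were real: $\vec\mu$ is a \emph{given} witness to $r\leq^{*}q\cat\vec\mu$, so you cannot pass to a measure-one subset of $A^q_{\ell(q)+j}$ via Lemma~\ref{lemma: before weak projections of AIM} and then ``assume'' that $\mu_j$ lies in it. That shrinking device is legitimate in the proof of the weak-projection lemma, where the direct extension $p^{*}$ is built \emph{before} the extension of the projected condition is handed to you, but here the quantifiers are in the opposite order. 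Delete the last paragraph, replace it with the one-line appeal to the definition of $\leq^{*}$ above, and the proof is complete.
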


\begin{lemma}
    $\mathbb{P}$ is $\Sigma$-Prikry taking $\Sigma:=\langle \kappa_n\mid n<\omega\rangle$.
\end{lemma}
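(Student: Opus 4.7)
The plan is to verify the six clauses of Definition~\ref{SigmaPrikry}, with $\ell(p)$ taken to be the integer explicitly stipulated in~$p$.

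Clauses~(1) and~(2) are straightforward. Every condition $p$ admits one-point extensions $p\cat\nu$ for any $\nu\in A^p_{\ell(p)}$, so $(\mathbb{P},\ell)$ is graded. For Clause~(2), $(\mathbb{P}_n,\leq^*)$ is itself $\kappa_n$-directed-closed: given fewer than $\kappa_n$ pairwise $\leq^*$-directed conditions of length~$n$, one forms the coordinatewise union of the $f^p_m$'s (well-defined by directedness, and of domain-size ${<}\lambda$ by inaccessibility of~$\lambda$) and, for each $m\geq n$, pulls back and intersects the $A^p_m$'s to a common domain; the intersection remains $E_m$-large by $\kappa_m$-completeness of each $E_m$ and $\kappa_m\geq\kappa_n$.

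For Clauses~(3) and~(5), given $q\leq^{n+m}p$ the definition of $\leq$ produces a unique $\prec$-increasing $\vec{\nu}\in\prod_{\ell(p)\leq i<\ell(p)+n+m}A^p_i$ with $q\leq^* p\cat\vec{\nu}$; set $m(p,q):=p\cat(\vec{\nu}\restriction n)$ and $w(p,q):=p\cat\vec{\nu}$. The monotonicity of $w$ in Clause~(5) follows from the same unpacking. For Clause~(4), each $|\ob_i(\dom f^p_i)|<\lambda$ (partial functions with domain of size ${<}\kappa_i$ inside a set of size ${<}\lambda$ and values in $\kappa_i<\lambda$, all bounded by inaccessibility), hence every $|A^p_i|<\lambda$ and $|W(p)|\leq\sum_{k<\omega}\prod_{i<k}|A^p_{\ell(p)+i}|<\lambda$ by regularity of~$\lambda$.

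The main obstacle is the Complete Prikry Property of Clause~(6). Fix $p\in\mathbb{P}$, $n<\omega$, and a $0$-open $U\subseteq\mathbb{P}$; the plan is to produce $q\leq^* p$ such that for every $\vec{\nu}\in\prod_{i<n}A^q_{\ell(p)+i}$, all direct extensions of $q\cat\vec{\nu}$ are either uniformly inside $U$ or uniformly outside~$U$. We proceed by induction on~$n$. The base $n=0$ is immediate from $0$-openness. At the inductive step, for each tail $\vec{\nu}$ of length $n-1$ we use the ultrafilter $E_{\ell(p)+n-1}(\dom f^p_{\ell(p)+n-1})$ to partition $A^p_{\ell(p)+n-1}$ and retain a measure-one homogeneous subset $B(\vec{\nu})$; these $B(\vec{\nu})$'s must then be assembled into a single pure extension by stabilizing their measures of largeness across levels, which is exactly the point of the diagonal-intersection (normality) principle established before the definition of $\mathbb{P}$, applied iteratively together with Lemma~\ref{lemma: before weak projections of AIM} through levels $\ell(p),\dots,\ell(p)+n-2$. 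Finally the inductive hypothesis handles residual pure extensions below. The hard part is precisely this cross-level coordination: every shrinking of $A^p_m$ to extract homogeneity must be compatible with the domains and measures at all levels ${>}m$, and Lemma~\ref{lemma: before weak projections of AIM} is tailored to provide this, so that the construction parallels (and in fact simplifies in the diagonal setting) Merimovich's argument in~\cite{MerSuper, SupercompatRadinExtender}.
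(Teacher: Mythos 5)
Your sketch is correct and follows essentially the same route as the paper's own proof, which likewise runs clause-by-clause through Definition~\ref{SigmaPrikry} and defers the only nontrivial point, the Complete Prikry Property, to the standard AIM-forcing argument (the paper cites \cite[\S3]{PovOmega}, you outline the induction-with-diagonal-intersections directly). One small remark: in your Clause~(6) outline the essential tool is the Normality (diagonal intersection) lemma; Lemma~\ref{lemma: before weak projections of AIM} is really designed for building the weak projections $\pi_{\mathbf{e},\mathbf{d}}$ and is not needed when shrinking the measure-one sets of a single fixed condition, so its invocation there is superfluous though harmless.
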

\begin{proof}
The proof is very similar to the verification that the \emph{AIM forcing} is a $\Sigma$-Prikry forcing. We only sketch the main points going over the clauses of Definition~\ref{SigmaPrikry}: (1) the length function $\ell\colon \mathbb{P}\rightarrow \omega$ is clear; (2) is easy to show that $\langle \mathbb{P}_n,\leq^*\rangle$ is $\kappa_n$-directed-closed; (3) the weakest extensions of $p$ are of the form $p\cat\vec\nu$; (4) this follows from inaccessibility of $\lambda$; (5)  is a routine verification; (6) the argument is similar to the corresponding verification  for the AIM forcing (see \cite[\S3]{PovOmega} for a full detailed account). 
\end{proof}

\begin{lemma}[Cardinal structure]\label{lemma: cardinal structure in pseudoaim}\hfill
\begin{enumerate}
    \item $\mathbb{P}$ is $\lambda^+$-cc and  preserves both $\kappa$ and $\lambda$.
    \item $\one\forces_{\mathbb{P}}``(\kappa^+)^{V[\dot{G}]}=\lambda\,\wedge\,\cf(\kappa)^{V[\dot{G}]}=\omega$''.
    \item $\one\forces_{\mathbb{P}}``2^\kappa\geq |\varepsilon|$''. In particular, if $\varepsilon\geq (\lambda^+)^V$ the $\sch$ fails at $\kappa$.
\end{enumerate}
\end{lemma}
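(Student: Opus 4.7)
The strategy is to follow the template used for the original Merimovich forcing (Lemma~\ref{lemma: cardinal structure in PE}), adapted to the diagonal setting. Item~(1) will come from a $\Delta$-system argument combined with the $\Sigma$-Prikry structure from the preceding lemma; items~(2)--(3) both reduce to the existence of an ``eventual-value'' injection from $\varepsilon$ into $\kappa^\omega$ inside $V[G]$, with item~(2) requiring an additional collapse argument.

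For $\lambda^+$-cc, given a family $\{p_\xi\mid\xi<\lambda^+\}\subseteq\mathbb{P}$, I would pigeonhole to $\lambda^+$ conditions sharing: a common length $\ell$; a common sequence $\langle f_i^{p_\xi}(\kappa_i)\mid i<\ell\rangle$ of Prikry values below $\kappa$ (only ${<}\kappa$ many possibilities); and, via the $\Delta$-system lemma applied to the $<\lambda$-sized sets $\bigcup_i\dom(f_i^{p_\xi})\in[\varepsilon]^{<\lambda}$ together with inaccessibility of $\lambda$, a common root $r$ with common values on $r$. Any two surviving conditions admit a common $\leq^*$-extension obtained by unioning the lower parts (consistent on the root) and intersecting the measure-one sets after projecting to a common domain, with Lemma~\ref{lemma: before weak projections of AIM} guaranteeing that these intersections stay $E_n$-large. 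For the preservation claims, the $\kappa_n$-directed-closedness of $(\mathbb{P}_n,\leq^*)$ (Clause~(2) of $\Sigma$-Prikryness, Definition~\ref{SigmaPrikry}) forbids the addition of bounded subsets of $\kappa$, preserving all cardinals $\leq\kappa$; $\lambda^+$-cc preserves all cardinals $\geq\lambda^+$; and preservation of $\lambda$ itself is subsumed by item~(2).

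For items~(2)--(3), the Prikry sequence $\langle\nu_n(\kappa_n)\mid n<\omega\rangle$ is cofinal in $\kappa$ in $V[G]$ (each $\nu_n(\kappa_n)$ lies in $(\kappa_{n-1},\kappa_n)$ by the definition of an object), giving $\cf(\kappa)^{V[G]}=\omega$. For each $\alpha\in[\kappa,\varepsilon)$ a density argument shows that the set of conditions $p$ with $\alpha\in\dom(f_n^p)$ for all sufficiently large $n$ is dense, so one may define $g_\alpha(n):=\nu_n(\alpha)\in\kappa_n$ for $n$ large enough. Normality of the measures $E_n(d)$ combined with Lemma~\ref{lemma: before weak projections of AIM} implies that for $\alpha\neq\beta$ the set $\{\nu\in\ob_n(d):\nu(\alpha)\neq\nu(\beta)\}$ is $E_n(d)$-large for suitable $d$, so by genericity $g_\alpha\neq g_\beta$ in $V[G]$. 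This furnishes an injection $\varepsilon\hookrightarrow\kappa^\omega$ in $V[G]$, yielding $(2^\kappa)^{V[G]}\geq|\varepsilon|^{V[G]}$ and item~(3); the $\sch$-failure consequence follows because $\lambda^+$-cc preserves $\lambda^+$, so $\varepsilon\geq(\lambda^+)^V$ forces $(2^\kappa)^{V[G]}\geq\lambda^+>\lambda=(\kappa^+)^{V[G]}$ while $\kappa$ is singular of countable cofinality. For item~(2), $\lambda^+$-cc preserves $\lambda^+$, so it remains to collapse every $\mu\in(\kappa,\lambda)$ to $\kappa$: pick $n$ with $j_n(\kappa_n)>\mu$ (available by ${<}\lambda$-supercompactness), fix a domain $d$ with $\{\kappa_n,\mu\}\subseteq d$, and use a ground-model function $h\colon\kappa_n\to\mu$ onto (witnessed by the extender structure through the elementary embedding $j_n$); the generic's value at coordinate $\mu$, composed with $h$, provides a surjection from $\kappa$ onto $\mu$ in $V[G]$.

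The principal obstacle is the collapsing step in item~(2): the eventual-value injection by itself only yields $|\mu|^{V[G]}\leq|\kappa^\omega|^{V[G]}$, which is insufficient since the right-hand side will equal $\lambda$, not $\kappa$. Getting $|\mu|^{V[G]}\leq\kappa$ genuinely requires the ${<}\lambda$-supercompactness hypothesis (as opposed to mere measurability) to produce, for each $\mu<\lambda$, an appropriate ground-model code $h\colon\kappa_n\to\mu$ together with a verification that its composition with the generic evaluation remains surjective after forcing; this is precisely the step where the strength of the large-cardinal hypothesis enters. The compatibility verification in (1) is the other delicate moment, since one must check that projected intersections of measures indexed over different $n$-domains remain large, which is exactly what Lemma~\ref{lemma: before weak projections of AIM} is designed to handle.
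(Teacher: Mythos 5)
Your items (1) (the $\Delta$-system argument for the $\lambda^+$-cc) and (3) (the pairwise-distinct functions $F_\alpha(n)=f^p_n(\alpha)$ witnessing $2^\kappa\geq|\varepsilon|$) match the paper's proof in substance. But there are two genuine gaps elsewhere. First, you never establish that $\lambda$ itself is preserved: in item (1) you declare this ``subsumed by item (2)'', yet your item (2) only collapses cardinals $\mu\in(\kappa,\lambda)$ and says nothing about why $\lambda$ does not get collapsed as well; without that, the conclusion $(\kappa^+)^{V[G]}=\lambda$ does not follow. The paper treats this as a separate point, deducing preservation of $\lambda$ from the Complete Prikry Property together with the bound $|W(p)|<\lambda$ (Clause~(4) of Definition~\ref{SigmaPrikry}), via \cite[Lemma~2.10(3)]{PartI}; some argument of this kind is indispensable and cannot be extracted from the chain condition or from the collapsing below $\lambda$. (Relatedly, preservation of cardinals $\leq\kappa$ needs the Prikry Property in addition to the $\leq^*$-closure you invoke.)

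Second, your collapsing mechanism for item (2) is wrong. You propose a ``ground-model function $h\colon\kappa_n\to\mu$ onto'', but no such surjection can exist in $V$ (nor in $M_n$, which is ${<}\lambda$-closed) when $\mu\in(\kappa,\lambda)$ is a cardinal above $\kappa_n$; and the single ordinal $\nu(\mu)<\kappa_n$ read off the generic at coordinate $\mu$ cannot by itself code a surjection of $\kappa$ onto $\mu$. The correct argument --- the one the paper gives --- is a covering argument: for a $V$-regular $\alpha\in(\kappa,\lambda)$ the set of $p$ with $\dom(f^p_n)\supseteq\alpha\setminus\kappa$ for all $n\geq\ell(p)$ is dense, and for each $\gamma<\alpha$ a further density argument forces $\gamma\in\dom(\nu_n)$ for some object $\nu_n$ of the generic sequence; hence $\alpha\setminus\kappa=\bigcup_{n<\omega}(\dom(\nu_n)\cap\alpha)$ is a countable union of sets each of size $<\kappa$ (since $|\dom(\nu_n)|<\kappa_n$), so $|\alpha|^{V[G]}\leq\kappa$. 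This uses only the smallness of the objects' domains, not any ground-model surjection, and is where the work of item (2) actually happens.
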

\begin{proof}
 (1) The argument for the $\lambda^+$-cc is similar to the one given by Gitik in \cite[Lemma~2.15]{Gitik-handbook} using the $\Delta$-system lemma.   
The preservation of $\kappa$ is an immediate consequence of $\langle \mathbb{P}_n,\leq^*\rangle$ being $\kappa_n$-closed and the Prikry Property \cite[Lemma~2.10(1)]{PartI}. The preservation of $\lambda$ is a consequence of the \emph{Complete Prikry Property} and  $|\{p\cat\vec\nu\mid \vec\nu\in\prod_{\ell(p)\leq n<m}A^p_n,\, m<\omega\}|<\lambda$ (see \cite[Lemma~2.10(3)]{PartI}).

\smallskip

 (2) Let $G\s\mathbb{P}$ generic and $\alpha\in (\kappa,\lambda)$ a $V$-regular cardinal. The set $$D_\alpha:=\{p\in\mathbb{P}\mid \forall n\geq \ell(p)\, \dom(f^p_n)\supseteq \alpha\setminus \kappa\}$$
 is dense. Thus, there is $p\in G\cap D_\alpha$. For each $n<\omega$ let $\vec\nu=\langle \nu_{\ell(p)},\dots,\nu_{\ell(p)+n}\rangle$ the unique $\prec$-increasing sequence in $\prod_{\ell(p)\leq i\leq \ell(p)+n}A^p_n$ for which $p\cat\vec\nu$ is in $G$. We claim that $\bigcup_{n<\omega}(\dom(\nu_n)\cap \alpha)$ covers $\alpha$. Indeed, for each $\gamma<\alpha$, $$E_\gamma:=\{q\leq p\mid \forall n\geq \ell(q)\,\forall \nu\in A^q_n\,(\gamma\in \dom(\nu)\,\wedge\,\nu(\gamma)\neq f^p_n(\gamma))\}$$ is dense below $p$. Hence there is $q\in G\cap E_\gamma$. Let $\mu\in A^q_{\ell(q)}$ be such that $q\cat\langle\mu\rangle\in G$. Since $q\leq p$ and $q\cat\langle\mu\rangle\in G$ it must be the case that $\gamma\in\dom(\nu_n)$ for some $n$. In particular $\gamma\in\dom(\nu_n)\cap \alpha$.

\smallskip

 (3) Fix $G\s \mathbb{P}$ generic over $V$. For each $\alpha\in (\kappa,\varepsilon)$ define $F_\alpha\in (\prod_{n<\omega}\kappa_n)^{V[G]}$ as  $F_\alpha(n):=f^p_n(\alpha)$ for some $p\in G$ with $\ell(p)>n$ and $\alpha\in \dom(f^p_n).$ This definition is well-posed thanks to genericity of $G$. We claim that $F_\alpha\notin V$. Indeed, for each $h\in (\prod_{n<\omega}\kappa_n)^V$ the set $$D_{\alpha,h}:=\{p\in \mathbb{P}\mid \forall n\geq \ell(p)\,\forall\nu\in A^p_n\,(\alpha\in\dom(\nu)\,\wedge\, \nu(\alpha)>h(n))\}$$
 is dense. Let $p\in G\cap D_{\alpha,h}$ and $\nu\in A^p_{\ell(p)}$ be the unique object $\nu$ such that $p\cat \langle\nu\rangle\in G$. By definition and our choice of $p$, $$F_\alpha(\ell(p)+1)=\nu(\alpha)>h(\ell(p)+1).$$

 Similarly, $F_\alpha\neq F_\beta$ for indices $\alpha\neq \beta$. because the set $E_{\alpha,\beta}:=\{p\in \mathbb{P}\mid \forall n\geq \ell(p)\,\forall\nu\in A^p_n\,(\alpha,\beta\in\dom(\nu)\,\wedge\, \nu(\alpha)<\nu(\beta))\}$ is dense.
\end{proof}

After having checked that $\mathbb{P}$ is $\Sigma$-Prikry we change gears and show that there is a natural $(\vec{\mathcal{U}},\lambda$)-nice weak $\Sigma$-system associated to $\mathbb{P}.$ 
\begin{definition}
    Let $\mathcal{D}$ denote the collection of all $\s^\star$-increasing sequences $\mathbf{d}=\langle d_n\mid n<\omega\rangle\in \prod_{n<\omega}\mathcal{D}_n(\varepsilon,\lambda)$. Given two sequences $\mathbf{d},\mathbf{e}\in \mathcal{D}$ we write $\mathbf{d}\sle \mathbf{e}$ whenever $d_n\s^\star e_n$ for each $n<\omega.$
\end{definition}
\begin{remark}
    Note that $(\mathcal{D},\sle)$ has a $\sle$-minimal element; namely, $$\mathbf{min}:=\langle \{\kappa_n\}\mid n<\omega\rangle.$$
    This minimal element will correspond to the \emph{Diagonal Prikry forcing} induced by $\langle \mathcal{U}_n\mid n<\omega\rangle$ (see Definition~\ref{def: diagonal prikry}). Recall that these are the normal measures induced by the maps $\textsf{eval}_{\kappa_n}\colon \nu\mapsto \nu(\kappa_n)$ for $n<\omega.$
\end{remark}

\begin{definition}
   For each sequence of domains $\mathbf{d}\in \mathcal{D}$  denote by $\mathbb{P}_\mathbf{d}$  the subposet of $\mathbb{P}$ whose universe is $\{p\in \mathbb{P}\mid \forall n<\omega\; \dom(f^p_n)\s d_n\}.$ Our top forcing $\mathbb{P}_\infty$  will  be  $\mathbb{P}$.
\end{definition}

\begin{lemma}[Weak Projections]\label{lemma: homogeneity of Extender Based} 
\hfill
\begin{enumerate}\label{lemma: definable Polish spaces}
    \item There is a directed system of weak projections
    $$\mathcal{P}=\langle \pi_{\mathbf{e},\mathbf{d}}\colon \mathbb{P}_{\mathbf{e}}\rightarrow \mathbb{P}_{\mathbf{d}}\mid \mathbf{e},\mathbf{d}\in\mathcal{D}\cup\{\infty\}\, \wedge\, \mathbf{d}\sle \mathbf{e}\rangle$$
where $\mathbb{P}_{\mathbf{min}}$ is the diagonal Prikry forcing relative to $\langle \mathcal{U}_n\mid n<\omega\rangle.$
    \item In the case where $\varepsilon=\lambda$ we also have   
   $|\mathrm{tcl}(\mathbb{P}_{\mathbf{d}})|<\lambda$ for all $\mathbf{d}\in \mathcal{D}$. 
   
\end{enumerate}
\end{lemma}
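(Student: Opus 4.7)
The plan is to define $\pi_{\mathbf{e},\mathbf{d}}$ by componentwise restriction: given $p=\langle f^p_0,\dots,f^p_{\ell(p)-1},\langle f^p_{\ell(p)},A^p_{\ell(p)}\rangle,\dots\rangle\in\mathbb{P}_{\mathbf{e}}$, set
$$\pi_{\mathbf{e},\mathbf{d}}(p):=\langle f^p_0\restriction d_0,\dots,f^p_{\ell(p)-1}\restriction d_{\ell(p)-1},\langle f^p_{\ell(p)}\restriction d_{\ell(p)},A^p_{\ell(p)}\restriction d_{\ell(p)}\rangle,\dots\rangle.$$
This lies in $\mathbb{P}_{\mathbf{d}}$ since each $A^p_n\restriction d_n$ belongs to $E_n(d_n)$ (the canonical projection of $E_n(e_n)$), order-preservation is immediate, and $\pi_{\mathbf{e},\mathbf{d}}\circ\pi_{\mathbf{f},\mathbf{e}}=\pi_{\mathbf{f},\mathbf{d}}$ holds by commutativity of restriction. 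The identification $\mathbb{P}_{\mathbf{min}}\cong \mathbb{P}(\vec{\mathcal{U}})$ follows since an object in $\ob_n(\{\kappa_n\})$ reduces to the single value $\nu(\kappa_n)$, and the pushforward of $E_n(\{\kappa_n\})$ along $\nu\mapsto\nu(\kappa_n)$ is precisely $\mathcal{U}_n$.

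The heart of the proof is verifying clause (2) of Definition~\ref{def: projections}. Given $p\in\mathbb{P}_{\mathbf{e}}$, I will construct $p^*\leq^* p$ by shrinking the measure-one sets iteratively: set $B_{\ell(p)}:=A^p_{\ell(p)}$, and for $n>\ell(p)$ define $B_n$ as the outcome of Lemma~\ref{lemma: before weak projections of AIM} applied with $A:=B_{n-1}$ and $B:=A^p_n$. Then $B_n\in E_n(e_n)$ and every $\eta\in B_n$ has the lifting property \emph{relative to $B_{n-1}$}: for each $\nu\in B_{n-1}\restriction d_{n-1}$ with $\nu\prec\eta\restriction d_n$ there is $\tau\in B_{n-1}$ with $\tau\restriction d_{n-1}=\nu$ and $\tau\prec\eta$. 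Let $p^*$ be $p$ with $A^p_n$ replaced by $B_n$ for all $n\geq\ell(p)$.

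Suppose now $q\leq \pi_{\mathbf{e},\mathbf{d}}(p^*)$, say $q\leq^* \pi_{\mathbf{e},\mathbf{d}}(p^*)\cat\vec{\nu}$ with $\vec\nu=\langle\nu_{\ell(p)},\dots,\nu_{\ell(q)-1}\rangle\in\prod_{\ell(p)\leq n<\ell(q)}B_n\restriction d_n$ a $\prec$-increasing sequence. I will lift $\vec\nu$ to a $\prec$-increasing sequence $\vec\tau\in\prod_{\ell(p)\leq n<\ell(q)}B_n$ with $\tau_n\restriction d_n=\nu_n$ by \emph{reverse induction}: pick any $\tau_{\ell(q)-1}\in B_{\ell(q)-1}$ projecting to $\nu_{\ell(q)-1}$, and having $\tau_{n+1}\in B_{n+1}$, apply the lifting property of $B_{n+1}$ to the pair $(\nu_n,\tau_{n+1})$ (noting $\nu_n\prec\nu_{n+1}=\tau_{n+1}\restriction d_{n+1}$) to obtain $\tau_n\in B_n$. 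Finally, set $p':=p\cat\vec\tau$ and absorb $q$ by enlarging each stem function $f^{p\cat\vec\tau}_n$ to contain $f^q_n$ (taking $f^q_n$-values on $d_n$ and $f^{p\cat\vec\tau}_n$-values elsewhere; these agree on their overlap since $f^q_n$ end-extends $f^{\pi(p\cat\vec\tau)}_n=f^{p\cat\vec\tau}_n\restriction d_n$) and by replacing each $A^{p\cat\vec\tau}_n$ with $A^{p\cat\vec\tau}_n\cap \pi_{e_n,d_n}^{-1}(A^q_n)\in E_n(e_n)$. Then $p'\leq p$ and $\pi_{\mathbf{e},\mathbf{d}}(p')\leq q$, as required.

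The main obstacle is precisely the reverse-induction step: a naive shrinking (using Lemma~\ref{lemma: before weak projections of AIM} with $A=A^p_{n-1}$) only guarantees lifts in $A^p_n$, which would be fine for the bottom step $\tau_{\ell(q)-1}$ but not for the inductive hypothesis \emph{$\tau_{n+1}\in B_{n+1}$} required to invoke the lemma again. The scheme above circumvents this by defining $B_n$ relative to $B_{n-1}$ (not $A^p_{n-1}$), thereby forcing the lifts to stay inside the shrunken sets all the way down. For part~(2), when $\varepsilon=\lambda$, each $f^p_n$ sits in $H_\lambda$ and each measure-one set $A^p_n$ is a subset of $\ob_n(d_n)\in V_\lambda$; inaccessibility of $\lambda$ then yields $\mathbb{P}_{\mathbf{d}}\in H_\lambda$, hence $|\mathrm{tcl}(\mathbb{P}_{\mathbf{d}})|<\lambda$.
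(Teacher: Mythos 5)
Your proposal is correct and follows essentially the same route as the paper: restriction maps, iterated shrinking via Lemma~\ref{lemma: before weak projections of AIM} applied each time to the \emph{already-shrunken} previous set, reverse-induction lifting of the $\prec$-increasing sequence, and amalgamation of stems with intersected pullbacks of the measure-one sets. You even isolate the same key subtlety the paper's argument hinges on, namely that the shrinking must be done relative to $B_{n-1}$ rather than $A^p_{n-1}$ for the downward recursion to stay inside the shrunken sets.
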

\begin{proof}
(1) Suppose first that $\mathbf{d}\neq\mathbf{min}$. Let the map $\pi_{\mathbf{e},\mathbf{d}}\colon \mathbb{P}_{\mathbf{e}}\rightarrow \mathbb{P}_\mathbf{d}$
 given by $p\mapsto p\restriction\mathbf{d}$ where
    $$p\restriction \mathbf{d}:=\langle f^p_0\restriction d_0,\dots, f^p_{\ell(p)-1}\restriction d_{\ell(p)-1},\langle f^p_{\ell(p)}\restriction d_{\ell(p)}, A^p_{\ell(p)}\restriction d_{\ell(p)}\rangle,\dots\rangle.$$
It  routine to check that $\pi_{\mathbf{e},\mathbf{d}}$ is a well-defined order-preserving map.  
    \begin{claim}
        $\pi_{\mathbf{e},\mathbf{d}}$ witnesses Clause~(3) of Definition~\ref{def: projections}.
    \end{claim}
    \begin{proof}[Proof of claim]
        Let $p\in \mathbb{P}_{\mathbf{e}}$ and say it takes the form
        $$\langle f_0,\dots, f_{\ell-1}, \langle f_\ell, A_\ell\rangle, \langle f_{\ell+1},A_{\ell+1}\rangle, \dots\rangle. $$
We are going to use Lemma~\ref{lemma: before weak projections of AIM} to $\leq^*$-extend $p$ to a condition $p^*\in \mathbb{P}_{\mathbf{e}}$ which witnesses the condition of being a weak projection. Define $\langle A^*_{n}\mid n\geq \ell\rangle$ inductively as follows. First, set $A^*_{\ell}:=A_\ell$. Second, appeal to Lemma~\ref{lemma: before weak projections of AIM} with $\langle A,B\rangle:=\langle A^*_\ell, A_{\ell+1}\rangle$ and the pair of domains $\langle d_\ell,d_{\ell+1}\rangle$ thus obtaining $A^*_{\ell+1}\in E_{\ell+1}(\dom(f_{\ell+1}))$. Inductively, apply the lemma to the pair $\langle A^*_{n}, A_{n+1}\rangle$ thus obtaining $A^*_{n+1}$. In the end we define
$$p^*:=\langle f_0,\dots, f_{\ell-1}, \langle f_\ell, A^*_\ell\rangle, \langle f_{\ell+1},A^*_{\ell+1}\rangle, \dots\rangle. $$
Now let  $q\in \mathbb{P}_{\mathbf{d}}$ be such that $q\leq p^*\restriction\mathbf{d}$. By definition  there is a $\prec$-increasing sequence $\vec\nu=\langle \nu_{\ell},\dots,\nu_{\ell(q)-1}\rangle$ such that $q\leq^* (p^*\restriction \mathbf{d})\cat\vec\nu$. Moreover, by definition of the measure one sets of $p^*\restriction \mathbf{d}$ there is a sequence $\langle \eta_{\ell}, \dots, \eta_{\ell(q)-1}\rangle \in \prod_{\ell\leq n<\ell(q)}A^*_i$ (not necessarily $\prec$-increasing) such that $\nu_i=\eta_i\restriction d_i$. Let us now use the defining property of the sets $A^*_i$'s to get an alternative sequence $\langle \tau_{\ell}, \dots, \tau_{\ell(q)-1}\rangle$ that is $\prec$-increasing.

First, set $\tau_{\ell(q)-1}:=\eta_{\ell(q)-1}$. To construct the rest of objects we argue by recursion as follows. Since $\eta_{\ell(q)-2}\in A^*_{\ell(q)-2}$, $\tau_{\ell(q)-1}\in A^*_{\ell(q)-1}$ and their respective projections to $d_{\ell(q)-2}$ and $d_{\ell(q)-1}$ do form a  $\prec$-increasing pair, the definition of $A^*_{\ell(q)-1}$ enable us to find $\tau_{\ell(q)-2}\in A^*_{\ell(q)-2}$ such that $$\text{$\tau_{\ell(q)-2}\restriction d_{\ell(q)-2}=\nu_{\ell(q)-2}$ and $\tau_{\ell(q)-2}\prec \tau_{\ell(q)-1}$.}$$
To construct $\tau_{\ell(q)-3}$ argue in the same way yet this time with respect to $\eta_{\ell(q)-3}$ and $\tau_{\ell(q)-2}$. Note that in the end we get $\tau_{\ell(q)-3}\prec \tau_{\ell(q)-2}\prec \tau_{\ell(q)-1}$.

Having constructed $\langle \tau_{\ell},\dots,\tau_{\ell(q)-1}\rangle$ we are in conditions to define $p'\leq p$ such that $\pi_{\mathbf{e},\mathbf{d}}(p')\leq q$. Specifically, define
$$p':=\langle f^q_0\cup f_0,\dots, f^q_{\ell}\cup (f_\ell\oplus \tau_{\ell}), \dots,  \langle f^q_{\ell(q)}\cup (f_{\ell(q)}\setminus \{\kappa_{\ell(q)}\}), B_{\ell(q)}\rangle,\dots \rangle$$
where $\langle B_{n}\mid n\geq \ell(q)\rangle$ is defined as
$$\left\{(\pi_{\dom(f_n)\cup\dom(f^q_n),\dom(f_n)}^{-1} A^*_n)\cap (\pi_{\dom(f_n)\cup\dom(f^q_n),\dom(f^q_n)}^{-1} A^q_n)\right\}_{\langle \tau_{\ell(q)-1\rangle}}.$$
One can check that $p'\leq p$ and that $\pi_{\mathbf{e},\mathbf{d}}(p')\leq^* q.$
    \end{proof}
    In the case were $\mathbf{d}=\mathbf{min}$ the projection is slightly different; namely,
    $$p\restriction \mathbf{min}:=\langle f^p_0(\kappa_0),\dots, f^p_{\ell(p)-1}(\kappa_{\ell(p)-1}),\textsf{eval}_{\kappa_n}``A^p_{\ell(p)},\dots\rangle.$$
    We leave for the interested reader to check  that this is a weak projection. 
    
    It is also routine to check that $\langle \pi_{\mathbf{e},\mathbf{d}}\colon \mathbb{P}_{\mathbf{e}}\rightarrow \mathbb{P}_{\mathbf{d}}\mid \mathbf{e},\mathbf{d}\in\mathcal{D}\cup\{\infty\}\, \wedge\, \mathbf{d}\sle \mathbf{e}\rangle$ is a directed system.
    
\smallskip

    (2) Obvious because $\lambda$ is inaccessible in $V$.
\end{proof}

Let us finalize our verifications showing that the above weak $\Sigma$-system has the capturing property. For the reader convenience, we recall that for a condition $p\in \mathbb{P}$, $W(p)$ denotes the collection of \emph{weak extensions of $p$}; namely, conditions of the form $p\cat\vec\nu$ for $\vec\nu\in \prod_{\ell(p)\leq n<m}A^p_n$ for $m<\omega$.  Given $m<\omega$ we denote by $W_m(p)$ the collection of $q\in W(p)$ such that $\ell(q)=\ell(p)+m$.
\begin{lemma}[Capturing]\label{lemma: capturing subsets}
    Let $G$ a $\mathbb{P}$-generic filter. For each  $x\in\mathcal{P}(\kappa)^{V[G]}$ there is  $\mathbf{d}\in \mathcal{D}$ such that $x\in \mathcal{P}(\kappa)^{V[g_{\mathbf{d}}]}$ where $g_\mathbf{d}:=\pi_{\mathbf{d}}``G.$
\end{lemma}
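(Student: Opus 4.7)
The plan is to adapt the ``good pair'' machinery used in the proof of Lemma~\ref{lemma: capturing subsets} of \S\ref{sec: merimovich} to the diagonal setting. The main difference is that conditions here carry a whole $\sq^\star$-increasing sequence of stems $\langle f^p_n\mid n<\omega\rangle$ rather than a single one, so the combinatorial structure of the ``pure part'' of $\mathbb{P}$ is a countable product of sub-forcings (one per coordinate $n$) whose domains interact through the $\s^\star$-constraint. Fix a $\mathbb{P}$-name $\dot{x}$ for $x$ and a sufficiently large regular cardinal $\chi$.

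\smallskip

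First I would define a \emph{good pair} $\langle \mathcal{M},\vec{f}^{*}\rangle$ where $\mathcal{M}=\bigcup_{\xi<\kappa}\mathcal{M}_\xi$, each $\mathcal{M}_\xi\prec H_\chi$ has size ${<}\lambda$, is closed under ${<}\kappa$-sequences of its own elements (so ${}^{<\kappa}\mathcal{M}_\xi\s\mathcal{M}_{\xi+1}$), and contains $\{\dot{x},\mathbb{P},\langle\mathbb{P}^*_n\mid n<\omega\rangle\}\cup(\xi\cup\{\xi\})$; and $\vec{f}^{*}=\langle f^{*}_n\mid n<\omega\rangle$ is a coordinate-wise union $f^{*}_n=\bigcup_{\xi<\kappa}f_{n,\xi}$ of a $\leq_{\mathbb{P}^*_n}$-decreasing sequence with the property that $f_{n,\xi}$ enters every dense open subset of $\mathbb{P}^*_n$ that lives in $\mathcal{M}_\xi$, and $f_{n,\xi}\cup\{f_{n,\xi}\}\s\mathcal{M}_{\xi+1}$ (the constructions at different $n$ are carried out simultaneously along the same $\xi$-chain to guarantee the $\s^\star$-compatibility of the limit domains). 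A standard diagonal-intersection argument, using $\kappa_n$-directed closure of $\langle\mathbb{P}_n,\leq^*\rangle$ and the closure of the $\mathcal{M}_\xi$'s, shows that the set of conditions $q\in\mathbb{P}$ of the form $q=\langle f^{*}_0,\dots,f^{*}_{\ell(q)-1},\langle f^{*}_n,A^q_n\rangle_{n\geq\ell(q)}\rangle$ for some good pair $\langle\mathcal{M},\vec f^{*}\rangle$ is dense in $\mathbb{P}$; by choosing the measure-one sets via the Complete Prikry Property (Definition~\ref{SigmaPrikry}(\ref{c6})) applied to the $0$-openings of the countably many (in $\mathcal{M}$) dense open subsets of $\mathbb{P}$, one furthermore arranges that $q$ is $\langle\mathcal{M},\mathbb{P}\rangle$-generic.

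\smallskip

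Pick such a $q\in G$, and set $\mathbf{d}:=\langle \dom(f^q_n)\mid n<\omega\rangle\in\mathcal{D}$. For each $\alpha<\kappa$ the set $D_\alpha:=\{r\in\mathbb{P}\mid r\parallel_{\mathbb{P}}\check\alpha\in\dot x\}$ is dense open and lies in $\mathcal{M}$, so by $\langle\mathcal{M},\mathbb{P}\rangle$-genericity there exists $q_\alpha\in D_\alpha\cap\mathcal{M}\cap G$. If $q_\alpha\in\mathcal{M}_\xi$, then for each $n<\omega$ the dense open set $\{h\in\mathbb{P}^*_n\mid \dom(h)\supseteq\dom(f^{q_\alpha}_n)\}$ belongs to $\mathcal{M}_\xi$, and hence by the good-pair property $f_{n,\xi+1}$ enters it, forcing $\dom(f^{q_\alpha}_n)\s\dom(f_{n,\xi+1})\s\dom(f^{*}_n)=\dom(f^q_n)$. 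Therefore $q_\alpha\in\mathbb{P}_{\mathbf{d}}$ and $\pi_{\mathbf{d}}(q_\alpha)=q_\alpha\in g_{\mathbf{d}}$. Defining $A:=\{\alpha<\kappa\mid\exists r\in g_{\mathbf{d}}\,(r\forces_{\mathbb{P}_\mathbf{d}}\check\alpha\in\dot x)\}\in V[g_{\mathbf{d}}]$, the usual argument — one direction via compatibility of $q_\alpha$ with any $r\in G$ deciding $\check\alpha\in\dot x$, the other via $\leq$-descent — yields $A=\dot{x}_G=x$, so $x\in\mathcal{P}(\kappa)^{V[g_{\mathbf{d}}]}$ as required.

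\smallskip

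The main obstacle will be implementing the simultaneous construction of the coordinatewise unions $f^{*}_n$ so as to (i) preserve the $\sq^\star$-monotonicity of the sequence of domains, (ii) make each $f_{n,\xi}$ meet all dense opens of $\mathbb{P}^{*}_n$ that are in $\mathcal{M}_\xi$ (there are $\geq \lambda$-many globally, but only ${<}\lambda$-many per stage, which is what the inaccessibility of $\lambda$ and the closure of $\mathcal{M}_\xi$ are good for), and (iii) absorb the Prikry-style density sets of $\mathbb{P}$ into the measure-one components via Lemma~\ref{lem: indiscernibles}(\ref{Strong Prikry Property}). Once these three ingredients are aligned, the transfer from the single-measure framework of \S\ref{sec: merimovich} to the countable-product diagonal framework here is essentially bookkeeping.
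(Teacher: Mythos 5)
Your proposal takes a genuinely different route from the paper's. The paper's proof here does \emph{not} port the good-pair/elementary-submodel argument of \S\ref{sec: merimovich}; instead it exploits $\cf(\kappa)=\omega$ directly: for each $n<\omega$ the set $D_n$ of conditions deciding $\dot x\cap\kappa_n$ is dense open (the forcing adds no bounded subsets of $\kappa$), so by countably many applications of the Strong Prikry Property and a $\leq^*$-lower bound one finds a single $p\in G$ with $\mathbb{P}^p_{\geq m_n}\subseteq D_n$ for all $n$; setting $\mathbf d:=\langle\dom(f^p_n)\mid n<\omega\rangle$, every weak extension $q\in W(p)$ satisfies $\pi_{\mathbf d}(q)=q$ (one-point extensions do not enlarge domains), whence $W(p)\cap G$ is computable from $g_{\mathbf d}$ and $x$ is recovered from it. This needs only countably many dense sets and no submodels. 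Your route is the one the paper itself uses for Merimovich's forcing in \S\ref{sec: merimovich}, and in outline it should transfer: the coordinatewise pure parts $\mathbb{P}^*_n$ are sufficiently closed to build the $f^*_n$ along a $\kappa$-chain of models, and the Normality (diagonal intersection) lemma of this section is exactly what the generic-condition construction would consume.

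Two caveats, one of which is a genuine error. First, $\mathcal{M}=\bigcup_{\xi<\kappa}\mathcal{M}_\xi$ contains $\kappa$-many (not \emph{countably} many) dense open subsets of $\mathbb{P}$ — your own proof uses the $\kappa$-many sets $D_\alpha$, all of which lie in $\mathcal{M}$ — so $\langle\mathcal{M},\mathbb{P}\rangle$-genericity of $q$ cannot be arranged by countably many invocations of the Complete Prikry Property as you parenthetically suggest; since $\langle\mathbb{P}_0,\leq^*\rangle$ is only $\kappa_0$-directed-closed, the $\kappa$-many dense sets must be absorbed level-by-level into the measure-one components via the diagonal-intersection lemma, with the sets appearing in $\mathcal{M}_\xi$ handled only from some level $n_\xi$ onward. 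Second, this density-of-generic-conditions claim is the entire technical content of your argument and is left as a black box; in \S\ref{sec: merimovich} the paper can at least defer to an existing reference for the single-extender case, whereas the diagonal analogue would have to be proved here. Neither point is fatal — the construction is standard in extender-based Prikry theory — but as written the proposal replaces a two-step elementary argument with a substantially harder one whose key step is asserted rather than proved.
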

\begin{proof}
Let $x\s \kappa$ be in $V[G]$. For simplicity of notations assume that the trivial condition $\one_\mathbb{P}$ forces this. For each $n<\omega$, let $\sigma_n$ be a $\mathbb{P}$-name such that $(\sigma_n)_G=x\cap \kappa_n$. Denote $D_n\coloneqq\{p\in \mathbb{P}\mid \exists y\in\mathcal{P}(\kappa)\;(p\forces_{\mathbb{P}} \check{y}=\sigma_n)\}$. Since $\mathbb{P}$ is $\Sigma$-Prikry it does not introduce bounded subsets to $\kappa$ -- 
hence,  $D_n$ is dense and open.  Appealing iteratively to the \emph{Strong Prikry property} we construct a $\leq^*$-decreasing sequence of conditions $\langle p_n\mid n<\omega\rangle$ such that for each $n<\omega$ there is some $m_n<\omega$ such that $\mathbb{P}^{p_n}_{\geq m_n}\s D_n.$ Letting $p$ be a $\leq^*$-lower bound for this sequence we get that, for each $n<\omega$, $\mathbb{P}^p_{m_n}\s D_n$. 

We have actually proved the existence of densely-many conditions $p$ as above, so we may assume that our condition $p$ is in $G$.  For each $n<\omega$, there is a unique  $q_n\in W_{m_n}(p)\cap G$  such that $q_{n}$ decides the value of  $\sigma_n$.  Let $g_{\mathbf{d}}$ be the generic filter induced by $\pi_{\mathbf{d}}$ and $G$ where  $\mathbf{d}:=\langle \dom(f^p_n)\mid n<\omega\rangle$. 

\begin{claim}\label{ChoosingThebranch}
$W(p)\cap G=\{q\in W(p)\mid \pi_{\mathbf{d}}(q)\in g_{\mathbf{d}}\}.$
\end{claim}
\begin{proof}[Proof of claim]

The left-to-right inclusion is obvious. For the other let $q$ in the right-hand-side. 
There is $q'\in G$ such that $\pi_{\mathbf{d}}(q')\leq_{\mathbb{P}_{\mathbf{d}}} \pi_{\mathbf{d}}(q)$. Since $q\in W(p)$ (i.e., $q=p\cat\vec\nu$ for some $\vec\nu$) observe that  $\pi_{\mathbf{d}}(q)=q$.  From this we infer that $q'\leq \pi_{\mathbf{d}}(q')\leq \pi_{\mathbf{d}}(q)=q$, and thus $q\in G$.
\end{proof}

By  virtue of the above claim   $W(p)\cap G\in V[g_{\mathbf{d}}]$. It is routine to check that $x=\{\alpha<\kappa\mid \exists n<\omega\, \exists q\in W(p)\cap G\;(q\forces_{\mathbb{P}} \check{\alpha}\in\sigma_n)\}\in V[g_{\mathbf{d}}]$. 
\end{proof}

\begin{cor}
    There is a weak $(\kappa,\lambda)$-nice $\Sigma$-system
    $$\mathcal{P}=\langle \pi_{\mathbf{e},\mathbf{d}}\colon \mathbb{P}_{\mathbf{e}}\rightarrow \mathbb{P}_{\mathbf{d}}\mid \mathbf{e},\mathbf{d}\in\mathcal{D}\cup\{\infty\}\, \wedge\, \mathbf{d}\sle \mathbf{e}\rangle$$
    with $\mathbb{P}_{\mathbf{min}}$ being the diagonal Prikry forcing $\mathbb{P}(\langle \mathcal{U}_n\mid n<\omega\rangle)$.
\end{cor}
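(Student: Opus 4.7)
The plan is to verify in turn each of the three defining clauses of a weak $(\kappa,\lambda)$-nice $\Sigma$-system (Definition~\ref{def: nice system}), together with the underlying structural requirements (Definition~\ref{def: directed system}), by pointing at the lemmas already established in \S\ref{sec: alternative AIM}. The system's  ``skeleton'' is already in place from Lemma~\ref{lemma: homogeneity of Extender Based}(1): the maps $\pi_{\mathbf{e},\mathbf{d}}$ are weak projections and the directed family $(\mathcal{D}\cup\{\infty\},\sle)$ has the unique maximal element $\infty$. Commutativity $\pi_{\mathbf{f},\mathbf{d}}=\pi_{\mathbf{e},\mathbf{d}}\circ\pi_{\mathbf{f},\mathbf{e}}$ for $\mathbf{d}\sle\mathbf{e}\sle\mathbf{f}$ in $\mathcal{D}$ is immediate from the definition of $p\mapsto p\restriction\mathbf{d}$ (restriction of domains and measure-one sets) and the analogous check when the target is $\mathbf{min}$.

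To see that the system is a weak \emph{$\Sigma$-system}, I need each $\mathbb{P}_\mathbf{d}$ (and $\mathbb{P}_\infty=\mathbb{P}$) to be $\Sigma$-Prikry. For $\mathbf{d}=\mathbf{min}$ this is well-known for the Diagonal Prikry forcing $\mathbb{P}(\vec{\mathcal{U}})$; for $\mathbf{d}\in\mathcal{D}$ and for $\mathbb{P}$ itself, the verification runs line-by-line as in Lemma~\ref{lemma: PE SigmaPrikry}: $\ell(p):=\ell$ grades the poset, $(\mathbb{P}_\mathbf{d})_n$ is $\kappa_n$-directed-closed under $\leq^*$, the weakest $m$-step extensions $w(p,q)$ are $p\cat\vec\nu$ (here the construction only involves objects over the domains $d_n$, so it stays inside $\mathbb{P}_\mathbf{d}$), Clause~\eqref{csize} follows from inaccessibility of $\lambda$, and the Complete Prikry Property is proved exactly as for the AIM-style forcing. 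The identification $\mathbb{P}_{\mathbf{min}}\cong\mathbb{P}(\vec{\mathcal{U}})$ uses that each $E_n(\{\kappa_n\})$ pushes forward to the normal measure $\mathcal{U}_n$ under $\mathsf{eval}_{\kappa_n}$.

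With the structural part in hand, the three niceness clauses unwind as follows. \emph{($\alpha$) $\kappa$-capturing}: this is precisely Lemma~\ref{lemma: capturing subsets}; given $x\in\mathcal{P}(\kappa)^{V[G]}$ and any $\mathbf{d}_0\in\mathcal{D}$, one produces $\mathbf{d}^*\in\mathcal{D}$ with $x\in V[g_{\mathbf{d}^*}]$ and then sets $\mathbf{e}:=\langle d_{0,n}\cup d^*_n\mid n<\omega\rangle\sge \mathbf{d}_0$, noting $\mathbf{d}^*\sle\mathbf{e}$ and $x\in V[g_\mathbf{e}]$ by projecting through the commutative system. \emph{($\beta$) $\lambda$-boundedness}: this is Lemma~\ref{lemma: homogeneity of Extender Based}(2) together with inaccessibility of $\lambda$ in $V$, which under our choice $\varepsilon=\lambda$ yields $|\mathrm{tcl}(\mathbb{P}_\mathbf{d})|<\lambda$ for every $\mathbf{d}\in\mathcal{D}$, so $\mathbb{P}_\mathbf{d}\in H_\lambda$. \emph{($\gamma$) Amenability to interpolations}: this is Clauses~(1)--(2) of Lemma~\ref{lemma: cardinal structure in pseudoaim}, namely $\one\forces_{\mathbb{P}}``(\kappa^+)^{V[\dot G]}=\lambda\wedge \cf(\kappa)^{V[\dot G]}=\omega$''.

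I do not expect any genuine obstacle here: the corollary is a bookkeeping synthesis of the machinery already built in \S\ref{sec: alternative AIM}. The subtlest point is really cosmetic, namely checking that the argument of Lemma~\ref{lemma: PE SigmaPrikry} goes through uniformly for every $\mathbb{P}_\mathbf{d}$ with $\mathbf{d}\neq\mathbf{min}$ --- in particular that  the normality/diagonalisation step (Lemma~\ref{lemma: before weak projections of AIM}) which feeds both the Complete Prikry Property and the weak-projection property can be invoked over the truncated measures $E_n(d_n)$ rather than the full $E_n(\dom(f_n))$ --- but this is immediate since the restriction $A\mapsto A\restriction d_n$ sends $E_n$-large sets to $E_n(d_n)$-large sets and the relevant sets $A^*_n$ defined there live in $E_n(d_n)$.
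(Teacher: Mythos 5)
Your proposal is correct and follows essentially the same route as the paper, which presents this corollary as an immediate synthesis of the preceding lemmas of \S\ref{sec: alternative AIM} (weak projections and $\lambda$-boundedness from Lemma~\ref{lemma: homogeneity of Extender Based} with $\varepsilon=\lambda$, $\Sigma$-Prikryness of the individual posets, $\kappa$-capturing from Lemma~\ref{lemma: capturing subsets}, and amenability to interpolations from Lemma~\ref{lemma: cardinal structure in pseudoaim}). Your explicit handling of the quantifier pattern in clause~($\alpha$) via coordinatewise unions of domain sequences matches the footnoted remark the paper makes in the analogous Merimovich case.
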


\begin{cor}\label{cor: the kappapsp}
    Assume that $\Sigma=\langle \kappa_n\mid n<\omega\rangle$ is an increasing sequence of  ${<}\lambda$-supercompact cardinals and $\lambda>\kappa:=\sup(\Sigma)$ is inaccessible. Then there is a model of $\zfc$ where: 
    \begin{enumerate}
        \item  $\kappa$ is a strong limit singular cardinal with $\cf(\kappa)=\omega$;
        \item Every $A\in L(V_{\kappa+1})\cap \mathcal{P}({}^\omega\kappa)$ has the $\kappa$-\psp.
          \item Every $A\in L(V_{\kappa+1})\cap \mathcal{P}(C(\Sigma))$ has the $\UBP$.
    \end{enumerate} 
\end{cor}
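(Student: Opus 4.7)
The proof is essentially an assembly of the machinery developed throughout \S3 and the concrete construction of the Diagonal Supercompact Extender Based Prikry forcing $\mathbb{P}=\mathbb{P}(\Sigma,\varepsilon,\lambda)$ in \S4.2. The plan is to take $\varepsilon:=\lambda$, let $G$ be $\mathbb{P}$-generic over $V$, and verify that $V[G]$ is the desired model of $\zfc$.

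First I would deal with Clause~(1), the cardinal arithmetic at $\kappa$. Lemma~\ref{lemma: cardinal structure in pseudoaim} gives that $\mathbb{P}$ preserves $\kappa$ and $\lambda$, forces $(\kappa^+)^{V[G]}=\lambda$ and forces $\cf(\kappa)^{V[G]}=\omega$. To see that $\kappa$ is strong limit in $V[G]$, note that $\mathbb{P}$ is $\Sigma$-Prikry (as proved right after Definition~\ref{def: diagonal prikry} of the pseudo-AIM forcing) and hence does not add bounded subsets of $\kappa$; thus $V_\kappa^{V[G]}=V_\kappa^V$. Since each $\kappa_n$ was measurable (even ${<}\lambda$-supercompact) in $V$, the cardinal $\kappa=\sup_n \kappa_n$ is a strong limit in $V$ and therefore also in $V[G]$.

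Next I would dispatch Clauses~(2) and (3) by invoking our two abstract theorems. The combined work of Lemmas~\ref{lemma: homogeneity of Extender Based} and~\ref{lemma: capturing subsets}, together with the fact that $\mathbb{P}_{\mathbf{min}}$ coincides with the Diagonal Prikry forcing $\mathbb{P}(\vec{\mathcal{U}})$ (where $\vec{\mathcal{U}}=\langle \mathcal{U}_n\mid n<\omega\rangle$ are the normal ultrafilters on the $\kappa_n$'s induced by the supercompactness embeddings) yields that
\[
\mathcal{P}=\langle \pi_{\mathbf{e},\mathbf{d}}\colon \mathbb{P}_{\mathbf{e}}\rightarrow \mathbb{P}_{\mathbf{d}}\mid \mathbf{e},\mathbf{d}\in\mathcal{D}\cup\{\infty\}\,\wedge\, \mathbf{d}\sle \mathbf{e}\rangle
\]
is a weak $(\vec{\mathcal{U}},\lambda)$-nice $\Sigma$-system in the sense of \S3. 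Clause~(2) then follows by applying Theorem~\ref{main theorem 1 in terms of nice systems} (regarding $\mathcal{P}$ merely as a weak $(\kappa,\lambda)$-nice system), which gives that every $A\in \mathcal{P}({}^\omega\kappa)\cap L(V[G]_{\kappa+1})$ has the $\kappa$-$\psp$. Similarly, Clause~(3) follows by applying Theorem~\ref{main theorem 2: BP for all subsets}, which uses precisely the $(\vec{\mathcal{U}},\lambda)$-niceness to conclude that every $A\in \mathcal{P}(C(\Sigma))\cap L(V[G]_{\kappa+1})$ has the $\vec{\mathcal{U}}$-$\bp$ in the $\UEP$-topology.

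In terms of difficulty there is essentially no novelty left in this corollary: the genuinely hard work has been done in establishing the abstract framework (Interpolation, Constellation, Theorems~\ref{main theorem 1 in terms of nice systems} and~\ref{main theorem 2: BP for all subsets}) and in verifying that the diagonal forcing fits this framework (in particular the construction of the weak projections $\pi_{\mathbf{e},\mathbf{d}}$ via Lemma~\ref{lemma: before weak projections of AIM} and the capturing Lemma~\ref{lemma: capturing subsets}). The only point where I would pause is the verification that $\mathbb{P}_{\mathbf{min}}$ is literally (and not just forcing-equivalent to) the Diagonal Prikry forcing with respect to $\vec{\mathcal{U}}$, which is needed to trigger the stronger hypothesis in Theorem~\ref{main theorem 2: BP for all subsets}; this comes from the fact that restricting the projection $\pi_{\mathbf{e},\mathbf{min}}$ sends each measure $E_n(d_n)$ to its normal projection $\mathcal{U}_n$ via $\mathsf{eval}_{\kappa_n}$, which is exactly the ultrafilter used to define $\mathbb{P}(\vec{\mathcal{U}})$.
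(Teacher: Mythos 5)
Your proposal is correct and matches the paper's intended argument: the corollary is exactly the assembly of the weak $(\vec{\mathcal{U}},\lambda)$-nice $\Sigma$-system produced in \S\ref{sec: alternative AIM} (with $\varepsilon=\lambda$ and $\mathbb{P}_{\mathbf{min}}=\mathbb{P}(\vec{\mathcal{U}})$) with Theorem~\ref{main theorem 1 in terms of nice systems}, Theorem~\ref{main theorem 2: BP for all subsets}, and the cardinal-structure lemma, plus the standard observation that $\Sigma$-Prikryness keeps $\kappa$ strong limit. The only blemish is a misattributed reference (the $\Sigma$-Prikryness of the diagonal extender-based forcing is its own lemma in \S\ref{sec: alternative AIM}, not a consequence of Definition~\ref{def: diagonal prikry}), which does not affect the argument.
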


\subsection*{Acknowledgements}
 Dimonte was supported by the Italian PRIN 2022 Grant Models, Sets and Classifications and the Italian PRIN 2017 Grant Mathematical Logic: models, sets, computability. Poveda acknowledges support from the Harvard CMSA and the Harvard Department of Mathematics. Thei  was supported by the Italian PRIN 2022 Grant ``Models, sets and classifications'' and by the European Union - Next Generation EU.

\bibliographystyle{alpha} 
\bibliography{citations}
\end{document}